\newtheorem*{Th*}{Theorem}
\newtheorem{Th}{Theorem}[section]
\newtheorem{Prop}{Proposition}[section]   
\newtheorem{Lem}{Lemma}[section]   
\newtheorem{Coro}{Corollary}[section]   
\newtheorem{Def}{Definition}  
\newtheorem{Rem}{Remark}[section]
\newcommand{\R}{\mathbb{R}}
\newcommand{\Z}{\mathbb{Z}}
\newcommand{\C}{\mathbb{C}}
\newcommand{\T}{\mathbb{T}}
\newcommand{\PP}{\mathcal{P}}
\newcommand{\Om}{\Omega}
\newcommand{\om}{\omega}
\newcommand{\id}{{\rm id}}
\newcommand{\Id}{{\rm Id}}
\newcommand{\p}{{\partial}}
\newcommand{\rk}{\mathop{\rm rk}}
\newcommand{\m}{\langle m\rangle}
\newcommand{\Div}{\mathop{\rm div}}
\newcommand{\pp}{{\rm P}}
\newcommand{\tr}{\mathop{\rm tr}}
\newcommand{\dt}[1]{\accentset{\mbox{\bfseries .}}{#1}}
\begin{document}

\title{Spatially quasi-periodic solutions of the Euler equation}   
 
\author{Xu Sun, P. Topalov\thanks{P.T. is partially supported by the Simons Foundation, 
Award \#526907}} 
  
\maketitle

\begin{abstract}  
We develop a framework for studying quasi-periodic maps and diffeomorphisms on $\R^n$.
As an application, we prove that the Euler equation is locally well posed in a space of quasi-periodic vector fields on $\R^n$.
In particular, the equation preserves the spatial quasi-periodicity of the initial data. Several results on the analytic dependence of solutions 
on the time and the initial data are proved.
\end{abstract}

\section{Introduction}\label{sec:introduction}
Consider the Euler equation in $\R^n$ ($n\ge 2$),
\begin{equation}\label{eq:euler}
\left\{
\begin{array}{l}
u_t+u\cdot\nabla u=-\nabla\pp,\quad\Div u=0,\\
u|_{t=0}=u_0,
\end{array}
\right.
\end{equation}
where $u$ is the fluid velocity, $\pp$ is the (scalar) pressure, and $\nabla$ denotes the gradient
in the Euclidean space taken component-wise. In this paper we will prove that the Euler equation is well posed
in a class of quasi-periodic vector fields on $\R^n$. Quasi-periodic functions appear naturally in integrable PDE's where the solutions 
can be frequently written in terms of theta functions (see e.g. \cite{KT,Moser} and the references therein). Such solutions are
quasi-periodic both in time and spatial direction. Note that in oceanography, modulational instabilities of periodic wave trains introduce perturbations 
that lead to spatially quasi-periodic dynamics and are believed to be one of the mechanisms responsible for the formation of 
rogue waves (\cite{OOS,WZ,AH}).  Faraday wave experiment in which a fluid layer is subject to vertical oscillations leads to quasipatterns
which are quasi-periodic functions. For example, a twelve-fold orientationally symmetric quasipattern is produced by forcing a layer of
silicone oil simultaneously at two frequencies (\cite{EF}). Quasipatterns were found in nonlinear optical systems, shaken convection,
and in liquid crystals (see \cite{Yu1,Yu2,Iooss1,Iooss2,Iooss3} and the references therein).

In order to define the quasi-periodic functions on $\R^n$ with $n\ge 1$ we fix an integer $M\ge n$, and a linear map
\begin{equation}\label{eq:Om}
\Omega : \R^n\to\R^M
\end{equation}
of rank $n$, $\rk\Omega=n$. Consider the standard covering map ${\tt p} : \R^M\to\T^M$ where 
$\T^M:=\R^M/\Z^M$ is the $M$-dimensional torus. Denote by $\Om_{\tt p}$ the composed map 
${\tt p}\circ\Om : \R^n\to\T^M$ and let $N\ge 1$ be an integer.

\begin{Def}\label{def:qp}
A map $f : \R^n\to\R^N$ is called {\em quasi-periodic} if $f(x)=F\big(\Om_{\tt p}(x)\big)$ where $F: \T^M\to\R^N$ is a continuous map.
\end{Def}

\noindent In the case when $n=1$ the definition above coincides with the classical definition of quasi-periodic functions 
(cf. \cite{Bohl,Esclangon}).
We will assume that $\Om$ satisfies the following {\em non-resonance condition}:
\begin{itemize}
\item[{\rm\bf (NC)}] {\em The image of the map $\Om_{\rm p} : \R^n\to\T^M$ is dense in $\T^M$.}
\end{itemize}

\begin{Rem}\label{rem:NC}
By Lemma \ref{lem:density<->injectivity} in the Appendix the non-resonance condition {\rm (NC)} is equivalent to 
the injectivity of the map 
\begin{equation}\label{eq:Lambda_m-map}
\Z^M\to\R^n,\quad m\mapsto\Lambda_m,
\end{equation}
where $\Lambda_m:=2\pi\Om^T(m)$ and $(\cdot)^T$ denotes the transpose of a matrix. 
By the Kronecker-Weyl theorem, {\rm (NC)} holds if e.g. there exists $\alpha\in\R^n$ such that 
the components of $\Om(\alpha)\in\R^M$ are linearly independent over $\Z$, i.e., 
if $\big(m,\Om(\alpha)\big)=0$ implies $m=0$.
\end{Rem}

\noindent Here and below $(\cdot,\cdot)$ denotes the Euclidean scalar product. By $\Z_{\ge 0}$ we will denote the set of 
non-negative integer numbers.

\medskip

\noindent{\em The space $Q^s_\Om(\R^n)$:}
We will concentrate our attention to the case when $N=1$. The case $N>1$ will then easily follow.
For any $s>\frac{M}{2}$ consider the Sobolev space $H^s(\T^M)\equiv H^s(\T^M,\R)$ of maps $\T^M\to\R$. 
By the Sobolev embedding theorem, the space $H^s(\T^M)$ is compactly embedded 
in the space of continuous functions on the torus $C(\T^M)\equiv C(\T^M,\R)$.
For a given $s>\frac{M}{2}$ and $\Om : \R^n\to\R^M$, as above, define the following space of quasi-periodic functions
\begin{equation}\label{eq:Q^s}
Q^s_\Om(\R^n):=\big\{f(x)=F\big(\Om_{\tt p}(x)\big)\,\big|\,F\in H^s(\T^M)\big\}\,.
\end{equation}
Take $f\in Q^s_\Om(\R^n)$.
Since $s>\frac{M}{2}$, the function $F\in H^s(\T^M)$ has a uniformly convergent Fourier series. 
This implies that $f(x)=F(\Om_{\tt p}(x))$ has a uniformly convergent expansion
\begin{eqnarray}
f(x)&=&\sum_{m\in \Z^M}\widehat{F}_m e^{2\pi i (m,\Om(x))}\nonumber\\
&=&\sum_{m\in \Z^M}\widehat{F}_m e^{i (\Lambda_m,x)},\quad\quad
\Lambda_m\equiv 2\pi\Om^T(m)\in\R^n,\label{eq:f_expansion}
\end{eqnarray}
where $\widehat{F}_m$, $m\in\Z^M$, are the Fourier coefficients of the function $F\in H^s(\T^M)$. 
The uniform convergence in \eqref{eq:f_expansion} implies that $f$ is a uniform limit of 
trigonometric polynomials, and hence by Wiener characterization (see e.g. \cite[Ch. I, \S 7]{Levitan},\cite[\S 2]{SunTopalov}) it 
represents a (classical) {\em almost-periodic} function in the sense of Bohr with Fourier exponents 
$\Lambda_m\equiv2\pi\Om^T(m)$. Hence,
\[
Q^s_\Om(\R^n)\subseteq C_{ap}(\R^n)\subseteq C_b(\R^n)
\]
where $C_{ap}(\R^n)$ is the space of (classical) almost-periodic functions in $\R^n$ in the sense of Bohr and 
$C_b(\R^n)$ is the space of uniformly bounded continuous functions in $\R^n$ (for more details see the short discussion at 
the end of the Appendix). It follows from Remark \ref{rem:NC} that the map \eqref{eq:Lambda_m-map} is injective. 
In particular, we see from the uniform convergence of \eqref{eq:f_expansion} that for any $m\in\Z^M$,
\begin{equation}\label{eq:F_m}
\widehat{F}_m=\lim_{T\to\infty}\frac{1}{(2T)^n}\int_{[-T,T]^n}f(x)\,e^{- i(\Lambda_m,x)}\,dx,
\end{equation}
where the integration is over the cube $[-T,T]^n$ in $\R^n$. We define the norm in $Q^s_\Om(\R^n)$, $s>M/2$,
\begin{equation}\label{eq:Q^s-norm}
\|f\|_s:=\Big(\sum_{m\in\Z^M}|\widehat{F}_m|^2\m^{2s}\Big)^{1/2},\quad\m:=\sqrt{1+|m|^2}\,.
\end{equation}

For $F\in H^s(\T^M)$ consider the pull-back map $\Om_{\tt p}^*(F)(x):=F\big(\Om_{\tt p}(x)\big)\in Q^s_\Om(\R^n)$. 
By construction, we have

\begin{Lem}\label{lem:pull-back}
Assume that $s>M/2$. Then, the map $H^s(\T^M)\stackrel{\Om_{\tt p}^*}{\to} Q^s_\Om(\R^n)$, $F\mapsto\Om_{\tt p}^*(F)$,
is a linear isomorphism.
\end{Lem}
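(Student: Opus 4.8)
The plan is to dispose of the structural facts first and then reduce everything to injectivity, which is the only place the non-resonance condition is really needed. Linearity of $\Om_{\tt p}^*$ is clear. Surjectivity is immediate from the definition \eqref{eq:Q^s} of $Q^s_\Om(\R^n)$: every $f$ in that space is by construction of the form $F\circ\Om_{\tt p}$ with $F\in H^s(\T^M)$. So the whole content is (i) injectivity of $\Om_{\tt p}^*$, and (ii) that $\Om_{\tt p}^*$ and its inverse are bounded. As I explain below, (ii) comes for free once (i) is known, because the norm \eqref{eq:Q^s-norm} on $Q^s_\Om(\R^n)$ is defined through the Fourier coefficients $\widehat F_m$ of $F$, which is only a legitimate definition once $F$ is uniquely determined by $f$.

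For injectivity suppose $F\in H^s(\T^M)$ satisfies $\Om_{\tt p}^*(F)=0$, i.e. $F\big(\Om_{\tt p}(x)\big)=0$ for all $x\in\R^n$. I would give the short topological argument: since $s>M/2$, the Sobolev embedding makes $F$ continuous on $\T^M$; it vanishes on $\Om_{\tt p}(\R^n)$, which by the non-resonance condition (NC) is dense in $\T^M$; hence $F\equiv 0$. An equivalent Fourier-analytic route is to use the uniformly convergent expansion \eqref{eq:f_expansion} together with the mean-value formula \eqref{eq:F_m}: the injectivity of the map $m\mapsto\Lambda_m$ of \eqref{eq:Lambda_m-map}, which is the reformulation of (NC) recorded in Remark \ref{rem:NC}, guarantees that the exponents $\Lambda_m$ are pairwise distinct, so \eqref{eq:F_m} recovers each $\widehat F_m$ from $f$ and forces $\widehat F_m=0$.

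Granting injectivity, every $f\in Q^s_\Om(\R^n)$ determines a unique $F\in H^s(\T^M)$ with $f=\Om_{\tt p}^*(F)$, so the coefficients $\widehat F_m$ entering \eqref{eq:Q^s-norm} are genuinely the Fourier coefficients of this $F$. Comparing \eqref{eq:Q^s-norm} with the standard norm $\|F\|_{H^s(\T^M)}^2=\sum_{m\in\Z^M}|\widehat F_m|^2\m^{2s}$ then shows that $\Om_{\tt p}^*$ is in fact a linear \emph{isometry} onto $Q^s_\Om(\R^n)$; in particular it and its inverse are bounded, which is stronger than the asserted isomorphism. As a byproduct this also shows that $Q^s_\Om(\R^n)$ is a complete normed (indeed Hilbert) space, being the isometric image of $H^s(\T^M)$.

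The only genuine obstacle is the injectivity step, and it is an obstacle only in the sense that it is precisely the point at which the hypothesis (NC) is indispensable: without it $\Om_{\tt p}(\R^n)$ need not be dense, $F$ need not be recoverable from $f$, and neither the norm \eqref{eq:Q^s-norm} nor the statement of the lemma would be meaningful. Everything else is bookkeeping, and most of it has in fact already been carried out in the discussion preceding the lemma.
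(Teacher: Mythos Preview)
Your proof is correct and in fact more detailed than what the paper provides: the paper simply asserts the lemma ``by construction'' without further argument. Your explicit check of injectivity via the density of $\Om_{\tt p}(\R^n)$ (equivalently, the injectivity of $m\mapsto\Lambda_m$) together with the Sobolev embedding $H^s(\T^M)\hookrightarrow C(\T^M)$, and your observation that $\Om_{\tt p}^*$ is an isometry by the very definition \eqref{eq:Q^s-norm} of the norm, are exactly the details the paper leaves implicit.
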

\noindent In particular, we see that $Q^s_\Om(\R^n)$ is a Hilbert space with a scalar product
\begin{equation}\label{eq:Q^s-scalar_product}
(f,g)_s:=\sum_{m\in\Z^M}\widehat{F}_m\overline{\big(\widehat{G}_m\big)}\m^{2s},\quad f,g\in Q^s_\Om(\R^n).
\end{equation}
In addition to the scalar product \eqref{eq:Q^s-scalar_product} on $Q^s_\Om(\R^n)$, we will also need the Hermitian form
\begin{equation}\label{eq:Besicovich_product1}
(f,g)_0:=\sum_{m\in\Z^M}\widehat{F}_m\overline{\big(\widehat{G}_m\big)},\quad f,g\in Q^s_\Om(\R^n).
\end{equation}
Note that the form \eqref{eq:Besicovich_product1} is bounded in $Q^s_\Om(\R^n)$, $s>M/2$.

\medskip

In addition to the space $Q^s_\Om(\R^n)$, $s>M/2$, in Section \ref{sec:spaces}, we define a finer scale 
of Hilbert subspaces 
\[
Q^{l,s}_\Om(\R^n)\subseteq Q^s_\Om(\R^n),\quad l\in\Z_{\ge 0}.
\] 
By definition, the space $Q^{l,s}_\Om(\R^n,\R^n)$ consists of maps $\R^n\to\R^n$ whose 
components are quasi-periodic functions in $Q^{l,s}_\Om(\R^n)$.
The main properties of $Q^{l,s}_\Om(\R^n)$ are discussed in Section \ref{sec:spaces}. 
In particular, it is shown that $Q^{l,s}_\Om(\R^n)$ is a {\em Banach algebra}.
In what follows we will omit the symbols $\R^n$ appearing in the notation of the space of quasi-periodic vector 
fields $Q^{l,s}_\Om(\R^n,\R^n)$ and write $Q^{l,s}_\Om$ instead. 
For $\rho>0$ denote by $B_{Q^{l,s}_\Om}(\rho)$ the open ball of radius 
$\rho$ in $Q^{l,s}_\Om$. In Section \ref{sec:euler} we prove the following theorem on 
the solutions of the Euler equation.

\begin{Th}\label{th:euler}
Assume that $s>M/2+1$ and $l\ge 2$. Then, for any $\rho>0$ there exists $T>0$ such that
for any divergence free $u_0\in B_{Q^{l,s}_\Om}(\rho)$ there exists a unique solution 
\begin{equation}\label{eq:class_od_solutions}
u\in C\big([-T,T],Q^{l,s}_\Om\big)\cap C^1\big([-T,T],Q^{l-1,s}_\Om\big)
\end{equation}
of the Euler equation \eqref{eq:euler} such that the pressure $\pp(t)$ belongs to $Q^s_\Om(\R^n)$ and 
has mean-value zero for any $t\in[-T,T]$. The solution depends continuously on the initial data in the sense that 
the data-to-solution map
\[
B_{Q^{l,s}_\Om}(\rho)\to C\big([-T,T],Q^{l,s}_\Om\big)\cap C^1\big([-T,T],Q^{l-1,s}_\Om\big),\quad
u_0\mapsto u,
\]
is continuous. In addition, we have that $\pp\in C\big([-T,T],Q^{l+1,s}_\Omega)$.
\end{Th}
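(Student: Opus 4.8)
The plan is to reduce the Euler equation to an ODE on the Banach algebra $Q^{l,s}_\Om$ by passing to Lagrangian coordinates, in the spirit of Ebin--Marsden and Kato. First I would set up the group of quasi-periodic diffeomorphisms: using the machinery of Section~\ref{sec:spaces}, one shows that maps of the form $\id+w$ with $w\in Q^{l,s}_\Om$ that are diffeomorphisms of $\R^n$ form a topological group $\mathcal{D}^{l,s}_\Om$, and that right multiplication is smooth while left multiplication and inversion are continuous (the usual ``mismatch'' of regularity). The key analytic inputs are that $Q^{l,s}_\Om$ is a Banach algebra and that composition $(\varphi,g)\mapsto g\circ\varphi$ behaves well on these spaces; these should already be available from the quasi-periodic calculus developed earlier. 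Along the trajectory $\varphi_t\in\mathcal{D}^{l,s}_\Om$ of the flow, $v:=\dt\varphi$, the Euler equation becomes the geodesic equation
\[
\dt\varphi=v,\qquad \dt v=-\big(\nabla\pp(t)\big)\circ\varphi,
\]
where the pressure is determined by the constraint that $\varphi_t$ be volume-preserving, i.e. $\Div u=0$.

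Second, I would handle the pressure term. Differentiating the Euler equation one gets, formally, $-\Delta\pp=\sum_{i,j}(\p_i u^j)(\p_j u^i)=\operatorname{tr}\big((\nabla u)^2\big)$, and with the mean-zero normalization this determines $\pp$ uniquely. The crucial point is that the operator $f\mapsto\nabla\Delta^{-1}f$ is bounded on the quasi-periodic scale: in Fourier variables it multiplies $\widehat F_m$ by $-i\Lambda_m/|\Lambda_m|^2$ for $m\neq 0$, and by $0$ for $m=0$ (the image of the mean-zero subspace lands in $Q^s_\Om$). One must check this multiplier maps $Q^s_\Om\to Q^{1,s}_\Om$, more generally gains one derivative on the $Q^{l,s}_\Om$ scale, so that $\pp\in Q^{l+1,s}_\Om$ when $u\in Q^{l,s}_\Om$; the injectivity of $m\mapsto\Lambda_m$ from (NC) and the fact that $0$ is not an accumulation point of $\{\Lambda_m\}$ (again a consequence of injectivity on the lattice, giving a lower bound $|\Lambda_m|\ge c$ for $m\neq 0$ on any bounded region, but one needs a uniform lower bound --- this is where some care is needed) guarantee boundedness of the multiplier. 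Writing the right-hand side of the geodesic equation as $P_e\big(v,\varphi,\text{second-order data}\big)\circ\varphi$, one sees it is a smooth (in fact real-analytic) map on $T\mathcal{D}^{l,s}_\Om$ because the pressure operator composed with $\varphi$ and the Banach-algebra multiplications are all smooth, while the single ``loss of derivative'' in $\nabla\Delta^{-1}(\nabla u:\nabla u)$ is exactly compensated by the gain of one derivative from $\Delta^{-1}$.

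Third, with the geodesic spray exhibited as a (locally Lipschitz, indeed smooth) vector field on the Banach manifold $T\mathcal{D}^{l,s}_\Om$, the Picard--Lindelöf theorem on Banach spaces gives, for each $\rho>0$, a uniform existence time $T>0$ and a unique solution $(\varphi_t,v_t)$ depending smoothly on the initial velocity $u_0=v_0$. Then I would push forward: $u(t):=v_t\circ\varphi_t^{-1}$. Here one uses that $\varphi_t^{-1}$ stays in $\mathcal{D}^{l,s}_\Om$ and that right composition is continuous to conclude $u\in C\big([-T,T],Q^{l,s}_\Om\big)$; the equation itself then gives $\dt u\in C\big([-T,T],Q^{l-1,s}_\Om\big)$ (one derivative is lost because $u\cdot\nabla u$ costs a derivative on the Eulerian side, unlike the Lagrangian side). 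Divergence-freeness of $u(t)$ is equivalent to $\varphi_t$ being volume preserving, which holds because the pressure was chosen precisely to enforce $\frac{d}{dt}\det(D\varphi_t)=0$; uniqueness in the stated class follows by running the correspondence backwards. Continuity of the data-to-solution map descends from smoothness of the ODE flow together with continuity of the composition maps.

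The main obstacle I anticipate is the pressure estimate: one must show that the Fourier multiplier $-i\Lambda_m/|\Lambda_m|^2$ (extended by zero at $m=0$) is bounded from $Q^{s}_\Om$ into $Q^{1,s}_\Om$ and, more delicately, that after composition with an arbitrary $\varphi\in\mathcal{D}^{l,s}_\Om$ the full pressure term remains a \emph{smooth} map on $T\mathcal{D}^{l,s}_\Om$ --- the subtlety being that $\Delta^{-1}$ is defined in Eulerian (fixed) coordinates while the spray must be expressed in Lagrangian coordinates, so one pre- and post-composes with $\varphi$, and one must verify that this conjugated operator still gains a derivative and depends smoothly on $\varphi$. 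Establishing the necessary uniform lower bound $\inf_{m\neq 0}|\Lambda_m|>0$, or an adequate substitute controlling small divisors, and tracking the one-derivative bookkeeping through the composition, are the technical heart of the argument; everything else is the standard Ebin--Marsden ODE reduction transported to the quasi-periodic category using the function-space results of Section~\ref{sec:spaces}.
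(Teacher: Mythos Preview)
Your overall architecture is right and matches the paper: pass to Lagrangian coordinates on a group $QD^{l,s}_\Om(\R^n)$ of quasi-periodic diffeomorphisms, rewrite Euler as a second-order ODE $(\dt\varphi,\dt v)=(v,F(v,\varphi))$ with $F$ built from the conjugated pressure operator, prove $F$ is real analytic, and invoke Picard--Lindel\"of. The continuity of the data-to-solution map and the regularity bookkeeping you describe are also correct in outline.

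However, there is a genuine gap at exactly the point you flag as ``where some care is needed.'' You write that $0$ is not an accumulation point of $\{\Lambda_m\}$ and hope for a uniform lower bound $\inf_{m\ne 0}|\Lambda_m|>0$. In the genuinely quasi-periodic case ($M>n$) this is \emph{false}: the set $\{\Lambda_m=2\pi\Om^T(m):m\in\Z^M\}$ is dense in $\R^n$, so small divisors are unavoidable and the multiplier $-i\Lambda_m/|\Lambda_m|^2$ is \emph{not} bounded from $Q^s_\Om$ to $Q^{1,s}_\Om$. Your proposal does not supply a substitute, and without one the Lagrangian vector field cannot be shown to take values in $Q^{l,s}_\Om$.

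The paper's resolution is the decomposition $\Z^M=I_\bullet\cup I_\infty$ with $I_\bullet=\{m:|\Lambda_m|\le 1\}$ and the associated projections $\Pi_\bullet,\Pi_\infty$. On $I_\infty$ the Laplacian is trivially invertible with the needed gain. On $I_\bullet$ one does \emph{not} invert $\Delta$ naively; instead one observes two things. First, $\Pi_\bullet:Q^{r,s}_\Om\to Q^{r+\tau,s}_\Om$ is bounded for every $\tau\ge 0$ (since $\langle\Lambda_m\rangle\le\sqrt 2$ on $I_\bullet$), so $\Pi_\bullet$ is infinitely smoothing in the $l$-index. Second, using $\Div u=0$ one rewrites $\Div(u\cdot\nabla u)=\sum_{j,k}\partial_j\partial_k(u_ju_k)$, so the low-frequency part of the pressure gradient is $(\Delta^{-1}\nabla\Div)\circ\Pi_\bullet\circ D(u)$, and the zero-order multiplier $\Lambda_{m,j}\Lambda_{m,k}/|\Lambda_m|^2$ is bounded by $1$ for all $m\ne 0$---no small divisor appears. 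The resulting operator
\[
\PP(w)=\big(\Delta^{-1}\Pi_\infty\big)\nabla Q(w)+\big(\Delta^{-1}\nabla\Div\big)\Pi_\bullet D(w)
\]
maps $Q^{l,s}_\Om\to Q^{l,s}_\Om$ analytically. The smoothness of the conjugated map $R_\varphi\circ\PP\circ R_{\varphi^{-1}}$ then hinges on showing that $R_\varphi\circ\Pi_\bullet$ and $\Pi_\bullet\circ R_{\varphi^{-1}}$ are real analytic (Lemma~\ref{lem:smoothing}, proved by expanding $e^{i(\Lambda_m,g)}$ as a convergent power series in $g$, which works precisely because $|\Lambda_m|\le 1$ on $I_\bullet$), and that $R_\varphi\circ(\Delta^{-1}\Pi_\infty)\circ R_{\varphi^{-1}}$ is real analytic (Lemma~\ref{lem:factor2}, via an inverse-function-theorem argument on the conjugated Laplacian restricted to $Q^{r,s}_{\Om,\infty}$). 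This splitting, and the smoothing property of $\Pi_\bullet$, are the missing ideas in your proposal.
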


\begin{Rem}\label{rem:external_force}
Theorem \ref{th:euler} continues to hold if we add a quasi-periodic external force 
$F\in C^1\big((-\infty,\infty),Q^{l+1,s}_\Om\big)$ such that $\Div F=0$ or $F=-\nabla U$ for some $U\in S'(\R^n)$ 
on the right side of \eqref{eq:euler}. The analytic dependence in Proposition \ref{prop:analyticity}, Section \ref{sec:euler}, 
continues to hold if we assume that $F\in C^1\big((-\infty,\infty),Q^{l+1,s}_{\Om,\bullet}\big)$ where $Q^{l+1,s}_{\Om,\bullet}$
consists of quasi-periodic functions with bounded set of Fourier exponents (see \eqref{eq:Q_bullet}).
\end{Rem}

The solution in Theorem \ref{th:euler} is unique in the class of solutions satisfying \eqref{eq:class_od_solutions}
such that the pressure $\pp(t)$ belongs to $Q^s_\Om(\R^n)$ for any $t\in[-T,T]$. 
The condition on the pressure can be replaced by another condition on the pressure (or, on the solution itself) but 
{\em cannot} be avoided, as seen from the transformation $\widetilde{u}(t):=u(t)+c t$ and 
$\widetilde{\pp}(t,x):=\pp(t,x)+(c,x)$, $c\in\R^n$, that transforms a solution $u$ of \eqref{eq:euler} with pressure $\pp$ to 
a solution $\widetilde{u}$ of \eqref{eq:euler} with pressure $\widetilde{\pp}$ and the same initial data $u_0$.
The transformation above can be easily modified so that the solution $\widetilde{u}$ will have a blow-up in finite time.

\medskip

\noindent{\em The total energy:} Since a quasi-periodic vector field $u\in Q^{l,s}_\Om\subseteq Q^s_\Om$, 
$l\ge 0$, $s>\frac{M}{2}+1$, does not decay at infinity, its total energy cannot be defined in the usual way.
Nevertheless, we can consider the {\em (averaged) energy}
\begin{equation}\label{eq:the_energy}
E(u):=\frac{1}{2}\lim_{T\to\infty}\frac{1}{(2T)^n}\int_{[-T,T]^n}(u,u)\,dx\ge 0
\end{equation}
which is well-defined and, as can be easily seen from the uniform convergence of the Fourier series,
$E(u)=\frac{1}{2}(u,u)_0$ where (cf. \eqref{eq:Besicovich_product1}),
\begin{equation}\label{eq:Besicovich_product2}
(u,v)_0:=\sum_{m\in\Z^M}(\widehat{u}_m,\overline{\widehat{v}_m}),\quad u,v\in Q^{l,s}_\Om,
\end{equation}
and $\widehat{u}_m,\widehat{v}_m\in\C^n$ are the Fourier coefficients of the vector fields $u,v\in Q^{l,s}_\Om$,
respectively (see Remark \ref{rem:many_things} in Section \ref{sec:spaces}). 
A direct computation shows that the solutions in Theorem \ref{th:euler} {\em preserve} the averaged energy \eqref{eq:the_energy}.
In fact, since $u\in C\big([-T,T],Q^{l,s}_\Om\big)\cap C^1\big([-T,T],Q^{l-1,s}_\Om\big)$, we can differentiate
\eqref{eq:the_energy} (and \eqref{eq:Besicovich_product2}) in time to conclude from the Stokes' theorem 
and the fact that $u$ is divergence free that
\begin{align}
\dt E(u)&=\big(\dt u,u\big)_0
=\lim_{T\to\infty}\frac{1}{(2T)^n}\int_{[-T,T]^n}\Big(-\big(u\cdot\nabla u,u\big)-\big(\nabla\pp,u\big)\Big)\,dx\nonumber\\
&=\lim_{T\to\infty}\frac{1}{(2T)^n}\int_{[-T,T]^n}\big(\nabla{\rm U},u\big)\,dx=
\lim_{T\to\infty}\frac{1}{(2T)^n}\int_{[-T,T]^n}\Div\big({\rm U} u\big)\,dx\nonumber\\
&=\lim_{T\to\infty}\frac{1}{(2T)^n}\int_{\partial([-T,T]^n)}(u,\nu)\,{\rm U}\,d\sigma=0,\label{eq:energy_computation}
\end{align}
where $\nu$ is the outward unit normal to the boundary $\partial([-T,T]^n)$ of the cube $[-T,T]^n$ in $\R^n$,
$d\sigma$ is the surface volume form corresponding to the Euclidena metric in $\R^n$, 
and ${\rm U}:=-\frac{1}{2}(u,u)-\pp$.
The limit of the flux integral above vanishes since the integrand is uniformly bounded on $\R^n$ by the inclusion
$Q^{l,s}_\Om(\R^n)\subseteq C_b(\R^n)$ and the fact that $u,\pp\in Q^{l,s}_\Om$ (cf. Theorem \ref{th:euler}).

\begin{Rem}
One can use the averaged energy \eqref{eq:the_energy} to define a right-invariant weak Riemannian
metric on the group of volume preserving quasi-periodic diffeomorphisms of $\R^n$ (cf. Section \ref{sec:diffeomorphisms}).
It can be shown that the solutions of the Euler equations in Theorem \ref{eq:euler} locally minimize 
the length of the curves corresponding to this metric. In particular, we see that the Euler equation can be derived from 
the variational principle applied to the averaged energy. We will discuss the Riemannian geometry of the group of 
volume preserving diffeomorphisms in detail in a separate work.
\end{Rem}

\medskip

\noindent{\em The particle trajectories:} Let us now take a divergence free quasi-periodic initial velocity field 
$u_0\in Q^{l,s}_\Om$ and let $u\in C\big((-T_1,T_2),Q^{l,s}_\Om\big)\cap C^1\big((-T_1,T_2),Q^{l-1,s}_\Om\big)$ be 
the solution of the Euler equation \eqref{eq:euler} on its maximal interval of existence $(-T_1,T_2)$, $T_1,T_2>0$.
(The existence of such an interval follows from Theorem \ref{th:euler}.)
The {\em trajectory (streamline) of a fluid particle} that is positioned at $x\in\R^n$ for $t=0$ is given by the integral curve of 
the (non-autonomous) ordinary differential equation $\dt\varphi(t)=u\big(t,\varphi(t)\big)$, $\varphi(t,x)|_{t=0}=x$ in $\R^n$.
It follows from Lemma \ref{lem:ode} in Section \ref{sec:euler} that these integral curves are defined for any $x\in\R^n$ 
and $t\in(-T_1,T_2)$ so that for any given $t\in(-T_1,T_2)$ the map $\varphi(t) : \R^n\to\R^n$, $x\mapsto\varphi(t,x)$ is 
a quasi-periodic diffeomorphism of $\R^n$. The quasi-periodic diffeomorphisms are quasi-periodic perturbations of 
the identity map in $\R^n$. They form a topological group and are studied in Section \ref{sec:diffeomorphisms} 
(cf. Theorem \ref{th:topological_group} and Theorem \ref{th:composition}). 
Proposition \ref{prop:analyticity} in Section \ref{sec:euler} implies the following corollary.

\begin{Coro}\label{coro:analyticity}
The trajectories of the fluid particles are analytic curves in $\R^n$ that depend analytically on 
the initial data $u_0\in Q^{l,s}_\Om$. 
\end{Coro}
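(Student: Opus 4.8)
\textit{Proof proposal.} The plan is to deduce the corollary from the analytic dependence established in Proposition \ref{prop:analyticity} by composing the (analytically varying) flow map with the evaluation functional at a point of $\R^n$, the latter being bounded and hence real-analytic.

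First, recall the set-up. By Lemma \ref{lem:ode}, for a divergence free $u_0\in Q^{l,s}_\Om$ the integral curves of $\dt\varphi(t)=u\big(t,\varphi(t)\big)$, $\varphi(0,\cdot)=\id$, are globally defined on the maximal existence interval and, for each $t$, the map $\varphi(t):\R^n\to\R^n$ is a quasi-periodic diffeomorphism, i.e. $\varphi(t)-\id\in Q^{l,s}_\Om$; the particle trajectory through $x$ is $t\mapsto\varphi(t,x)$. By Proposition \ref{prop:analyticity}, on a suitable time interval and a suitable neighborhood of $u_0$ the flow map depends analytically on the time and the initial data: the curve $t\mapsto\varphi(t)$ is real-analytic with values in $\id+Q^{l,s}_\Om$, and for each fixed $t$ the map $u_0\mapsto\varphi(t)$ is real-analytic into $\id+Q^{l,s}_\Om$. (Should Proposition \ref{prop:analyticity} be phrased only in terms of the Eulerian velocity $u$, one first upgrades it to the flow: by Lemma \ref{lem:ode} the trajectory equation has right-hand side $(t,\varphi)\mapsto u(t,\varphi)$, which is real-analytic as a map into $Q^{l,s}_\Om$ because $u$ depends analytically on $(t,u_0)$ and composition is analytic on $Q^{l,s}_\Om$ by the results of Section \ref{sec:diffeomorphisms}; one then invokes the analytic dependence theorem for ODEs in Banach spaces.)

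Second, fix $x\in\R^n$ and consider the evaluation map $\mathrm{ev}_x:Q^{l,s}_\Om\to\R^n$, $v\mapsto v(x)$. Since $s>M/2$, for every $v\in Q^{l,s}_\Om\subseteq Q^s_\Om$ the Fourier series \eqref{eq:f_expansion} converges uniformly and, by Cauchy–Schwarz, $\|v\|_{C_b(\R^n)}\le C\,\|v\|_s$ with $C=\big(\sum_{m\in\Z^M}\m^{-2s}\big)^{1/2}<\infty$; hence $\mathrm{ev}_x$ is a bounded linear operator, and a bounded linear operator is real-analytic.

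Finally, write $\varphi(t,x)=x+\mathrm{ev}_x\big(\varphi(t)-\id\big)$. As the sum of a constant and the composition of the real-analytic map $t\mapsto\varphi(t)-\id\in Q^{l,s}_\Om$ (resp. $u_0\mapsto\varphi(t)-\id\in Q^{l,s}_\Om$) with the real-analytic map $\mathrm{ev}_x$, the function $t\mapsto\varphi(t,x)$ is a real-analytic curve in $\R^n$, and for each $t$ the map $u_0\mapsto\varphi(t,x)$ is real-analytic; this is exactly the assertion of the corollary. The only genuine difficulty in this argument is contained in Proposition \ref{prop:analyticity} — i.e. the analyticity for the nonlinear Euler flow itself — while the passage from that statement to the analyticity of individual particle trajectories is soft, resting only on the continuity of point evaluation on $Q^{l,s}_\Om$.
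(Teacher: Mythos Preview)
Your main argument is correct and matches the paper's approach: Proposition~\ref{prop:analyticity} already gives the real-analyticity of $(t,u_0)\mapsto\varphi(t;u_0)$ with values in $QD^{l,s}_\Om(\R^n)$, and the paper simply observes that since $\dt\varphi=v=u\circ\varphi$ with $\varphi|_{t=0}=\id_{\R^n}$ this $\varphi$ is the particle flow, whence the corollary follows; your explicit use of the bounded linear evaluation $\mathrm{ev}_x$ just spells out what the paper leaves implicit.

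One caution about your parenthetical hedge: the fallback you sketch---deducing analyticity of $\varphi$ from analyticity of $u$ via the map $(t,\varphi)\mapsto u(t)\circ\varphi$---would not work as written. The composition $(g,\varphi)\mapsto g\circ\varphi$ on $Q^{l,s}_\Om\times QD^{l,s}_\Om(\R^n)$ is only continuous by Theorem~\ref{th:composition}; $C^r$-regularity requires $r$ extra derivatives on the first factor, and composition is certainly not real-analytic without loss of regularity. This is precisely why the paper passes to Lagrangian coordinates and proves Proposition~\ref{prop:F-smooth} directly rather than going through the Eulerian velocity. Since Proposition~\ref{prop:analyticity} is in fact stated for the pair $(v,\varphi)$, the hedge is unnecessary and your main line stands.
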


\noindent The corollary extends the result in \cite{Shnir} from the periodic to the quasi-periodic setting.
In contrast to \cite{Serfati}, the particle trajectories in Corollary \ref{coro:analyticity} depend analytically on the initial data.

Another consequence of Proposition \ref{prop:analyticity} is that the Fourier coefficients of the fluid velocity are analytic functions 
of time and the initial data. Note that generically, the velocity field $u : (-T_1,T_2)\to Q^{l,s}_\Om$ is {\em not} real analytic. 
Moreover, its dependence on the initial data is not even Lipschitz continuous.

\begin{Coro}\label{coro:u_m-analytic}
Let $u\in C\big((-T_1,T_2),Q^{l,s}_\Om\big)\cap C^1\big((-T_1,T_2),Q^{l-1,s}_\Om\big)$ be
the solution of the Euler equation \eqref{eq:euler} on its maximal time of existence. 
Then, for any $m\in\Z^M$ the Fourier coefficient $\widehat{u}_m : (-T_1,T_2)\to\C^n$ is an analytic curve that 
depends analytically on the initial data $u_0\in Q^{l,s}_\Om$.
\end{Coro}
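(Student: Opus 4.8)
The plan is to route each Fourier coefficient of the \emph{Eulerian} velocity through the \emph{Lagrangian} data, on which analytic dependence has already been established, by means of the bounded pairing $(\cdot,\cdot)_0$. Recall from Section~\ref{sec:euler} (Lemma~\ref{lem:ode} and Proposition~\ref{prop:analyticity}, or its proof) that the flow of the Euler velocity, $\dot\varphi(t)=u(t)\circ\varphi(t)$ with $\varphi(0)=\id$, has the form $\varphi(t)=\id+w(t)$ with $w(t)\in Q^{l,s}_\Om$, that the Lagrangian velocity $v(t):=\dot\varphi(t)=u(t)\circ\varphi(t)$ lies in $Q^{l,s}_\Om$, and that the maps $(t,u_0)\mapsto w(t)$ and $(t,u_0)\mapsto v(t)$ are analytic into $Q^{l,s}_\Om$. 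Since $l\ge2$ and $s>M/2+1$, $Q^{l,s}_\Om$ is a Banach algebra contained in $Q^s_\Om\subseteq C_{ap}(\R^n)$.

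The first and main step is the exact formula
\[
\widehat{u}_m(t)=\big(v(t),g_m(t)\big)_0,\qquad g_m(t,x):=e^{i(\Lambda_m,\varphi(t,x))}=e^{i(\Lambda_m,x)}\,e^{i(\Lambda_m,w(t,x))},
\]
where the pairing on the right is taken componentwise as in \eqref{eq:Besicovich_product2} (with $g_m$ scalar). To prove it, start from \eqref{eq:F_m}, write $u(t)=v(t)\circ\varphi(t)^{-1}$, and change variables $x=\varphi(t,y)$ in the cube averages. This is legitimate because $\Div u=0$ forces $\det D\varphi(t)\equiv1$, and because $\varphi(t)=\id+w(t)$ with $w(t)$ uniformly bounded, so that the cubes $[-T,T]^n$ and $\varphi(t)^{-1}\big([-T,T]^n\big)$ differ by a set of volume $O(T^{n-1})$ while the integrand stays uniformly bounded. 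One then uses that $v(t)\,\overline{g_m(t)}$ is a (complex-valued) quasi-periodic function, hence almost periodic, and that, by the injectivity of $m\mapsto\Lambda_m$ furnished by {\rm (NC)} (Remark~\ref{rem:NC}), its Bohr mean equals the zeroth Fourier-exponent coefficient, namely $\big(v(t),g_m(t)\big)_0$; equivalently, this is the invariance of $(\cdot,\cdot)_0$ under composition with a volume-preserving quasi-periodic diffeomorphism. This step is the crux: $u(t)$ is \emph{not} analytic as a curve in $Q^{l,s}_\Om$ and not even Lipschitz in $u_0$, so the passage to the Lagrangian quantities $v$ and $g_m$ is essential.

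Next one checks that $(t,u_0)\mapsto g_m(t)$ is analytic into $Q^{l,s}_\Om$. The map $w\mapsto(\Lambda_m,w)=\sum_{j=1}^n(\Lambda_m)_j w_j$ is bounded and linear from $Q^{l,s}_\Om$ into its scalar subalgebra; the exponential $h\mapsto e^{h}$ is an entire map of the unital Banach algebra $Q^{l,s}_\Om$ into itself; and multiplication by the fixed element $e^{i(\Lambda_m,\cdot)}\in Q^{l,s}_\Om$ is bounded and linear. Composing these with the analytic dependence of $w=\varphi-\id$ on $(t,u_0)$ yields the claim.

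Finally, the Hermitian form $(\cdot,\cdot)_0$ is bounded on $Q^s_\Om$ (cf. \eqref{eq:Besicovich_product1}), so it defines a bounded real-bilinear map $Q^s_\Om\times Q^s_\Om\to\C^n$. A bounded bilinear map composed with two analytic maps is analytic, jointly in all parameters; applying this to $\widehat{u}_m(\cdot)=\big(v(\cdot),g_m(\cdot)\big)_0$, with $v$ and $g_m$ analytic into $Q^{l,s}_\Om\subseteq Q^s_\Om$, shows that for every $m\in\Z^M$ the curve $\widehat{u}_m:(-T_1,T_2)\to\C^n$ is analytic and depends analytically on $u_0$, as claimed. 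The only nontrivial ingredient is the change-of-variables identity of the second paragraph; the rest is a routine composition of entire and bounded multilinear maps with analytic maps.
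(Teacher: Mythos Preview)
Your proposal is correct and follows essentially the same route as the paper: both compute $\widehat{u}_m$ by the change of variables $y=\varphi(t,x)$ (using $\det[d\varphi]\equiv 1$ and that the symmetric difference of cubes contributes $O(T^{n-1})$), rewrite the result as a bounded Bohr--Besicovitch pairing of $v(t)$ against $e^{\pm i(\Lambda_m,\varphi(t,\cdot))}$, and then conclude analyticity from Proposition~\ref{prop:analyticity} together with the Banach algebra property of $Q^{l,s}_\Om$ (to handle the exponential) and the boundedness of the pairing. The only cosmetic difference is that the paper uses the complex \emph{bilinear} form $\langle g,h\rangle_0=\sum_m \widehat{g}_m\widehat{h}_{-m}$ rather than the Hermitian $(\cdot,\cdot)_0$; your use of the sesquilinear form is harmless since $\overline{g_m}=e^{-i(\Lambda_m,\varphi)}$ is equally analytic in $(t,u_0)$, but it is worth being explicit about this point.
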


\medskip

\noindent{\em The phase space:} With any choice of the linear map $\Omega$ which satisfies 
the non-resonance condition {\rm (NC)} one associates a discrete lattice
$\Gamma_\Om\equiv\big\{\gamma\in\R^n\,\big|\,\Om(\gamma)\in\Z^M\big\}$ in $\R^n$.
The rank of $\Gamma_\Om$ can take any integer value in $\{0,...,n-1\}$ (cf. Lemma \ref{lem:Gamma_Om} in the Appendix). 
Hence, depending on the choice of $\Om$, the phase space of the fluid in Theorem \ref{th:euler} is diffeomorphic to 
the cylinders $\T^r\times\R^{n-r}$ where $r=\rk\Gamma_\Om$ and $\T:=\R/\Z$. Informally, we can say that when $r>0$ we 
have $r$ periodic and $n-r$ purely quasi-periodic directions in $\R^n$. If $r=0$ the phase space is diffeomorphic to $\R^n$ and 
does not have periodic directions. This implies that for such $\Om$ a quasi-periodic vector field $u\in Q^{l,s}_\Om$ 
in general position is {\em purely quasi-periodic} in the sense that it does {\em not} have non-vanishing periods.
By the uniqueness, if $u_0\in Q^{l,s}_\Om$ is purely quasi-periodic, then $u(t)\in  Q^{l,s}_\Om$ is purely quasi-periodic
for any $t$ in the interval of existence.

\medskip

\noindent{\em Related work\&Discussion:} There are many important works related to the solutions of the Euler equation 
on $\R^n$ in various function spaces. Since we are not able in this short section to review even a small part of these works, 
we will mention only a few and will refer to the monographs \cite{Chemin,MB} for further references. 
Local existence and uniqueness in the Sobolev space $H^s(\R^n)$, $s>\frac{n}{2}+1$, is proved in \cite{Kato} (see also \cite{KatoPonce,KatoBook}). 
Concerning spatially non-decaying solutions, we mention \cite{Chemin,Serfati} (and the references therein) were local existence and uniqueness is 
proved in the H\"older space $C^\gamma_b(\R^n)$, $\gamma>1$. (Note that these solutions do not depend continuously on 
the initial data \cite{MY}.) Solutions in spaces of functions that grow at infinity were constructed recently in \cite{McOwenTopalov}. 
Almost-periodic solution of the Euler equation are considered in \cite{TTY,ST}.
In \cite{TTY} is proved that a unique weak solution of the 2d Euler equation in the Besov space $B^0_{\infty,1}(\R^2)$ with
almost-periodic initial data is almost-periodic for any time. In \cite{ST} is proved that if the initial data $u_0$ in the Besov space
$B^1_{\infty,1}(\R^n)$, $n\ge 2$, is almost-periodic in the (larger) Besov space $B^0_{\infty,1}(\R^n)$, then the unique solution 
in $B^1_{\infty,1}(\R^n)$ (\cite{PP}) is almost-periodic in $B^0_{\infty,1}(\R^n)$ for any $t$ in the interval of existence. 
The discrepancy of the norms appears since the continuity on the initial data in $B^1_{\infty,1}$ is established with respect to 
the (weaker) norm in $B^0_{\infty,1}(\R^n)$. The proofs in \cite{TTY,ST} are based on the Bochner's characterization of almost-periodic 
functions and on the continuity on the initial data in various norms -- an idea applied initially to the solutions of 
the Navier-Stokes equation in \cite{GMN}. We are not aware of works on quasi-periodic solutions of the (full) Euler equation. 
As mentioned above, quasi-periodic functions appear naturally in applications and have much more rigid structure than 
the almost-periodic ones. Note that Bochner's characterization does {\em not} apply to quasi-periodic functions, and hence
results on quasi-periodic functions are not readily available. Note also that our solutions depend continuously on the initial data 
with respect to the norm in $Q^{l,s}_\Om$. By Lemma \ref{lem:Bochner_property} in the Appendix the elements of $Q^{l,s}_\Om$ are 
almost-periodic in the sense of Bochner with respect to the norm in $Q^{l,s}_\Om$. 
Finally, note that results similar to the ones proved in this paper can be also proved in the viscose case.

\medskip

\noindent{\em Organization of the paper:} In Section \ref{sec:spaces} we define the spaces 
of quasi-periodic functions $Q^{l,s}_\Om(\R^n)$, $l\ge 0$, $s>M/2$, and study their main properties.
In particular, in Lemma \ref{lem:Q^{l,s}} and Proposition \ref{prop:Q^{l,s}} we discuss the Fourier series 
of quasi-periodic functions. In Section \ref{sec:diffeomorphisms} we define the group $QD^{l,s}_\Om(\R^n)$, 
$l\ge 0$, $s>\frac{M}{2}+1$, of quasi-periodic diffeomorphisms of $\R^n$ and prove that it is a topological group 
that enjoys additional regularity properties formulated in Theorem \ref{th:composition}. (Part of the results in 
Section \ref{sec:diffeomorphisms} are contained in the first author's PhD thesis.)
Theorem \ref{th:euler} is proved in Section \ref{sec:euler}. The main obstacle in this section is to prove analyticity 
and to avoid the appearance of {\em small denominators} in the Lagrangian representation of the Euler equation 
(cf. \cite{Arnold,EM,Wol}). This is achieved by the decomposition \eqref{eq:F}, Proposition \ref{prop:pde<->ode}, 
and a sequence of lemmas concerning the analyticity of the non-linear maps (between spaces of quasi-periodic functions) 
appearing as factors in the decomposition \eqref{eq:F}. The paper has an Appendix where we prove several technical lemmas.

\medskip

\noindent{\em Acknowledgment:} Our dear college and long time co-author Thomas Kappeler was 
involved at an earlier stage of this project. His constant encouragement, insights and influence cannot be overstated.
The authors are also thankful to Jean-Claude Saut for referring them to G\'erard Iooss' papers on quasipatterns.

\section{Spaces of quasi-periodic functions}\label{sec:spaces}
In this section we introduce and study in detail the scale of Hilbert spaces of quasi-periodic functions
$Q^{l,s}_\Om(\R^n)$, $l\ge\Z_{\ge 0}$, $s>M/2$.

Any $f\in Q^s_\Om(\R^n)\subseteq L^\infty(\R^n)$ defines a tempered distribution in $S'(\R^n)$ which we identify with $f$.
In view of the uniform convergence, the series \eqref{eq:f_expansion} converges to $f$ in $S'(\R^n)$ in distributional sense.
Recall that a series $\sum_{j\in J} f_j$, $f_j\in S'(\R^n)$, where $J$ is a countable set of indices, converges to 
$f\in S'(\R^n)$ in $S'(\R^n)$ {\em independently of the order of summation} (or equivalently, {\em unconditionally}) if 
for any test function $\varphi\in S(\R^n)$ the series $\sum_{j\in J}\langle f_j,\varphi\rangle$ converges unconditionally to 
$\langle f,\varphi\rangle$, i.e., for any bijection $\sigma : J\to J$ the series $\sum_{j\in J}\langle f_{\sigma(j)},\varphi\rangle$
converges to $\langle f,\varphi\rangle$. We have the following characterization of the image of the embedding
$Q^s_\Om(\R^n)\subseteq S'(\R^n)$.

\begin{Lem}\label{lem:qp_distributions1}
Assume that $s>M/2$. A distribution $f\in S'(\R^n)$ belongs to $Q^s_\Om(\R^n)$ if and only if $f$ can be written as 
a convergent in $S'(\R^n)$ series,
\begin{equation}\label{eq:f_expansion1}
f(x)=\sum_{m\in \Z^M}\widehat{f}_m e^{i (\Lambda_m,x)},\quad
\Lambda_m\equiv2\pi\Om^T(m),
\end{equation}
such that $(\widehat{f}_m)_{m\in\Z^M}$ is a sequence of complex numbers such that
\begin{equation}\label{eq:f_condition1}
\sum_{m\in\Z^M}|\widehat{f}_m|^2\m^{2s}<\infty. 
\end{equation}
If the conditions \eqref{eq:f_expansion1} and \eqref{eq:f_condition1} above hold then \eqref{eq:f_expansion1} 
converges absolutely (and uniformly) to $f$ and, in particular, \eqref{eq:f_expansion1} converges in $S'(\R^n)$
independently of the order of summation. Moreover, 
\begin{equation}\label{eq:f_m}
\widehat{f}_m=\lim_{T\to\infty}\frac{1}{(2T)^n}\int_{[-T,T]^n}f(x)\,e^{- i(\Lambda_m,x)}\,dx
\end{equation}
and for any $m\in\Z^M$ the coefficient $\widehat{f}_m$ coincides with the Fourier coefficinent
$\widehat{F}_m$ of the periodic function $F\in C(\T^M)$ (see Definition \ref{def:qp}).
\end{Lem}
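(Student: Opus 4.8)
The plan is to prove the two implications separately and then establish the supplementary claims (absolute/uniform convergence, the averaging formula, and the identification of coefficients). For the forward implication, suppose $f\in Q^s_\Om(\R^n)$. By definition $f=\Om_{\tt p}^*(F)$ for some $F\in H^s(\T^M)$, and since $s>M/2$ the Sobolev embedding gives $F\in C(\T^M)$ with a uniformly convergent Fourier series $F(\theta)=\sum_{m}\widehat F_m e^{2\pi i(m,\theta)}$. Composing with $\Om_{\tt p}$ and using $(m,\Om(x))=(\Om^T(m),x)=\tfrac{1}{2\pi}(\Lambda_m,x)$ yields \eqref{eq:f_expansion1} with $\widehat f_m:=\widehat F_m$, uniformly convergent on $\R^n$; and \eqref{eq:f_condition1} holds by the definition of the $H^s(\T^M)$-norm. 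Uniform convergence on $\R^n$ of the partial sums, together with the fact that the partials are bounded by a fixed constant, lets me pass to the limit against any $\varphi\in S(\R^n)$ (dominated convergence on each compact piece plus the Schwartz decay), so the series converges to $f$ in $S'(\R^n)$; since the bound \eqref{eq:f_condition1} and the Cauchy–Schwarz estimate $\sum_m|\widehat f_m|\le\big(\sum_m|\widehat f_m|^2\m^{2s}\big)^{1/2}\big(\sum_m\m^{-2s}\big)^{1/2}<\infty$ (here $\sum_m\m^{-2s}<\infty$ precisely because $2s>M$) show the series is absolutely convergent, the convergence is in fact unconditional, both uniformly and in $S'$.

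For the reverse implication, assume $f\in S'(\R^n)$ is given by a series \eqref{eq:f_expansion1} convergent in $S'$ with coefficients satisfying \eqref{eq:f_condition1}. The same Cauchy–Schwarz bound as above shows $\sum_m|\widehat f_m|<\infty$, so $g(x):=\sum_m\widehat f_m e^{i(\Lambda_m,x)}$ is a well-defined bounded continuous function on $\R^n$, the series converging absolutely and uniformly. Define $G(\theta):=\sum_m\widehat f_m e^{2\pi i(m,\theta)}$ on $\T^M$; by \eqref{eq:f_condition1} we have $G\in H^s(\T^M)$, and $g=\Om_{\tt p}^*(G)\in Q^s_\Om(\R^n)$. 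It remains to identify $g$ with $f$: both $f$ and $g$ equal the same series in $S'(\R^n)$ (for $g$ this is again the uniform-convergence-implies-$S'$-convergence argument, and the $S'$ limit is unique), hence $f=g$ as distributions, i.e. $f\in Q^s_\Om(\R^n)$. This simultaneously establishes that \eqref{eq:f_expansion1} converges absolutely and uniformly to $f$ and unconditionally in $S'$.

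Finally, for the averaging formula \eqref{eq:f_m}: since \eqref{eq:f_expansion1} converges uniformly, I can interchange the limit $T\to\infty$, the integral over $[-T,T]^n$, and the summation over $m'$, so $\frac{1}{(2T)^n}\int_{[-T,T]^n}f(x)e^{-i(\Lambda_m,x)}dx=\sum_{m'}\widehat f_{m'}\cdot\frac{1}{(2T)^n}\int_{[-T,T]^n}e^{i(\Lambda_{m'}-\Lambda_m,x)}dx$. Each summand's coefficient is $\prod_{k=1}^n\operatorname{sinc}\!\big(T(\Lambda_{m'}-\Lambda_m)_k\big)$, which tends to $1$ if $\Lambda_{m'}=\Lambda_m$ and to $0$ otherwise, with all factors bounded by $1$ in modulus; by the injectivity of $m\mapsto\Lambda_m$ (Remark \ref{rem:NC}, which uses (NC)) the relation $\Lambda_{m'}=\Lambda_m$ forces $m'=m$, and dominated convergence over the summation index $m'$ (majorant $|\widehat f_{m'}|$, summable) yields \eqref{eq:f_m} with value $\widehat f_m$. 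The identification $\widehat f_m=\widehat F_m$ is then immediate: in the forward direction we took $\widehat f_m=\widehat F_m$ by construction, and in general \eqref{eq:f_m} pins down the coefficients uniquely in terms of $f$, while the same formula computed on $\T^M$ (or directly from $F\in C(\T^M)$ via its Fourier coefficients) gives $\widehat F_m$.

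The main obstacle is the averaging/orthogonality step: one must justify the triple interchange of limit, integral, and sum, and—crucially—invoke the non-resonance condition (NC) through the injectivity of $m\mapsto\Lambda_m$ so that distinct lattice points $m$ do not produce coinciding exponents $\Lambda_m$ that would pollute the limit. Without injectivity the "Fourier coefficient'' $\widehat f_m$ would not be well-defined by the spatial average, so this is exactly where the hypothesis is used; the remaining estimates are routine consequences of $2s>M$ via $\sum_m\m^{-2s}<\infty$ and Cauchy–Schwarz.
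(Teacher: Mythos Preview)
Your proof is correct and follows essentially the same route as the paper's: both directions hinge on the Cauchy--Schwarz bound $\sum_m|\widehat f_m|\le\|f\|_s\big(\sum_m\m^{-2s}\big)^{1/2}<\infty$ to upgrade $S'$-convergence to absolute/uniform convergence, and then define (or recover) the periodic lift $F\in H^s(\T^M)$ from the same coefficients. Your treatment is slightly more explicit than the paper's in two places---you spell out the identification $f=g$ via uniqueness of $S'$-limits in the converse direction, and you compute the sinc factors and invoke the injectivity of $m\mapsto\Lambda_m$ (i.e.\ {\rm(NC)}) for \eqref{eq:f_m}---whereas the paper simply asserts that uniform convergence yields \eqref{eq:f_m}; but these are elaborations of the same argument, not a different approach.
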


Lemma \ref{lem:qp_distributions1} will be often applied together with Lemma \ref{lem:uniqueness} in the Appendix.

\begin{Rem}\label{rem:many_things}
Note that the expansion \eqref{eq:f_expansion1} of an element $f\in Q^s_\Om(\R^n)$ such that \eqref{eq:f_condition1} 
holds is unique since the coefficients $\widehat{f}_m$  can be determined from $f$ by formula \eqref{eq:f_m}. 
We will refer to \eqref{eq:f_expansion1} as the {\em Fourier series} and to $\widehat{f}_m$, $m\in\Z^M$, as the 
{\em Fourier coefficients} of the quasi-periodic function $f\in Q^s_\Om(\R^n)$.
Since the coefficients $\widehat{f}_m$, $m\in\Z^M$, satisfy \eqref{eq:f_m} the terminology is consistent with the one
used in the theory of almost-periodic functions (see e.g. \cite{Levitan}).
\end{Rem}


\begin{Rem}\label{rem:L^infty}
It follows from the Cauchy-Schwatz inequality and representation \eqref{eq:f_expansion1} and \eqref{eq:f_condition1} that 
there exists $C\equiv C_s>0$ such that for any $f,g\in Q^s_\Om(\R^n)$, $|f-g|_\infty\le C\|f-g\|_s$, where $|\cdot|_\infty$
denotes the norm in $L^\infty(\R^n)$. In particular, we see that the inclusions $Q^s_\Om(\R^n)\subseteq C_{ap}(\R^n)$ and 
$Q^s_\Om(\R^n)\subseteq L^\infty(\R^n)$ are bounded.
\end{Rem}

\begin{proof}[Proof of Lemma \ref{lem:qp_distributions1}]
The proof of this Lemma is straightforward. In fact, assume that $f\in Q^s_\Om(\R^n)$. Then, by definition, $f(x)=F(\Om_{\tt p}(x))$ 
where $F\in H^s(\T^m)$ whit $s>\frac{M}{2}$. In particular, inequality \eqref{eq:f_condition1} holds with 
$\widehat{f}_m$ replaced by $\widehat{F}_m$ for any $m\in\Z^M$. This together with the Cauchy-Schwarz inequality implies that 
$\sum_{m\in \Z^M}|\widehat{F}_m|<\infty$. Hence, the Fourier series $\sum_{m\in\Z^M}\widehat{F}_m e^{2\pi i(m,y)}$ converges 
uniformly and absolutely to $F$ on the torus $\T^M$. 
We have
$f(x)=F(\Om_{\tt p}(x))=\sum_{m\in\Z^M}\widehat{F}_m e^{2\pi i(m,\Om(x))}=
\sum_{m\in\Z^M}\widehat{F}_m e^{i(2\pi\Om^T(m),x)}$ 
where the series converge uniformly and absolutely.
In particular, we see that the statement of the Lemma holds with $\widehat{f}_m=\widehat{F}_m$, $m\in\Z^m$.

Conversely, assume that \eqref{eq:f_expansion1} converges in $S'(\R^n)$ to some $f\in S'(\R^n)$.
Assume in addition that  \eqref{eq:f_condition1} holds. Then, by the Cauchy-Schwarz inequality, 
$\sum_{m\in \Z^M}|\widehat{f}_m|<\infty$. This implies that the series $\sum_{m\in \Z^M}\widehat{f}_m e^{i (\Lambda_m,x)}$
converges uniformly and absolutely to $f$ and $f\in C_b(\R^n)$.
Hence, we have 
\begin{eqnarray*}
f(x)=\sum_{m\in \Z^M}\widehat{f}_m e^{i (\Lambda_m,x)}=\sum_{m\in \Z^M}\widehat{f}_m e^{2\pi i (m,\Om(x))}=
F(\Om_{\tt p}(x))
\end{eqnarray*} 
where $F(y):=\sum_{m\in\Z^M}\widehat{f}_m e^{2\pi i(m,y)}$ converges uniformly and \eqref{eq:f_condition1} holds.
This implies that $F\in H^s(\T^M)$ and its Fourier coefficients coincide with the coefficients $\widehat{f}_m$, $m\in\Z^m$. 
Combining the above we conclude that $f\in Q^s_\Om(\R^n)$.

Finally, note that since the series \eqref{eq:f_expansion1} converges to $f$ absolutely the sum is independent of the order 
of summation in $m\in\Z^M$. The uniform convergence of \eqref{eq:f_expansion1} implies \eqref{eq:f_m}.
\end{proof}

\medskip\medskip

\noindent{\em The space $Q^{l,s}_\Om(\R^n)$}:
For given $l\in\Z_{\ge 0}$ and $s>\frac{M}{2}$ we will introduce a finer scale $Q^{l,s}_\Om(\R^n)$ of Sobolev spaces 
of quasi-periodic functions,
\begin{equation}\label{eq:Q^{l,s}}
Q^{l,s}_\Om(\R^n):=\big\{f\in Q^s_\Om(\R^n)\,\big|\, \p_x^\beta f\in Q^s_\Om(\R^n),\,|\beta|\le l\big\}
\end{equation}
where $\beta\in\Z_{\ge 0}^n$ is a multi-index and $\p_x^\beta\equiv\p_{x_1}^{\beta_1}\cdots\p_{x_n}^{\beta_n}$ where 
$\p_{x_k}$ denotes the distributional partial derivative in the $x_k$ variable.
We define the norm in $Q^{l,s}_\Om(\R^n)$,
\begin{equation}\label{eq:Q^{l,s}-norm}
|f|_{l,s}:=\Big(\sum_{|\beta|\le l}\|\p_x^\beta f\|_s^2\Big)^{1/2}.
\end{equation}
Using Lemma \ref{lem:qp_distributions1} and Lemma \ref{lem:uniqueness} one can easily prove the following

\begin{Lem}\label{lem:Q^{l,s}}
Assume that $l\in\Z_{\ge 0}$ and $s>\frac{M}{2}$. Then the following statements hold:
\begin{itemize}
\item[(i)] The space $Q^{l,s}_\Om(\R^n)$ equipped with the norm \eqref{eq:Q^{l,s}-norm} is a Hilbert space.
\item[(ii)] An element $f\in Q^s_\Om(\R^n)$ belongs to $Q^{l,s}_\Om(\R^n)$ if and only if its Fourier coefficients 
(see Remark \ref{rem:many_things}) satisfy
\begin{equation}\label{eq:boundedness_Q^{l,s}-norm}
\sum_{m\in\Z^M}|\widehat{f}_m|^2\langle\Lambda_m\rangle^{2l}\m^{2s}<\infty,\quad
\langle\Lambda_m\rangle\equiv\sqrt{1+|\Lambda_m|^2},
\end{equation}
where  $|\Lambda_m|\equiv\sqrt{\sum_{j=1}^n\Lambda_{m,j}^2}$ and 
$\big(\Lambda_{m,1},...,\Lambda_{m,n}\big)$ are the components of the Fourier exponent $\Lambda_m\in\R^n$.
Moreover, for $f\in Q^{l,s}_\Om(\R^n)$  we have that $\widehat{(\p_x^\beta f)}_m=(i\Lambda_m)^\beta\widehat{f}_m$ 
for any multi-index $\beta$ with $0\le|\beta|\le l$ and for any $m\in\Z^M$.
\item[(iii)] The norms \eqref{eq:Q^{l,s}-norm} and
\begin{equation}\label{eq:Q^{l,s}'-norm}
\|f\|_{l,s}:=\Big(\sum_{m\in\Z^M}|\widehat{f}_m|^2\langle\Lambda_m\rangle^{2l}\m^{2s}\Big)^{1/2}
\end{equation}
in $Q^{l,s}_\Om(\R^n)$ are equivalent.
\end{itemize}
\end{Lem}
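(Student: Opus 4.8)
The plan is to deduce all three assertions from Lemma~\ref{lem:qp_distributions1} — which already identifies $Q^s_\Om(\R^n)$, via the (unambiguous, by the injectivity of $m\mapsto\Lambda_m$ under {\rm (NC)}) assignment $f\mapsto(\widehat f_m)_{m\in\Z^M}$, with the weighted sequence space $\big\{(\widehat f_m)\,:\,\sum_m|\widehat f_m|^2\m^{2s}<\infty\big\}$ — combined with the uniqueness Lemma~\ref{lem:uniqueness} and the elementary multinomial identity
\[
\langle\Lambda_m\rangle^{2l}=\Big(1+\sum_{j=1}^n\Lambda_{m,j}^2\Big)^{l}=\sum_{|\gamma|\le l}c_\gamma\,|\Lambda_m^\gamma|^2,\qquad c_\gamma:=\binom{l}{|\gamma|}\binom{|\gamma|}{\gamma}>0,
\]
where $\Lambda_m^\gamma:=\prod_{j=1}^n\Lambda_{m,j}^{\gamma_j}$, the sum is over $\gamma\in\Z_{\ge 0}^n$ with $|\gamma|\le l$, and the positive constants $c_\gamma$ depend only on $n$ and $l$.

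First I would record how $\p_x^\beta$ acts on the Fourier series. Because $f=\sum_m\widehat f_m e^{i(\Lambda_m,x)}$ converges to $f$ in $S'(\R^n)$ and each $\p_x^\beta$ is continuous on $S'(\R^n)$, differentiating termwise gives $\p_x^\beta f=\sum_m(i\Lambda_m)^\beta\widehat f_m e^{i(\Lambda_m,x)}$ in $S'(\R^n)$. When moreover $\p_x^\beta f\in Q^s_\Om(\R^n)$ — which, by \eqref{eq:Q^{l,s}}, is exactly the situation for all $|\beta|\le l$ once $f\in Q^{l,s}_\Om(\R^n)$ — Lemma~\ref{lem:qp_distributions1} supplies a second $S'$-convergent expansion $\p_x^\beta f=\sum_m\widehat{(\p_x^\beta f)}_m e^{i(\Lambda_m,x)}$; subtracting and applying Lemma~\ref{lem:uniqueness} forces $\widehat{(\p_x^\beta f)}_m=(i\Lambda_m)^\beta\widehat f_m$ for every $m\in\Z^M$, which is the ``Moreover'' clause of (ii). (Alternatively one can get this identity straight from the mean-value formula \eqref{eq:f_m} for $\p_x^\beta f$ by integrating by parts over $[-T,T]^n$ — legitimate since $f\in C^l(\R^n)$ with bounded derivatives up to order $l$ — and noting that the boundary flux is $O(T^{n-1})$, hence negligible after dividing by $(2T)^n$.) The equivalence in (ii) then follows: if $f\in Q^{l,s}_\Om(\R^n)$ the multinomial identity gives
\[
\sum_m|\widehat f_m|^2\langle\Lambda_m\rangle^{2l}\m^{2s}=\sum_{|\gamma|\le l}c_\gamma\sum_m|\widehat{(\p_x^\gamma f)}_m|^2\m^{2s}=\sum_{|\gamma|\le l}c_\gamma\,\|\p_x^\gamma f\|_s^2<\infty,
\]
and conversely, if \eqref{eq:boundedness_Q^{l,s}-norm} holds then for each $|\beta|\le l$ one has $\sum_m|(i\Lambda_m)^\beta\widehat f_m|^2\m^{2s}\le\sum_m|\widehat f_m|^2\langle\Lambda_m\rangle^{2l}\m^{2s}<\infty$ (using $|\Lambda_m^\beta|^2\le|\Lambda_m|^{2|\beta|}\le\langle\Lambda_m\rangle^{2l}$), so by Lemma~\ref{lem:qp_distributions1} the $S'$-limit $\p_x^\beta f$ lies in $Q^s_\Om(\R^n)$, i.e.\ $f\in Q^{l,s}_\Om(\R^n)$.

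The same displayed computation proves (iii), since it shows $\|f\|_{l,s}^2=\sum_{|\gamma|\le l}c_\gamma\|\p_x^\gamma f\|_s^2$, which lies between $\big(\min_{|\gamma|\le l}c_\gamma\big)|f|_{l,s}^2$ and $\big(\max_{|\gamma|\le l}c_\gamma\big)|f|_{l,s}^2$ with positive constants independent of $f$. Finally, for (i): the norm $|\cdot|_{l,s}$ comes from the inner product $(f,g)_{l,s}:=\sum_{|\beta|\le l}(\p_x^\beta f,\p_x^\beta g)_s$ built from \eqref{eq:Q^s-scalar_product}, so $\big(Q^{l,s}_\Om(\R^n),|\cdot|_{l,s}\big)$ is a pre-Hilbert space, and by (iii) it is enough to check completeness for the equivalent norm $\|\cdot\|_{l,s}$. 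By (ii) and Lemma~\ref{lem:qp_distributions1} the map $f\mapsto(\widehat f_m)_{m\in\Z^M}$ is an isometric bijection of $\big(Q^{l,s}_\Om(\R^n),\|\cdot\|_{l,s}\big)$ onto $\ell^2\big(\Z^M,\langle\Lambda_m\rangle^{2l}\m^{2s}\big)$ — surjectivity because $\langle\Lambda_m\rangle\ge 1$ reduces any such sequence to the hypothesis \eqref{eq:f_condition1}, whence Lemma~\ref{lem:qp_distributions1} produces an element of $Q^s_\Om(\R^n)$ which (ii) promotes to $Q^{l,s}_\Om(\R^n)$ — and since a weighted $\ell^2$ space is complete, so is $Q^{l,s}_\Om(\R^n)$.

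I do not expect a genuine obstacle in this lemma; the one point requiring a little care is the termwise-differentiation identity $\widehat{(\p_x^\beta f)}_m=(i\Lambda_m)^\beta\widehat f_m$ (equivalently, justifying that the boundary flux in the mean-value formula for $\p_x^\beta f$ vanishes), after which (ii) and (iii) drop out of the single multinomial identity and (i) is the routine transport of Hilbert-space structure across an isometric isomorphism onto a weighted $\ell^2$ space.
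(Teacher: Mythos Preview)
Your argument is correct and, for parts (ii) and (iii), essentially the same as the paper's: both differentiate the Fourier series termwise in $S'(\R^n)$, invoke Lemma~\ref{lem:uniqueness} to identify $\widehat{(\p_x^\beta f)}_m=(i\Lambda_m)^\beta\widehat f_m$, and then combine the resulting norm identities. Your explicit multinomial expansion of $\langle\Lambda_m\rangle^{2l}$ is a slightly cleaner bookkeeping device than the paper's route through $(1+|\Lambda_m|_1)^{2l}$, and it gives (iii) with explicit constants rather than merely an equivalence.

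For part (i) you take a different but equally valid path: you first establish (ii) and (iii), then transport completeness from the weighted sequence space $\ell^2\big(\Z^M,\langle\Lambda_m\rangle^{2l}\m^{2s}\big)$ via the isometry $f\mapsto(\widehat f_m)$. The paper instead proves (i) independently by a direct Cauchy-sequence argument---each $(\p_x^\beta f_j)_j$ is Cauchy in $Q^s_\Om$, hence convergent, and the limits are identified via convergence in $S'$. Your ordering is arguably more economical (you get (i) almost for free once (ii)--(iii) are in hand), while the paper's has the minor advantage that (i) stands on its own without appeal to the Fourier characterization.
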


\begin{Rem}
By Lemma \ref{lem:Q^{l,s}}, the scalar product in $Q^{l,s}_\Om(\R^n)$ is equivalent to
\begin{equation}\label{eq:Q^{l,s}-scalar_product}
(f,g)_{s,l}:=\sum_{m\in\Z^M}\widehat{f}_m\overline{\big(\widehat{g}_m\big)}\langle\Lambda_m\rangle^{2l}\m^{2s},
\quad f,g\in Q^{l,s}_\Om(\R^n).
\end{equation}
\end{Rem}

\begin{Rem}
The space of quasi-periodic functions $Q^{l,s}_\Om(\R^n)$ appears as a natural generalization of the Sobolev space of functions 
on the torus $\T^n$. In fact, if $M=n$ and $\Omega$ is the identity matrix $\Id_{n\times n}$ then $Q^{l,s}_\Om(\R^n)$ coincides 
with the Sobolev space $H^{s+l}(\T^n)$ interpreted as a space of $\Z^n$-periodic functions in $\R^n$.
\end{Rem}

The corollary below follows directly from \eqref{eq:f_expansion1} and Lemma \ref{lem:Q^{l,s}} (iii).

\begin{Coro}\label{coro:density}
The set of finite linear combinations of exponents $e^{i (\Lambda_m,x)}$, $m\in\Z^M$, is dense in $Q^{l,s}_\Om(\R^n)$.
\end{Coro}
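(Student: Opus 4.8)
The plan is to prove density by showing that for every $f\in Q^{l,s}_\Om(\R^n)$ the partial sums of its Fourier series converge to $f$ in $Q^{l,s}_\Om(\R^n)$; since these partial sums are finite linear combinations of the exponents $e^{i(\Lambda_m,x)}$, this suffices.

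First I would use Lemma \ref{lem:Q^{l,s}}(iii) to replace the norm $|\cdot|_{l,s}$ by the equivalent weighted $\ell^2$-norm $\|\cdot\|_{l,s}$ from \eqref{eq:Q^{l,s}'-norm}. Given $f\in Q^{l,s}_\Om(\R^n)$ with Fourier coefficients $(\widehat f_m)_{m\in\Z^M}$ (Remark \ref{rem:many_things}), set, for $R>0$,
\[
f_R(x):=\sum_{|m|\le R}\widehat f_m\,e^{i(\Lambda_m,x)}.
\]
Each $f_R$ is a finite linear combination of the exponents. I would then check that $f_R\in Q^{l,s}_\Om(\R^n)$ with Fourier coefficients equal to $\widehat f_m$ for $|m|\le R$ and $0$ otherwise: indeed $f_R=\Om_{\tt p}^*\big(\sum_{|m|\le R}\widehat f_m\,e^{2\pi i(m,y)}\big)$, the trigonometric polynomial in parentheses lies in $H^s(\T^M)$, and the claim follows from Lemma \ref{lem:pull-back} together with the uniqueness of Fourier coefficients in Lemma \ref{lem:qp_distributions1}. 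Consequently
\[
\|f-f_R\|_{l,s}^2=\sum_{|m|>R}|\widehat f_m|^2\langle\Lambda_m\rangle^{2l}\m^{2s}.
\]

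By Lemma \ref{lem:Q^{l,s}}(ii) the full series $\sum_{m\in\Z^M}|\widehat f_m|^2\langle\Lambda_m\rangle^{2l}\m^{2s}$ converges, so its tails over $\{|m|>R\}$ tend to $0$ as $R\to\infty$; hence $\|f-f_R\|_{l,s}\to 0$, which proves the corollary. I do not expect any real obstacle here: the only point worth a line of justification is that the partial sums $f_R$ genuinely belong to $Q^{l,s}_\Om(\R^n)$ and carry the expected Fourier coefficients, which is immediate from the pull-back isomorphism of Lemma \ref{lem:pull-back}. (Alternatively, the conclusion follows at once from the density of trigonometric polynomials in $H^s(\T^M)$ combined with Lemma \ref{lem:pull-back} and the characterization in Lemma \ref{lem:Q^{l,s}}(ii)--(iii).)
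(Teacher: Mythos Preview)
Your proof is correct and follows essentially the same approach as the paper, which merely remarks that the corollary follows directly from the Fourier expansion \eqref{eq:f_expansion1} and the norm equivalence in Lemma \ref{lem:Q^{l,s}}(iii). You have simply written out the details of that one-line justification.
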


Let us now proof Lemma \ref{lem:Q^{l,s}}.

\begin{proof}[Proof of Lemma \ref{lem:Q^{l,s}}]
The proof of item $(i)$ is straightforward. Let $(f_j)_{j\ge 1}$ be a Cauchy sequence in $Q^{l,s}_\Om(\R^n)$. Then,
in view of \eqref{eq:Q^{l,s}-norm}, the sequence $(\p_x^\beta f_j)_{j\ge 1}$ is a Cauchy sequence in $Q^s_\Om(\R^n)$
for any given multi-index $\beta$, $|\beta|\le l$. As the space $Q^s_\Om(\R^n)$ is complete we conclude that there exists
$v_\beta\in Q^s_\Om(\R^n)$ such that 
\begin{equation}\label{eq:convergence_in_Q}
\p_x^\beta f_j\stackrel{Q^s_\Om}{\longrightarrow} v_\beta,\quad j\to\infty.
\end{equation}
On the other side, the continuity of the inclusion $Q^s_\Om(\R^n)\subseteq L^\infty(\R^n)$ (see Remark \ref{rem:L^infty})
and the fact that $f_j\stackrel{Q^s_\Om}{\longrightarrow} v_0$ as $j\to\infty$ implies that $f_j\to v_0$ in $S'(\R^n)$.
Hence, $\p_x^\beta f_j\stackrel{S'}{\longrightarrow} \p_x^\beta v_0$ as $j\to\infty$ for any $|\beta|\le l$.
By comparing this with \eqref{eq:convergence_in_Q} we conclude that $\p_x^\beta v_0=v_\beta\in Q^s_\Om(\R^n)$ for any 
$|\beta|\le l$. Hence, $v_0\in Q^s_\Om(\R^n)$ and (by \eqref{eq:convergence_in_Q})
$f_j\stackrel{Q^s_\Om}{\longrightarrow}v_0$ as $j\to\infty$. This completes the proof of $(i)$.

In order to prove item $(ii)$ take $f\in Q^{l,s}_\Om(\R^n)\subseteq Q^s_\Om(\R^n)$.
Then, by Lemma \ref{lem:qp_distributions1},
$f(x)=\sum_{m\in\Z^M}\widehat{f}_m e^{i (\Lambda_m,x)}$
where the series converges to $f$ in $S'(\R^n)$ independently of the order of summation.
This implies that for any $\beta$ with $|\beta|\le l$,
\begin{equation}\label{eq:expansion_derivative}
\p_x^\beta f=\sum_{m\in\Z^M}\widehat{f}_m (i\Lambda_m)^\beta e^{i (\Lambda_m,x)}
\end{equation}
where the series converges to $\p_x^\beta f$ in $S'(\R^n)$ independently of the order of summation.
It then follows from Lemma \ref{lem:uniqueness} that \eqref{eq:expansion_derivative} is the Fourier expansion of 
the quasi-periodic function $\p_x^\beta f\in Q^s_\Om(\R^n)$ (see Remark \ref{rem:many_things}). In particular, we see that 
for any multi-index $\beta$ with $|\beta|\le l$, $\widehat{\big(\partial_x^\beta f\big)}_m=(i\Lambda_m)^\beta\widehat{f}_m$, 
and by Lemma \ref{lem:qp_distributions1},
\begin{equation}\label{eq:boundedness_Q^{l,s}-pre-norm}
\sum_{m\in\Z^M}|\widehat{f}_m|^2 \big(|\Lambda_{m,1}|^{2\beta_1}\cdots|\Lambda_{m,n}|^{2\beta_n}\big)\m^{2s}<\infty,
\end{equation}
where $\Lambda_m\equiv(\Lambda_{m,1},...,\Lambda_{m,n})$.
Since inequality \eqref{eq:boundedness_Q^{l,s}-pre-norm} holds for any $\beta$ with $|\beta|\le l$, we sum up inequalities
of the from \eqref{eq:boundedness_Q^{l,s}-pre-norm} for different values of $\beta$ to conclude that
\[
\sum_{m\in\Z^M}|\widehat{f}_m|^2(1+|\Lambda_m|_1)^{2l}\m^{2s}<\infty
\]
where $|\Lambda_m|_1\equiv\sum_{j=1}^n|\Lambda_{m,j}|$. The last inequality is equivalent to 
\eqref{eq:boundedness_Q^{l,s}-norm}.

Item $(iii)$ follows easily from the arguments used to prove $(ii)$.
\end{proof}

\medskip

Denote by $C^k_b(\R^n)$, $k\in\Z_{\ge 0}$, the space of continuously differentiable functions on $\R^n$ whose partial derivatives 
of order $\le k$ are continuous and bounded in $\R^n$. Similarly, we denote by $C^k(\R^n)$, $k\in\Z_{\ge 0}$, the space of continuously
differentiable functions on $\R^n$ whose partial derivatives of order $\le k$ are continuous in $\R^n$ (and not necessarily bounded).
We have the following

\begin{Prop}\label{prop:Q^{l,s}}
Assume that $l\in \Z_{\ge 0}$ and $s>\frac{M}{2}$. Then we have:
\begin{itemize} 
\item[(i)]  For any multi-index $\beta$ the mapping 
\[
\p_x^\beta : Q^{l+|\beta|,s}_\Om(\R^n)\to Q^{l,s}_\Om(\R^n)
\] 
is continuous. 
\item[(ii)] The space $Q^{l,s}_\Om(\R^n)$ is a Banach algebra with respect to the pointwise multiplication of functions.
\item[(iii)] If $s>\frac{M}{2}+k$ for some $k\in\Z_{\ge 0}$ then $Q^{l,s}_\Om(\R^n)\subseteq C^{k+l}_b(\R^n)$
and the inclusion is continuous. More generally, 
\begin{equation}\label{eq:H_loc}
Q^{l,s}_\Om(\R^n)\subseteq H^{\frac{n}{2}+\big(s-\frac{M}{2}\big)+l}_{loc}(\R^n)\cap C^{k+l}_b(\R^n).
\end{equation}
\end{itemize}
\end{Prop}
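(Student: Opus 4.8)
The plan is to reduce everything to the corresponding statements for the Sobolev spaces $H^s(\T^M)$ via the pull-back isomorphism $\Om_{\tt p}^*$ from Lemma \ref{lem:pull-back} and the Fourier-side characterization of $Q^{l,s}_\Om(\R^n)$ from Lemma \ref{lem:Q^{l,s}}. For item (i), given $f\in Q^{l+|\beta|,s}_\Om(\R^n)$, Lemma \ref{lem:Q^{l,s}}(ii) gives $\widehat{(\p_x^\beta f)}_m=(i\Lambda_m)^\beta\widehat{f}_m$, so I would estimate $|(i\Lambda_m)^\beta|\le|\Lambda_m|^{|\beta|}\le\langle\Lambda_m\rangle^{|\beta|}$ termwise in the norm \eqref{eq:Q^{l,s}'-norm} to get $\|\p_x^\beta f\|_{l,s}\le\|f\|_{l+|\beta|,s}$; continuity then follows from the equivalence of $\|\cdot\|_{l,s}$ and $|\cdot|_{l,s}$ in Lemma \ref{lem:Q^{l,s}}(iii). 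This step is essentially bookkeeping.

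For item (iii), the key point is that the linear isomorphism $\Om_{\tt p}^*:H^s(\T^M)\to Q^s_\Om(\R^n)$ intertwines, roughly, the torus-side smoothness with the $\R^n$-side smoothness. First I would observe that if $F\in H^s(\T^M)$ with $s>\frac{M}{2}+(k+l)$ then $F\in C^{k+l}(\T^M)$ by the Sobolev embedding theorem on the compact manifold $\T^M$, and hence $f=\Om_{\tt p}^*(F)=F\circ\Om_{\tt p}$ is $C^{k+l}$ on $\R^n$ with all derivatives up to order $k+l$ bounded, because $\Om_{\tt p}$ is an affine (linear composed with covering) map with bounded derivatives and $F$ together with its derivatives is bounded on the compact torus; quantitatively the chain rule gives $|f|_{C^{k+l}_b(\R^n)}\le C\,\|F\|_{C^{k+l}(\T^M)}\le C'\,\|F\|_{H^s(\T^M)}\le C''\,\|f\|_s$. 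Combined with Lemma \ref{lem:Q^{l,s}}(ii)--(iii), which says $f\in Q^{l,s}_\Om(\R^n)$ precisely when $\p_x^\beta f\in Q^s_\Om(\R^n)$ for $|\beta|\le l$, and noting $s>\frac{M}{2}+k$ forces each such $\p_x^\beta f$ into $C^k_b(\R^n)$, one gets $f\in C^{k+l}_b(\R^n)$ with continuous inclusion. For the $H^{\frac n2+(s-\frac M2)+l}_{loc}$ refinement in \eqref{eq:H_loc}, I would restrict to a bounded open set $\Omega_0\Subset\R^n$ and use that $\Om:\R^n\to\R^M$ has rank $n$, so (after an orthogonal change of coordinates on $\R^M$) $\Om$ maps $\Omega_0$ diffeomorphically onto a submanifold, and the composition $F\circ\Om$ with $F\in H^{s+l}(\T^M)$ (here using the extra $l$ derivatives contained in $Q^{l,s}_\Om$) lies locally in $H^{s+l-\frac{M-n}{2}}=H^{\frac n2+(s-\frac M2)+l}$; this is the restriction-of-Sobolev-functions-to-a-transversal-submanifold estimate, with the loss $\frac{M-n}{2}$ in the exponent coming from the codimension.

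For item (ii), that $Q^{l,s}_\Om(\R^n)$ is a Banach algebra under pointwise multiplication, I would transport the multiplication through $\Om_{\tt p}^*$: since $(F\circ\Om_{\tt p})(G\circ\Om_{\tt p})=(FG)\circ\Om_{\tt p}$, the claim reduces to the standard fact that $H^{s+l}(\T^M)$ is a Banach algebra for $s+l>\frac M2$ (indeed $s>\frac M2$), together with the observation that $\Om_{\tt p}^*$ carries $Q^{l,s}_\Om$ isomorphically onto $H^{s+l}(\T^M)$ up to equivalent norms — wait, this last identification is not quite literal because the $Q^{l,s}_\Om$-norm weights by $\langle\Lambda_m\rangle^{l}$ rather than $\langle m\rangle^{l}$, and the non-resonance condition (NC) only gives injectivity, not a two-sided comparison, of $m\mapsto\Lambda_m$. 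So the honest route is to work directly on $\R^n$: write $f=\sum\widehat f_m e^{i(\Lambda_m,x)}$, $g=\sum\widehat g_m e^{i(\Lambda_m,x)}$, note that by the injectivity in Remark \ref{rem:NC} the product $fg$ has Fourier expansion $\sum_{m}\big(\sum_{m'+m''=m}\widehat f_{m'}\widehat g_{m''}\big)e^{i(\Lambda_m,x)}$ — here one uses that $\Lambda_{m'}+\Lambda_{m''}=\Lambda_{m'+m''}$ since $\Om^T$ is linear, so frequencies add on the $\Z^M$-index — and then run the classical Moser/Kato--Ponce convolution estimate with the weights $\langle m\rangle^{s}$ for the $H^s(\T^M)$ part and additionally Leibniz on $\p_x^\beta(fg)$ for $|\beta|\le l$ using item (i) and the $L^\infty$-bound of Remark \ref{rem:L^infty}. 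The cleanest organization is: prove $Q^{0,s}_\Om=Q^s_\Om$ is an algebra by transporting to $H^s(\T^M)$ via Lemma \ref{lem:pull-back} (the norms being literally equivalent when $l=0$), then bootstrap to general $l$ by the Leibniz rule $\p_x^\beta(fg)=\sum_{\gamma\le\beta}\binom{\beta}{\gamma}\p_x^\gamma f\,\p_x^{\beta-\gamma}g$, the algebra property in degree $0$, and part (i).

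The main obstacle I anticipate is the local-regularity inclusion \eqref{eq:H_loc}: one must make precise, with the rank-$n$ hypothesis on $\Om$, how composing an $H^{s+l}(\T^M)$-function with the linear map $\Om$ produces a function that is locally $H^{\frac n2+(s-\frac M2)+l}$ on $\R^n$ — i.e. a trace/restriction theorem to an $n$-dimensional affine subspace in general position, incurring exactly the codimension-$(M-n)$ loss of $\tfrac{M-n}{2}$ derivatives. Everything else (items (i), (ii), and the $C^{k+l}_b$ part of (iii)) is a routine transport through $\Om_{\tt p}^*$ combined with standard Sobolev multiplication and embedding theorems on the compact torus.
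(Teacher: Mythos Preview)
Your approach to items (i) and (ii) is essentially the paper's: for (i) you use the Fourier characterization of Lemma~\ref{lem:Q^{l,s}}, and for (ii) you transport the $l=0$ case to $H^s(\T^M)$ via $\Om_{\tt p}^*$ and then bootstrap via Leibniz. The paper does the bootstrap by induction on $l$ rather than the one-shot Leibniz expansion, but this is cosmetic; in either version one must justify the product rule in $S'(\R^n)$, which the paper does by density (Corollary~\ref{coro:density}).

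In item (iii) there is a genuine slip. You claim that the lift $F$ of $f\in Q^{l,s}_\Om(\R^n)$ lies in $H^{s+l}(\T^M)$, ``using the extra $l$ derivatives contained in $Q^{l,s}_\Om$''. This is false: those $l$ extra derivatives are \emph{only in the $n$ directions tangent to $\Om(\R^n)$}, i.e.\ $F\in H^{l,s}_\Om(\T^M)$ in the paper's later notation, and since for $M>n$ the map $m\mapsto\Lambda_m$ admits no lower bound $|\Lambda_m|\gtrsim|m|$ (this is exactly the small-divisor phenomenon), there is no inclusion $H^{l,s}_\Om(\T^M)\subseteq H^{s+l}(\T^M)$. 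You caught precisely this point in (ii) (``the $Q^{l,s}_\Om$-norm weights by $\langle\Lambda_m\rangle^{l}$ rather than $\langle m\rangle^{l}$'') but then forgot it here. The fix --- which is what the paper does --- is to apply the codimension-$(M-n)$ trace theorem not once to $F$ but separately to each $\p_x^\beta f\in Q^s_\Om(\R^n)\subseteq H^{s-(M-n)/2}_{loc}(\R^n)$ for $|\beta|\le l$; having all partial derivatives up to order $l$ in $H^{s-(M-n)/2}_{loc}$ then gives $f\in H^{s-(M-n)/2+l}_{loc}=H^{\frac n2+(s-\frac M2)+l}_{loc}$. Your $C^{k+l}_b$ argument and your identification of the trace theorem as the key mechanism are correct.
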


\begin{Rem}\label{rem:banach_algebra}
In the definition of a Banach algebra $(X,\|\cdot\|)$ we assume that the ring inequality $\|fg\|\le C\,\|f\|\|g\|$, $f,g\in X$, 
holds with a constant $C>0$ that is {\em not necessarily} equal to one.
\end{Rem}

\begin{proof}[Proof of Proposition \ref{prop:Q^{l,s}}]
Item $(i)$ follows directly from Lemma \ref{lem:qp_distributions1} and Lemma \ref{lem:Q^{l,s}} $(ii)$.
We will now prove $(ii)$ by iduction in $l\ge 0$: Assume that $l=0$ and take $f,g\in Q^{0,s}_\Om(\R^n)\equiv Q^s_\Om(\R^n)$.
Then, by definition $f=\Om_{\tt p}^*(F)$ and $g=\Om_{\tt p}^*(G)$ where $F,G\in H^s(\T^M)$. Clearly,
\begin{equation}\label{eq:pull-back}
fg=\Om_{\tt p}^*(FG).
\end{equation}
Since for $s>\frac{M}{2}$ the space $H^s(\T^M)$ is a Banach algebra, we then conclude that $FG\in H^s(\T^M)$ and hence
$fg\in Q^s_\Om(\R^n)$. The continuity of the pointwise multiplication of functions in $Q^s_\Om(\R^n)$ then follows from 
\eqref{eq:pull-back} and Lemma \ref{lem:pull-back}.
Further, assume that $l\ge 1$ and $Q^{l-1,s}_\Om(\R^n)$ is a Banach algebra.
Take $f,g\in Q^{l,s}_\Om(\R^n)$. Then, a simple approximation argument involving Corollary \ref{coro:density},
the induction hypothesis and item $(i)$ implies that for any $1\le j\le n$ we have
\begin{equation}\label{eq:product_formula}
\p_{x_j}(fg)=(\p_{x_j}f)g+f(\p_{x_j}g)
\end{equation}
in $S'(\R^n)$.
Using the induction hypothesis one more time one concludes from \eqref{eq:product_formula} that 
$\p_{x_j}(fg)\in Q^{l-1,s}_\Om(\R^n)$. This implies that $fg\in Q^{l,s}_\Om(\R^n)$. The continuity of the pointwise 
multiplication of functions in $Q^{l,s}_\Om(\R^n)$ then follows since, by \eqref{eq:product_formula} and the induction 
hypothesis, the map
\[
Q^{l,s}_\Om(\R^n)\times Q^{l,s}_\Om(\R^n)\to Q^{l-1,s}_\Om(\R^n),\quad(f,g)\mapsto\p_{x_j}(fg),
\]
is continuous for any $1\le j\le n$. This completes the proof of item $(ii)$.

Towards the proof of item $(iii)$, we first note that the embedding $Q^{l,s}_\Om(\R^n)\subseteq C^k_b(\R^n)$ follows directly
from the Sobolev embedding $H^s(\T^M)\subseteq C^k(\T^M)$, $s>\frac{M}{2}+k$, since any element $f\in Q^{l,s}_\Om(\R^n)$ 
has the form $f=\Om_{\tt p}^*(F)$ where $F\in H^s(\T^M)\subseteq C^k(\T^M)$. More generally, one sees from the trace theorem that
for any multi-index $\beta\in\Z_{\ge 0}^n$ with $|\beta|\le l$ one has
\[
\p_x^\beta f\in Q^s_\Om(\R^n)\subseteq H^{s-\frac{M-n}{2}}_{loc}(\R^n)
\]
which implies that $f\in H^{\frac{n}{2}+\big(s-\frac{M}{2}\big)+l}_{loc}(\R^n)\subseteq C^{k+l}(\R^n)$.
Combining this with the fact that for any multi-index $\beta$ with $|\beta|\le l$,
\[
\p_x^\beta f\in Q^s_\Om(\R^n)\subseteq C^k_b(\R^n),
\]
we conclude that $f\in C^{k+l}_b(\R^n)$. Since all of the inclusions above are continuous we conclude that
the inclusion $Q^{l,s}_\Om(\R^n)\subseteq C^{k+l}_b(\R^n)$ is continuous.
\end{proof}

\begin{Rem}\label{rem:complex_spaces}
In addition to the spaces $Q^{l,s}_\Om(\R^n)$, $l\in\Z_{\ge 0}$, $s>M/2$, 
that consist of real valued functions we will also consider complex spaces
$Q^{l,s}_{\Om,\C}(\R^n)\equiv Q^{l,s}_\Om(\R^n)\oplus i\,Q^{l,s}_\Om(\R^n)$
that consist of complex valued quasi-periodic functions. Note that all statements proved in this section hold also for
$Q^{l,s}_{\Om,\C}(\R^n)$. In particular, $Q^{l,s}_{\Om,\C}(\R^n)$ is a Hilbert space and a Banach algebra.
\end{Rem}

\medskip\medskip

The space of quasi-periodic functions $Q^{l,s}_\Om(\R^n)$ is closely related to the following space of periodic functions
on the torus $\T^M$. For given $l\in\Z_{\ge 0}$ and $s>\frac{M}{2}$ define
\begin{equation}\label{eq:H^{l,s}}
H^{l,s}_\Om(\T^M):=\big\{F\in H^s(\T^M)\,\big|\,\p_\Om^\beta F\in H^s(\T^M)\big\}
\end{equation}
where $\beta$ is a multi-index and 
\[
\p_\Om^\beta\equiv\p_{\Om_1}^{\beta_1}\cdots\p_{\Om_n}^{\beta_n}
\]
where $\p_{\Om_k}:=\sum_{j=1}^M\Om^j_k\p_{y_j}$ denotes the distributional derivative in 
the direction of the $k$-th column of the matrix of $\Om$. 
In the same way as above we define the norm in $H^{l,s}_\Om(\T^M)$,
\begin{equation}\label{eq:H^{l,s}-norm}
|F|_{l,s}:=\Big(\sum_{|\beta|\le l}\|\p_\Om^\beta F\|_s^2\Big)^{1/2}.
\end{equation}
(Note that we use the same symbols for the norms in $Q^{l,s}_\Om$ and $H^{l,s}_\Om$.)
One easily sees that $F\in H^{l,s}_\Om(\T^M)$ if and only if its Fourier coefficients satisfy
$\sum_{m\in\Z^M}|\widehat{F}_m|^2\langle\Lambda_m\rangle^{2l}\m^{2s}<\infty$. Note that 
$\Lambda_m=(\Lambda_{m,1},...,\Lambda_{m,n})$ where the component of $\Lambda_{m,k}$ ($1\le k\le n$) is equal to
$2\pi\sum_{j=1}^M\Om_k^jm_j$. 
We have

\begin{Prop}
Assume that $l\in\mathbb{Z}_{\ge 0}$ and $s>\frac{M}{2}$.
\begin{itemize}
\item[(i)] For any $l\in \Z_{\ge 0}$ and for any multi-index $\beta$ the mapping 
\[
\p_\Om^\beta : H^{l+|\beta|,s}_\Om(\T^M)\to H^{l,s}_\Om(\T^M)
\] 
is continuous.
\item[(ii)]The space $H^{l,s}_\Om(\T^M)$ is a Banach algebra with respect to the pointwise multiplication of functions.
\item[(iii)] If $s>\frac{M}{2}+k$ for some $k\in\Z_{\ge 0}$ then 
\[
H^{l,s}_\Om(\T^M)\subseteq C^k(\T^M).
\]
\end{itemize}
\end{Prop}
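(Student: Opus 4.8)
The plan is to record the Fourier-side description of $H^{l,s}_\Om(\T^M)$, then transport the whole proposition from Proposition \ref{prop:Q^{l,s}} via the pull-back $\Om_{\tt p}^*$, with item (iii) handled separately and trivially.

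First I would compute $\p_{\Om_k}e^{2\pi i(m,y)}=i\Lambda_{m,k}\,e^{2\pi i(m,y)}$, keeping track of the factors of $2\pi$ through $\Lambda_{m,k}=2\pi\sum_{j=1}^M\Om_k^jm_j$; iterating, $\widehat{(\p_\Om^\beta F)}_m=(i\Lambda_m)^\beta\widehat F_m$ whenever $\p_\Om^\beta F\in H^s(\T^M)$. By uniqueness of Fourier coefficients on $\T^M$ this yields, exactly as in the proof of Lemma \ref{lem:Q^{l,s}}, the characterization $F\in H^{l,s}_\Om(\T^M)\iff\sum_m|\widehat F_m|^2\langle\Lambda_m\rangle^{2l}\m^{2s}<\infty$ together with the equivalence of $|\cdot|_{l,s}$ with the Fourier norm on the right. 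Item (i) is then immediate from $|\Lambda_m^\beta|\le\langle\Lambda_m\rangle^{|\beta|}$.

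Next I would note the chain-rule identity $\p_{x_k}(F\circ\Om_{\tt p})=(\p_{\Om_k}F)\circ\Om_{\tt p}$, i.e.\ $\p_x^\beta\circ\Om_{\tt p}^*=\Om_{\tt p}^*\circ\p_\Om^\beta$ on $H^s(\T^M)$, and that $\Om_{\tt p}^*:H^s(\T^M)\to Q^s_\Om(\R^n)$ is an isometry for $\|\cdot\|_s$ by Lemma \ref{lem:pull-back} (both norms equal $(\sum_m|\widehat F_m|^2\m^{2s})^{1/2}$). Together these show that $\Om_{\tt p}^*$ restricts to an isometric linear isomorphism $H^{l,s}_\Om(\T^M)\to Q^{l,s}_\Om(\R^n)$: indeed $\p_\Om^\beta F\in H^s(\T^M)$ for all $|\beta|\le l$ iff $\p_x^\beta\Om_{\tt p}^*(F)=\Om_{\tt p}^*(\p_\Om^\beta F)\in Q^s_\Om(\R^n)$ for all $|\beta|\le l$, which says precisely $\Om_{\tt p}^*(F)\in Q^{l,s}_\Om(\R^n)$. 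Since $\Om_{\tt p}^*$ is also a ring homomorphism ($\Om_{\tt p}^*(FG)=\Om_{\tt p}^*(F)\,\Om_{\tt p}^*(G)$ pointwise), item (ii) follows from Proposition \ref{prop:Q^{l,s}}(ii): for $F,G\in H^{l,s}_\Om(\T^M)$ one has $\Om_{\tt p}^*(FG)=\Om_{\tt p}^*(F)\,\Om_{\tt p}^*(G)\in Q^{l,s}_\Om(\R^n)$, hence $FG\in H^{l,s}_\Om(\T^M)$, and the ring inequality is inherited up to the equivalence of norms. (Alternatively, (ii) may be proved directly by induction on $l$ using $\p_{\Om_k}(FG)=(\p_{\Om_k}F)G+F(\p_{\Om_k}G)$, mirroring the proof of Proposition \ref{prop:Q^{l,s}}.) Finally, (iii) is the most elementary point: $H^{l,s}_\Om(\T^M)\subseteq H^s(\T^M)\subseteq C^k(\T^M)$ for $s>\frac M2+k$ by the standard Sobolev embedding on the torus, the index $l$ playing no role.

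I do not expect a genuine obstacle here: the statement is the torus-side mirror of Proposition \ref{prop:Q^{l,s}}. The only points requiring attention are the $2\pi$-normalization in the first step and the verification that $\Om_{\tt p}^*$ intertwines the derivative families $\{\p_x^\beta\}$ and $\{\p_\Om^\beta\}$, after which everything is a transport of already-established facts.
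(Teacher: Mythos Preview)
Your proposal is correct. The paper itself omits the proof, saying only that ``the proofs of these two statements are similar to the proofs of the corresponding results for $Q^{l,s}_\Om(\R^n)$ and will be omitted,'' which suggests re-running the arguments of Proposition~\ref{prop:Q^{l,s}} directly on $\T^M$. Your main route is slightly different: rather than mimicking those proofs, you establish that $\Om_{\tt p}^*$ intertwines $\p_\Om^\beta$ with $\p_x^\beta$ and is an isometric ring isomorphism $H^{l,s}_\Om(\T^M)\to Q^{l,s}_\Om(\R^n)$, then pull items (i)--(ii) back from Proposition~\ref{prop:Q^{l,s}}. This is a cleaner packaging and, incidentally, also yields the subsequent Lemma (that $\Om_{\tt p}^*$ is a Banach-algebra isomorphism) as a byproduct. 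You also note the direct induction alternative, which is exactly the paper's ``similar'' route. Either way works; there is no gap.
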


\begin{Lem}
Assume that $l\in\mathbb{Z}_{\ge 0}$ and $s>\frac{M}{2}$.
The map $H^{l,s}_\Om(\T^M)\stackrel{\Om_{\tt p}^*}{\to} Q^{l,s}_\Om(\R^n)$, $F\mapsto\Om_{\tt p}^*(F)$,
is a Banach algebras isomorphism.
\end{Lem}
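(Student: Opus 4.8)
The plan is to verify that the linear isomorphism $\Om_{\tt p}^* : H^s(\T^M)\to Q^s_\Om(\R^n)$ from Lemma \ref{lem:pull-back} restricts to a bijection between the subspaces $H^{l,s}_\Om(\T^M)$ and $Q^{l,s}_\Om(\R^n)$, and that it is bi-bounded and multiplicative. Since the multiplicativity is already recorded (equation \eqref{eq:pull-back}: $\Om_{\tt p}^*(FG)=\Om_{\tt p}^*(F)\Om_{\tt p}^*(G)$), the content is just to match the two finer scales. The cleanest route is via the Fourier side: by Lemma \ref{lem:qp_distributions1} and Remark \ref{rem:many_things}, an element $f=\Om_{\tt p}^*(F)\in Q^s_\Om(\R^n)$ has Fourier coefficients $\widehat f_m=\widehat F_m$ for all $m\in\Z^M$; by Lemma \ref{lem:Q^{l,s}}(ii), $f\in Q^{l,s}_\Om(\R^n)$ iff $\sum_{m}|\widehat f_m|^2\langle\Lambda_m\rangle^{2l}\m^{2s}<\infty$; and by the characterization recorded just above the statement, $F\in H^{l,s}_\Om(\T^M)$ iff $\sum_{m}|\widehat F_m|^2\langle\Lambda_m\rangle^{2l}\m^{2s}<\infty$. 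These two conditions are literally the same, so $F\in H^{l,s}_\Om(\T^M)\iff\Om_{\tt p}^*(F)\in Q^{l,s}_\Om(\R^n)$, and $\Om_{\tt p}^*$ is a bijection between the two spaces.

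First I would record that $\Om_{\tt p}^*$ is injective on all of $H^s(\T^M)$ (Lemma \ref{lem:pull-back}) and hence on the subspace; surjectivity onto $Q^{l,s}_\Om(\R^n)$ follows because any $f\in Q^{l,s}_\Om(\R^n)\subseteq Q^s_\Om(\R^n)$ is $\Om_{\tt p}^*(F)$ for a unique $F\in H^s(\T^M)$, and the Fourier-coefficient identity above then forces $F\in H^{l,s}_\Om(\T^M)$. Next I would note that $\Om_{\tt p}^*$ intertwines the directional derivatives with the partials on $\R^n$: for a trigonometric polynomial one checks directly that $\p_{x_k}\big(\Om_{\tt p}^*(F)\big)=\Om_{\tt p}^*(\p_{\Om_k}F)$ since $\p_{x_k}e^{i(\Lambda_m,x)}=i\Lambda_{m,k}\,e^{i(\Lambda_m,x)}$ and $\p_{\Om_k}e^{2\pi i(m,y)}=2\pi i\big(\sum_j\Om_k^j m_j\big)e^{2\pi i(m,y)}=i\Lambda_{m,k}\,e^{2\pi i(m,y)}$; the general case follows by density (Corollary \ref{coro:density}) and the continuity of both sides in $S'$. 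Iterating gives $\p_x^\beta\circ\Om_{\tt p}^*=\Om_{\tt p}^*\circ\p_\Om^\beta$ for all multi-indices $\beta$.

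Boundedness in both directions then follows from the equivalence of the norms $|\cdot|_{l,s}$ and $\|\cdot\|_{l,s}$ on $Q^{l,s}_\Om(\R^n)$ (Lemma \ref{lem:Q^{l,s}}(iii)) together with the analogous equivalence on $H^{l,s}_\Om(\T^M)$: since $\|\Om_{\tt p}^*(F)\|_{l,s}=\big(\sum_m|\widehat F_m|^2\langle\Lambda_m\rangle^{2l}\m^{2s}\big)^{1/2}$ equals the Fourier-side norm of $F$ verbatim, $\Om_{\tt p}^*$ is an isometry for these "hatted" norms, hence a bounded isomorphism with bounded inverse for the original norms \eqref{eq:Q^{l,s}-norm} and \eqref{eq:H^{l,s}-norm}. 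Finally, multiplicativity of $\Om_{\tt p}^*$ on $Q^{l,s}_\Om(\R^n)$, $H^{l,s}_\Om(\T^M)$ is inherited from the case $l=0$ via \eqref{eq:pull-back}, and both spaces are Banach algebras by Proposition \ref{prop:Q^{l,s}}(ii) and the preceding Proposition, so $\Om_{\tt p}^*$ is an isomorphism of Banach algebras.

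The only mild subtlety — not really an obstacle — is justifying the commutation $\p_x^\beta\circ\Om_{\tt p}^*=\Om_{\tt p}^*\circ\p_\Om^\beta$ at the level of distributional derivatives rather than classical ones; this is handled by approximating $F$ in $H^{l,s}_\Om(\T^M)$ by trigonometric polynomials, applying the identity termwise, and passing to the limit using that $\Om_{\tt p}^*$ and all the derivative operators involved are continuous into $S'(\R^n)$ (or directly into the relevant $Q^s_\Om$ spaces via Lemma \ref{lem:qp_distributions1} and Lemma \ref{lem:uniqueness}). Everything else is bookkeeping on the identical Fourier-coefficient sequences on the two sides.
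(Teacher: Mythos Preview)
Your argument is correct and is essentially the approach the paper has in mind: the paper omits the proof, noting only that it is ``similar to the proofs of the corresponding results for $Q^{l,s}_\Om(\R^n)$,'' i.e., the Fourier-coefficient characterizations in Lemma~\ref{lem:qp_distributions1} and Lemma~\ref{lem:Q^{l,s}}. Your identification $\widehat{f}_m=\widehat{F}_m$, the matching of the conditions $\sum_m|\widehat{F}_m|^2\langle\Lambda_m\rangle^{2l}\m^{2s}<\infty$, and the multiplicativity via \eqref{eq:pull-back} are exactly the ingredients that make this immediate.
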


The proofs of these two statements are similar to the proofs of the corresponding results for $Q^{l,s}_\Om(\R^n)$ and 
will be omitted.

\section{Quasi-periodic diffeomorphisms}\label{sec:diffeomorphisms}
In this Section we define the group of quasi-periodic diffeomorphism of $\R^n$ and study its properties.
For given $l\in\Z_{\ge 0}$ and $s>\frac{M}{2}+1$ consider the space $Q^{l,s}_\Om\equiv Q^{l,s}_\Om(\R^n,\R^n)$ of 
quasi-periodic vector fields on $\R^n$ where for simplicity we will often omit the symbols $\R^n$ appearing in the notation. 
(In order to avoid confusion we reserve the notation $Q^{l,s}_\Om(\R^n)$ for single valued quasi-periodic
functions only.) By Proposition \ref{prop:Q^{l,s}}, the inclusion
\begin{equation}\label{eq:C^1_b}
Q^{l,s}_\Om(\R^n)\subseteq C^1_b(\R^n)
\end{equation}
is continuous. This allows us to define the set of maps $\R^n\to\R^n$,
\begin{equation}\label{eq:QD^{l,s}}
QD^{l,s}_\Om(\R^n):=\big\{\varphi(x)=x+f(x), f\in Q^{l,s}_\Om\,\big|\,\exists\,\varepsilon_0>0\,\,\,\text{\rm s.t.}\,
\det\big(\Id+[df]\big)>\varepsilon_0\}
\end{equation}
where $\Id\equiv\Id_{n\times n}$ denotes the identity $n\times n$-matrix and $[df]$ is the Jacobian matrix of the map $f :\R^n\to\R^n$.
Note that by Hadamard's theorem (see e.g. \cite[Supplement 2.5D]{AMR}) and the fact that the components of the Jacobian matrix
$[d_xf]$ belong to $C_b$, the set $QD^{l,s}_\Om(\R^n)$ consists of orientation preserving $C^1$-diffeomorphisms of $\R^n$. 
Note alsuch that in general, $\varepsilon_0>0$ appearing in \eqref{eq:QD^{l,s}} depends on the choice of $\varphi$.  
Since, in view of the continuity of the inclusion \eqref{eq:C^1_b} and the Banach algebra property of $C_b(\R^n)$, 
the inequality appearing in \eqref{eq:QD^{l,s}} is an open condition in $Q^{l,s}_\Om(\R^n,\R^n)$, we conclude that 
$QD^{l,s}_\Om(\R^n)$ can be identified with an open set in $Q^{l,s}_\Om(\R^n,\R^n)$ that is coordinatized by 
$f\in Q^{l,s}_\Om(\R^n,\R^n)$. By definition, $QD^s_\Om(\R^n):=QD^{0,s}_\Om(\R^n)$. We have

\begin{Lem}\label{lem:differentiable_structure}
Assume that $l\in\Z_{\ge 0}$ and $s>\frac{M}{2}+1$.
Then the set $QD^{l,s}_\Om(\R^n)$ is a Banach manifold modeled on the space of quasi-periodic maps $Q^{l,s}_\Om(\R^n,\R^n)$.
\end{Lem}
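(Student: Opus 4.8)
The plan is to show that $QD^{l,s}_\Om(\R^n)$ is not merely a subset of the affine space $x + Q^{l,s}_\Om(\R^n,\R^n)$ but an \emph{open} subset of it, since a Banach manifold structure on an open subset of a Banach space is then automatic with the single global chart $\varphi = x + f \mapsto f$. Concretely, I would proceed as follows. First, recall from Proposition \ref{prop:Q^{l,s}}(iii) that for $s > \frac{M}{2}+1$ the inclusion $Q^{l,s}_\Om(\R^n) \subseteq C^1_b(\R^n)$ is continuous; in particular, for $f \in Q^{l,s}_\Om(\R^n,\R^n)$ the Jacobian $[df]$ has all entries in $Q^{l-1,s}_\Om(\R^n) \subseteq C_b(\R^n)$ (when $l \ge 1$; for $l = 0$ one uses directly that $s > M/2+1$ forces $f \in C^1_b$), and the assignment $f \mapsto [df]$ is a bounded linear map into the Banach algebra $C_b(\R^n)^{n\times n}$.

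The next step is the key point: I want to verify that the condition ``$\exists \varepsilon_0 > 0$ such that $\det(\Id + [df]) > \varepsilon_0$ on all of $\R^n$'' is an open condition on $f$ in the $Q^{l,s}_\Om$-topology. The map $f \mapsto \det(\Id + [df])$ is a composition of the bounded linear map $f \mapsto [df]$ with the polynomial map $A \mapsto \det(\Id + A)$ on $n\times n$ matrices; since $C_b(\R^n)$ is a Banach algebra (Remark \ref{rem:banach_algebra}) and the determinant is a polynomial in the entries, $f \mapsto \det(\Id + [df])$ is a continuous (indeed locally Lipschitz, even real-analytic) map from $Q^{l,s}_\Om(\R^n,\R^n)$ into $C_b(\R^n)$. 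Now if $f_0$ satisfies $\det(\Id + [df_0]) > \varepsilon_0$ everywhere, then for any $f$ with $\|\det(\Id+[df]) - \det(\Id+[df_0])\|_{C_b} < \varepsilon_0/2$ we get $\det(\Id + [df]) > \varepsilon_0/2$ everywhere; by continuity of $f \mapsto \det(\Id + [df])$ into $C_b(\R^n)$, this holds for all $f$ in a $Q^{l,s}_\Om$-neighborhood of $f_0$. Hence $QD^{l,s}_\Om(\R^n)$, identified via $\varphi \mapsto f$ with $\{f : \det(\Id+[df]) \text{ bounded below}\}$, is open in $Q^{l,s}_\Om(\R^n,\R^n)$.

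Finally, an open subset $U$ of a Banach space $E$ is canonically a smooth (even real-analytic) Banach manifold modeled on $E$, with the identity as a single chart and all transition maps trivial; applying this with $E = Q^{l,s}_\Om(\R^n,\R^n)$ and $U = QD^{l,s}_\Om(\R^n)$ under the identification above gives the claim. I would also remark, for completeness, that Hadamard's theorem (as already invoked in the text) guarantees each such $\varphi$ is a genuine $C^1$-diffeomorphism of $\R^n$, so the manifold really does parametrize diffeomorphisms, though that fact is not needed for the Banach manifold structure itself.

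The main obstacle, such as it is, is purely the verification that $f \mapsto \det(\Id + [df])$ lands continuously in $C_b(\R^n)$ rather than merely in $C(\R^n)$ — this is where one genuinely uses $s > \frac{M}{2}+1$ (so that first derivatives of $f$ are bounded, not just continuous) together with the Banach algebra structure of $C_b(\R^n)$; everything else is the routine observation that openness in a Banach space yields a manifold structure for free.
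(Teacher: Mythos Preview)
Your proposal is correct and follows essentially the same approach as the paper. The paper's argument appears in the paragraph immediately preceding the lemma: it observes that the continuity of the inclusion $Q^{l,s}_\Om(\R^n)\subseteq C^1_b(\R^n)$ together with the Banach algebra property of $C_b(\R^n)$ makes the determinant condition in \eqref{eq:QD^{l,s}} an open condition in $Q^{l,s}_\Om(\R^n,\R^n)$, and then states the lemma as the resulting identification of $QD^{l,s}_\Om(\R^n)$ with an open set in that Banach space; you have simply written out the same reasoning in greater detail.
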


We will first prove that $QD^{l,s}_\Om(\R^n)$ is a topological group. To this end, we prove

\begin{Prop}\label{prop:composition}
Assume that $l\in\Z_{\ge 0}$ and $s>\frac{M}{2}+1$.
Then the map
\[
Q^{l,s}_\Om(\R^n)\times QD^{l,s}_\Om(\R^n)\to Q^{l,s}_\Om(\R^n),\quad (g,\varphi)\mapsto g\circ\varphi,
\]
is continuous.
\end{Prop}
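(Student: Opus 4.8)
The plan is to reduce the statement to the corresponding fact on the torus $\T^M$, where composition is replaced by a substitution of a quasi-periodic map into a periodic function. Via the isomorphism $\Om_{\tt p}^*$ of Lemma \ref{lem:pull-back} (and its $Q^{l,s}/H^{l,s}$ version), write $g=\Om_{\tt p}^*(G)$ with $G\in H^s(\T^M)$, and $\varphi(x)=x+f(x)$ with $f\in Q^{l,s}_\Om$. Then $g\circ\varphi(x)=G\big(\Om_{\tt p}(x)+\Om_{\tt p}(f(x))\big)$, so the natural object is the map $y\mapsto G\big(y+h(y)\big)$ where $h=\Om\circ f$ viewed (through $\Om_{\tt p}$) as a $\T^M$-valued perturbation term; one must be slightly careful that $\Om(f(x))$ need not itself descend to a map on $\T^M$, so it is cleaner to argue directly on $\R^n$ using the absolutely convergent Fourier expansion $g(x)=\sum_{m}\widehat g_m e^{i(\Lambda_m,x)}$ from Lemma \ref{lem:qp_distributions1}.

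First I would handle the case $l=0$. Given $\varphi=\id+f\in QD^s_\Om$, the composed exponential $e^{i(\Lambda_m,\varphi(x))}=e^{i(\Lambda_m,x)}\,e^{i(\Lambda_m,f(x))}$; since $Q^s_\Om(\R^n)$ is a Banach algebra (Proposition \ref{prop:Q^{l,s}}(ii)) and is closed under the entire function $z\mapsto e^{iz}$ applied componentwise — this last point itself needs the algebra property plus the bound $|\cdot|_\infty\le C\|\cdot\|_s$ of Remark \ref{rem:L^infty} to get convergence of the exponential series in $Q^s_\Om$ — each term $e^{i(\Lambda_m,\varphi(\cdot))}$ lies in $Q^s_\Om$. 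However, summing these terms against $\widehat g_m$ and controlling $\|g\circ\varphi\|_s$ uniformly is exactly the subtle point: the factor $e^{i(\Lambda_m,f(x))}$ can have large $Q^s_\Om$-norm as $|m|\to\infty$. The way around this is \emph{not} to expand $g$ but to use the identification with $H^s(\T^M)$: the map $Q^s_\Om\ni f\mapsto$ (the class of $\Om\circ f$) is delicate, so instead I would use that $G\in H^s(\T^M)$ is a uniform limit of trigonometric polynomials $G_\varepsilon$ with $\|G-G_\varepsilon\|_{H^s}$ small, prove the estimate for polynomials $G_\varepsilon$ by the Banach-algebra argument above (a \emph{finite} sum of exponentials, each estimated through $\big\|e^{i(\Lambda_m,f)}\big\|_s$, which is finite for each fixed $m$), and then pass to the limit. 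For the limiting step one needs a uniform-in-$\varphi$ (on bounded sets) bound; here one invokes the structure of $QD^s_\Om$, namely that on a neighborhood of a fixed $\varphi_0$ the relevant composition operators are uniformly bounded, which follows from writing $g\circ\varphi=(g\circ\varphi_0)\circ(\varphi_0^{-1}\circ\varphi)$ and reducing to $\varphi$ near $\id$.

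The induction on $l$ is the routine part: for $l\ge 1$ and $g,\varphi$ as above, the chain rule gives $\p_{x_j}(g\circ\varphi)=\sum_{k=1}^n\big((\p_{x_k}g)\circ\varphi\big)\,\p_{x_j}\varphi_k$, which (by an approximation argument using Corollary \ref{coro:density}, exactly as in the proof of Proposition \ref{prop:Q^{l,s}}(ii)) holds in $S'(\R^n)$; since $\p_{x_k}g\in Q^{l-1,s}_\Om(\R^n)$, $\p_{x_j}\varphi_k\in Q^{l-1,s}_\Om(\R^n)$, and $Q^{l-1,s}_\Om$ is a Banach algebra, the induction hypothesis gives $\p_{x_j}(g\circ\varphi)\in Q^{l-1,s}_\Om$, hence $g\circ\varphi\in Q^{l,s}_\Om$, and continuity of $(g,\varphi)\mapsto g\circ\varphi$ into $Q^{l,s}_\Om$ follows from continuity of each $(g,\varphi)\mapsto\p_{x_j}(g\circ\varphi)$ into $Q^{l-1,s}_\Om$ together with the base case. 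For continuity at a given $(g_0,\varphi_0)$ one splits $g\circ\varphi-g_0\circ\varphi_0=(g-g_0)\circ\varphi+\big(g_0\circ\varphi-g_0\circ\varphi_0\big)$: the first term is controlled by the uniform bound on the composition operator near $\varphi_0$, and the second by density of trigonometric polynomials (for which $\varphi\mapsto g_0\circ\varphi$ is manifestly continuous, being built from finitely many exponentials and the continuity of $f\mapsto e^{i(\Lambda_m,f)}$ in $Q^{l,s}_\Om$) plus the same uniform bound. The main obstacle, then, is the base case $l=0$ — specifically, obtaining the estimate $\|g\circ\varphi\|_s\le C(\varphi)\,\|g\|_s$ with $C$ locally bounded in $\varphi$, since the naive term-by-term bound fails; this is overcome by reducing to $\varphi$ near the identity, approximating $G$ on $\T^M$ by trigonometric polynomials, and using the Banach-algebra/entire-function calculus on $Q^s_\Om$ uniformly on a small ball around $\id$.
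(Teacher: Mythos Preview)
Your induction step for $l\ge 1$ is correct and coincides with the paper's: the chain rule, the induction hypothesis, and the Banach algebra property of $Q^{l-1,s}_\Om$ are exactly what is used.

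The base case $l=0$, however, has a genuine gap. Your Banach-algebra/entire-function calculus gives, for $\varphi=\id+f$ with $\|f\|_s<\delta$, the bound $\big\|e^{i(\Lambda_m,f)}\big\|_s\le e^{C_s|\Lambda_m|\,\|f\|_s}\le e^{C_s|\Lambda_m|\,\delta}$. This is uniform in $f$ \emph{for each fixed $m$}, but it grows without bound in $m$ (recall $|\Lambda_m|=2\pi|\Om^Tm|$ is comparable to $|m|$ in general). Hence for a trigonometric polynomial $G_\varepsilon$ you only get
\[
\|G_\varepsilon\circ\varphi\|_s\ \le\ C\sum_{|m|\le N}|\widehat G_m|\,\langle m\rangle^{s}\,e^{C_s|\Lambda_m|\delta},
\]
and there is no way to pass to the limit $N\to\infty$ against the mere control $\sum_m|\widehat G_m|^2\langle m\rangle^{2s}<\infty$. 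In other words, the ``uniform bound'' you need to close the approximation is precisely the estimate $\|g\circ\varphi\|_s\le C(\varphi)\|g\|_s$ you are trying to prove, so the argument is circular. The reduction $g\circ\varphi=(g\circ\varphi_0)\circ(\varphi_0^{-1}\circ\varphi)$ also presupposes both that $g\circ\varphi_0\in Q^s_\Om$ and that $\varphi_0^{-1}\circ\varphi\in QD^s_\Om$, neither of which is yet available at this point.

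The fix is exactly the route you abandoned in your first paragraph: your worry that ``$\Om(f(x))$ need not descend to $\T^M$'' is unfounded. Since $f=\Om_{\tt p}^*(F)$ with $F\in H^s(\T^M,\R^n)$, one has $\Om_{\tt p}(\varphi(x))=\Phi(\Om_{\tt p}(x))$ where $\Phi(\theta):=\theta+\Om_{\tt p}F(\theta)$ is a \emph{well-defined} $C^1$ map $\T^M\to\T^M$, and therefore $g\circ\varphi=\Om_{\tt p}^*(G\circ\Phi)$. The substantive step (the paper's Lemma~\ref{lem:Phi-diffeomorphism}) is to show $\Phi\in D^s(\T^M)$: the determinant condition $\det(\Id_n+[d_\theta F]\Om)\ge\varepsilon_0$ transfers to $\det[d_\theta\Phi]=\det(\Id_M+\Om[d_\theta F])\ge\varepsilon_0$ by Sylvester's determinant identity, and a short covering-degree argument shows $\Phi$ is a genuine diffeomorphism. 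Once this is in hand, the base case follows immediately from the known continuity of $(G,\Phi)\mapsto G\circ\Phi$, $H^s(\T^M)\times D^s(\T^M)\to H^s(\T^M)$, together with the isometry $\Om_{\tt p}^*$ of Lemma~\ref{lem:pull-back}.
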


We start with a preparation:
Take $g\in Q^s_\Om(\R^n)$ and $\varphi\in QD^s_\Om(\R^n)$. Then for any given $x\in\R^n$ we have
$g(x)=G(\Om_{\tt p}(x))$ and $\varphi(x)=x+F(\Om_{\tt p}(x))$ where $G\in H^s(\T^M,\R)$ and $F\in H^s(\T^M,\R^n)$.
This implies that for any $x\in\R^n$,
\begin{eqnarray}
(g\circ\varphi)(x)&=G\Big(\Om_{\tt p}\Big(x+F\big(\Om_{\tt p}(x)\big)\Big)\Big)\nonumber\\
&=G\big(\Om_{\tt p}(x)+\Om_{\tt p}F(\Om_{\tt p}(x))\big)\nonumber\\
&=\big(G\circ\Phi\big)(\Om_{\tt p}(x))\label{eq:main1}
\end{eqnarray}
where
\begin{equation}\label{eq:Phi}
\Phi: \T^M\to\T^M,\quad\Phi :\theta\mapsto\theta+\Om_{\tt p}F(\theta).
\end{equation}
Denote
\[
D^s(\T^M):=\big\{\varphi\in\text{\rm Diff}^1_+(\T^M)\,\big|\,\varphi\in H^s(\T^M,\T^M)\big\}
\]
where $\text{\rm Diff}^1_+(\T^M)$ is the group of orientation preserving $C^1$-diffeomorphisms of the torus $\T^M$
and $H^s(\T^M,\T^M)$ is the space of maps $\T^M\to\T^M$ of Sobolev class $s$.
It is known that $D^s(\T^M)$ is a topological group  -- see e.g. \cite[Theorem 1.2]{IKT},\cite{EM}. 
We will prove

\begin{Lem}\label{lem:Phi-diffeomorphism}
Assume that $\varphi\in QD^s_\Om(\R^n)$, $s>\frac{M}{2}+1$, and let $\varphi(x)=x+F\big(\Om_{\tt p}(x)\big)$ where
$F\in H^s(\T^M,\R^n)$. Then, the map \eqref{eq:Phi} is a diffeomorphism in $D^s(\T^M)$ such that the diagram 
\begin{equation}\label{eq:diagram}
\begin{tikzcd}
\T^M\arrow{r}{\Phi}&\T^M\\
\R^n\arrow{u}{\Om_{\tt p}}\arrow{r}{\varphi}&\R^n\arrow[swap]{u}{\Om_{\tt p}}
\end{tikzcd}
\end{equation}
is commutative.
\end{Lem}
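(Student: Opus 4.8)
The plan is to establish the commutativity of diagram \eqref{eq:diagram} first, since that is essentially a direct computation that was already carried out in \eqref{eq:main1}–\eqref{eq:Phi}, and then to deduce from it that $\Phi$ is well-defined, $C^1$, of Sobolev class $s$, and bijective with $C^1$ inverse. For the commutativity: by definition $\varphi(x)=x+F(\Om_{\tt p}(x))$, so $\Om_{\tt p}(\varphi(x))={\tt p}(\Om(x)+\Om(F(\Om_{\tt p}(x))))$. Writing $\theta=\Om_{\tt p}(x)$ and noting that $\Om_{\tt p}F(\theta):=\Om(F(\theta))\bmod\Z^M$ depends only on $\theta$ (not on the chosen lift $x$), this equals $\theta+\Om_{\tt p}F(\theta)=\Phi(\theta)$, which is exactly $\Phi(\Om_{\tt p}(x))$. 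So $\Phi\circ\Om_{\tt p}=\Om_{\tt p}\circ\varphi$.

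Next I would check that $\Phi$ maps $\T^M$ to $\T^M$ and is of the right regularity. Since $F\in H^s(\T^M,\R^n)$ with $s>\tfrac M2+1$, the Sobolev embedding gives $F\in C^1(\T^M,\R^n)$; composing with the linear map $\Om$ and reducing mod $\Z^M$ yields $\Om_{\tt p}F\in C^1(\T^M,\T^M)$, and $\theta\mapsto\theta+\Om_{\tt p}F(\theta)$ is therefore a $C^1$ map $\T^M\to\T^M$. Membership in $H^s(\T^M,\T^M)$ follows from $F\in H^s$ together with the fact that $\Om$ is a bounded linear map and that $H^s(\T^M)$, $s>M/2$, is preserved under such operations (and is a Banach algebra, which handles the reduction to local charts). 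To see that $\Phi$ is a bijection of $\T^M$, I use the commutativity of \eqref{eq:diagram} and the surjectivity of $\Om_{\tt p}$ onto a dense subset: since $\varphi$ is a bijection of $\R^n$ (by the Hadamard argument recalled after \eqref{eq:QD^{l,s}}), for any $\theta'\in\Om_{\tt p}(\R^n)$ we can write $\theta'=\Om_{\tt p}(x')$ and pull back through $\varphi^{-1}$ to get a preimage of $\theta'$ under $\Phi$; continuity of $\Phi$ and density of $\Om_{\tt p}(\R^n)$ (condition {\rm (NC)}) then upgrade this to surjectivity of $\Phi$ on all of $\T^M$, while compactness of $\T^M$ and local injectivity (below) give injectivity. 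Alternatively, and more cleanly, I would argue that $\Phi$ is a local $C^1$-diffeomorphism because its differential $\Id+d(\Om_{\tt p}F)$ is invertible: differentiating the identity $\Om\circ(\Id+[df])=\big(d\Phi\circ\Om_{\tt p}\big)\cdot(\text{lift})$ and using $\rk\Om=n$ together with $\det(\Id+[df])>\varepsilon_0$ shows $d\Phi$ is everywhere invertible; a proper local homeomorphism of the compact connected manifold $\T^M$ onto itself is a covering, and since $\Phi$ is homotopic to the identity (the homotopy $t\mapsto\theta+t\,\Om_{\tt p}F(\theta)$), it has degree one, hence is a bijection. Then $\Phi\in D^s(\T^M)$ by definition.

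I expect the main obstacle to be the surjectivity/injectivity of $\Phi$: the naive approach of "inverting $\varphi$ and pushing through $\Om_{\tt p}$" only directly produces preimages over the dense set $\Om_{\tt p}(\R^n)$, and one must be careful that the induced inverse is genuinely well-defined on $\T^M$ and continuous, rather than merely defined on a dense subset. The degree-theory argument sketched above circumvents this, but it requires knowing that $d\Phi$ is invertible everywhere — which is where the condition $\det(\Id+[df])>\varepsilon_0$ (and not merely $\det\neq 0$) and the rank condition $\rk\Om=n$ enter in an essential way. Once bijectivity is in hand, $\Phi^{-1}$ is automatically $C^1$ by the inverse function theorem, and its Sobolev regularity $H^s$ follows from the standard fact (used in the cited references \cite{IKT,EM}) that $D^s(\T^M)$ is closed under inversion for $s>M/2+1$; hence $\Phi\in D^s(\T^M)$, completing the proof.
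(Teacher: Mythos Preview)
Your overall strategy is sound and the commutativity verification matches the paper's. Your degree-one argument via the homotopy $\Phi_t(\theta)=\theta+{\tt p}\big(t\,\Om F(\theta)\big)$ is a genuinely different route from the paper's: the paper introduces the lattice $\Gamma_\Om=\{\gamma\in\R^n\mid\Om(\gamma)\in\Z^M\}$, shows $\varphi$ commutes with translations by $\Gamma_\Om$, and then checks by hand that for any $p\in\Om_{\tt p}(\R^n)$ the preimage $\Phi^{-1}(p)$ is a single point. Your homotopy argument is cleaner and avoids the lattice bookkeeping entirely; the paper's approach, on the other hand, makes the structure of $\ker h$ (Proposition~\ref{prop:homomorphism}) visible already at this stage.

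There is, however, one real gap. Your sentence ``differentiating the identity $\Om\circ(\Id+[df])=\big(d\Phi\circ\Om_{\tt p}\big)\cdot(\text{lift})$ and using $\rk\Om=n$ together with $\det(\Id+[df])>\varepsilon_0$ shows $d\Phi$ is everywhere invertible'' does not actually establish invertibility of $d_\theta\Phi$ as an $M\times M$ matrix. Differentiating the diagram only gives $[d_\theta\Phi]\cdot\Om=\Om\cdot[d_x\varphi]$, which controls $d\Phi$ on the $n$-dimensional image of $\Om$ but says nothing directly about the complementary $(M-n)$-dimensional directions. The missing ingredient is Sylvester's determinant identity: since $[d_\theta\Phi]=\Id_{M\times M}+\Om[d_\theta F]$ with $\Om$ of size $M\times n$ and $[d_\theta F]$ of size $n\times M$, one has
\[
\det[d_\theta\Phi]=\det\big(\Id_{M\times M}+\Om[d_\theta F]\big)=\det\big(\Id_{n\times n}+[d_\theta F]\Om\big)=\det[d_x\varphi]\big|_{\theta=\Om_{\tt p}(x)}.
\]
This is exactly how the paper proceeds. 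You also need to say explicitly (the paper does) that the lower bound $\det[d_x\varphi]>\varepsilon_0$ is a priori known only for $\theta\in\Om_{\tt p}(\R^n)$; it extends to $\det[d_\theta\Phi]\ge\varepsilon_0$ on all of $\T^M$ by continuity of $[d_\theta F]$ and density of $\Om_{\tt p}(\R^n)$ (condition {\rm (NC)}). Once this is in place, your covering-plus-homotopy argument goes through.
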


\begin{Rem}\label{rem:uniqueness}
One sees from the non-resonance condition {\rm (NC)} that a $C^1$-diffeomorphism $\Phi : \R^n\to\R^n$ which
satisfies \eqref{eq:diagram} is uniquely determined by $\varphi\in\text{\rm Diff}^1_+(\R^n)$.
\end{Rem}

Lemma \ref{lem:Phi-diffeomorphism} and formula \eqref{eq:Phi} imply that the map
\begin{equation}\label{eq:homomorphism}
h : QD^s_\Om(\R^n)\to D^s(\T^M),\quad\varphi\mapsto\Phi,
\end{equation}
is well-defined and continuous. In fact, see will see below that \eqref{eq:homomorphism} is a homomorphism of topological 
groups (Proposition \ref{prop:homomorphism}).

\begin{Rem}\label{rem:homomorphism}
Note that the correspondence \eqref{eq:homomorphism} is {\em not} necessarily injective. In fact,  since $\varphi$ is coordinatized by
the quasi-periodic function $f=\Om_{\tt p}^*(F)$, which by Lemma \ref{lem:pull-back} is uniquely determined by $F\in H^s(\T^M,\R^n)$, 
one sees from \eqref{eq:Phi} that the functions $F$ and $F+\gamma$ where $\gamma\in\Gamma_\Om$ (see \eqref{eq:Gamma_Om})
lead to the same diffeomorphism $\Phi\in D^s(\R^n)$. The arguments in the proof of Lemma \ref{lem:Phi-diffeomorphism} imply that 
$h(\varphi)=h(\psi)$ if and only if for any $x\in\R^n$, $\varphi(x)=\psi(x)+\gamma$ for some 
$\gamma\in\Gamma_\Om$ independent of $x$.
\end{Rem}

\begin{proof}[Proof of Lemma \ref{lem:Phi-diffeomorphism}]
Take $\varphi\in QD^s_\Om(\R^n)$, $s>\frac{M}{2}+1$ as in stated in the lemma.
Since by the Sobolev embedding $F\in H^s(\T^M,\R^n)\subseteq C^1(\T^M,\R^n)$ and 
$\varphi(x)=x+F(\Om_{\tt p}(x))$ we conclude that for any $x\in\R^n$
$[d_x\varphi]=\big(\Id+[d_\theta F]\Om\big)\big|_{\theta=\Om_{\tt p}(x)}$.
This together with \eqref{eq:QD^{l,s}} then implies that
\[
\det\Big(\Id+[d_\theta F]\Om\Big)\big|_{\theta=\Om_{\tt p}(x)}>\varepsilon_0>0.
\]
Since the image of the map $\Om_{\tt p} : \R^n\to\T^M$ is dense in $\T^M$ and
since $F\subseteq C^1(\T^M,\R^n)$, we conclude that for any $\theta\in\T^M$,
\begin{equation}\label{eq:det_positive}
\det\Big(\Id+[d_\theta F]\Om\Big)\ge\varepsilon_0>0.
\end{equation}
On the other side, for any $\theta\in\T^M$,
\[
[d_\theta\Phi]=\Id_{M\times M}+\Om[d_\theta F].
\]
By combining this, \eqref{eq:det_positive}, and Sylvester's determinant identity we conclude that for any $\theta\in\T^M$,
\[
\det[d_\theta\Phi]=\det\Big(\Id_{M\times M}+\Om[d_\theta F]\Big)=\det\Big(\Id+[d_\theta F]\Om\Big)\ge\varepsilon_0>0.
\]
Hence, $\Phi : \T^M\to\T^M$ is a local $C^1$-diffeomorphism.

Let us now prove that $\Phi : \T^M\to\T^M$ is onto.
Since $\T^M$ is compact we see that the image $\Phi(\T^M)$ of $\Phi$ is closed in $\T^M$. 
On the other side, since $\Phi : \T^M\to\T^M$ is a local diffeomorphism we conclude that $\Phi(\T^M)$ is open.
The connectedness of $\T^M$ then implies that $\Phi(\T^M)=\T^M$. Hence, $\Phi : \T^M\to\T^M$ is a covering map.

We will now prove that the degree of this covering map $\Phi : \T^M\to\T^M$ is one.
To this end, we will find $p\in\T^M$ such that the pre-image $\Phi^{-1}(p)$ consists of a single point.
For any $x\in\R^n$ we have
\begin{eqnarray}
\Phi\big(\Om_{\tt p}(x)\big)&=&\Om_{\tt p}(x)+\Om_{\tt p} F\big(\Om_{\tt p}(x)\big)\nonumber\\
&=&\Om_{\tt p}\Big(x+F\big(\Om_{\tt p}(x)\big)\Big)\nonumber\\
&=&\Om_{\tt p}\big(\varphi(x)\big).\label{eq:diagram_formula}
\end{eqnarray}
Hence, $\Phi$ satisfies the commutative diagram \eqref{eq:diagram}.

Consider the following discrete set in $\R^n$,
\begin{equation}\label{eq:Gamma_Om}
\Gamma_\Om:=\big\{\gamma\in\R^n\,\big|\,\Om(\gamma)\in\Z^M\big\}.
\end{equation}
The set $\Gamma_\Om$ is a discrete lattice in $\R^n$ whose rank $r$ is less than or equal to $n$ (see e.g. \cite[Lemma 3,\,\S 49]{Arnold}).
(In view of the non-resonance condition {\rm (NC)}, $r=n$ only if $n=M$.)
In particular,
\[
\Gamma_\Om:=\big\{m_1\gamma_1+...+m_r\gamma_r\,\big|\,m_1,...,m_r\in\Z\big\}
\]
for some basis $\gamma_1,...,\gamma_r\in\R^n$ of $\Gamma_\Om$.
For any $x\in\R^n$ and for any $\gamma\in\Gamma_\Om$ we have 
\begin{eqnarray}
\varphi(x+\gamma)&=&(x+\gamma)+F\big(\Om_{\tt p}(x+\gamma)\big)\nonumber\\
&=&x+F\big(\Om_{\tt p}(x)\big)+\gamma\nonumber\\
&=&\varphi(x)+\gamma,\label{eq:Gamma_ivariant}
\end{eqnarray}
where we used that $\Om(\gamma)\in\Z^M$ and $F$ is a function on $\T^M$.
This implies that for any $\gamma\in\Gamma_\Om$ we have the following commutative diagram
\begin{equation}\label{eq:translation_invariance}
\begin{tikzcd}
\R^n\arrow{r}{\varphi}&\R^n\\
\R^n\arrow{u}{\tau_\gamma}\arrow{r}{\varphi}&\R^n\arrow[swap]{u}{\tau_\gamma}
\end{tikzcd}
\end{equation}
where $\tau_\gamma(x)=x+\gamma$.
Since $\varphi : \R^n\to\R^n$ is a $C^1$-diffeomorphism \eqref{eq:translation_invariance} implies that 
for any $x\in\R^n$ and for any $\gamma\in\Gamma_\Om$ we have 
\begin{equation}\label{eq:Gamma_ivariant'}
\varphi^{-1}(x+\gamma)=\varphi^{-1}(x)+\gamma,
\end{equation}
where $\varphi^{-1} : \R^n\to\R^n$ is the inverse of the diffeomorphism $\varphi$.
Now, take $p\in\T^M$ such that $p\in\Om_{\tt p}(\R^n)$ and $\Phi(q_1)=\Phi(q_2)=p$.
Note that the assumption $\Phi(q)\in\Om_{\tt p}(\R^n)$ for some $q\in\T^M$ and \eqref{eq:Phi} imply 
$\Phi(q)=q+\Om_{\tt p}F(q)\in\Om_{\tt p}(\R^n)$ 
that gives $q\in\Om_{\tt p}(\R^n)$. Therefore, there exist ${\tilde q}_1$ and ${\tilde q}_2$ in $\R^n$ such that 
\[
\Om_{\tt p}^{-1}(q_1)={\tilde q}_1+\Gamma_\Om\quad\text{and}\quad\Om_{\tt p}^{-1}(q_2)={\tilde q}_2+\Gamma_\Om.
\]
Then we obtain from the commutative diagram \eqref{eq:diagram} and \eqref{eq:Gamma_ivariant} that 
\[
\Om_{\tt p}\big(\varphi({\tilde q}_j)+\Gamma_\Om\big)=\Om_{\tt p}\big(\varphi({\tilde q}_j+\Gamma_\Om)\big)=\Phi(q_j)=p,
\quad j=1,2.
\]
This implies that $\varphi({\tilde q}_2)=\varphi({\tilde q}_1)+\gamma$ for some $\gamma\in\Gamma_\Om$. 
Then \eqref{eq:Gamma_ivariant'} gives ${\tilde q}_2={\tilde q}_1+\gamma$ which implies that
\[
q_2=\Om_{\tt p}({\tilde q}_2)=\Om_{\tt p}({\tilde q}_1+\gamma)=\Om_{\tt p}({\tilde q}_1)=q_1.
\]
Hence the preimage $\Phi^{-1}(p)$ contains only one point. 
This implies that the degree of the covering map $\Phi : \T^M\to\T^M$ is one.
Hence, $\Phi : \T^M\to\T^M$ is a $C^1$-diffeomorphism.
Finally, since $F\in H^s(\T^M,\R^n)$ we conclude from \eqref{eq:Phi} that $\Phi\in H^s(\T^M,\T^M)$.
This completes the proof of Lemma \ref{lem:Phi-diffeomorphism}.
\end{proof}

Now we are ready to prove Proposition \ref{prop:composition}.

\begin{proof}[Proof of Proposition \ref{prop:composition}]
We will prove Proposition \ref{prop:composition} by induction in $l\ge 0$: 
First, assume that $l=0$ and note that by \eqref{eq:main1}, for any $g\in Q^s_\Om(\R^n)$ and $\varphi\in QD^s_\Om(\R^n)$,
\begin{equation}\label{eq:main1'}
g\circ\varphi=\Om_{\tt p}^*\big(G\circ\Phi),
\end{equation}
where $\Phi$ is given by \eqref{eq:Phi}, $g=\Om_{\tt p}^*G$, $\varphi=\id_{\R^n}+\Om_{\tt p}^*F$, and 
$G\in H^s(\T^M,\R)$ and $F\in H^s(\T^M,\R^n)$. (Here $\id_{\R^n}$ denotes the identity map in $\R^n$.)
By Lemma \ref{lem:pull-back}, the map
\[
Q^s_\Om(\R^n)\to H^s(\T^M,\R^n),\quad g\mapsto G,
\]
is a linear isomorphism.
In addition, $\Phi=h(\varphi)$ where $h$ is the continuous map \eqref{eq:homomorphism}.
On the other side, by Theorem 1.2 in \cite{IKT} (cf. also \cite{EM}), the composition
\[
H^s(\T^M,\R)\times D^s(\R^M)\to H^s(\T^M,\R), \quad(G,\Phi)\to G\circ\Phi,
\]
is continuous. This together with \eqref{eq:main1'} and Lemma \ref{lem:pull-back} then imply that the statement of 
Proposition \ref{prop:composition} holds when $l=0$.

Further, assume that $l\ge 1$ and let the map
\begin{equation}\label{eq:induction_hypothesis}
Q^{l-1,s}_\Om(\R^n)\times QD^{l-1,s}_\Om(\R^n)\to Q^{l-1,s}_\Om(\R^n),\quad (g,\varphi)\mapsto g\circ\varphi,
\end{equation}
be continuous. Take $g\in Q^{l,s}_\Om(\R^n)$ and $\varphi\in Q^{l,s}_\Om(\R^n)$. 
Since $Q^{l,s}_\Om(\R^n)\subseteq C^1_b(\R^n)$ we have that for any $x\in\R^n$,
\begin{equation}\label{eq:chain_rule}
\big[d_x(g\circ\varphi)\big]=[d_yg]\big|_{y=\varphi(x)}\cdot[d_x\varphi].
\end{equation}
Then \eqref{eq:chain_rule}, the induction hypotesis \eqref{eq:induction_hypothesis}, and the Banach algebra property of
$Q^{l-1,s}_\Om(\R^n)$ with $l\ge 1$ imply that the map
\[
Q^{l,s}_\Om(\R^n)\times QD^{l,s}_\Om(\R^n)\to Q^{l-1,s}_\Om(\R^n)\otimes\text{\rm Mat}_{n\times n}(\R),
\quad(g,\varphi)\mapsto\big[d(g\circ\varphi)\big],
\]
where $\text{\rm Mat}_{n\times n}(\R)$ denotes the space of $n\times n$-matrices, is continuous.
This, in view of the definition \eqref{eq:Q^{l,s}} (and \eqref{eq:Q^{l,s}-norm}) of $Q^{l,s}_\Om(\R^n)$, concludes the proof of 
Proposition \ref{prop:composition}.
\end{proof}

Further, we prove

\begin{Prop}\label{prop:the_inverse_map}
Assume that $l\in\Z_{\ge 0}$ and $s>\frac{M}{2}+1$. Then the map
\begin{equation}\label{eq:the_inverse_map}
QD^{l,s}_\Om(\R^n)\to QD^{l,s}_\Om(\R^n),\quad\varphi\mapsto\varphi^{-1},
\end{equation}
is well-defined and continuous.
\end{Prop}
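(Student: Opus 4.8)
The plan is to reduce the inversion of a quasi-periodic diffeomorphism of $\R^n$ to the inversion of the associated diffeomorphism of the torus $\T^M$ supplied by Lemma \ref{lem:Phi-diffeomorphism} — for which continuity of inversion is already available, since $D^s(\T^M)$ is a topological group — and then to upgrade the regularity from the index $s$ to the finer index $(l,s)$ by an induction in $l$ modeled on the proofs of Proposition \ref{prop:composition} and Proposition \ref{prop:Q^{l,s}}.

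\textbf{The case $l=0$.} Take $\varphi\in QD^s_\Om(\R^n)$ and write $\varphi=\id+f$ with $f=\Om_{\tt p}^*F$, $F\in H^s(\T^M,\R^n)$ (Lemma \ref{lem:pull-back}). By Hadamard's theorem $\varphi$ is a $C^1$-diffeomorphism of $\R^n$, and by Lemma \ref{lem:Phi-diffeomorphism} the map $\Phi:=h(\varphi)\in D^s(\T^M)$, see \eqref{eq:Phi} and \eqref{eq:homomorphism}, fits into the commutative diagram \eqref{eq:diagram}, i.e.\ $\Om_{\tt p}\circ\varphi=\Phi\circ\Om_{\tt p}$. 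Evaluating this at $\varphi^{-1}(x)$ yields $\Om_{\tt p}\circ\varphi^{-1}=\Psi\circ\Om_{\tt p}$, where $\Psi:=\Phi^{-1}\in D^s(\T^M)$ since $D^s(\T^M)$ is a topological group. Now, from $\varphi(\varphi^{-1}(x))=x$ and $\varphi=\id+\Om_{\tt p}^*F$ we obtain the key identity
\[
\varphi^{-1}(x)-x=-F\big(\Om_{\tt p}(\varphi^{-1}(x))\big)=-F\big(\Psi(\Om_{\tt p}(x))\big),\qquad x\in\R^n,
\]
that is, $\varphi^{-1}=\id+\Om_{\tt p}^*(-F\circ\Psi)$. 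Since $F\in H^s(\T^M,\R^n)$, $\Psi\in D^s(\T^M)$, and the composition $H^s(\T^M,\R^n)\times D^s(\T^M)\to H^s(\T^M,\R^n)$ is continuous (\cite[Theorem 1.2]{IKT}, cf.\ \cite{EM}), we get $-F\circ\Psi\in H^s(\T^M,\R^n)$, hence $\varphi^{-1}-\id\in Q^s_\Om$. Moreover $\det[d\varphi^{-1}]=1/\big(\det[d\varphi]\circ\varphi^{-1}\big)\ge 1/\sup_{\R^n}\det[d\varphi]>0$, the supremum being finite because $[d\varphi]=\Id+[df]\in C_b$; thus $\varphi^{-1}\in QD^s_\Om(\R^n)$. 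Continuity of \eqref{eq:the_inverse_map} for $l=0$ then follows by composing the continuous maps $\varphi\mapsto(F,h(\varphi))$ (Lemma \ref{lem:pull-back} and \eqref{eq:homomorphism}), $\Phi\mapsto\Phi^{-1}$ (topological group), $(F,\Psi)\mapsto-F\circ\Psi$, and $\Om_{\tt p}^*$.

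\textbf{The inductive step $l\ge 1$.} Assume \eqref{eq:the_inverse_map} is well-defined and continuous on $QD^{l-1,s}_\Om(\R^n)$, and let $\varphi\in QD^{l,s}_\Om(\R^n)\subseteq QD^{l-1,s}_\Om(\R^n)$, $f:=\varphi-\id\in Q^{l,s}_\Om$. By the inductive hypothesis $\varphi^{-1}\in QD^{l-1,s}_\Om(\R^n)$, and the identity $\varphi^{-1}-\id=-f\circ\varphi^{-1}$ together with Proposition \ref{prop:composition} shows $\varphi^{-1}-\id\in Q^{l-1,s}_\Om$. To recover the missing derivative, differentiate: $[d\varphi^{-1}]=\big([d\varphi]\circ\varphi^{-1}\big)^{-1}$. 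Here $[d\varphi]=\Id+[df]$ has entries in $Q^{l-1,s}_\Om$ (Proposition \ref{prop:Q^{l,s}}(i)) and determinant $\ge\varepsilon_0>0$ (see \eqref{eq:QD^{l,s}}); since the reciprocal of a quasi-periodic function bounded away from zero again belongs to $Q^{l-1,s}_\Om$ — which follows, via Lemma \ref{lem:pull-back}, from the corresponding Moser-type statement for $H^s(\T^M)$ — matrix inversion is a continuous map on the open set of $n\times n$-matrices over the Banach algebra $Q^{l-1,s}_\Om$ whose determinant is bounded away from zero. Composing with $\varphi^{-1}\in QD^{l-1,s}_\Om(\R^n)$ entrywise (Proposition \ref{prop:composition}) gives $[d\varphi^{-1}]\in Q^{l-1,s}_\Om\otimes\mathrm{Mat}_{n\times n}(\R)$, whence $\varphi^{-1}-\id\in Q^{l,s}_\Om$ by the description \eqref{eq:Q^{l,s}}, \eqref{eq:Q^{l,s}-norm} of the space and its norm; the Jacobian condition is checked as in the case $l=0$, so $\varphi^{-1}\in QD^{l,s}_\Om(\R^n)$. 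Finally, by the form of the norm \eqref{eq:Q^{l,s}-norm}, continuity of \eqref{eq:the_inverse_map} into $QD^{l,s}_\Om(\R^n)$ reduces to continuity of $\varphi\mapsto\varphi^{-1}-\id$ into $Q^{l-1,s}_\Om$ (inductive hypothesis) together with continuity of $\varphi\mapsto[d\varphi^{-1}]$ into $Q^{l-1,s}_\Om\otimes\mathrm{Mat}_{n\times n}(\R)$; the latter is the composition of the continuous maps $\varphi\mapsto[d\varphi]$ (Proposition \ref{prop:Q^{l,s}}(i)), $\varphi\mapsto\varphi^{-1}$ into $QD^{l-1,s}_\Om(\R^n)$ (inductive hypothesis), entrywise composition $(A,\psi)\mapsto A\circ\psi$ (Proposition \ref{prop:composition}), and matrix inversion.

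\textbf{On the main difficulty.} The crux is the identity $\varphi^{-1}(x)-x=-F\big(\Psi(\Om_{\tt p}(x))\big)$: it simultaneously certifies that $\varphi^{-1}-\id$ is quasi-periodic and exhibits its ``profile'' $-F\circ\Psi$ on $\T^M$, so that everything reduces to known facts about $D^s(\T^M)$ and about $Q^{l,s}_\Om$. The only genuinely technical ingredient is the matrix-inversion step in the Banach algebra $Q^{l-1,s}_\Om$, which rests on the closure of $Q^{l-1,s}_\Om$ (equivalently of $H^{l-1,s}_\Om(\T^M)$) under reciprocals of functions bounded away from zero; if one prefers not to quote a composition lemma, one can instead obtain $\big(\Id+[df]\big)^{-1}\in Q^{l-1,s}_\Om\otimes\mathrm{Mat}_{n\times n}(\R)$ inductively in $l$ by differentiating the identity $\big(\Id+[df]\big)\big(\Id+[df]\big)^{-1}=\Id$ and using the Banach algebra property of $Q^{l-1,s}_\Om$.
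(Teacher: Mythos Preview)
Your proof is correct and follows essentially the same route as the paper: for $l=0$ you exhibit $\varphi^{-1}=\id+\Om_{\tt p}^*(-F\circ\Phi^{-1})$ and invoke the topological group structure of $D^s(\T^M)$, and for $l\ge 1$ you run the same induction via $[d\varphi^{-1}]=\big([d\varphi]\circ\varphi^{-1}\big)^{-1}$, Proposition~\ref{prop:composition}, and the division lemma (Lemma~\ref{lem:division} in the paper). The only cosmetic difference is that the paper obtains the formula for $\varphi^{-1}$ by first computing $(\psi\circ\varphi)(x)=x+\{F+G\circ\Phi\}(\Om_{\tt p}(x))$ and solving $F+G\circ\Phi=0$, whereas you read it off directly from $\varphi(\varphi^{-1}(x))=x$ and the commutative diagram~\eqref{eq:diagram}.
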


Proposition \ref{prop:composition} and Proposition \ref{prop:the_inverse_map} imply

\begin{Th}\label{th:topological_group}
Assume that $l\in\Z_{\ge 0}$ and $s>\frac{M}{2}+1$.
Then $QD^{l,s}_\Om(\R^n)$ is a topological group with respect to the composition of maps.
\end{Th}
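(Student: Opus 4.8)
The plan is to deduce Theorem~\ref{th:topological_group} from Proposition~\ref{prop:composition} and Proposition~\ref{prop:the_inverse_map} together with the chart structure on $QD^{l,s}_\Om(\R^n)$ supplied by Lemma~\ref{lem:differentiable_structure}: recall that $\varphi\in QD^{l,s}_\Om(\R^n)$ is written $\varphi=\id_{\R^n}+f$ with $f\in Q^{l,s}_\Om(\R^n,\R^n)$, and the correspondence $\varphi\mapsto f$ identifies $QD^{l,s}_\Om(\R^n)$ with an open subset of $Q^{l,s}_\Om$. First I would dispose of the algebra: the identity map corresponds to $f\equiv 0$ and $\det(\Id+[df])=1>1/2$, so $\id_{\R^n}\in QD^{l,s}_\Om(\R^n)$; associativity is automatic for composition of maps; and closure under inversion is exactly the well-definedness assertion of Proposition~\ref{prop:the_inverse_map}. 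What remains is closure under composition and the continuity of the two group operations.

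For closure under composition, write $\varphi=\id_{\R^n}+f$ and $\psi=\id_{\R^n}+g$ with $f,g\in Q^{l,s}_\Om$, so that
\[
\varphi\circ\psi=\psi+f\circ\psi=\id_{\R^n}+\big(g+f\circ\psi\big).
\]
Applying Proposition~\ref{prop:composition} to each scalar component $f_j$ of $f$ gives $f_j\circ\psi\in Q^{l,s}_\Om(\R^n)$, hence $g+f\circ\psi\in Q^{l,s}_\Om$; thus $\varphi\circ\psi$ has the required form $\id_{\R^n}+(\text{element of }Q^{l,s}_\Om)$. For the Jacobian condition in \eqref{eq:QD^{l,s}} I would invoke the chain rule $[d_x(\varphi\circ\psi)]=[d_y\varphi]\big|_{y=\psi(x)}\cdot[d_x\psi]$ and multiplicativity of the determinant: exactly as in \eqref{eq:det_positive}, $\det[d\varphi]\ge\varepsilon_1>0$ and $\det[d\psi]\ge\varepsilon_2>0$ hold on \emph{all} of $\R^n$, so $\det[d_x(\varphi\circ\psi)]\ge\varepsilon_1\varepsilon_2>0$ for every $x$, and $\varphi\circ\psi\in QD^{l,s}_\Om(\R^n)$.

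The continuity of the group operations is then bookkeeping. Read in the chart $f\in Q^{l,s}_\Om$, composition is the map $(\varphi,\psi)\mapsto g+f\circ\psi$, which factors through: the continuous (linear) projections $(\varphi,\psi)\mapsto g$ and $(\varphi,\psi)\mapsto(f,\psi)$; the componentwise application of the continuous map of Proposition~\ref{prop:composition} to $(f,\psi)\mapsto f\circ\psi$ (with values in $Q^{l,s}_\Om(\R^n,\R^n)=Q^{l,s}_\Om$); and addition in $Q^{l,s}_\Om$. Hence composition $QD^{l,s}_\Om(\R^n)\times QD^{l,s}_\Om(\R^n)\to QD^{l,s}_\Om(\R^n)$ is continuous, and continuity of $\varphi\mapsto\varphi^{-1}$ is the remaining part of Proposition~\ref{prop:the_inverse_map}. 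Together these show $QD^{l,s}_\Om(\R^n)$ is a topological group.

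I do not anticipate a genuine obstacle at this stage: the analytic weight has been moved into Proposition~\ref{prop:composition}, Proposition~\ref{prop:the_inverse_map}, and Lemma~\ref{lem:Phi-diffeomorphism}. The single point deserving a line of care is that the open condition $\det(\Id+[df])>0$ is stable under composition, which works only because that estimate is \emph{uniform} over $\R^n$ (a consequence of the density of $\Om_{\tt p}(\R^n)$ in $\T^M$), so that evaluating $\det[d\varphi]$ at the moving point $\psi(x)$ still yields a lower bound independent of $x$.
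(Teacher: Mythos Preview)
Your proof is correct and follows essentially the same approach as the paper: closure under composition via the chain rule and the uniform lower bound on the Jacobian, with continuity of composition and inversion reduced to Proposition~\ref{prop:composition} and Proposition~\ref{prop:the_inverse_map}. Your version is in fact more explicit than the paper's, which simply asserts $\psi\circ\varphi\in Q^{l,s}_\Om(\R^n,\R^n)$ and the determinant bound in two lines; your chart computation $\varphi\circ\psi=\id_{\R^n}+(g+f\circ\psi)$ spells out what the paper leaves implicit. One minor remark: the uniform lower bound $\det[d\varphi]\ge\varepsilon_1>0$ on all of $\R^n$ is part of the defining condition~\eqref{eq:QD^{l,s}} itself, so you do not need to appeal to the density argument behind~\eqref{eq:det_positive} here.
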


\begin{proof}[Proof of Theorem \ref{th:topological_group}]
By Proposition \ref{prop:composition} for any $\varphi,\psi\in QD^{l,s}_\Om(\R^n)$
the composition $\psi\circ\varphi\in Q^{l,s}_\Om(\R^n,\R^n)$. In view of the the definition 
\eqref{eq:QD^{l,s}} of $QD^{l,s}_\Om(\R^n)$ it follows that there exists $\varepsilon_0>0$ such that
\[
\det\big([d(\psi\circ\varphi)]\big)=
\det\big([d_y\psi]\big|_{y=\varphi(x)}\big)\det\big([d_x\varphi]\big)>\varepsilon_0>0\quad\forall x\in\R^n.
\]
This shows that $\psi\circ\varphi\in QD^{l,s}_\Om(\R^n)$.
The continuity of the composition and the inverse map \eqref{eq:the_inverse_map} then follow from 
Proposition \ref{prop:composition} and Proposition \ref{prop:the_inverse_map}.
\end{proof}

Let us also record the following 

\begin{Prop}\label{prop:homomorphism}
Assume that $s>\frac{M}{2}+1$. Then we have:
\begin{itemize}
\item[(i)] The map $h : QD^s_\Om(\R^n)\to D^s(\T^M)$, $\varphi\mapsto\Phi$, where $\Phi$ is given by \eqref{eq:Phi}, is
a homomorphism of topological groups.
\item[(ii)] The kernel of this homomorphism consists of all translations $\tau_\gamma : \R^n\to \R^n$,
$x\mapsto x+\gamma$, where $\gamma\in\Gamma_\Om$ (see \eqref{eq:Gamma_Om}).
\end{itemize}
\end{Prop}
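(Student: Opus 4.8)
The plan for (i) is this: well-definedness of $h$ is Lemma \ref{lem:Phi-diffeomorphism}, continuity of $h$ is established in the discussion preceding Remark \ref{rem:homomorphism}, and $D^s(\T^M)$ is a topological group, so only the algebraic identity $h(\psi\circ\varphi)=h(\psi)\circ h(\varphi)$ remains, and I would prove it by a diagram chase. Fix $\varphi,\psi\in QD^s_\Om(\R^n)$ and set $\Phi:=h(\varphi)$, $\Psi:=h(\psi)$. By Lemma \ref{lem:Phi-diffeomorphism} the identities $\Phi\circ\Om_{\tt p}=\Om_{\tt p}\circ\varphi$ and $\Psi\circ\Om_{\tt p}=\Om_{\tt p}\circ\psi$ hold, so
\[
(\Psi\circ\Phi)\circ\Om_{\tt p}=\Psi\circ\Om_{\tt p}\circ\varphi=\Om_{\tt p}\circ(\psi\circ\varphi).
\]
By Theorem \ref{th:topological_group} the composition $\psi\circ\varphi$ again belongs to $QD^s_\Om(\R^n)$, so it has an associated torus diffeomorphism $h(\psi\circ\varphi)$ characterized by the analogous commuting square; the uniqueness statement of Remark \ref{rem:uniqueness} then forces $h(\psi\circ\varphi)=\Psi\circ\Phi=h(\psi)\circ h(\varphi)$. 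The normalization $h(\id_{\R^n})=\id_{\T^M}$ is immediate from \eqref{eq:Phi} with $F\equiv 0$, which finishes (i).

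For (ii) I would first check that each translation $\tau_\gamma$, $\gamma\in\Gamma_\Om$, lies in $QD^s_\Om(\R^n)$: it has the form $x\mapsto x+f(x)$ with $f\equiv\gamma$ a constant vector, hence $f=\Om_{\tt p}^*(F)$ for the constant $F\equiv\gamma\in H^s(\T^M,\R^n)$, and $\det(\Id+[df])=1>0$. Then \eqref{eq:Phi} shows that $h(\tau_\gamma)$ is the map $\theta\mapsto\theta+\Om_{\tt p}(\gamma)$, and $\Om_{\tt p}(\gamma)={\tt p}(\Om(\gamma))=0$ in $\T^M$ since $\Om(\gamma)\in\Z^M$ by the definition \eqref{eq:Gamma_Om} of $\Gamma_\Om$; thus $\tau_\gamma\in\ker h$. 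For the reverse inclusion I would invoke Remark \ref{rem:homomorphism} with $\psi=\id_{\R^n}$ (using $h(\id_{\R^n})=\id_{\T^M}$ from (i)): it says that $h(\varphi)=\id_{\T^M}$ holds exactly when $\varphi(x)=x+\gamma$ for some $\gamma\in\Gamma_\Om$ independent of $x$, i.e. $\varphi=\tau_\gamma$. Hence $\ker h=\{\tau_\gamma:\gamma\in\Gamma_\Om\}$. (Should one wish to avoid Remark \ref{rem:homomorphism}, the reverse inclusion also follows directly: $h(\varphi)=\id_{\T^M}$ forces $\Om(F(\theta))\in\Z^M$ for all $\theta\in\T^M$ by \eqref{eq:Phi}, and since $\theta\mapsto\Om(F(\theta))$ is continuous, $\T^M$ connected and $\Z^M$ discrete, $\Om\circ F$ equals a constant $m_0\in\Z^M$; injectivity of the rank-$n$ linear map $\Om$ then makes $F$ the constant $\gamma:=\Om^{-1}(m_0)\in\Gamma_\Om$, so $\varphi=\tau_\gamma$.)

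There is no serious obstacle here given the material already in this section; the one point I would be careful about is the appeal to uniqueness in (i), which requires knowing in advance that $\psi\circ\varphi\in QD^s_\Om(\R^n)$ so that Remark \ref{rem:uniqueness} is applicable to it — precisely what Theorem \ref{th:topological_group} supplies.
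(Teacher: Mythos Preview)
Your proof is correct and follows essentially the same approach as the paper: for (i) you concatenate the two commuting squares from Lemma \ref{lem:Phi-diffeomorphism} and invoke Theorem \ref{th:topological_group} together with the uniqueness in Remark \ref{rem:uniqueness}, exactly as the paper does; for (ii) the paper simply says ``follows from \eqref{eq:Phi}'', and your argument spells this out in detail (including the alternative continuity argument), which is a fuller but not different treatment.
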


\begin{proof}[Proof of Proposition \ref{prop:homomorphism}]
Let is first prove item $(i)$. Take $\psi,\varphi\in QD^{l,s}_\Om(\R^n)$.
By attaching to each other the two copies of the diagram \eqref{eq:diagram} corresponding respectively to $\psi$ and $\varphi$ 
we obtain the commutative diagram
\begin{equation}\label{eq:diagram_adjoint}
\begin{tikzcd}
\T^M\arrow{r}{\Psi}&\T^M\arrow{r}{\Phi}&\T^M\\
\R^n\arrow{u}{\Om_{\tt p}}\arrow{r}{\psi}&\R^n\arrow[swap]{u}{\Om_{\tt p}}\arrow{r}{\varphi}&\R^n\arrow{u}{\Om_{\tt p}}
\end{tikzcd}
.
\end{equation}
By Theorem \ref{th:topological_group}, $\psi\circ\varphi\in  QD^{l,s}_\Om(\R^n)$. In view of the uniqueness statement in 
Remark \ref{rem:uniqueness} we then conclude from \eqref{eq:diagram_adjoint} and Lemma \ref{lem:Phi-diffeomorphism} 
that $h(\psi\circ\varphi)=\Psi\circ\Phi$. This completes the proof of item $(i)$. Item $(ii)$ follows from \eqref{eq:Phi}.
\end{proof}

\begin{proof}[Proof of Proposition \ref{prop:the_inverse_map}]
Take $\varphi,\psi\in QD^s_\Om(\R^n)$.
Then, $\varphi(x)=x+f(x)$ where $f(x)=F\big(\Om_{\tt p}(x)\big)$ and 
$\psi(x)=x+g(x)$ where $g(x)=G\big(\Om_{\tt p}(x)\big)$ with $G,F\in H^s(\R^M,\R^n)$. 
We have
\begin{eqnarray}
(\psi\circ\varphi)(x)&=&\varphi(x)+g\big(\varphi(x)\big)\nonumber\\
&=&\big(x+F\big(\Om_{\tt p}(x)\big)\big)+G\Big(\Om_{\tt p}\big(x+F\big(\Om_{\tt p}(x)\big)\big)\Big)\nonumber\\
&=&x+\big\{ F+G\circ\Phi\big\}\big(\Om_{\tt p}(x)\big)\label{eq:G*F}.
\end{eqnarray}
This implies that the composition $\psi\circ\varphi\in QD^s_\Om(\R^n)$ corresponds to 
the periodic map $F+G\circ\Phi\in H^s(\T^M,\R^n)$ where $\Phi=h(\varphi)\in D^s(\T^M)$.
Hence, by taking 
\begin{equation}\label{eq:G-inverse}
G:=-F\circ\Phi^{-1} 
\end{equation}
we see from \eqref{eq:G*F} that
\begin{equation}\label{eq:phi-inverse}
\varphi^{-1}=\id_{\R^n}+\Om_{\tt p}^*\big(G\big).
\end{equation}
Moreover, it follows from \eqref{eq:G-inverse}, Theorem 1.2 in \cite{IKT}, and Lemma \ref{lem:Phi-diffeomorphism}, 
that $G\in H^s(\T^M,\R^n)$. In addition, it follows easily from \eqref{eq:phi-inverse} that there exists $\varepsilon_0>0$ such that
$\det[d(\varphi^{-1})]>\varepsilon_0>0$. This implies that $\varphi^{-1}\in QD^{l,s}_\Om(\R^n)$.
Since $F\in H^s(\T^M,\R^n)$ coordinatizes the diffeomorphism $\varphi\in QD^s_\Om(\R^n)$ 
(see Lemma \ref{lem:pull-back}) and since $\Phi\in D^s(\T^M)$ where $h$ is the continuous map 
\eqref{eq:homomorphism}, we conclude from \cite[Theorem 1.2 ]{IKT}, \eqref{eq:G-inverse}, and \eqref{eq:phi-inverse},
that the map $QD^s_\Om(\R^n)\to QD^s_\Om(\R^n)$, $\varphi\mapsto\varphi^{-1}$, is well-defined and continuous.

Now, assume that $l\ge 1$ and that the map
\begin{equation}\label{eq:induction_hypothesis2}
QD^{l-1,s}_\Om(\R^n)\to QD^{l-1,s}_\Om(\R^n),\quad\varphi\mapsto\varphi^{-1},
\end{equation}
is well-defined and continuous. For $\varphi\in QD^{l,s}_\Om(\R^n)$ we have, in view of the inclusion 
$Q^{l,s}_\Om(\R^n)\subseteq C^1_b(\R^n)$, that for any $y\in\R^n$,
\begin{equation}\label{eq:d_phi_inverse}
\big[d_y(\varphi^{-1})\big]=\big[[d_x\varphi]|_{x=\varphi^{-1}(y)}\big]^{-1}.
\end{equation}
This together with induction hypothesis \eqref{eq:induction_hypothesis2}, Proposition \ref{prop:composition},
the Banach algebra properties of $Q^{l-1,s}_\Om(\R^n)$, and Lemma \ref{lem:division} below, implies that
the map
\[
QD^{l,s}_\Om(\R^n)\to Q^{l-1,s}_\Om(\R^n)\otimes\text{\rm Mat}_{n\times n}(\R),\quad\varphi\mapsto\big[d(\varphi^{-1})\big],
\]
is well-defined and continuous. This completes the proof of Proposition \ref{prop:the_inverse_map}.
\end{proof}

The following lemma is used in the proof of Proposition \ref{prop:the_inverse_map}.
\begin{Lem}\label{lem:division}
Assume that $l\in\Z_{\ge 0}$, $s>\frac{M}{2}$, and let $f_0\in Q^{l,s}_\Om(\R^n)$ be such that
$|f_0(x)|>\varepsilon_0>0$ for any $x\in\R^n$. Then $1/f_0\in Q^{l,s}_\Om(\R^n)$. Moreover,
there exists an open neighborhood $U$ of zero in $Q^{l,s}_\Om(\R^n)$ such that the map
\[
U\to Q^{l,s}_\Om(\R^n),\quad f\mapsto 1/(f_0+f),
\]
is continuous.\footnote{In fact, this is a real analytic map.}
\end{Lem}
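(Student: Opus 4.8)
The plan is to first establish that $1/f_0 \in Q^{l,s}_\Om(\R^n)$ by transporting the problem to the torus, then obtain continuity (and, in the footnote's stronger claim, analyticity) of $f \mapsto 1/(f_0+f)$ by combining the Banach algebra property of $Q^{l,s}_\Om(\R^n)$ with the usual Neumann-series argument for inversion. For the first part, write $f_0 = \Om_{\tt p}^*(F_0)$ with $F_0 \in H^{l,s}_\Om(\T^M)$ (using the isomorphism $\Om_{\tt p}^*$ of the last Lemma before this one). Since the image of $\Om_{\tt p}$ is dense in $\T^M$ (by the non-resonance condition (NC)) and $F_0$ is continuous (as $H^{l,s}_\Om(\T^M) \subseteq H^s(\T^M) \subseteq C(\T^M)$ for $s > M/2$), the pointwise lower bound $|f_0(x)| > \varepsilon_0$ on $\R^n$ propagates to $|F_0(\theta)| \ge \varepsilon_0 > 0$ for all $\theta \in \T^M$. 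Then it suffices to show $1/F_0 \in H^{l,s}_\Om(\T^M)$, because $\Om_{\tt p}^*(1/F_0) = 1/f_0$; applying $\Om_{\tt p}^*$ backwards gives $1/f_0 \in Q^{l,s}_\Om(\R^n)$.

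To see $1/F_0 \in H^{l,s}_\Om(\T^M)$: one can induct on $l$. For $l = 0$ this is the standard fact that $H^s(\T^M)$, $s>M/2$, is closed under reciprocals of functions bounded away from zero (proved either via the algebra property and a Neumann series, or by noting $g \mapsto 1/g$ is smooth near the bounded-away-from-zero locus and $H^s$ is a multiplicative algebra closed under composition with smooth functions). For the inductive step, use the quotient rule: for each column direction $k$, $\p_{\Om_k}(1/F_0) = -(\p_{\Om_k} F_0)/F_0^2$; since $\p_{\Om_k}F_0 \in H^{l-1,s}_\Om(\T^M)$ by item (i) of the preceding Proposition, and $1/F_0^2 = (1/F_0)^2 \in H^{l-1,s}_\Om(\T^M)$ by the induction hypothesis together with the algebra property, the product lies in $H^{l-1,s}_\Om(\T^M)$; hence $1/F_0 \in H^{l,s}_\Om(\T^M)$. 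Pulling back, this simultaneously shows the quotient rule $\p_{x_j}(1/f_0) = -(\p_{x_j}f_0)/f_0^2$ holds in $Q$-spaces, which is what is actually needed downstream.

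For the continuity (and analyticity) statement: write $1/(f_0+f) = (1/f_0)\cdot 1/(1 + f/f_0)$. Choose the open neighborhood $U$ of $0$ in $Q^{l,s}_\Om(\R^n)$ small enough that, using the continuity of multiplication (Proposition \ref{prop:Q^{l,s}}(ii)) and the bound $|\cdot|_{l,s}$, one has $\|(1/f_0) \cdot f\|_{l,s} \le \tfrac12$ for $f \in U$ — concretely, $U = \{f : |f|_{l,s} < \delta\}$ with $\delta$ small relative to $\|1/f_0\|_{l,s}$ and the algebra constant $C$ of Remark \ref{rem:banach_algebra}. Then the Neumann series $\sum_{k\ge 0} (-1)^k \big((1/f_0)f\big)^k$ converges absolutely in the Banach algebra $Q^{l,s}_\Om(\R^n)$, uniformly for $f \in U$, and equals $1/(1+f/f_0)$; multiplying by the fixed element $1/f_0 \in Q^{l,s}_\Om(\R^n)$ gives a convergent power series representation of $f \mapsto 1/(f_0+f)$, which is therefore real analytic on $U$, in particular continuous. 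The main obstacle is really just the first step — verifying that $Q^{l,s}$ (equivalently $H^{l,s}_\Om(\T^M)$) is closed under taking reciprocals of functions bounded away from zero; once the quotient-rule induction and the algebra property are in hand, the Neumann-series argument is routine.
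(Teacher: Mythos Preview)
Your proposal is correct and follows essentially the same approach as the paper's (very brief) proof: the paper says the result follows from Lemma~\ref{lem:pull-back} together with an induction in $l\ge 0$ reducing to the analogous statement in $H^s(\T^M)$, which is precisely your pull-back to the torus, the density argument transferring the lower bound to $F_0$, and the quotient-rule induction; the paper then simply cites \cite[Lemma~B.2]{IKT} for the $H^s$ base case and the continuity, whereas you spell out the Neumann-series argument explicitly.
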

\noindent Lemma \ref{lem:pull-back} and an induction argument in $l\ge 0$ reduces the proof of Lemma \ref{lem:division} to 
an analogous statement with $Q^{l,s}_\Om(\R^n)$ replaced by $H^s(\T^M,\R)$. The case of the Sobolev space $H^s$ follows from 
Lemma B.2 in \cite{IKT}.

In the remaining part of this Section we will prove that the composition and the inverse in the topological group $QD^{l,s}_\Om(\R^n)$ 
enjoy additional regularity properties. These properties are similar to the ones of the groups $D^s(\R^n)$ and $D^s(X)$ where $X$ is 
a smooth compact manifold without boundary (see e.g. \cite[Theorem 1.1 and Theorem 1.2]{IKT}). More specifically, one has

\begin{Th}\label{th:composition} 
Assume that $l\in\Z_{\ge 0}$ and $s>\frac{M}{2}+1$. Then for any $r\in\Z_{\ge 0}$ the maps
\begin{equation}\label{eq:composition_regularity}
Q^{l+r,s}_\Om(\R^n)\times QD^{l,s}_\Om(\R^n)\to Q^{l,s}_\Om(\R^n),\quad(g,\varphi)\mapsto g\circ\varphi,
\end{equation}
and
\begin{equation}\label{eq:inverse_regularity}
QD^{l+r,s}_\Om(\R^n)\to QD^{l,s}_\Om(\R^n),\quad\varphi\mapsto\varphi^{-1},
\end{equation}
are $C^r$-smooth.
\end{Th}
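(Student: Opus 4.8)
The plan is to reduce Theorem \ref{th:composition} to the corresponding regularity statement for the group $D^s(\T^M)$ of Sobolev diffeomorphisms of the torus, namely \cite[Theorem 1.1 and Theorem 1.2]{IKT}, by transporting everything along the pull-back isomorphism $\Om_{\tt p}^*$ and the homomorphism $h$ of \eqref{eq:homomorphism}. The starting point is the identity \eqref{eq:main1'}, $g\circ\varphi=\Om_{\tt p}^*(G\circ\Phi)$, where $g=\Om_{\tt p}^*G$ and $\Phi=h(\varphi)$. Since $\Om_{\tt p}^*:H^s(\T^M)\to Q^s_\Om(\R^n)$ is a linear isomorphism (Lemma \ref{lem:pull-back}), and since $h:QD^s_\Om(\R^n)\to D^s(\T^M)$ is continuous (it is even affine in the coordinate $F$, by \eqref{eq:Phi}), the map \eqref{eq:composition_regularity} for $l=0$ factors through the $C^r$-smooth composition $H^{r,s}\!\times D^s\to H^s$, $(G,\Phi)\mapsto G\circ\Phi$, composed with the continuous linear (hence $C^\infty$) maps $g\mapsto G$, $\varphi\mapsto\Phi$, and $\Om_{\tt p}^*$. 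This proves \eqref{eq:composition_regularity} when $l=0$; note $g\in Q^{r,s}_\Om$ corresponds to $G\in H^{r,s}_\Om(\T^M)\subseteq H^s(\T^M)$, and $H^{r,s}_\Om$ norm controls finitely many $\p_\Om$-derivatives in $H^s$, which is exactly the regularity needed by the $D^s(\T^M)$-statement applied in the directions of the columns of $\Om$.

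Next I would promote this to general $l\ge 1$ by induction on $l$, exactly paralleling the proof of Proposition \ref{prop:composition}. Differentiating via the chain rule \eqref{eq:chain_rule}, $[d(g\circ\varphi)]=([dg]\circ\varphi)\cdot[d\varphi]$, expresses the first derivatives of $g\circ\varphi$ as a product (in the Banach algebra $Q^{l-1,s}_\Om$, Proposition \ref{prop:Q^{l,s}}(ii)) of $[dg]\circ\varphi$ — which is a composition of type \eqref{eq:composition_regularity} at level $l-1$ with $g$ replaced by its derivatives, hence $C^r$-smooth by the induction hypothesis together with the $l=0$ base case applied to each component — and $[d\varphi]$, which depends $C^\infty$-linearly (in fact affinely) on $\varphi\in Q^{l,s}_\Om$ with values in $Q^{l-1,s}_\Om\otimes\mathrm{Mat}_{n\times n}$ by Proposition \ref{prop:Q^{l,s}}(i). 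Since multiplication in a Banach algebra is bilinear and bounded, hence $C^\infty$, the composite is $C^r$; combined with $C^r$-smoothness of $g\circ\varphi$ itself in $Q^{l-1,s}_\Om$, the definition \eqref{eq:Q^{l,s}} and the norm \eqref{eq:Q^{l,s}-norm} give $C^r$-smoothness into $Q^{l,s}_\Om$. For the inverse map \eqref{eq:inverse_regularity} I would run the analogous induction: the $l=0$ case follows from \eqref{eq:G-inverse}, $G=-F\circ\Phi^{-1}$, and \eqref{eq:phi-inverse}, using the $C^r$-regularity of inversion in $D^s(\T^M)$, the $C^r$-regularity of composition in $D^s(\T^M)$, the continuity of $h$, and Lemma \ref{lem:pull-back}; the inductive step uses \eqref{eq:d_phi_inverse}, $[d_y(\varphi^{-1})]=\big([d_x\varphi]|_{x=\varphi^{-1}(y)}\big)^{-1}$, together with Proposition \ref{prop:composition} (now at the $C^r$-level, just proved), the Banach algebra structure, and the smoothness of matrix inversion furnished by Lemma \ref{lem:division} applied entrywise through the cofactor/determinant formula.

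The main obstacle, and the place requiring care, is keeping track of the bookkeeping of derivative loss: a map of class $C^r$ from $Q^{l+r,s}_\Om$ must, after one differentiation in the $\varphi$-variable, land in a target where we have one more spatial derivative to spare, and the inductive structure couples the $r$-index with the $l$-index. Concretely, one must verify that the cited theorems in \cite{IKT} are stated (or can be stated) with the source regularity measured along the fixed finite family of vector fields $\p_{\Om_1},\dots,\p_{\Om_n}$ rather than the full $H^{s+r}$ scale — i.e. that $H^{r,s}_\Om(\T^M)$ is an admissible "source" space — since $Q^{l+r,s}_\Om$ corresponds under $\Om_{\tt p}^*$ to $H^{l+r,s}_\Om(\T^M)$, not to $H^{s+l+r}(\T^M)$. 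Once this compatibility is granted (it follows from the $\Om$-directional nature of the differentiations appearing in the Faà di Bruno expansion of $G\circ\Phi$, combined with the commutative diagram \eqref{eq:diagram} which intertwines $\p_\Om$ on $\T^M$ with $\p_x$ on $\R^n$), the rest is the routine product-rule/chain-rule induction sketched above, mirroring the structure already established for Proposition \ref{prop:composition} and Proposition \ref{prop:the_inverse_map}.
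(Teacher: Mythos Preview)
Your approach is genuinely different from the paper's. The paper does \emph{not} reduce the $C^r$-statement to \cite{IKT} on $\T^M$; instead it works intrinsically in $Q^{l,s}_\Om$. For \eqref{eq:composition_regularity} it writes the Taylor expansion of $(g+\delta g)\circ(\varphi+\delta\varphi)$ with integral remainder (using $Q^{l+r,s}_\Om\subseteq C^r_b$), identifies the $P_k$ as continuous polynomial maps via Lemmas \ref{lem:almost_lipschitz}--\ref{lem:almost_bilinear}, and invokes the converse to Taylor's theorem. For \eqref{eq:inverse_regularity} it applies the implicit function theorem to $(\varphi,\psi)\mapsto\varphi\circ\psi$ (now known to be $C^r$), with invertible partial derivative $D_2$ given by multiplication by $[d\varphi_0]\circ\varphi_0^{-1}$.

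Your route can probably be made to work, but as written it has a gap exactly where you flag ``the main obstacle''. The $C^r$-theorem in \cite{IKT} requires the source space $H^{s+r}(\T^M)$, whereas under $\Om_{\tt p}^*$ the space $Q^{r,s}_\Om$ corresponds to $H^{r,s}_\Om(\T^M)$, which carries extra regularity only in the $n<M$ directions $\p_{\Om_1},\dots,\p_{\Om_n}$ and is \emph{not} contained in $H^{s+r}(\T^M)$. Hence you cannot cite \cite{IKT} directly for the $l=0$ base case. Your remark that the variations $\delta\Phi=\Om\,\delta F$ lie in the $\Om$-directions, so that the Fa\`a di Bruno expansion of $G\circ\Phi$ only hits $\p_\Om^\beta G$, is exactly the right observation---but it means you must \emph{reprove} the key estimates of \cite{IKT} (continuity of the $P_k$, control of the remainder) in this anisotropic setting, not merely invoke them. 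The same issue reappears for the inverse: $\Phi\in D^s(\T^M)$ has only $\Om$-directional extra regularity, so the $C^r$-inversion statement in \cite{IKT} does not apply as a black box either. Once you carry out that anisotropic reproof, you are essentially executing the paper's direct argument, just upstairs on $\T^M$ rather than downstairs on $\R^n$; the induction in $l$ and the implicit function theorem for the inverse then proceed as you describe (and the paper's choice of implicit function theorem over the inductive $[d(\varphi^{-1})]$-argument is cleaner because it avoids re-checking at each step that the inverse lands in $QD^{l,s}_\Om$).
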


We first prove

\begin{Lem}\label{lem:almost_lipschitz}
Assume that $l\in\Z_{\ge 0}$ and $s>\frac{M}{2}+1$.
For any $\varphi_0\in QD^{l,s}_\Om(\R^n)$ there exist an open neighborhood $U(\varphi_0)$ of $\varphi_0$ in
$QD^{l,s}_\Om(\R^n)$ and a constant $C>0$ such that for any $g\in Q^{l,s}_\Om(\R^n)$ and for any 
$\varphi\in U(\varphi_0)$ one has
\[
\|g\circ\varphi\|_{l,s}\le C\|g\|_{l,s}.
\]
\end{Lem}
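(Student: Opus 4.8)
The plan is to deduce the lemma directly from Proposition \ref{prop:composition}, using that the composition map $(g,\varphi)\mapsto g\circ\varphi$ is \emph{linear in its first argument}. Indeed, $(\lambda_1 g_1+\lambda_2 g_2)\circ\varphi=\lambda_1(g_1\circ\varphi)+\lambda_2(g_2\circ\varphi)$ holds pointwise for all $g_1,g_2\in Q^{l,s}_\Om(\R^n)$, $\lambda_1,\lambda_2\in\R$, and $\varphi\in QD^{l,s}_\Om(\R^n)$, while by Proposition \ref{prop:composition} the map $Q^{l,s}_\Om(\R^n)\times QD^{l,s}_\Om(\R^n)\to Q^{l,s}_\Om(\R^n)$, $(g,\varphi)\mapsto g\circ\varphi$, is (jointly) continuous.

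The argument is then the standard one showing that a map which is separately linear in one variable and jointly continuous is locally uniformly bounded. Since $0\circ\varphi_0=0$ and the composition map is continuous at $(0,\varphi_0)$, there exist $\varepsilon>0$ and an open neighborhood $U(\varphi_0)$ of $\varphi_0$ in $QD^{l,s}_\Om(\R^n)$ such that $\|g\|_{l,s}\le\varepsilon$ and $\varphi\in U(\varphi_0)$ imply $\|g\circ\varphi\|_{l,s}\le 1$. For an arbitrary $g\in Q^{l,s}_\Om(\R^n)$ with $g\neq 0$ and any $\varphi\in U(\varphi_0)$, applying this to $\big(\varepsilon/\|g\|_{l,s}\big)g$, which has norm $\varepsilon$, and using the linearity of $g\mapsto g\circ\varphi$ gives $\big(\varepsilon/\|g\|_{l,s}\big)\|g\circ\varphi\|_{l,s}\le 1$, that is, $\|g\circ\varphi\|_{l,s}\le(1/\varepsilon)\|g\|_{l,s}$; the case $g=0$ is trivial. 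Hence the lemma holds with $C:=1/\varepsilon$.

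I do not expect a genuine obstacle here: the essential content is already in Proposition \ref{prop:composition}, and the only point to be careful about is that the continuity there is \emph{joint} continuity on the product space, which is precisely what allows the neighborhood $U(\varphi_0)$ to be chosen independently of $g$ (for $g$ near $0$). If a more hands-on argument with an explicit constant were desired, one could instead induct on $l$: for $l=0$ use the isometric isomorphism $\Om_{\tt p}^*:H^s(\T^M)\to Q^s_\Om(\R^n)$ and the local uniform boundedness on $D^s(\T^M)$ of the composition operator $G\mapsto G\circ\Phi$ (with $\Phi=h(\varphi)$), and for the inductive step use the chain rule \eqref{eq:chain_rule}, the continuity of $\p_x^\beta:Q^{l,s}_\Om\to Q^{l-1,s}_\Om$, and the Banach algebra property of $Q^{l-1,s}_\Om(\R^n)$ (Proposition \ref{prop:Q^{l,s}}), after first shrinking $U(\varphi_0)$ to a bounded neighborhood so that the entries of $[d\varphi]$ have uniformly bounded $Q^{l-1,s}_\Om$-norm.
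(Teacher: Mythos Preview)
Your proof is correct and follows essentially the same approach as the paper's own proof: both use the joint continuity of the composition map at $(0,\varphi_0)$ from Proposition~\ref{prop:composition} together with the linearity in the first argument to pass from a local bound on a small ball to the global estimate via rescaling. The only cosmetic difference is that the paper writes the bound as $\|g\circ\varphi\|_{l,s}\le C_1$ for $\|g\|_{l,s}\le\varepsilon$ and then sets $C=C_1/\varepsilon$, whereas you take $C_1=1$ directly.
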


\begin{proof}[Proof of Lemma \ref{lem:almost_lipschitz}]
First, recall that we identify $QD^{l,s}_\Om(\R^n)$ with an open set in $Q^{l,s}_\Om(\R^n)$ -- see 
\eqref{eq:QD^{l,s}} and Lemma \ref{lem:differentiable_structure}.
The statement of Lemma \ref{lem:almost_lipschitz} follows directly from Proposition \ref{prop:composition}. 
In fact, since $g\circ\varphi\to 0$ in $Q^{l,s}_\Om(\R^n)$ as $(g,\varphi)\to (0,\varphi_0)$ in 
$Q^{l,s}_\Om(\R^n)\times QD^{l,s}_\Om(\R^n)$ we conclude from  Proposition \ref{prop:composition} that there exist 
$C_1>0$ and $\varepsilon>0$ such that
\[
\|g\circ\varphi\|_{l,s}\le C_1
\]
for any $g\in Q^{l,s}_\Om(\R^n)$ with $\|g\|_{l,s}\le\varepsilon$ and for any 
$\varphi\in U(\varphi_0)\equiv\id_{\R^n}+B(f_0)$ where $B(f_0)$ is an open ball centered at 
$f_0\equiv\varphi_0-\id_{\R^n}$ in $Q^{l,s}_\Om(\R^n)$ and such that 
$U(\varphi_0)\subseteq QD^{l,s}_\Om(\R^n)$. 
This and the linearity of the composition with respect to the first argument then imply that
\[
\|g\circ\varphi\|_{l,s}\le\left(\frac{C_1}{\varepsilon}\right)\,\|g\|_{l,s}
\]
for any $g\in Q^{l,s}_\Om(\R^n)$ and for any $\varphi\in U(\varphi_0)$.
Finally, by setting $C=C_1/\varepsilon$ we complete the proof of the Lemma.
\end{proof}

In fact, we have also the following variant of Lemma \ref{lem:almost_lipschitz}.

\begin{Lem}\label{lem:almost_bilinear}
Assume that $l\in\Z_{\ge 0}$ and $s>\frac{M}{2}+1$.
For any $\varphi_0\in QD^{l,s}_\Om(\R^n)$ there exist an open neighborhood $U(\varphi_0)$ of $\varphi_0$ in
$QD^{l,s}_\Om(\R^n)$ and a constant $C>0$ such that for any $g\in Q^{l+1,s}_\Om(\R^n)$ and for any 
$\varphi\in U(\varphi_0)$ one has
\[
\|g\circ\varphi-g\circ\varphi_0\|_{l,s}\le C\|g\|_{l+1,s}\|\varphi-\varphi_0\|_{l,s}.
\]
\end{Lem}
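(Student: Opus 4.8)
The plan is to reduce the estimate to Lemma \ref{lem:almost_lipschitz} and the Banach algebra property of $Q^{l,s}_\Om(\R^n)$ by writing the difference $g\circ\varphi-g\circ\varphi_0$ as an integral of a $Q^{l,s}_\Om$-valued function along the straight-line segment from $\varphi_0$ to $\varphi$. First I would fix $\varphi_0$ and take for $U(\varphi_0)$ the ball-shaped (hence convex) neighborhood $\id_{\R^n}+B(f_0)$, $f_0:=\varphi_0-\id_{\R^n}$, produced in the proof of Lemma \ref{lem:almost_lipschitz}, shrinking it if needed so that $U(\varphi_0)\subseteq QD^{l,s}_\Om(\R^n)$. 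For $\varphi\in U(\varphi_0)$, set $\varphi_t:=\varphi_0+t(\varphi-\varphi_0)$; convexity of $B(f_0)$ together with the fact that it is centered at $f_0$ gives $\varphi_t\in U(\varphi_0)$ for all $t\in[0,1]$, which is what will let me invoke Lemma \ref{lem:almost_lipschitz} uniformly in $t$.

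Next I would establish an integral representation. Since $s>\frac{M}{2}+1$, Proposition \ref{prop:Q^{l,s}} gives $g\in Q^{l+1,s}_\Om(\R^n)\subseteq C^1_b(\R^n)$, so for each fixed $x\in\R^n$ the one-dimensional fundamental theorem of calculus applied to $t\mapsto g(\varphi_t(x))$ yields
\[
g(\varphi(x))-g(\varphi_0(x))=\int_0^1\sum_{j=1}^n(\p_{x_j}g)\big(\varphi_t(x)\big)\,\big(\varphi_j(x)-\varphi_{0,j}(x)\big)\,dt .
\]
I would then set $h_t:=\sum_{j=1}^n\big[(\p_{x_j}g)\circ\varphi_t\big]\cdot(\varphi_j-\varphi_{0,j})$ and check, using Proposition \ref{prop:composition} (composition sends $Q^{l,s}_\Om\times QD^{l,s}_\Om$ into $Q^{l,s}_\Om$) and the Banach algebra property (Proposition \ref{prop:Q^{l,s}}(ii)), that $h_t\in Q^{l,s}_\Om(\R^n)$ and that $t\mapsto h_t$ is continuous on $[0,1]$; hence $\int_0^1 h_t\,dt$ exists as a Riemann integral in $Q^{l,s}_\Om(\R^n)$. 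Because point evaluation is a bounded linear functional on $Q^{l,s}_\Om(\R^n)$ (Remark \ref{rem:L^infty}), it commutes with this integral, so $\int_0^1 h_t\,dt$ and $g\circ\varphi-g\circ\varphi_0$ have the same value at every $x$; as elements of $Q^{l,s}_\Om(\R^n)$ are genuine continuous functions, this forces $g\circ\varphi-g\circ\varphi_0=\int_0^1 h_t\,dt$ in $Q^{l,s}_\Om(\R^n)$.

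Finally I would estimate: $\|g\circ\varphi-g\circ\varphi_0\|_{l,s}\le\sup_{t\in[0,1]}\|h_t\|_{l,s}$, and for each $t$ the ring inequality, Lemma \ref{lem:almost_lipschitz} applied to each $(\p_{x_j}g)\circ\varphi_t$ with $\varphi_t\in U(\varphi_0)$, the continuity of $\p_{x_j}:Q^{l+1,s}_\Om(\R^n)\to Q^{l,s}_\Om(\R^n)$ (Proposition \ref{prop:Q^{l,s}}(i)), and $\|\varphi_j-\varphi_{0,j}\|_{l,s}\le\|\varphi-\varphi_0\|_{l,s}$ combine to give $\|h_t\|_{l,s}\le C\,\|g\|_{l+1,s}\,\|\varphi-\varphi_0\|_{l,s}$ with $C$ independent of $t$, $g$, and $\varphi\in U(\varphi_0)$. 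The vector-valued case follows componentwise. The point that requires care — and the reason the loss of one derivative in $g$ is genuine rather than an artifact — is that I must not use differentiability of composition in the diffeomorphism variable, which is precisely the content of Theorem \ref{th:composition} with $r\ge 1$ that this lemma is meant to help establish; the device above sidesteps this by deriving the integral identity pointwise in $\R^n$ and only afterwards lifting it to $Q^{l,s}_\Om(\R^n)$ through continuity, so nothing circular enters. The only secondary bookkeeping point is to make sure the chosen neighborhood is convex so that $\varphi_t$ stays inside the region where Lemma \ref{lem:almost_lipschitz} applies.
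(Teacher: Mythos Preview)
Your proposal is correct and follows essentially the same approach as the paper: write the difference via the mean-value integral along the segment $\varphi_t=\varphi_0+t(\varphi-\varphi_0)$, observe that $t\mapsto [dg]\circ\varphi_t$ is continuous in $Q^{l,s}_\Om$ so the integral lives in $Q^{l,s}_\Om$, and then combine the Banach algebra inequality with Lemma~\ref{lem:almost_lipschitz}. Your added remarks on convexity of the ball-shaped neighborhood and on avoiding circularity with Theorem~\ref{th:composition} are accurate and make the argument slightly more explicit than the paper's own write-up.
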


\begin{proof}[Proof of Lemma \ref{lem:almost_bilinear}]
In view of the inclusion $Q^{l,s}_\Om(\R^n)\subseteq C^1_b(\R^n)$ we have that for any 
$\varphi\in U(\varphi_0)\equiv\id_{\R^n}+B(f_0)$, 
where $B(f_0)$ is a given open ball centered at $f_0\equiv\varphi_0-\id_{\R^n}$ in $Q^{l,s}_\Om(\R^n)$ and such that 
$U(\varphi_0)\subseteq QD^{l,s}_\Om(\R^n)$, and for any $x\in\R^n$,
\begin{eqnarray}
g\big(\varphi(x)\big)-g\big(\varphi_0(x)\big)&=&\int_0^1[d_yg]|_{y=\varphi_0(x)+t\delta\varphi(x)}\cdot\delta\varphi(x)\,dt\nonumber\\
&=&\Big(\int_0^1[d_yg]|_{y=\varphi_0(x)+t\delta\varphi(x)}\,dt\Big)\cdot\delta\varphi(x)\label{eq:mean_value}
\end{eqnarray}
where $\delta\varphi:=\varphi-\varphi_0\in Q^{l,s}_\Om(\R^n)$.
Since, $g\in Q^{l+1,s}_\Om(\R^n)$ and $\varphi,\varphi_0\in Q^{l,s}_\Om(\R^n)$ we conclude from Proposition \ref{prop:Q^{l,s}}
and Proposition \ref{prop:composition} that the curve
\[
[0,1]\to Q^{l,s}_\Om(\R^n)\otimes\text{\rm Mat}_{n\times n}(\R),\quad t\mapsto [d_yg]|_{y=\varphi_0(x)+t\delta\varphi(x)},
\]
is continuous. This implies that that the integral in \eqref{eq:mean_value} has convergent Riemann sums in $Q^{l,s}_\Om(\R^n)$ and
that \eqref{eq:mean_value} holds in $Q^{l,s}_\Om(\R^n)$. This and the Banach algebra property of $Q^{l,s}_\Om(\R^n)$ then imply that
\begin{equation}\label{eq:formula2}
\|g\circ\varphi-g\circ\varphi_0\|_{l,s}\le C_1 \sup_{\varphi\in U(\varphi_0)}\big\|[dg]\circ\varphi\big\|_{l,s}\|\delta\varphi\|_{l,s}
\end{equation}
for some constants $C_1>0$ that is independent of the choice of $g\in Q^{l+1,s}_\Om(\R^n)$ and $\varphi\in U(\varphi_0)$,
and the neighborhood $U(\varphi_0)$ in $QD^{l,s}_\Om(\R^n)$ is chosen such that $\big\|[dg]\circ\varphi\big\|_{l,s}$ is bounded 
uniformly in $U(\varphi_0)$ (see Proposition \ref{prop:composition}, Lemma \ref{lem:almost_lipschitz}).
Finally, the Lemma follow from \eqref{eq:formula2} and lemma \ref{lem:almost_lipschitz}.
\end{proof}

Now, we are ready to prove Theorem \ref{th:composition}.

\begin{proof}[Proof of Theorem \ref{th:composition}]
For the proof of the regularity of the composition \eqref{eq:composition_regularity} we follow the lines of the proof of \cite[Proposition 2.9]{IKT}. 
If $r=0$ the statement of Theorem \ref{th:composition} follows directly from
Proposition \ref{prop:composition} and Proposition \ref{prop:the_inverse_map}. Further, assume that $r\ge 1$ and take 
$g,\delta g\in Q^{l+r,s}_\Om(\R^n)$, $\varphi\in QD^{l,s}_\Om(\R^n)$, and $\delta\varphi\in Q^{l,s}_\Om$ such that 
$\varphi+\delta\varphi\in U(\varphi)\equiv\id_{\R^n}+B(f)$ where $B(f)$ is a given open ball centered at $f\equiv\varphi-\id_{\R^n}$ in 
$Q^{l,s}_\Om$ and such that $U(\varphi)\subseteq QD^{l,s}_\Om(\R^n)$. 
In view of the inclusion $Q^{l+r,s}_\Om(\R^n)\subseteq C^{l+r+1}_b(\R^n)\subseteq C^{r}_b(\R^n)$ 
(see Proposition \ref{prop:Q^{l,s}} $(iii)$) one sees from Taylor's formula with remainder in integral form that for any $x\in\R^n$,
\begin{equation}\label{eq:taylor_expansion1}
g\big(\varphi(x)+\delta\varphi(x)\big)=\sum_{|\beta|\le r}\frac{1}{\beta!}\big(\p_x^\beta g\big)\big(\varphi(x)\big){\big(\delta\varphi(x)\big)^\beta}+
\mathcal{R}_1\big(g,\varphi,\delta\varphi\big)(x)
\end{equation}
where the remainder $\mathcal{R}_1\big(g,\varphi,\delta\varphi\big)(x)$ is given by
\begin{equation}\label{eq:remainder1}
\sum_{|\beta|=r}\frac{r}{\beta!}\int_0^1(1-t)^{r-1}
\Big(\big(\p_x^\beta g\big)\big(\varphi(x)+t\delta\varphi(x)\big)-\big(\p_x^\beta g\big)\big(\varphi(x)\big)\Big)\,dt
\cdot\big(\delta\varphi(x)\big)^\beta.
\end{equation}
Similarly, for any $\varphi$, $\delta\varphi$, and $\delta g$ as above and for any $x\in\R^n$ we have
\begin{equation}\label{eq:taylor_expansion2}
\delta g\big(\varphi(x)+\delta\varphi(x)\big)=\sum_{|\beta|\le r-1}\frac{1}{\beta!}
\big(\p_x^\beta \delta g\big)\big(\varphi(x)\big){\big(\delta\varphi(x)\big)^\beta}+
\mathcal{R}_2\big(\varphi,\delta g,\delta\varphi\big)(x)
\end{equation}
where $\mathcal{R}_2\big(\varphi,\delta g,\delta\varphi\big)(x)$ is given by
\begin{equation}\label{eq:remainder2}
\sum_{|\beta|=r}\frac{r}{\beta!}\int_0^1(1-t)^{r-1}
\big(\p_x^\beta\delta g\big)\big(\varphi(x)+t\delta\varphi(x)\big)\,dt
\cdot\big(\delta\varphi(x)\big)^\beta.
\end{equation}
It follows from Proposition \ref{prop:composition}, Proposition \ref{prop:the_inverse_map}, and the Banach algebra property of 
$Q^{l,s}_\Om(\R^n)$ that the integrals in \eqref{eq:remainder1} and \eqref{eq:remainder2} have convergent Riemann sums in 
$Q^{l,s}_\Om(\R^n)$ and the equalities \eqref{eq:taylor_expansion1} and \eqref{eq:taylor_expansion2} also hold in $Q^{l,s}_\Om(\R^n)$.
Summing up these two equalities we see that for any $g,\delta g\in Q^{l+r,s}_\Om(\R^n)$, $\varphi\in QD^{l,s}_\Om(\R^n)$, and for any
$\delta\varphi\in Q^{l,s}_\Om$ as above,
\begin{equation}\label{eq:expansion3}
(g+\delta g)\circ(\varphi+\delta\varphi)=g\circ\varphi+\sum_{k=1}^r \frac{1}{k!}P_k\big(g,\varphi\big)(\delta g,\delta\varphi)+
\mathcal{R}\big(g,\varphi,\delta g,\delta\varphi\big)
\end{equation}
where
\[
\frac{1}{k!}\,P_k\big(g,\varphi\big)(\delta g,\delta\varphi):=
\sum_{|\beta|=k}\frac{1}{\beta!}\big(\p_x^\beta g\big)\circ\varphi\cdot(\delta\varphi)^\beta+
\sum_{|\beta|=k-1}\frac{1}{\beta!}\big(\p_x^\beta \delta g\big)\circ\varphi\cdot(\delta\varphi)^\beta
\]
and
\begin{eqnarray}
\mathcal{R}\big(g,\varphi,\delta g,\delta\varphi\big)&:=&\mathcal{R}_1(g,\varphi,\delta\varphi\big)+
\mathcal{R}_2(\varphi,\delta\varphi,\delta g\big)\nonumber\\
&=&\sum_{|\beta|=r}\rho_{\beta}(g,\varphi,\delta g,\delta\varphi)\cdot(\delta\varphi)^\beta\label{eq:remainder3}
\end{eqnarray}
with
\begin{eqnarray}
\rho_{\beta}(g,\varphi,\delta g,\delta\varphi)&:=&\frac{r}{\beta!}\int_0^1(1-t)^{r-1}
\Big(\big(\p_x^\beta g\big)\circ\big(\varphi+t\delta\varphi\big)-\big(\p_x^\beta g\big)\circ\varphi(x)\Big)\,dt\nonumber\\
&+&\frac{r}{\beta!}\int_0^1(1-t)^{r-1}
\big(\p_x^\beta\delta g\big)\circ\big(\varphi+t\delta\varphi\big)\,dt\label{eq:remainder3'}
\end{eqnarray}
for any multi-index $\beta\in\Z_{\ge 0}^n$ with $|\beta|=r$.
In view of the Banach algebra property of $Q^{l,s}_\Om(\R^n)$ for any $g\in Q^{l+r,s}_\Om(\R^n)$ and 
$\varphi\in QD^{l,s}_\Om(\R^n)$,
\[
P_k(g,\varphi)\in\mathcal{P}^k\Big(Q^{l+r,s}_\Om\times Q^{l,s}_\Om,Q^{l,s}_\Om\Big),\quad 1\le k\le r,
\]
where $\mathcal{P}^k(X,Y)$ denotes the Banach space of polynomial maps of degree $k$ from a normed
space $X$ to a Banach space $Y$ supplied with the uniform norm.
Moreover, using again the Banach algebra property of $Q^{l,s}_\Om(\R^n)$, Proposition \ref{prop:composition},
and Lemma \ref{lem:almost_bilinear} one easily sees that for any $1\le k\le r$ the map
\[
P_k : Q^{l+r,s}_\Om(\R^n)\times QD^{l,s}_\Om(\R^n)\to\mathcal{P}^k
\Big(Q^{l+r,s}_\Om\times Q^{l,s}_\Om,Q^{l,s}_\Om\Big)
\]
is continuous. 
For any $\varphi\in QD^{l,s}_\Om(\R^n)$ denote by $B_\bullet(f)$ the ball centered at $f\equiv\varphi-\id_{\R^n}$ in 
$Q^{l,s}_\Om(\R^n)$ of maximal radius such that $U_\bullet(\varphi)\equiv\id_{\R^n}+B_\bullet(f)\subseteq QD^{l,s}_\Om(\R^n)$.
Consider the open set $V$ of elements $(g,\varphi,\delta g,\delta\varphi)$ in 
$Q^{l+r,s}_\Om\times QD^{l,s}_\Om(\R^n)\times Q^{l+r,s}_\Om\times Q^{l,s}_\Om$,
\[
V:=\Big\{(g,\delta g)\in Q^{l+r,s}_\Om\times Q^{l+r,s}_\Om,\varphi\in QD^{l,s}_\Om,\delta\varphi\in Q^{l,s}_\Om\,\Big|\,
\varphi+\delta\varphi\in U_\bullet(\varphi)\Big\}.
\]
It follows from \eqref{eq:remainder3'} and Proposition \ref{prop:composition} that 
for any $\beta$ with $|\beta|=r$ the map $\rho_\beta : V\to Q^{l,s}_\Om(\R^n)$ is continuous. 
This together with \eqref{eq:remainder3}, \eqref{eq:remainder3'}, and the Banach algebra property of $Q^{l,s}_\Om(\R^n)$ then 
implies that the map
\[
\widetilde{\mathcal{R}} : V\to \mathcal{P}^k\Big(Q^{l+r,s}_\Om\times Q^{l,s}_\Om,Q^{l,s}_\Om\Big),\quad
(g,\varphi,\delta g,\delta\varphi)\mapsto\big[w\mapsto\widetilde{\mathcal{R}}(g,\varphi,\delta g,\delta\varphi;w)\big],
\]
where
\[
\widetilde{\mathcal{R}}(g,\varphi,\delta g,\delta\varphi;w):=\sum_{|\beta|=r}\rho_\beta(g,\varphi,\delta g,\delta\varphi)\cdot w^\beta,
\quad w\in Q^{l,s}_\Om(\R^n),
\]
is continuous. Note that
\begin{equation}\label{eq:R=tildeR}
\mathcal{R}\big(g,\varphi,\delta g,\delta\varphi\big)=\widetilde{\mathcal{R}}(g,\varphi,\delta g,\delta\varphi;\delta\varphi).
\end{equation}
Since $\widetilde{\mathcal{R}}\big(g,\varphi,0,0;\cdot\big)=0$ we then conclude from \eqref{eq:expansion3}, \eqref{eq:R=tildeR}, and 
the converse to Taylor's theorem (see e.g. \cite[Theorem 2.4.15]{AMR}) that \eqref{eq:composition_regularity} is a $C^r$-map.

The regularity of the inverse map \eqref{eq:inverse_regularity} now follows easily from the implicit function theorem -- 
see the proof of \cite[Proposition 2.13]{IKT}. In fact, if $r=0$ then the statement follows from Proposition \ref{prop:the_inverse_map}.
Now, assume that $r\ge 1$, take $\varphi_0\in QD^{l+r,s}_\Om(\R^n)$, and denote by $\psi_0$ its inverse 
$\psi_0=\varphi_0^{-1}\in QD^{l+r,s}_\Om(\R^n)$. It follows from the regularity of \eqref{eq:composition_regularity} that
\[
F : QD^{l+r,s}_\Om(\R^n)\times QD^{l,s}_\Om(\R^n)\to Q^{l,s}_\Om(\R^n),\quad(\varphi,\psi)\mapsto\varphi\circ\psi,
\]
is $C^r$-smooth. The (functional) partial derivarive of $F$ with respect to the second argument 
$D_2F(\varphi_0,\psi_0) : Q^{l,s}_\Om(\R^n)\to Q^{l,s}_\Om(\R^n)$ is 
\[
D_2F(\varphi_0,\psi_0)(\delta\psi)=\big[[d\varphi_0]\circ\varphi_0^{-1}\big]\cdot\delta\psi.
\]
Since $r\ge 1$ it follows from the Banach algebra property of $Q^{l,s}_\Om(\R^n)$ that the components of the Jacobian matrix
$[d\varphi_0]\circ\varphi_0^{-1}$ and it's (matrix) inverse $\big[[d\varphi_0]\circ\varphi_0^{-1}\big]^{-1}$ belong to
$Q^{l,s}_\Om(\R^n)$ (see Lemma \ref{lem:division}). This implies that 
\[
D_2F(\varphi_0,\psi_0) : Q^{l,s}_\Om(\R^n)\to Q^{l,s}_\Om(\R^n)
\]
is an isomorphism. In addition, $F(\varphi_0,\psi_0)=\id_{\R^n}$.
Hence, by the implicit function theorem in Banach spaces, there exist an open neighborhood $U(\varphi_0)$ of $\varphi_0$ in 
$QD^{l+r,s}_\Om(\R^n)$ and a $C^r$-map
\begin{equation}\label{eq:Psi}
\Psi : U(\varphi_0)\to Q^{l,s}_\Om(\R^n),\quad\varphi\mapsto\Psi(\varphi),
\end{equation}
such that $F\big(\varphi,\Psi(\varphi)\big)=\id_{\R^n}$ for any $\varphi\in U(\varphi_0)$.
In particular we see that for any $\varphi\in U(\varphi_0)$, $\varphi^{-1}=\Psi(\varphi)$.
Hence, by the regularity of the map \eqref{eq:Psi}, the map 
\[
U(\varphi_0)\to QD^{l,s}_\Om(\R^n),\quad\varphi\mapsto\varphi^{-1}=\Psi(\varphi),
\] 
is a $C^r$-map. Since $\varphi_0\in QD^{l+r,s}_\Om(\R^n)$ was chosen arbitrarily we conclude that 
\eqref{eq:inverse_regularity} is a $C^r$-map.
\end{proof}

\section{Proof of Theorem \ref{th:euler}}\label{sec:euler}
Consider the Euler equation \eqref{eq:euler},
\begin{equation}\label{eq:euler1}
\left\{
\begin{array}{l}
u_t+u\cdot\nabla u=-\nabla\pp,\quad\Div u=0,\\
u|_{t=0}=u_0,
\end{array}
\right.
\end{equation}
and assume that for given $l\ge 2$, $s>\frac{M}{2}+1$, and $T>0$,
\begin{equation}\label{eq:u-solution}
u\in C\big([-T,T],Q^{l,s}_\Om\big)\cap C^1(\big[-T,T],Q^{l-1,s}_\Om\big)
\end{equation}
is a solution of \eqref{eq:euler1}. Then, by \eqref{eq:euler1} and the Banach algebra property of $Q^{l-1,s}_\Om(\R^n)$
(Proposition \ref{prop:Q^{l,s}} (ii)),
\begin{equation*}
\nabla\pp\in C\big([-T,T],Q^{l-1,s}_\Om\big)
\end{equation*}
and, by assumption, $\pp(t)\in Q^s_\Om(\R^n)$ and has mean-value zero for any $t\in[-T,T]$.
By applying $\Div$ to the both sides of \eqref{eq:euler1}, we obtain that
\begin{equation}\label{eq:pressure1}
\Div\big(u\cdot\nabla u\big)=-\Delta\pp\,.
\end{equation}
Since $\Div u=0$ we then conclude by a direct computation that
\begin{equation}\label{eq:Q-relation}
\Div\big(u\cdot\nabla u\big)=\tr\big([du]^2\big)+u\cdot\nabla\big(\Div u\big)=\tr\big([du]^2\big)
\end{equation}
where $[du]$ is the Jacobian matrix of $w$ and $\tr$ is the trace of an $n\times n$-matrix.
For $w\in Q^{l,s}_\Om$ with $l\ge 0$, $s>M/2$, denote
\begin{equation}\label{eq:Q}
Q(w):=\tr\big([dw]^2\big)
\end{equation}
and note that $Q(w)\in Q^{l-1,s}_\Om(\R^n)$ by Proposition \ref{prop:Q^{l,s}}.

Let $\Lambda_m=(\Lambda_{m,1},...,\Lambda_{m,n})$, $m\in\Z^M$, 
be the exponents appearing in the Fourier expansion \eqref{eq:f_expansion1} of 
the elements of $Q^{r,s}_\Om$ with $r\ge 0$ and $s>M/2$. Consider the subsets of indices
\begin{align}
&I_\bullet:=\big\{m\in\Z^M\,\big|\,|\Lambda_m|\le 1,\,\,1\le j\le n\big\},\label{eq:I_bullet}\\
&I_\infty:=\Z^M\setminus I_\bullet,\label{eq:I_infty}
\end{align}
and define the closed subspaces in $Q^{l,s}_\Om$,
\begin{align}
&Q^{r,s}_{\Om,\bullet}:=\big\{w\in Q^{r,s}_\Om\,\big|\,\widehat{w}_m=0\,\,\,
\text{\rm for }\,\,m\in I_\infty\big\},\label{eq:Q_bullet}\\
&Q^{r,s}_{\Om,\infty}:=\big\{w\in Q^{r,s}_\Om\,\big|\,\widehat{w}_m=0\,\,\,
\text{\rm for }\,\,m\in I_\bullet\big\}.\label{eq:Q_infty}
\end{align}
By construction,
\begin{equation}\label{eq:splitting}
Q^{r,s}_\Om=Q^{r,s}_{\Om,\bullet}\oplus_\perp Q^{r,s}_{\Om,\infty},
\end{equation}
where the spaces are orthogonal with respect to the scalar product 
\eqref{eq:Q^{l,s}-scalar_product} on $Q^{r,s}_\Om$. Let 
\begin{equation}\label{eq:projections}
\Pi_\bullet : Q^{r,s}_\Om\to Q^{r,s}_{\Om,\bullet},\quad 
\Pi_\infty : Q^{r,s}_\Om\to Q^{r,s}_{\Om,\infty},
\end{equation}
be the (orthogonal) projections corresponding to the splitting \eqref{eq:splitting}.
Clearly, $\Pi_\infty+\Pi_\bullet=\id_{Q^{r,s}_\Om}$ and the projections \eqref{eq:projections}
commute with the differentiation in the scale $Q^{r,s}_\Om$, $r\ge 0$, $s>M/1$. 
Our first observation concerns the space $Q^{r,s}_{\Om,\bullet}$.

\begin{Lem}\label{lem:Q_bullet-smooth}
For any $r,\tau\in\Z_{\ge 0}$ and $s>M/2$ we have that 
$Q^{r,s}_{\Om,\bullet}\subseteq Q^{r+\tau,s}_\Om\subseteq C^\infty_b$
and the inclusions are bounded. In particular, for any $r,\tau\ge 0$ the map
\[
\Pi_\bullet : Q^{r,s}_\Om\to Q^{r+\tau,s}_\Om
\]
is well-defined and bounded.
\end{Lem}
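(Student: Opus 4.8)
The plan is to exploit the one special feature of the index set $I_\bullet$: although $I_\bullet$ need not be finite, the Fourier exponents $\Lambda_m$ with $m\in I_\bullet$ are uniformly bounded, so raising the weight $\langle\Lambda_m\rangle$ to an arbitrarily large power costs only an $m$-independent constant. Concretely, by \eqref{eq:I_bullet} there is a constant $c_0\ge 1$ (one may take $c_0=\sqrt 2$) with $\langle\Lambda_m\rangle\le c_0$ for all $m\in I_\bullet$. First I would fix $w\in Q^{r,s}_{\Om,\bullet}$ and $\tau\in\Z_{\ge 0}$; by \eqref{eq:Q_bullet} the Fourier coefficients $\widehat{w}_m$ vanish for $m\in I_\infty$, so, using the description of the norm from Lemma \ref{lem:Q^{l,s}}(ii)--(iii) (cf. \eqref{eq:Q^{l,s}'-norm}),
\[
\sum_{m\in\Z^M}|\widehat{w}_m|^2\langle\Lambda_m\rangle^{2(r+\tau)}\m^{2s}=\sum_{m\in I_\bullet}|\widehat{w}_m|^2\langle\Lambda_m\rangle^{2(r+\tau)}\m^{2s}\le c_0^{2\tau}\sum_{m\in I_\bullet}|\widehat{w}_m|^2\langle\Lambda_m\rangle^{2r}\m^{2s}=c_0^{2\tau}\,\|w\|_{r,s}^2<\infty.
\]
By Lemma \ref{lem:Q^{l,s}}(ii) the finiteness of the left-hand side means $w\in Q^{r+\tau,s}_\Om$, and the inequality says precisely that the inclusion $Q^{r,s}_{\Om,\bullet}\hookrightarrow Q^{r+\tau,s}_\Om$ is bounded (with norm at most $c_0^{\tau}$), for every $\tau$.

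Next, to reach $C^\infty_b$ I would let $k_0\in\Z_{\ge 0}$ be the largest integer with $s>\frac{M}{2}+k_0$; it exists because $s>\frac{M}{2}$. By Proposition \ref{prop:Q^{l,s}}(iii) the inclusion $Q^{r+\tau,s}_\Om\subseteq C^{k_0+r+\tau}_b$ is bounded, so composing with the previous step, $Q^{r,s}_{\Om,\bullet}\subseteq C^{k_0+r+\tau}_b$ with bounded inclusion, for every $\tau\in\Z_{\ge 0}$. Since $k_0+r+\tau\to\infty$ as $\tau\to\infty$, this gives $Q^{r,s}_{\Om,\bullet}\subseteq\bigcap_{k\ge 0}C^k_b=C^\infty_b$, with each inclusion $Q^{r,s}_{\Om,\bullet}\to C^k_b$ bounded. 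Finally, the projection $\Pi_\bullet$ of \eqref{eq:projections} acts on Fourier series by deleting the modes indexed by $I_\infty$, hence $\|\Pi_\bullet w\|_{r,s}\le\|w\|_{r,s}$ for all $w\in Q^{r,s}_\Om$ (it is well-defined by the splitting \eqref{eq:splitting}), i.e.\ $\Pi_\bullet:Q^{r,s}_\Om\to Q^{r,s}_{\Om,\bullet}$ is bounded; composing with the bounded inclusion $Q^{r,s}_{\Om,\bullet}\hookrightarrow Q^{r+\tau,s}_\Om$ established above yields the boundedness of $\Pi_\bullet:Q^{r,s}_\Om\to Q^{r+\tau,s}_\Om$.

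I do not anticipate a genuine technical obstacle; everything reduces to the weight comparison on $I_\bullet$ together with results already proved (Lemma \ref{lem:Q^{l,s}}, Proposition \ref{prop:Q^{l,s}}, and the orthogonal splitting \eqref{eq:splitting}). The one point worth stressing is conceptual rather than computational: one should \emph{not} try to argue that $I_\bullet$ is finite -- it is not, as soon as the subgroup $\{\Lambda_m\}_{m\in\Z^M}$ is dense in $\R^n$ -- and the mechanism that replaces finiteness is exactly the uniform bound $\langle\Lambda_m\rangle\le c_0$ on $I_\bullet$, which makes $\langle\Lambda_m\rangle^{2(r+\tau)}$ and $\langle\Lambda_m\rangle^{2r}$ comparable with a constant depending on $\tau$ but not on $m$.
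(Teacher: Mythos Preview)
Your proposal is correct and follows essentially the same approach as the paper's own proof: both arguments rest on the uniform bound $\langle\Lambda_m\rangle\le\sqrt{2}$ for $m\in I_\bullet$, use Lemma \ref{lem:Q^{l,s}}(ii)--(iii) to compare the $\|\cdot\|_{r+\tau,s}$ and $\|\cdot\|_{r,s}$ norms, and then invoke Proposition \ref{prop:Q^{l,s}}(iii) to pass to $C^\infty_b$. Your write-up is in fact slightly more explicit (you give the operator-norm bound $c_0^{\tau}$ and spell out the $\Pi_\bullet$ step separately), and your closing remark that $I_\bullet$ is generally infinite is a useful clarification, but there is no substantive difference in method.
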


\begin{proof}[Proof of Lemma \ref{lem:Q_bullet-smooth}]
It is enough to consider scalar valued functions. 
Take $f\in Q^{r,s}_\Om(\R^n)$ where $r\ge 0$ and $s>M/2$. 
It then follows from Lemma \ref{lem:Q^{l,s}} (ii) that
\[
f(x)=\sum_{m\in I_\bullet} \widehat{f}_m e^{i (\Lambda_m,x)}
\]
where $\sum_{m\in I_\bullet}|\widehat{f}_m|^2\langle\Lambda_m\rangle^{2r}\m^{2s}<\infty$.
This and the definition \eqref{eq:I_bullet} of the set $I_\bullet$ then imply that 
$|\Lambda_m|$ are bounded uniformly in $m\in I_\bullet$ and hence for any $\tau\ge 0$,
\[
\sum_{m\in I_\bullet}|\widehat{f}_m|^2\langle\Lambda_m\rangle^{2(r+\tau)}\m^{2s}<\infty.
\]
Applying  Lemma \ref{lem:Q^{l,s}} (ii) one more time we see that $f\in Q^{r+\tau,s}_\Om(\R^n)$ and
the inclusions
\[
Q^{r,s}_{\Om,\bullet}(\R^n)\subseteq Q^{r+\tau,s}_\Om(\R^n)
\]
are bounded. Since this happens for any $\tau\ge 0$ we then conclude from 
Proposition \ref{prop:Q^{l,s}} (iii) that 
\[
Q^{r,s}_{\Om,\bullet}(\R^n)\subseteq\bigcap_{\tau\ge 0} Q^{\tau,s}_\Om(\R^n)\subseteq C^\infty_b(\R^n)
\]
and the inclusions are bounded.
\end{proof}

\begin{Rem}
More generally, since $f\in Q^{r,s}_{\Om,\bullet}(\R^n)\subseteq S'(\R^n)$ is a tempered distribution 
whose Fourier transform
\[
(\mathcal{F}f)(\xi)=(2\pi)^n\sum_{m\in I_\bullet}\widehat{f}_m\delta(\xi-\Lambda_m)
\]
has support $I_\bullet$ inside the centered at zero closed ball of radius $\le 1$ in $\C^n$, we conclude from 
Paley-Wiener's theorem that $f$ extends to an entire function on $\C^n$ such that 
$|f(\xi)|\le C e^{\mathop{\rm Im}(\xi)})$ (cf. e.g. \cite{Shubin2}). 
\end{Rem}

Our next observation is the following lemma.

\begin{Lem}\label{lem:Delta-isomorphism}
For any $r\ge 0$ and $s>M/2$ the map
\begin{equation*}
\Delta : Q^{r+2,s}_{\Om,\infty}\to Q^{r,s}_{\Om,\infty}
\end{equation*}
is a linear isomorphism.
\end{Lem}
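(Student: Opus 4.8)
The plan is to realize $\Delta$ as a Fourier multiplier on $Q^{r,s}_{\Om,\infty}$ whose symbol is bounded both above and below. First I would record, using Lemma \ref{lem:Q^{l,s}}~(ii) with the multi-indices $\beta$ of length $2$ and summing over the second-order pure derivatives, that for every $f\in Q^{r+2,s}_{\Om,\infty}$ one has $\widehat{(\Delta f)}_m=\sum_{j=1}^n(i\Lambda_{m,j})^2\,\widehat f_m=-|\Lambda_m|^2\,\widehat f_m$ for all $m\in\Z^M$. Since $\widehat f_m=0$ for $m\in I_\bullet$, the same holds for $\Delta f$, so $\Delta$ sends the subspace supported on $I_\infty$ into itself, and it acts there by multiplying the $m$-th Fourier coefficient by $-|\Lambda_m|^2$.

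The elementary fact that makes everything work is that on $I_\infty$ the symbol $|\Lambda_m|^2$ is comparable to $\langle\Lambda_m\rangle^2$. Indeed, by \eqref{eq:I_infty} and \eqref{eq:I_bullet} we have $|\Lambda_m|>1$ whenever $m\in I_\infty$, hence $1<|\Lambda_m|^2\le\langle\Lambda_m\rangle^2=1+|\Lambda_m|^2\le 2|\Lambda_m|^2$, so that $\tfrac12\langle\Lambda_m\rangle^2\le|\Lambda_m|^2\le\langle\Lambda_m\rangle^2$ uniformly in $m\in I_\infty$. With this in hand I would verify the three points. \emph{Boundedness:} using the equivalent norm $\|\cdot\|_{l,s}$ from Lemma \ref{lem:Q^{l,s}}~(iii),
\[
\|\Delta f\|_{r,s}^2=\sum_{m\in I_\infty}|\widehat f_m|^2|\Lambda_m|^4\langle\Lambda_m\rangle^{2r}\m^{2s}\le\sum_{m\in I_\infty}|\widehat f_m|^2\langle\Lambda_m\rangle^{2(r+2)}\m^{2s}=\|f\|_{r+2,s}^2.
\]
\emph{Injectivity:} if $\Delta f=0$ then $|\Lambda_m|^2\widehat f_m=0$ for all $m$, and since $|\Lambda_m|>1$ on $I_\infty$ this forces $\widehat f_m=0$, i.e. $f=0$. \emph{Surjectivity with bounded inverse:} given $g=\sum_{m\in I_\infty}\widehat g_m e^{i(\Lambda_m,x)}\in Q^{r,s}_{\Om,\infty}$, I would set $c_m:=-\widehat g_m/|\Lambda_m|^2$ for $m\in I_\infty$ and $c_m:=0$ otherwise; the comparison above gives
\[
\sum_{m\in\Z^M}|c_m|^2\langle\Lambda_m\rangle^{2(r+2)}\m^{2s}=\sum_{m\in I_\infty}|\widehat g_m|^2\frac{\langle\Lambda_m\rangle^{2(r+2)}}{|\Lambda_m|^4}\m^{2s}\le 4\,\|g\|_{r,s}^2<\infty,
\]
so by Lemma \ref{lem:qp_distributions1} together with Lemma \ref{lem:Q^{l,s}}~(ii) the series $f:=\sum_{m}c_m e^{i(\Lambda_m,x)}$ is a genuine element of $Q^{r+2,s}_\Om$; it is supported on $I_\infty$, hence $f\in Q^{r+2,s}_{\Om,\infty}$, and by construction $\Delta f=g$ with $\|f\|_{r+2,s}\le 2\|g\|_{r,s}$. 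Combining these, $\Delta$ is a continuous linear bijection with continuous inverse, which is the claim.

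There is essentially no serious obstacle in this lemma; the only step needing a little care is confirming that the formally defined preimage $f$ is an honest quasi-periodic function in $Q^{r+2,s}_\Om$ rather than a merely formal series, and this is exactly what the Fourier-coefficient characterization in Lemma \ref{lem:qp_distributions1} and Lemma \ref{lem:Q^{l,s}}~(ii) supplies. It is worth noting that the restriction to the $\infty$-part is essential: on $Q^{r,s}_{\Om,\bullet}$ the Laplacian annihilates the mode $m$ with $\Lambda_m=0$ and fails to be bounded below, so it is not an isomorphism there.
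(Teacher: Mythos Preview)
Your proof is correct and is precisely the argument the paper has in mind: the paper's own proof is the single sentence that the lemma follows from the estimate $\tfrac{1}{2}\langle\Lambda_m\rangle\le|\Lambda_m|\le\langle\Lambda_m\rangle$ for $m\in I_\infty$ together with Lemma~\ref{lem:Q^{l,s}}~(ii), and you have simply written out the details of that Fourier-multiplier computation explicitly.
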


\noindent The lemma follows directly from the estimate 
$\frac{1}{2}\,\langle\Lambda_m\rangle\le|\Lambda_m|\le\langle\Lambda_m\rangle,\quad m\in I_\infty$,
and Lemma \ref{lem:Q^{l,s}} (ii).

In view of \eqref{eq:pressure1} and \eqref{eq:Q-relation} we have
\begin{align*}
-\Delta\pp&=\Div\big(u\cdot\nabla u\big)=\big(\Pi_\infty+\Pi_\bullet\big)\Div\big(u\cdot\nabla u\big)\\
&=\Pi_\infty\circ Q(u)+\Pi_\bullet\Div(u\cdot\nabla u\big)\\
&=\Pi_\infty\circ Q(u)+\Div\circ\,\Pi_\bullet(u\cdot\nabla u\big).
\end{align*}
This implies that
\begin{equation}\label{eq:pressure2}
-\Delta\big(\nabla\pp\big)=\Pi_\infty\circ\nabla\circ Q(u)+\big(\nabla\circ\Div\big)\circ\,\Pi_\bullet\circ D(u),
\end{equation}
where we set 
\[
D(w):=w\cdot\nabla w
\]
for $w\in Q^{l,s}_\Om$. Note that for any $r\ge 0$ and $s>M/2$ the map
\[
\nabla\circ\Div : Q^{r+2,s}_\Om\to Q^{r,s}_\Om
\]
appearing in \eqref{eq:pressure2} is bounded and its image is contained in the image of the Laplace operator
$\Delta : Q^{r+2,s}_\Om\to Q^{r,s}_\Om$. In order to see this, define the linear map
\begin{equation}\label{eq:Delta-inverse-map}
\Delta^{-1}\circ\nabla\circ\Div : Q^{r+2,s}_\Om\to Q^{r+2,s}_\Om,\quad w\mapsto\big(\Delta^{-1}\circ\nabla\circ\Div\big)(w),
\end{equation}
by setting
\begin{equation}\label{eq:Delta-inverse}
\widehat{\Big(\Delta^{-1}\partial_j\partial_k w_k\Big)}_m:=
\left\{
\begin{array}{cc}
\frac{\Lambda_{m,j}\Lambda_{m,k}}{|\Lambda_m|^2}\widehat{(w_k)}_m,&\quad m\in\Z^M\setminus\{0\},\\
0,&m=0,
\end{array}
\right.
\end{equation}
for any $1\le k,j\le n$. The map \eqref{eq:Delta-inverse-map} is bounded since 
$\frac{\Lambda_{m,j}\Lambda_{m,k}}{|\Lambda_m|^2}\le 1$ for any $m\in\Z^M\setminus\{0\}$.
By combining  \eqref{eq:pressure2} with \eqref{eq:Delta-inverse}, Lemma \ref{lem:Delta-isomorphism},
and the fact that $\nabla\pp\in Q^{l-1,s}_\Om(\R^n)$, we conclude that
\begin{equation}\label{eq:pressure3}
-\nabla\pp(t)=\PP\big(u(t)\big)+c(t),\quad t\in[-T,T],
\end{equation}
where $c : [-T,T]\to\R^n$ is a curve in $\R^n$ and
\begin{equation}\label{eq:PP}
\quad\PP(w):=\big(\Delta^{-1}\circ\Pi_\infty\big)\circ\nabla\circ Q(w)+
\big(\Delta^{-1}\circ\nabla\circ\,\Div\big)\circ\Pi_\bullet\,\circ D(w),\quad w\in Q^{l,s}_\Om.
\end{equation}
Here one uses that the kernel of $\Delta : S'(\R^n)\to S'(\R^n)$ consists of harmonic polynomials on $\R^n$
(see e.g. \cite[\S\,5.10]{Shubin2}) and that $\nabla\pp$ and $\PP(u)$ are quasi-periodic, and hence
uniformly bounded on $\R^n $ (cf.  Proposition \ref{prop:Q^{l,s}} (iii), Lemma \ref{lem:PP} below). 
We have

\begin{Lem}\label{lem:PP}
For any $l\ge 2$ and $s>M/2$ the map
\[
\PP : Q^{l,s}_\Om\to Q^{l,s}_\Om,\quad w\mapsto\PP(w),
\]
is well-defined and real analytic. Moreover, $\PP(w)$ has mean-value zero for any $w\in Q^{l,s}_\Om$.
\end{Lem}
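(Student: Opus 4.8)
The plan is to decompose $\PP$ into a composition of maps, each of which is either linear and bounded or polynomial (hence real analytic), on the scale $Q^{l,s}_\Om$. Recall from \eqref{eq:PP} that
\[
\PP(w)=\big(\Delta^{-1}\circ\Pi_\infty\big)\circ\nabla\circ Q(w)+
\big(\Delta^{-1}\circ\nabla\circ\,\Div\big)\circ\Pi_\bullet\,\circ D(w),
\]
where $Q(w)=\tr\big([dw]^2\big)$ and $D(w)=w\cdot\nabla w$. First I would analyze the first summand. The Jacobian map $w\mapsto[dw]$ sends $Q^{l,s}_\Om$ to $Q^{l-1,s}_\Om\otimes\text{\rm Mat}_{n\times n}(\R)$ continuously by Proposition \ref{prop:Q^{l,s}} (i), and since $Q^{l-1,s}_\Om$ is a Banach algebra (Proposition \ref{prop:Q^{l,s}} (ii)) and $l\ge 2$ so that $l-1\ge 1$, the map $w\mapsto Q(w)=\tr\big([dw]^2\big)$ is a continuous homogeneous polynomial of degree $2$ from $Q^{l,s}_\Om$ to $Q^{l-1,s}_\Om$, hence real analytic. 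Applying $\nabla$ lands in $Q^{l-2,s}_\Om$, and then $\Pi_\infty : Q^{l-2,s}_\Om\to Q^{l-2,s}_{\Om,\infty}$ is a bounded projection. The key point is that $\Delta^{-1}\circ\Pi_\infty$ gains two derivatives: by Lemma \ref{lem:Delta-isomorphism}, $\Delta : Q^{l,s}_{\Om,\infty}\to Q^{l-2,s}_{\Om,\infty}$ is a linear isomorphism, so $\Delta^{-1}\circ\Pi_\infty : Q^{l-2,s}_\Om\to Q^{l,s}_{\Om,\infty}\subseteq Q^{l,s}_\Om$ is bounded linear. Composing, the first summand defines a real analytic map $Q^{l,s}_\Om\to Q^{l,s}_\Om$.

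Next I would handle the second summand. The map $w\mapsto D(w)=w\cdot\nabla w$ is, again by Proposition \ref{prop:Q^{l,s}} (i)–(ii), a continuous homogeneous polynomial of degree $2$ from $Q^{l,s}_\Om$ to $Q^{l-1,s}_\Om$, hence real analytic. Then $\Pi_\bullet : Q^{l-1,s}_\Om\to Q^{l-1,s}_{\Om,\bullet}$; here the crucial gain comes from Lemma \ref{lem:Q_bullet-smooth}, which says $\Pi_\bullet : Q^{l-1,s}_\Om\to Q^{l+1,s}_\Om$ is well-defined and bounded (indeed $Q^{r,s}_{\Om,\bullet}\subseteq Q^{r+\tau,s}_\Om$ for all $\tau\ge 0$). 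Finally $\nabla\circ\Div : Q^{l+1,s}_\Om\to Q^{l-1,s}_\Om$ is bounded, and its range lies in the range of $\Delta$ with $\Delta^{-1}\circ\nabla\circ\Div$ bounded as a map $Q^{r+2,s}_\Om\to Q^{r+2,s}_\Om$ by the explicit formula \eqref{eq:Delta-inverse} together with the bound $\frac{\Lambda_{m,j}\Lambda_{m,k}}{|\Lambda_m|^2}\le 1$; applied with $r+2=l+1$ this gives a bounded linear map $Q^{l+1,s}_\Om\to Q^{l+1,s}_\Om\subseteq Q^{l,s}_\Om$. Composing, the second summand is also a real analytic map $Q^{l,s}_\Om\to Q^{l,s}_\Om$. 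Since a finite sum of real analytic maps between Banach spaces is real analytic, $\PP$ is real analytic.

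The mean-value-zero assertion is immediate from the Fourier description: the mean value of a quasi-periodic function $g\in Q^s_\Om(\R^n)$ is precisely its zeroth Fourier coefficient $\widehat{g}_0$ (take $m=0$ in \eqref{eq:f_m}). In the first summand, $\Pi_\infty$ kills the $m=0$ mode since $0\in I_\bullet$, so its image has vanishing zeroth coefficient, and $\Delta^{-1}$ preserves this. In the second summand, the formula \eqref{eq:Delta-inverse} explicitly sets the $m=0$ Fourier coefficient of $\Delta^{-1}\circ\nabla\circ\Div$ applied to anything to be $0$. Hence $\widehat{\PP(w)}_0=0$, i.e. $\PP(w)$ has mean value zero.

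The main obstacle is bookkeeping the loss and gain of derivatives so that everything closes up at level $l$: the term $Q(u)$ costs one derivative to form and one more to apply $\nabla$, so it would only land in $Q^{l-2,s}_\Om$ were it not for the two-derivative gain from $\Delta^{-1}$ on the $\infty$-part (Lemma \ref{lem:Delta-isomorphism}); the term $D(u)$ costs one derivative, and applying $\nabla\circ\Div$ afterward would cost two more, which is only affordable because $\Pi_\bullet$ maps into $C^\infty_b$ functions and in particular gains arbitrarily many derivatives (Lemma \ref{lem:Q_bullet-smooth}) before $\Delta^{-1}\circ\nabla\circ\Div$ is applied. Once this is tracked correctly, real analyticity is just the observation that each factor is either bounded linear or a polynomial map of degree $2$.
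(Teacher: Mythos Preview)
Your proposal is correct and follows essentially the same approach as the paper's proof: both track the derivative gains from $\Delta^{-1}\circ\Pi_\infty$ (via Lemma~\ref{lem:Delta-isomorphism}) and from the smoothing $\Pi_\bullet$ (via Lemma~\ref{lem:Q_bullet-smooth}) to close the estimate at level $l$, and both read off the vanishing mean value from the Fourier side. The only cosmetic difference is that for the first summand the paper observes it equals $\nabla\big(\Delta^{-1}\circ\Pi_\infty\circ Q(w)\big)$ and hence is a gradient with zero mean, whereas you argue directly that $\Pi_\infty$ annihilates the $m=0$ mode; both are valid.
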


\begin{proof}[Proof of Lemma \ref{lem:PP}]
Assume that $l\ge 2$ and $s>M/2$.
It follows from Proposition \ref{prop:Q^{l,s}} (i) and the Banach algebra property of 
$Q^{r,s}_\Om$ (Proposition \ref{prop:Q^{l,s}} (ii)) that the polynomial map \eqref{eq:Q},
\[
Q : Q^{l,s}_\Om\to Q^{l-1}_\Om,\quad w\mapsto Q(w),
\]
is real-analytic. This implies that the map
$\Pi_\infty\circ\nabla\circ Q : Q^{l,s}_\Om\to Q^{l-2}_{\Om,\infty}$
is real analytic. By combining this with Lemma \ref{lem:Delta-isomorphism}
we conclude that 
\[
\big(\Delta^{-1}\circ\Pi_\infty\big)\circ\nabla\circ Q : Q^{l,s}_\Om\to Q^{l,s}_\Om
\]
is well-defined and real analytic. Similar arguments involving the smoothing of the projection 
operator $\Pi_\bullet$ in Lemma \ref{lem:Q_bullet-smooth}
and the boundedness of the map \eqref{eq:Delta-inverse-map} imply that the map
\[
\big(\Delta^{-1}\circ\nabla\circ\Div \big)\circ\Pi_\bullet\,\circ D : Q^{l,s}_\Om\to Q^{l,s}_\Om
\]
is well-defines and real analytic.

Let us now prove  the second statement of the lemma. Take $w\in Q^{l,s}_\Om$.
The second summand in \eqref{eq:PP} has mean-value zero by the definition \eqref{eq:Delta-inverse} of the map
\eqref{eq:Delta-inverse-map}. It follows from \eqref{eq:Q} and Lemma \ref{lem:Delta-isomorphism} 
that the first term in \eqref{eq:PP} is the gradient of the quasi-periodic function 
$\Delta^{-1}\circ\Pi_\infty\circ Q(w)\in Q^{l+1,s}_\Om(\R^n)$. 
By Lemma \ref{lem:Q^{l,s}} (ii) we then conclude that the $0$'th Fourier coefficients of
the components of the first summand vanish. This completes the proof of the lemma.
\end{proof}

On the other side, since $\Div u=0$ the $j$-th component of $u\cdot\nabla u$ can be written as
$u\cdot\nabla u_j=\sum_{k=1}^n\partial_{x^k}\big(u_ku_j\big)$ where 
$(u_1,...,u_n)$ are the components of the fluid velocity \eqref{eq:u-solution}. This implies that
\begin{equation}\label{eq:momentum_tensor_form}
\Div\big(u\cdot\nabla u\big)=\sum_{k,j=1}^n\partial_{x_j}\partial_{x_k}\big(u_ku_j\big).
\end{equation}
By applying $\Div$ to the both sides of \eqref{eq:euler1} we then obtain that
\begin{equation}
-\Delta\pp=\sum_{k,j=1}^n\partial_{x_j}\partial_{x_k}\big(u_ku_j\big).
\end{equation}
Since $u\in C\big([-T,T],Q^{l,s}_\Om\big)$ we conclude from the assumption that $\pp\in Q^s_\Om(\R^n)$ 
has mean-value zero and Proposition \ref{prop:Q^{l,s}} that
\begin{equation}\label{eq:P-regularity}
\pp=\sum_{k,j=1}^n\Delta^{-1}\partial_{x_j}\partial_{x_k}\big(u_k u_j\big)\in C\big([-T,T],Q^{l,s}_\Om\big)
\end{equation}
where $\Delta^{-1}\partial_{x_j}\partial_{x_k} : Q^{l,s}_\Om(\R^n)\to Q^{l,s}_\Om(\R^n)$ is
a bounded linear map defined as in \eqref{eq:Delta-inverse}.
In particular, \eqref{eq:P-regularity} and Lemma \ref{lem:Q^{l,s}} imply that $\nabla P\in Q^{l-1,s}_\Om$ has
mean-value zero. Since by the second part of Lemma \ref{lem:PP}, $\PP(u)$ has mean-value zero, we obtain 
from \eqref{eq:pressure3} that
\begin{equation}\label{eq:pressure4}
-\nabla\pp(t)=\PP(u(t)),\quad t\in[-T,T],
\end{equation}
where $\PP(u)\in C\big([-T,T],Q^{l,s}_\Om\big)$ by the first part of Lemma \ref{lem:PP}.
By combining this with \eqref{eq:P-regularity} we conclude that 
$\nabla\pp,\PP(u)\in C\big([-T,T],Q^{l,s}_\Om\big)$, and hence
\begin{equation}\label{eq:P-regularity_final}
\pp\in C\big([-T,T],Q^{l+1,s}_\Om\big)\,.
\end{equation}
Moreover, \eqref{eq:pressure4} implies that \eqref{eq:u-solution} satisfies the equation
\begin{equation}\label{eq:euler2}
\left\{
\begin{array}{l}
u_t+u\cdot\nabla u=\PP(u),\\
u|_{t=0}=u_0,\quad\Div u_0=0.
\end{array}
\right.
\end{equation}
We have the following lemma.

\begin{Lem}\label{lem:equations_equivalence}
Assume that $u\in C\big([-T,T],Q^{l,s}_\Om\big)\cap C^1\big([-T,T],Q^{l-1,s}_\Om\big)$ 
for some $l\ge 2$ and $s>M/2$. Then, $u$ is a solution of the Euler equation \eqref{eq:euler1} 
such that the pressure $\pp(t)$ belongs to $Q^s_\Om(\R^n)$ and has mean-value zero for any $t\in[-T,T]$ if and only if 
$u$ satisfies equation \eqref{eq:euler2}. In both (equivalent) cases, we have that
$\pp\in C\big([-T,T],Q^{l+1,s}_\Om\big)$.
\end{Lem}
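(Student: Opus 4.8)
The plan is to prove the two implications separately, the regularity $\pp\in C\big([-T,T],Q^{l+1,s}_\Om\big)$ coming out in both cases. The forward implication together with this regularity statement is essentially the computation \eqref{eq:pressure1}--\eqref{eq:P-regularity_final} carried out above: assuming $u$ solves \eqref{eq:euler1} with $\pp(t)\in Q^s_\Om(\R^n)$ of mean value zero, one has $\nabla\pp\in C\big([-T,T],Q^{l-1,s}_\Om\big)$; applying $\Div$ and using $\Div u=0$ gives $-\Delta\pp=\Div(u\cdot\nabla u)$, and the divergence form \eqref{eq:momentum_tensor_form} together with \eqref{eq:P-regularity} places $\pp$ in $C\big([-T,T],Q^{l,s}_\Om\big)$; then $\nabla\pp$ has mean value zero, which combined with \eqref{eq:pressure3} and the mean-value-zero property of $\PP(u)$ (Lemma \ref{lem:PP}) forces the curve $c$ in \eqref{eq:pressure3} to vanish, so \eqref{eq:pressure4} holds and $u$ satisfies \eqref{eq:euler2}; finally $\nabla\pp=-\PP(u)\in C\big([-T,T],Q^{l,s}_\Om\big)$ together with $\pp\in C\big([-T,T],Q^{l,s}_\Om\big)$ yields $\pp\in C\big([-T,T],Q^{l+1,s}_\Om\big)$. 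The only genuinely new point is therefore the reverse implication, where one must check that the divergence-free constraint is preserved in time.

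For the reverse implication, assume $u$ is as in the hypothesis and satisfies \eqref{eq:euler2}. I would apply $\Div$ to \eqref{eq:euler2} and combine the pointwise identity $\Div(u\cdot\nabla u)=Q(u)+u\cdot\nabla(\Div u)$ with the identity $\Div\,\PP(w)=Q(w)+\Pi_\bullet\big(w\cdot\nabla(\Div w)\big)$ — the latter being a direct Fourier computation from the formula \eqref{eq:PP}, the description \eqref{eq:Delta-inverse}, Lemma \ref{lem:Delta-isomorphism}, and the commutation of $\Pi_\bullet,\Pi_\infty$ with $\nabla$ and $\Div$. The $Q(u)$ terms cancel, and $d:=\Div u$, which lies in $C\big([-T,T],Q^{l-1,s}_\Om(\R^n)\big)\cap C^1\big([-T,T],Q^{l-2,s}_\Om(\R^n)\big)$, is seen to satisfy the linear transport equation
\[
\partial_t d+u\cdot\nabla d=\Pi_\bullet\big(u\cdot\nabla d\big),\qquad d\big|_{t=0}=\Div u_0=0 .
\]

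To conclude that $d\equiv 0$, I would first note that applying $\Pi_\bullet$ to this equation and using $\Pi_\bullet\Pi_\infty=0$ gives $\partial_t(\Pi_\bullet d)=0$, so $\Pi_\bullet d(t)=\Pi_\bullet d(0)=0$ for all $t$. For the full statement I would run the standard energy estimate for a transport equation: after mollifying $d$ in the torus variable (for instance by truncating its Fourier series, which commutes with $\partial_x^\beta$, $\Pi_\bullet$ and $\Pi_\infty$) so that $t\mapsto\|d(t)\|_{l-1,s}^2$ is differentiable, one pairs the equation with $d$ in $Q^{l-1,s}_\Om$; the top-order transport term is removed by the usual commutator (Kato--Ponce-type) cancellation, and the right-hand side is controlled by the smoothing estimate $\|\Pi_\bullet(u\cdot\nabla d)\|_{l-1,s}\le C\,\|u\cdot\nabla d\|_{0,s}\le C\,\|u\|_{l,s}\,\|d\|_{l-1,s}$ of Lemma \ref{lem:Q_bullet-smooth} and the Banach algebra property; this gives an inequality of the form $\frac{d}{dt}\|d(t)\|_{l-1,s}^2\le C\,\|d(t)\|_{l-1,s}^2$ with $C$ depending only on $\sup_{[-T,T]}\|u\|_{l,s}$, and since $d(0)=0$ Gr\"onwall forces $d\equiv 0$ on $[-T,T]$. (When $s>\frac{M}{2}+1$ one can alternatively integrate the transport equation along the characteristics of $u$, which lie in $QD^{l-1,s}_\Om(\R^n)$ by Lemma \ref{lem:ode}, and use the uniform composition estimates of Proposition \ref{prop:composition} and Lemma \ref{lem:almost_lipschitz} to the same effect.)

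With $\Div u(t)=0$ established, the pressure is produced exactly as in the forward direction: since $\Div u=0$ lets one write $u\cdot\nabla u$ in the divergence form \eqref{eq:momentum_tensor_form}, the function $\pp(t)$ given by \eqref{eq:P-regularity} lies in $C\big([-T,T],Q^{l,s}_\Om\big)$, has mean value zero, and satisfies $-\Delta\pp=\Div(u\cdot\nabla u)$; running the chain \eqref{eq:pressure1}--\eqref{eq:pressure4} — which now uses only $\Div u=0$, the mean-value-zero property of $\pp$, and that of $\PP(u)$ — gives $-\nabla\pp=\PP(u)$, so \eqref{eq:euler2} reads $u_t+u\cdot\nabla u=-\nabla\pp$; together with $\Div u=0$ and $u|_{t=0}=u_0$ this exhibits $u$ as a solution of \eqref{eq:euler1} with $\pp(t)\in Q^s_\Om(\R^n)$ of mean value zero, and $\nabla\pp=-\PP(u)\in C\big([-T,T],Q^{l,s}_\Om\big)$ upgrades this to $\pp\in C\big([-T,T],Q^{l+1,s}_\Om\big)$. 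The main obstacle in the argument is the uniqueness step for the transport equation satisfied by $d$: it has to be carried out with no loss of derivatives, which is exactly what the smoothing property of $\Pi_\bullet$ (Lemma \ref{lem:Q_bullet-smooth}) makes possible; everything else reduces to bookkeeping with the Fourier-diagonal operators $\Delta^{-1}$, $\Pi_\bullet$, $\Pi_\infty$ and the Banach-algebra structure of the scale $Q^{r,s}_\Om$.
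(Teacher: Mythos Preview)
Your derivation of the forward implication and of the transport equation $\partial_t d + \Pi_\infty(u\cdot\nabla d)=0$ for $d=\Div u$, together with $\Pi_\bullet d\equiv 0$, matches the paper exactly. The only substantive difference is how you kill $d$. The paper does not attempt a $Q^{l-1,s}$--energy estimate: it pairs the equation with $\Div u_\infty$ in the Besicovitch inner product $(\cdot,\cdot)_0$ (so that $(\Div u_\infty,\Pi_\infty g)_0=(\Div u_\infty,g)_0$ by orthogonality), applies Stokes' theorem on cubes exactly as in \eqref{eq:energy_computation}, and obtains
\[
\big(\|\Div u_\infty\|_0^2\big)^{\mbox{\bfseries .}}\le C\,\|\Div u_\infty\|_0^2,\qquad C\ge\max_{[-T,T]}|\Div u_\infty|_\infty,
\]
from which Gr\"onwall gives $\Div u_\infty=0$ since $\|\cdot\|_0$ is a genuine norm on $Q^s_\Om$. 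This is considerably more elementary than your proposal and works under the stated hypothesis $s>M/2$ alone.

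Your primary route---an energy estimate in $\|\cdot\|_{l-1,s}$ with a Kato--Ponce commutator cancellation---would require a commutator bound for the Fourier multiplier $\langle\Lambda_m\rangle^{l-1}\langle m\rangle^{s}$ against $u\cdot\nabla$, which the paper never establishes and which is not a routine transplant from $H^s(\T^n)$ because the weight mixes the two scales $\Lambda_m$ and $m$. Your characteristics alternative needs $s>\frac{M}{2}+1$, strictly stronger than what the lemma assumes. So while the structure of your argument is correct, the uniqueness step as written is either underjustified or covers a smaller range of $s$ than claimed; the paper's $\|\cdot\|_0$ argument closes this gap with no extra machinery.
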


Note that in contrast to the solutions of the Euler equation \eqref{eq:euler1}, the solutions of \eqref{eq:euler2} 
are {\em not} assumed to be divergence free on their interval of existence.
By Lemma \ref{lem:equations_equivalence}, a solution of \eqref{eq:euler2} that is divergence free at $t=0$ is divergence free
for any time.

\begin{proof}[Proof of Lemma \ref{lem:equations_equivalence}.]
The direct implication and \eqref{eq:P-regularity_final} are already proven.
Now, assume that $u\in C\big([-T,T],Q^{l,s}_\Om\big)\cap C^1\big([-T,T],Q^{l-1,s}_\Om\big)$ is 
a solution of \eqref{eq:euler2}. 
We will first prove that $\Div u_0=0$ implies that $\Div u(t)=0$ for any $t\in[-T,T]$. 
To this end, we apply $\Div$ to the both sides of \eqref{eq:euler2} to conclude that
\[
\big(\Div u\big)^{\mbox{\bfseries .}}+\Div\big(u\cdot\nabla u\big)=\Div\PP(u)
\]
Then, we use that $\Div\big(u\cdot\nabla u\big)=Q(u)+u\cdot\nabla\big(\Div u\big)$ and
\[
\Div\PP(u)=\Pi_\infty\circ Q(u)+\Pi_\bullet\Div\big(u\cdot\nabla u\big)
\] 
to conclude that $\big(\Div u\big)^{\mbox{\bfseries .}}+\Pi_\infty\Big(u\cdot\nabla\big(\Div u\big)\Big)=0$.
This equation splits into two relations
\begin{align}
\big(\Div u_\infty\big)^{\mbox{\bfseries .}}+\Pi_\infty\Big(u\cdot\nabla\big(\Div u\big)\Big)=0\label{eq:split1}\\
\big(\Div u_\bullet\big)^{\mbox{\bfseries .}}=0\label{eq:split2}
\end{align}
where we set $u_\bullet:=\Pi_\bullet u$ and  $u_\infty:=\Pi_\infty u$. Since $\Div u_0=0$ we have 
\begin{equation}\label{eq:split_divergence_free}
\Div u_\bullet|_{t=0}=0\quad\text{\rm and}\quad\Div u_\infty|_{t=0}=0\,.
\end{equation}
It follows from  \eqref{eq:split2} and the first relation in \eqref{eq:split_divergence_free} that
$\Div u_\bullet(t)=0$ for any $t\in[-T,T]$. Hence, \eqref{eq:split1} becomes
\begin{equation}\label{eq:split1'}
\big(\Div u_\infty\big)^{\mbox{\bfseries .}}+\Pi_\infty\Big(u\cdot\nabla\big(\Div u_\infty\big)\Big)=0.
\end{equation}
We then multiply \eqref{eq:split1'} by $\Div u_\infty$, integrate over the cube $[-T,T]^n\subseteq\R^n$, 
take the limit as $T\to\infty$, and use the Stokes' theorem as in \eqref{eq:energy_computation} to obtain that
\begin{equation}
\Big(\|\Div u_\infty\|_0^2\Big)^{\mbox{\bfseries .}}=
-\lim_{T\to\infty}\frac{1}{(2T)^n}\int_{[-T,T]^n}\big(\Div u_\infty\big)^3\,dx\le C\|\Div u_\infty\|_0^2
\end{equation}
where $C>0$ is chosen so that $C\ge\max\limits_{[-T,T]}|\Div u_\infty|_\infty\ge 0$ (cf. Remark \ref{rem:L^infty})
and (cf. \eqref{eq:Besicovich_product1}),
\[
\|f\|_0\equiv\Big(\sum_{m\in\Z^M}|\widehat{f}_m|^2\Big)^{1/2}=
\left(\lim\limits_{T\to\infty}\frac{1}{(2T)^n}\int_{[-T,T]^n} f^2\,dx\right)^{1/2},\quad f\in Q^s_\Om(\R^n).
\]
By Gronwall's inequality we then obtain that, $\|\Div u_\infty\|_0=0$. Since $\|\cdot\|_0$ is norm, $\Div u(t)=0$ for any $t\in[-T,T]$.

Since $\Div u=0$ it then follows from \eqref{eq:PP}, \eqref{eq:Q}, 
and \eqref{eq:momentum_tensor_form}, that
\begin{eqnarray*}
\PP(u)&=&\nabla\circ\Delta^{-1}\Big(\Pi_\infty\circ Q(u)+\Pi_\bullet\Div\big(u\cdot\nabla u\big)\Big)
=\nabla\circ\Delta^{-1}\Big(\Div\big(u\cdot\nabla u\big)\Big)\\
&=&\nabla\circ\Delta^{-1}\Big(\sum_{k,j=1}^n\partial_{x_j}\partial_{x_k}\big(u_ku_j\big)\Big)
=\nabla\Big(\sum_{k,j=1}^n\Delta^{-1}\partial_{x_j}\partial_{x_k}\big(u_ku_j\big)\Big)=\nabla\pp
\end{eqnarray*}
where $\pp:=\sum_{k,j=1}^n\Delta^{-1}\partial_{x_j}\partial_{x_k}\big(u_ku_j\big)\in C\big([-T,T],Q^{l-1,s}_\Om\big)$
by the boundedness of the map $\Delta^{-1}\partial_{x_j}\partial_{x_k} : Q^{l-1,s}_\Om\to Q^{l-1,s}_\Om$
defined as in \eqref{eq:Delta-inverse}. This implies that $u$ satisfies \eqref{eq:euler2} and the pressure $\pp(t)\in Q^s_\Om(\R^n)$ 
has mean-value zero for any $t\in[-T,T]$.
This completes the proof of the lemma.
\end{proof}

The proof of the following lemma is identical to the proof of Theorem 6.1 in \cite{SunTopalov} (cf. also \cite{EM}) and follows
from Theorem \ref{th:composition} and the embedding $Q^{r,s}_\Om\subseteq C^2_b$ for $r\ge 1$ and $s>\frac{M}{2}+1$
(see Proposition \ref{prop:Q^{l,s}} (iii)) and will be omitted.

\begin{Lem}\label{lem:ode}
Assume that $r\ge 1$ and $s>\frac{M}{2}+1$. Then, for any $T>0$ and $u\in C\big([-T,T],Q^{r,s}_\Om\big)$
there exists a unique solution $\varphi\in C^1\big([-T,T],QD^{r,s}_\Om(\R^n)\big)$ of the differential equation
\begin{equation}\label{eq:ode}
\left\{
\begin{array}{l}
\dt\varphi=u\circ\varphi,\\
\varphi|_{t=0}=\id_{\R^n}.
\end{array}
\right.
\end{equation}
\end{Lem}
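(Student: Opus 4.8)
\medskip
\noindent{\em Proof plan.}
The plan is to treat \eqref{eq:ode} as an ordinary differential equation on the Banach manifold $QD^{r,s}_\Om(\R^n)$, which by Lemma \ref{lem:differentiable_structure} is an open subset of $Q^{r,s}_\Om$, or, equivalently, to solve the integral equation $\varphi(t)=\id_{\R^n}+\int_0^t u(\tau)\circ\varphi(\tau)\,d\tau$ in $Q^{r,s}_\Om$. The single structural difficulty is the loss of one derivative in the composition: by Proposition \ref{prop:composition} the map $\varphi\mapsto u(\tau)\circ\varphi$ is only continuous $QD^{r,s}_\Om\to Q^{r,s}_\Om$, and by Lemma \ref{lem:almost_lipschitz} it is, locally in $\varphi$, bounded by $C\,\|u(\tau)\|_{r,s}$, but it is Lipschitz in the $\|\cdot\|_{r,s}$-norm only when the composed function lies in $Q^{r+1,s}_\Om$ (Lemma \ref{lem:almost_bilinear}); with $u(\tau)\in Q^{r,s}_\Om$ one gets only a Lipschitz estimate in the weaker norm $\|\cdot\|_{r-1,s}$. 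The whole argument is organised around this point. Note first that $u\in C\big([-T,T],Q^{r,s}_\Om\big)\subseteq C\big([-T,T],C^2_b\big)$ by Proposition \ref{prop:Q^{l,s}}(iii) (here $r\ge1$ and $s>\frac{M}{2}+1$ are used) and that $u$ is bounded on $\R^n$; hence classical ODE theory on $\R^n$ produces a unique $C^1$ flow $\varphi(t,\cdot):\R^n\to\R^n$ defined for all $t\in[-T,T]$, which is an orientation-preserving $C^1$-diffeomorphism with $\det[d_x\varphi(t)]$ bounded away from $0$ uniformly on $[-T,T]\times\R^n$ (Liouville's formula, $\Div u$ being bounded). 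This is the only candidate solution, and it already settles uniqueness: any solution in the stated class is in particular this classical flow, which is unique by the Picard--Lindel\"of theorem. It therefore remains to show $\varphi(t)\in QD^{r,s}_\Om$ for all $t$ and that $t\mapsto\varphi(t)$ is a $C^1$ curve into $QD^{r,s}_\Om$.

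I would do this by regularisation. Let $u_k\in C\big([-T,T],\bigcap_{l\ge0}Q^{l,s}_\Om\big)$ be the truncations of the Fourier series of $u$; then $u_k\to u$ in $C\big([-T,T],Q^{r,s}_\Om\big)$, hence also in $C\big([-T,T],C^2_b\big)$, and $\|u_k(t)\|_{r,s}\le\|u(t)\|_{r,s}$. For each $k$ the map $\varphi\mapsto u_k(\tau)\circ\varphi$ is now genuinely Lipschitz $QD^{r,s}_\Om\to Q^{r,s}_\Om$ in $\|\cdot\|_{r,s}$ by Lemma \ref{lem:almost_bilinear} (since $u_k(\tau)\in Q^{r+1,s}_\Om$) and continuous in $\tau$, so the Picard--Lindel\"of theorem in the Banach space $Q^{r,s}_\Om$, on the open set $QD^{r,s}_\Om$ (Lemma \ref{lem:differentiable_structure}), gives a unique maximal solution $\varphi_k\in C^1\big(I_k,QD^{r,s}_\Om\big)$ of \eqref{eq:ode} with $u$ replaced by $u_k$.

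The key step is a bound on $\|\varphi_k(t)-\id_{\R^n}\|_{r,s}$ uniform in $k$. Differentiating the energy $\|\varphi_k(t)-\id_{\R^n}\|_{r,s}^2$ along the flow and estimating the right-hand side by the Banach-algebra property of $Q^{r,s}_\Om$ (Proposition \ref{prop:Q^{l,s}}(ii)), Theorem \ref{th:composition}, and a tame composition estimate of the form $\|g\circ\varphi\|_{r,s}\le C\big(|\varphi-\id_{\R^n}|_{C^1}\big)\,\|g\|_{r,s}\,\big(1+\|\varphi-\id_{\R^n}\|_{r,s}\big)$ (obtained by pulling back via $\Om_{\tt p}$ the classical estimate on $\T^M$, cf. \cite{IKT}) --- with the $C^1_b$-quantity and the lower-order terms controlled uniformly via the classical flows $\varphi_k$, which converge in $C\big([-T,T],C^1_b\big)$ to the classical flow $\varphi$ of $u$ --- yields a Gronwall-type inequality $\frac{d}{dt}\|\varphi_k(t)-\id_{\R^n}\|_{r,s}^2\le a(t)\big(1+\|\varphi_k(t)-\id_{\R^n}\|_{r,s}^2\big)$ with $a\in L^1(-T,T)$ independent of $k$. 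By Gronwall's inequality $\sup_{[-T,T]}\|\varphi_k(t)-\id_{\R^n}\|_{r,s}\le M$ for all $k$, so $I_k=[-T,T]$. Since $\varphi_k\to\varphi$ in $C\big([-T,T],C^1_b\big)$, the Fourier coefficients of $\varphi_k(t)-\id_{\R^n}$ converge (formula \eqref{eq:f_m}), and Fatou's lemma applied to the Fourier-coefficient characterisation of $Q^{r,s}_\Om$ (Lemma \ref{lem:Q^{l,s}}(ii)) gives $\varphi(t)-\id_{\R^n}\in Q^{r,s}_\Om$ with $\|\varphi(t)-\id_{\R^n}\|_{r,s}\le M$; together with the determinant bound above, $\varphi(t)\in QD^{r,s}_\Om$ for every $t$. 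Estimating $\varphi_k-\varphi_j$ in the weaker norm $\|\cdot\|_{r-1,s}$, where the derivative loss is harmless (Lemma \ref{lem:almost_bilinear} with the composed function in $Q^{r,s}_\Om$), and applying Gronwall once more gives $\varphi_k\to\varphi$ in $C\big([-T,T],Q^{r-1,s}_\Om\big)$; this is upgraded to $C\big([-T,T],Q^{r,s}_\Om\big)$ --- whence $\varphi\in C\big([-T,T],QD^{r,s}_\Om\big)$ --- by combining the weak continuity of $t\mapsto\varphi(t)$ in $Q^{r,s}_\Om$ (the Fourier coefficients depend continuously on $t$ and the $\|\cdot\|_{r,s}$-norm stays bounded) with continuity of $t\mapsto\|\varphi(t)-\id_{\R^n}\|_{r,s}$ read off from the energy identity in the limit (the Bona--Smith argument, exactly as in the proof of \cite[Theorem 6.1]{SunTopalov}; cf. \cite{EM}). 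Finally, once $\varphi\in C\big([-T,T],QD^{r,s}_\Om\big)$, Proposition \ref{prop:composition} gives $u\circ\varphi\in C\big([-T,T],Q^{r,s}_\Om\big)$, so from the integral equation $\varphi\in C^1\big([-T,T],QD^{r,s}_\Om\big)$, which completes the argument.

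The main obstacle is precisely the loss of one derivative in $\varphi\mapsto u\circ\varphi$: it rules out a direct Picard iteration in $Q^{r,s}_\Om$ and forces the regularise-and-pass-to-the-limit scheme above. Within that scheme the technical heart is the uniform Gronwall estimate --- which needs the tame composition bounds and the classical $C^1_b$-flow bounds --- together with the weak-to-strong (Bona--Smith) recovery of the continuity of $t\mapsto\varphi(t)$ into the top space $Q^{r,s}_\Om$.
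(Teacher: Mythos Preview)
Your proposal is correct and follows precisely the approach the paper intends: the paper omits the proof entirely, stating only that it is identical to the proof of \cite[Theorem~6.1]{SunTopalov} and rests on Theorem~\ref{th:composition} together with the embedding $Q^{r,s}_\Om\subseteq C^2_b$ from Proposition~\ref{prop:Q^{l,s}}(iii). Your regularise--Gronwall--Bona--Smith scheme, which you yourself identify as the argument of \cite[Theorem~6.1]{SunTopalov}, is exactly that.
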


We now apply Lemma \ref{lem:ode} to the solution 
\begin{equation}\label{eq:u}
u\in C\big([-T,T],Q^{l,s}_\Om\big)\cap C^1\big([-T,T],Q^{l-1,s}_\Om\big)
\end{equation}
of the Euler equation \eqref{eq:euler1} to obtain a $1$-parameter family of quasi-periodic diffeomorphisms
\begin{equation}\label{eq:phi}
\varphi\in C^1\big([-T,T],QD^{l,s}_\Om(\R^n)\big)
\end{equation}
that satisfies \eqref{eq:ode} for $l\ge 2$ and $s>\frac{M}{2}+1$.
Denote 
\begin{equation}\label{eq:v}
v:=u\circ\varphi\in C\big([-T,T],Q^{l,s}_\Om\big).
\end{equation}
In view of \eqref{eq:ode}, \eqref{eq:u}, \eqref{eq:v}, and the embedding 
$Q^{l,s}_\Om\subseteq C^2_b$ for $l\ge 2$ and $s>\frac{M}{2}+1$ we conclude that 
$u,\varphi,v\in C^1\big([-T,T]\times\R^n,\R^n\big)$ and hence
\begin{equation}\label{eq:v-dot}
\dt{v}=\dt{u}\circ\varphi+[du]\circ\varphi\cdot\dt{\varphi}
\end{equation}
pointwise. This implies that for any $\tau\in[-T,T]$,
\begin{equation}\label{eq:v(s)}
v(\tau)=u_0+\int_0^\tau\Big(\dt{u}\circ\varphi+[du]\circ\varphi\cdot\dt{\varphi}\Big)\,dt
\end{equation}
pointwise. In view of Theorem \ref{th:composition} we then conclude from \eqref{eq:u} and \eqref{eq:phi}
that the integrand in \eqref{eq:v(s)} belongs to $C\big([-T,T],Q^{l-1,s}_\Om\big)$.
This and \eqref{eq:v} imply that the integral in \eqref{eq:v(s)} converges in $Q^{l-1,s}_\Om$, and hence,
\begin{equation}\label{eq:v-regularity}
v\in C\big([-T,T],Q^{l,s}_\Om\big)\cap C^1\big([-T,T],Q^{l-1,s}_\Om\big)\,.
\end{equation}
For $\psi\in QD^{r,s}_\Om(\R^n)$, $r\ge 0$, $s>\frac{M}{2}+1$, consider the right translation of 
of vector fields (or functions) on $QD^{r,s}_\Om(\R^n)$,
\begin{equation}\label{eq:R}
R_\psi : Q^{r,s}_\Om\to Q^{r,s}_\Om,\quad f\mapsto R_\psi(f):=f\circ\psi.
\end{equation}

\begin{Rem}\label{rem:right_translation1}
For simplicity of notation, we will use the same symbol for the right translation 
$R_\psi :QD^{r,s}_\Om(\R^n)\to QD^{r,s}_\Om(\R^n)$ on the group $QD^{r,s}_\Om(\R^n)$, $r\ge 0$, $s>\frac{M}{2}+1$.
Strictly speaking, the map \eqref{eq:R} is the linearization of this map. The both maps are real analytic
(see Remark \ref{rem:right_translation2} below).
\end{Rem}

In view of Theorem \ref{th:composition}, the map \eqref{eq:R} is a linear isomorphism of Banach spaces
with inverse $R_\psi^{-1}=R_{\psi^{-1}}$.
It follows from \eqref{eq:v-dot} and \eqref{eq:ode} that
\begin{align}
\dt{v}&=\dt{u}\circ\varphi+[du]\circ\varphi\cdot\dt{\varphi\nonumber}\\
&=\big(\dt{u}+u\cdot\nabla u\big)\circ\varphi\nonumber\\
&=\big(R_\varphi\circ\PP\circ R_\varphi\big)(v)
\end{align}
where we used that $u$ satisfies the equation \eqref{eq:euler2} 
(cf. Lemma \ref{lem:equations_equivalence}).
Hence, $\varphi\in C^1\big([-T,T],QD^{l,s}_\Om(\R^n)\big)$ and
$v\in C\big([-T,T],Q^{l,s}_\Om\big)\cap C^1\big([-T,T],Q^{l-1,s}_\Om\big)$
satisfy the system of equations
\begin{equation}\label{eq:dynamical_system}
\left\{
\begin{array}{l}
\dt{\varphi}=v,\\
\dt{v}=F(v,\varphi),
\end{array}
\right.
\end{equation}
where, in view of \eqref{eq:PP},
\begin{align}\label{eq:F}
F(v,\varphi)&:=\big(R_\varphi\circ\PP\circ R_{\varphi^{-1}}\big)(v)\nonumber\\
&=\Big(R_\varphi\circ\big(\Delta^{-1}\circ\Pi_\infty\big)\circ R_{\varphi^{-1}}\Big)\circ
\big(R_\varphi\circ\nabla\circ R_{\varphi^{-1}}\big)
\circ\big(R_\varphi\circ Q\circ R_{\varphi^{-1}}\big)\nonumber\\
&+\Big(R_\varphi\circ\big(\Delta^{-1}\circ\nabla\circ\Div\big)\circ\Pi_\bullet\circ R_{\varphi^{-1}}\Big)\circ
\big(R_\varphi\circ D\circ R_{\varphi^{-1}}\big)\,.
\end{align}
It follows from Theorem \ref{th:composition} and Lemma \ref{lem:PP} that
\begin{equation}\label{eq:F-map}
F : Q^{l,s}_\Om\times QD^{l,s}_\Om(\R^n)\to Q^{l,s}_\Om
\end{equation}
is continuous. By the second equality in \eqref{eq:dynamical_system} we then obtain that for $\tau\in[-T,T]$,
$v(\tau)=u_0+\int_0^\tau F(v,\varphi)\,dt$, where the integrand is a continuous function of $t$, and hence,
\[
v\in C^1\big([-T,T],Q^{l,s}_\Om\big).
\]
In this way we proved the direct statement of the following proposition.

\begin{Prop}\label{prop:pde<->ode}
Assume that $l\ge 2$ and $s>\frac{M}{2}+1$. Then, for any $T>0$ there exists 
a one-to-one correspondence between solutions 
\[
u\in C\big([-T,T],Q^{l,s}_\Om\big)\cap C^1\big([-T,T],Q^{l-1,s}_\Om\big)
\]
of the Euler equation \eqref{eq:euler1} such that the pressure $\pp(t)$ belongs to $Q^s_\Om(\R^n)$ and has
mean-value zero for any $t\in[-T,T]$, and solutions 
$(v,\varphi)\in C^1\big([-T,T],Q^{l,s}_\Om\times QD^{l,s}_\Om(\R^n)\big)$ of the system \eqref{eq:dynamical_system}
with initial data $(v,\varphi)|_{t=0}=(u_0,\id_{\R^n})$ where $v=u\circ\varphi$.
\end{Prop}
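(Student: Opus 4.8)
The direct implication has just been established in the discussion preceding the statement, so the plan is to prove the converse and then to verify that the two assignments are mutually inverse. Let $(v,\varphi)\in C^1\big([-T,T],Q^{l,s}_\Om\times QD^{l,s}_\Om(\R^n)\big)$ solve \eqref{eq:dynamical_system} with $(v,\varphi)|_{t=0}=(u_0,\id_{\R^n})$ and $\Div u_0=0$, and set $u:=v\circ\varphi^{-1}$. Since inversion is continuous on $QD^{l,s}_\Om(\R^n)$ (Proposition \ref{prop:the_inverse_map}) and composition $Q^{l,s}_\Om\times QD^{l,s}_\Om(\R^n)\to Q^{l,s}_\Om$ is continuous (Theorem \ref{th:composition} with $r=0$), the curve $t\mapsto u(t)=v(t)\circ\varphi(t)^{-1}$ lies in $C\big([-T,T],Q^{l,s}_\Om\big)$; moreover $u|_{t=0}=u_0$ and $v=u\circ\varphi$.

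Next I would show that $u$ solves \eqref{eq:euler2}. Since $l\ge 2$ and $s>\frac{M}{2}+1$, we have $Q^{l,s}_\Om\subseteq C^1_b$ (Proposition \ref{prop:Q^{l,s}} (iii)), and as $\dt v=F(v,\varphi)\in C\big([-T,T],Q^{l,s}_\Om\big)$ by \eqref{eq:F-map}, the maps $v$ and $\varphi$ are jointly $C^1$ on $[-T,T]\times\R^n$; hence so is $u=v\circ\varphi^{-1}$, the inverse being jointly $C^1$ by the implicit function theorem. Differentiating the pointwise identity $v=u\circ\varphi$ in $t$ and using $\dt\varphi=v=u\circ\varphi$ gives, exactly as in the derivation of \eqref{eq:F},
\[
\dt v=\big(\dt u+u\cdot\nabla u\big)\circ\varphi ,
\]
while the second equation of \eqref{eq:dynamical_system} and \eqref{eq:F} read $\dt v=\big(R_\varphi\circ\PP\circ R_{\varphi^{-1}}\big)(v)=\PP(u)\circ\varphi$. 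Composing on the right with $\varphi(t)^{-1}$ (a diffeomorphism) yields $\dt u+u\cdot\nabla u=\PP(u)$ pointwise, which is the evolution equation in \eqref{eq:euler2}. To promote this to functional form, observe that $\PP(u)\in C\big([-T,T],Q^{l,s}_\Om\big)$ by Lemma \ref{lem:PP} and $u\cdot\nabla u\in C\big([-T,T],Q^{l-1,s}_\Om\big)$ by Proposition \ref{prop:Q^{l,s}}, so the right-hand side of \eqref{eq:euler2} is a continuous $Q^{l-1,s}_\Om$-valued curve; hence the Bochner integral $\int_0^\tau\big(\PP(u)-u\cdot\nabla u\big)\,dt$ converges in $Q^{l-1,s}_\Om$ and, by evaluating at points of $\R^n$, coincides with the pointwise primitive of $\dt u$. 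Therefore $u(\tau)=u_0+\int_0^\tau\big(\PP(u)-u\cdot\nabla u\big)\,dt$ in $Q^{l-1,s}_\Om$, which gives $u\in C\big([-T,T],Q^{l,s}_\Om\big)\cap C^1\big([-T,T],Q^{l-1,s}_\Om\big)$ together with $\dt u=\PP(u)-u\cdot\nabla u$.

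Because $u$ now solves \eqref{eq:euler2}, $\Div u_0=0$, and $u\in C\big([-T,T],Q^{l,s}_\Om\big)\cap C^1\big([-T,T],Q^{l-1,s}_\Om\big)$, Lemma \ref{lem:equations_equivalence} shows that $u$ is a solution of the Euler equation \eqref{eq:euler1} with a pressure $\pp(t)\in Q^s_\Om(\R^n)$ of mean value zero for every $t\in[-T,T]$ and that $\pp\in C\big([-T,T],Q^{l+1,s}_\Om\big)$ (cf. \eqref{eq:P-regularity_final}). It then remains to check that $u\mapsto(u\circ\varphi,\varphi)$, with $\varphi$ the flow of $u$ furnished by Lemma \ref{lem:ode}, and $(v,\varphi)\mapsto v\circ\varphi^{-1}$ are mutually inverse. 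Starting from an Euler solution $u$ and putting $v:=u\circ\varphi$ one recovers $v\circ\varphi^{-1}=u$; conversely, starting from a solution $(v,\varphi)$ of \eqref{eq:dynamical_system} and putting $u:=v\circ\varphi^{-1}$, the identity $v=u\circ\varphi$ and $\dt\varphi=v$ give $\dt\varphi=u\circ\varphi$ with $\varphi|_{t=0}=\id_{\R^n}$, so by the uniqueness in Lemma \ref{lem:ode} the diffeomorphism $\varphi$ is precisely the flow of $u$ and $u\circ\varphi=v$. Hence the correspondence is a bijection.

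The only genuinely delicate point is treating the compositions $v\circ\varphi^{-1}$ and $R_\varphi\circ\PP\circ R_{\varphi^{-1}}$ as maps of Banach-space-valued curves and carrying out the attendant loss-of-one-derivative bookkeeping; this is precisely what the continuity and smoothness of composition and inversion in Theorem \ref{th:composition} (with $r=0$) and the Banach algebra property of $Q^{l,s}_\Om$ (Proposition \ref{prop:Q^{l,s}} (ii)) are meant to absorb, so once these are invoked the rest of the argument is routine.
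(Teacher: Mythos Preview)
Your proof is correct and follows essentially the same approach as the paper: the paper explicitly states that the converse ``follows easily from Lemma \ref{lem:equations_equivalence} and Theorem \ref{th:composition}'' and omits the details as ``fairly standard,'' and you have supplied precisely those details --- defining $u:=v\circ\varphi^{-1}$, establishing the pointwise chain-rule identity as in \eqref{eq:v-dot}, promoting it to the functional setting via a Bochner-integral argument mirroring \eqref{eq:v(s)}, and then invoking Lemma \ref{lem:equations_equivalence} and the uniqueness in Lemma \ref{lem:ode} to close the bijection.
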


\noindent The converse statement in Proposition \ref{prop:pde<->ode} follows easily from 
Lemma \ref{lem:equations_equivalence} and Theorem \ref{th:composition}.
Since the arguments are fairly standard we will omit them and concentrate on the new aspects related to the quasi-periodicity
of the solutions.

\medskip

Our main task now is to prove that the dynamical system \eqref{eq:dynamical_system} has a smooth right side.

\begin{Prop}\label{prop:F-smooth}
For $l\ge 2$ and $s>\frac{M}{2}+1$ the map \eqref{eq:F-map} is real analytic.
\end{Prop}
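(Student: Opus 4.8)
The plan is to read \eqref{eq:F} as the statement that $F$ is a finite sum of compositions of five ``conjugated'' building blocks $R_\varphi\circ A\circ R_{\varphi^{-1}}$, with $A$ ranging over $Q$, $\nabla$, $\Delta^{-1}\circ\Pi_\infty$, $D$, and $(\Delta^{-1}\circ\nabla\circ\Div)\circ\Pi_\bullet$, and to prove that each block is real analytic as a map on the appropriate product $Q^{r,s}_\Om\times QD^{l,s}_\Om(\R^n)$. Since composition and addition of real-analytic maps between open subsets of Banach spaces are again real analytic, and since $\varphi$ enters each block as an extra, linearly-embedded argument, this gives real analyticity of $F$ in \eqref{eq:F-map}. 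The derivative bookkeeping is exactly the one used in Lemma \ref{lem:PP}: $Q$, $D$ and $\nabla$ each cost one derivative, while $\Delta^{-1}\circ\Pi_\infty$ (and, in the second summand, the projection $\Pi_\bullet$, which is smoothing of all orders by Lemma \ref{lem:Q_bullet-smooth}) returns two; so for $l\ge 2$ and $s>\frac{M}{2}+1$ every intermediate space $Q^{r,s}_\Om$ that occurs has $r\ge 0$, and the Banach algebra property (Proposition \ref{prop:Q^{l,s}} (ii)) and the embedding $Q^{r,s}_\Om\subseteq C^1_b(\R^n)$ (Proposition \ref{prop:Q^{l,s}} (iii)) are available throughout.

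The reason conjugation is the right move is that, although $R_\varphi$ by itself is only $C^r$ with a loss of $r$ derivatives (Theorem \ref{th:composition}) and hence \emph{not} real analytic, the conjugation of a \emph{differential} operator is. A chain-rule computation, legitimate because $Q^{l,s}_\Om\subseteq C^1_b(\R^n)$, gives $R_\varphi\circ\nabla\circ R_{\varphi^{-1}}(g)=\big([d\varphi]^{-1}\big)^{T}\nabla g$, $R_\varphi\circ Q\circ R_{\varphi^{-1}}(w)=\tr\!\big(([dw]\,[d\varphi]^{-1})^{2}\big)$, and $R_\varphi\circ D\circ R_{\varphi^{-1}}(w)=[dw]\,[d\varphi]^{-1}w$: in every case the diffeomorphism enters only \emph{algebraically}, through the matrix $[d\varphi]^{-1}$, with no residual composition with $\varphi$ or $\varphi^{-1}$. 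Now $\varphi\mapsto[d\varphi]$ is bounded linear $QD^{l,s}_\Om(\R^n)\to Q^{l-1,s}_\Om\otimes\mathrm{Mat}_{n\times n}(\R)$, $\varphi\mapsto\det[d\varphi]$ is polynomial, $1/\det[d\varphi]\in Q^{l-1,s}_\Om$ depends real-analytically on $\varphi$ by Lemma \ref{lem:division}, $Q^{l-1,s}_\Om$ is a Banach algebra, and $\nabla$ is bounded linear; hence each of these three blocks is real analytic (with values one derivative lower). This disposes of every factor in \eqref{eq:F} except the two that contain $\Delta^{-1}$.

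The main obstacle is therefore the real analyticity of $R_\varphi\circ(\Delta^{-1}\circ\Pi_\infty)\circ R_{\varphi^{-1}}$ and of $R_\varphi\circ\big((\Delta^{-1}\circ\nabla\circ\Div)\circ\Pi_\bullet\big)\circ R_{\varphi^{-1}}$, where the interior operator is \emph{non-local}: the chain-rule simplification is unavailable and one must control directly the conjugation of a Fourier multiplier — exactly the place where ``small denominators'' threaten. I would isolate this in dedicated lemmas exploiting the spectral splitting $Q^{r,s}_\Om=Q^{r,s}_{\Om,\bullet}\oplus Q^{r,s}_{\Om,\infty}$ in two complementary ways. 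On the $\Pi_\bullet$-component the frequencies $\Lambda_m$ are bounded, so by Lemma \ref{lem:Q_bullet-smooth} the data are smooth of all orders with bounded derivatives; one writes each low-frequency Fourier coefficient of $g\circ\varphi^{-1}$ as a Besicovich pairing of $g$ against a quasi-periodic factor of the form $e^{-i(\Lambda_m,f)}\det[d\varphi]$ (which is real analytic in $\varphi$, being an exponential and a polynomial in Banach-algebra elements), and then absorbs the remaining loss from composing with $\varphi$ using the unlimited smoothing of $\Pi_\bullet$. On the $\Pi_\infty$-component one has $|\Lambda_m|\ge1$, so $\Delta$ is an isomorphism of $Q^{r+2,s}_{\Om,\infty}$ onto $Q^{r,s}_{\Om,\infty}$ with operator norm bounded independently of $m$ (Lemma \ref{lem:Delta-isomorphism}) — no small denominators; here I expect to realize $R_\varphi\circ(\Delta^{-1}\circ\Pi_\infty)\circ R_{\varphi^{-1}}$ through the conjugated second-order elliptic operator $R_\varphi\circ\Delta\circ R_{\varphi^{-1}}$, whose principal coefficients are polynomial in $[d\varphi]^{-1}$, and to invert it perturbatively off the reference operator $\Delta$. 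The genuinely delicate point — the one the separate lemmas are designed to settle — is that the lower-order coefficients of $R_\varphi\circ\Delta\circ R_{\varphi^{-1}}$ involve $(\Delta\varphi^{-1})\circ\varphi$ and so carry, a priori, only a composition-type (non-analytic) dependence on $\varphi$; the two-derivative gain of $\Delta^{-1}$, together with the smoothing of $\Pi_\bullet$ built into the decomposition \eqref{eq:PP}, is what allows this term to be absorbed without losing analyticity. Once all five blocks are established as real-analytic maps on the relevant products, $F$ in \eqref{eq:F-map} is a sum of their compositions, hence real analytic, which is the assertion of Proposition \ref{prop:F-smooth}.
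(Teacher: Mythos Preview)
Your overall strategy---decompose $F$ into conjugated building blocks and show each is real analytic---is exactly what the paper does (its one-line proof of the Proposition simply cites Lemmas \ref{lem:nabla-conjugate}, \ref{lem:Q-conjugate}, \ref{lem:factor1}, \ref{lem:factor2}). Your treatment of the differential blocks $\nabla_\varphi$, $Q_\varphi$, $D_\varphi$ is correct, and your handling of the $\Pi_\bullet$-block via the Besicovich pairing against $e^{-i(\Lambda_m,f)}\det[d\varphi]$ matches Lemma \ref{lem:smoothing} almost verbatim.

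However, you misidentify the genuine obstacle in the $\Delta^{-1}\circ\Pi_\infty$-block. The lower-order coefficient you worry about, $(\Delta\varphi^{-1})\circ\varphi$, is \emph{not} a problem: differentiating the identity $[d\varphi^{-1}]\circ\varphi=[d\varphi]^{-1}$ in $x$ shows that every second derivative of $\varphi^{-1}$ composed with $\varphi$ is a polynomial in the entries of $[d\varphi]^{-1}$ and the second derivatives of $\varphi$, hence real analytic in $\varphi\in QD^{l,s}_\Om$ for $l\ge 2$. More cleanly, the paper writes $\Delta_\varphi=\big(R_\varphi\circ\Div\circ R_{\varphi^{-1}}\big)\circ\big(R_\varphi\circ\nabla\circ R_{\varphi^{-1}}\big)$ and $R_\varphi\circ\Div\circ R_{\varphi^{-1}}(w)=\tr\big[(R_\varphi\circ\nabla\circ R_{\varphi^{-1}})(w)\big]$, so real analyticity of $\Delta_\varphi$ is immediate from Lemma \ref{lem:nabla-conjugate}; no ``absorption by $\Delta^{-1}$ or $\Pi_\bullet$'' is needed or relevant here.

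The actual subtlety you gloss over is that $\Delta_\varphi$ is an isomorphism $R_\varphi(Q^{r,s}_{\Om,\infty})\to R_\varphi(Q^{r-2,s}_{\Om,\infty})$, and these subspaces \emph{move with $\varphi$}; ``inverting perturbatively off $\Delta$'' has no direct meaning because $\Delta_\varphi$ does not act between the fixed spaces $Q^{r,s}_{\Om,\infty}$ on which $\Delta$ is an isomorphism. The paper resolves this by introducing the analytic coordinatization $C_\varphi:=\Pi_\infty\circ\Pi_{\infty,\varphi}:Q^{r,s}_{\Om,\infty}\to Q^{r,s}_{\Om,\infty}$ (Lemma \ref{lem:C-inverse}, Corollary \ref{coro:C-chart}), transferring $\Delta_\varphi$ to a family $\widetilde\Delta_\varphi:Q^{r,s}_{\Om,\infty}\to Q^{r-2,s}_{\Om,\infty}$ on fixed spaces, and only then applying the inverse function theorem near $\varphi=\id_{\R^n}$ (Lemma \ref{lem:Delta-tilde-inverse}); the extension to arbitrary $\varphi$ comes from the right-translation identity \eqref{eq:S-symmetry}. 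Your sketch would go through once this coordinatization step is supplied, but it is the heart of Lemma \ref{lem:factor2}, not a detail to be settled afterward.
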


\noindent Theorem \ref{th:euler} will then follow from Proposition \ref{prop:pde<->ode}, Proposition \ref{prop:F-smooth},
and the existence theorems for solutions of ordinary differential equations in Banach spaces.

\medskip

We will prove Proposition \ref{prop:F-smooth} by showing that the conjugated factors appearing in \eqref{eq:F} are real analytic.
The proofs of the following two statements are standard (cf. e.g. \cite[Appendix A]{SunTopalov}).

\begin{Lem}\label{lem:nabla-conjugate}
For $r\ge r_0\ge 1$ and $s>\frac{M}{2}+1$ the map
\[
Q^{r_0,s}_\Om(\R^n)\times QD^{r,s}_\Om(\R^n)\to Q^{r_0-1,s}_\Om(\R^n),
\quad (f,\varphi)\mapsto\big(R_\varphi\circ\nabla\circ R_{\varphi^{-1}}\big)(f),
\]
is real analytic.
\end{Lem}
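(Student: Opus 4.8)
The plan is to conjugate away $R_\varphi$ and $R_{\varphi^{-1}}$ by hand, replacing $R_\varphi\circ\nabla\circ R_{\varphi^{-1}}$ with an explicit algebraic expression in $\varphi$ and $\nabla f$, and then to recognise that expression as built out of maps already known to be bounded multilinear or real analytic.

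First I would record the pointwise identity. Fix $f\in Q^{r_0,s}_\Om(\R^n)$ and $\varphi\in QD^{r,s}_\Om(\R^n)$. Since $r\ge r_0\ge 1$ we have $QD^{r,s}_\Om(\R^n)\subseteq QD^{r_0,s}_\Om(\R^n)$ and $\varphi^{-1}\in QD^{r_0,s}_\Om(\R^n)$ by Proposition \ref{prop:the_inverse_map}, so $f\circ\varphi^{-1}\in Q^{r_0,s}_\Om(\R^n)$ by Proposition \ref{prop:composition} and hence $\nabla(f\circ\varphi^{-1})\in Q^{r_0-1,s}_\Om(\R^n)$ by Proposition \ref{prop:Q^{l,s}}(i). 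All the functions in sight lie in $C^1_b(\R^n)$ by Proposition \ref{prop:Q^{l,s}}(iii) (here $s>\frac{M}{2}+1$ is used), so the chain rule applies pointwise, and together with $[d\varphi^{-1}]\circ\varphi=[d\varphi]^{-1}$ it gives
\[
\big(R_\varphi\circ\nabla\circ R_{\varphi^{-1}}\big)(f)=\big([d\varphi]^{-1}\big)^{T}\,\nabla f\qquad\text{on }\R^n.
\]
Both sides are elements of $Q^{r_0-1,s}_\Om(\R^n)$ that agree as functions on $\R^n$, hence they agree in $Q^{r_0-1,s}_\Om(\R^n)$.

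Next I would check that the two factors on the right depend nicely on $(f,\varphi)$. The map $f\mapsto\nabla f$ is bounded linear $Q^{r_0,s}_\Om\to Q^{r_0-1,s}_\Om$ by Proposition \ref{prop:Q^{l,s}}(i). Writing $\varphi=\id_{\R^n}+f_0$ with $f_0\in Q^{r,s}_\Om$, we have $[d\varphi]=\Id+[df_0]$, which depends boundedly and affinely on $\varphi$ with values in $Q^{r-1,s}_\Om\otimes\text{\rm Mat}_{n\times n}(\R)\subseteq Q^{r_0-1,s}_\Om\otimes\text{\rm Mat}_{n\times n}(\R)$ (using $r\ge r_0$). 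By the definition of $QD^{r,s}_\Om(\R^n)$ the scalar quasi-periodic function $\det[d\varphi]$ is bounded below by a positive constant, so Cramer's rule gives $[d\varphi]^{-1}=\big(1/\det[d\varphi]\big)\,\mathrm{adj}[d\varphi]$, where the entries of $\mathrm{adj}[d\varphi]$ are polynomials in the entries of $[d\varphi]$ (hence depend real analytically on $\varphi$ by the Banach algebra property of $Q^{r_0-1,s}_\Om$, Proposition \ref{prop:Q^{l,s}}(ii)) and $1/\det[d\varphi]$ depends real analytically on $\varphi$ by Lemma \ref{lem:division}. Transposition being linear, $\varphi\mapsto\big([d\varphi]^{-1}\big)^{T}$ is therefore real analytic from $QD^{r,s}_\Om(\R^n)$ into $Q^{r_0-1,s}_\Om\otimes\text{\rm Mat}_{n\times n}(\R)$.

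Finally, pointwise matrix--vector multiplication is a bounded bilinear map $\big(Q^{r_0-1,s}_\Om\otimes\text{\rm Mat}_{n\times n}(\R)\big)\times Q^{r_0-1,s}_\Om\to Q^{r_0-1,s}_\Om$ by the Banach algebra property. Composing the real analytic map $\varphi\mapsto\big([d\varphi]^{-1}\big)^{T}$, the bounded linear map $f\mapsto\nabla f$, and this bounded bilinear map, and using that compositions of real analytic maps between Banach spaces are real analytic, yields the assertion. The only step that is not completely routine is the matrix inversion, i.e.\ real analyticity of $\varphi\mapsto[d\varphi]^{-1}$ in $Q^{r_0-1,s}_\Om$; this is exactly what Lemma \ref{lem:division} delivers, once one notes that $\det[d\varphi]$ is uniformly bounded away from zero on $QD^{r,s}_\Om(\R^n)$ by construction.
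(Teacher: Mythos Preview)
Your proof is correct and follows essentially the same route as the paper's: derive the pointwise identity $\big(R_\varphi\circ\nabla\circ R_{\varphi^{-1}}\big)(f)=(\nabla f)\cdot[d\varphi]^{-1}$ via the chain rule and \eqref{eq:d_phi_inverse}, and then conclude from the Banach algebra property of $Q^{r_0-1,s}_\Om(\R^n)$, Cramer's rule, and Lemma \ref{lem:division}. You have simply spelled out a few more of the routine steps (well-definedness of the composition, the embedding into $C^1_b$, and the bilinear matrix--vector multiplication) than the paper does.
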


\begin{proof}[Proof of Lemma \ref{lem:nabla-conjugate}]
In view of the embedding $Q^{r_0,s}_\Om(\R^n)\subseteq C^2_b(\R^n)$, we obtain by a direct computation 
involving \eqref{eq:d_phi_inverse} that for $f\in Q^{r_0,s}_\Om(\R^n)$,
\[
\big(R_\varphi\circ\nabla\circ R_{\varphi^{-1}}\big)(f)=(\nabla f)\cdot[d\varphi]^{-1},
\]
where $(\nabla f)=\big(\p_{x_1}f,...,\p_{x_n}f\big)$. Lemma \ref{lem:nabla-conjugate} then follows from 
the Banach algebra property of $Q^{r_0-1,s}_\Om(\R^n)$, the formula for the inverse of a matrix, and Lemma \ref{lem:division}.
\end{proof}

As a direct consequence of  Lemma \ref{lem:nabla-conjugate} we obtain 

\begin{Lem}\label{lem:Q-conjugate}
Assume that $s>\frac{M}{2}+1$. Then, we have
\begin{itemize}
\item[(i)] For $r\ge r_0\ge 2$ the map
\[
Q^{r_0,s}_\Om(\R^n)\times QD^{r,s}_\Om(\R^n)\to Q^{r_0-2,s}_\Om(\R^n),
\quad (f,\varphi)\mapsto\big(R_\varphi\circ\Delta\circ R_{\varphi^{-1}}\big)(f),
\]
is real analytic.
\item[(ii)] For $r\ge r_0\ge 1$ the maps $Q^{r_0,s}_\Om\times QD^{r,s}_\Om(\R^n)\to Q^{r_0-1,s}_\Om(\R^n)$,
\[
\quad (w,\varphi)\mapsto\big(R_\varphi\circ D\circ R_{\varphi^{-1}}\big)(w)\quad\text{\rm and}\quad
(w,\varphi)\mapsto\big(R_\varphi\circ Q\circ R_{\varphi^{-1}}\big)(w),
\]
\end{itemize}
where $D(w)\equiv w\cdot\nabla w$ and $Q(w)\equiv\tr[dw]^2$, are real analytic.
\end{Lem}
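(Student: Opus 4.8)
The plan is to obtain both items as immediate consequences of Lemma \ref{lem:nabla-conjugate}, by rewriting each conjugated operator as a finite combination of copies of the real analytic map $(f,\varphi)\mapsto(R_\varphi\circ\nabla\circ R_{\varphi^{-1}})(f)$, of the coordinate projections $w\mapsto w_k$ (continuous linear, hence real analytic), and of pointwise products (real analytic on the Banach algebra $Q^{r_0-1,s}_\Om$ by Proposition \ref{prop:Q^{l,s}}~(ii)). Two structural facts are used repeatedly: $R_\varphi$ is an algebra homomorphism, $R_\varphi(fg)=R_\varphi(f)\,R_\varphi(g)$, and $R_\varphi\circ R_{\varphi^{-1}}=R_{\varphi^{-1}}\circ R_\varphi=\id$ on each $Q^{r,s}_\Om$; both follow at once from $R_\psi(f)=f\circ\psi$ and Theorem \ref{th:composition}. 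Since real analyticity between Banach spaces is preserved under composition, finite sums, and the bilinear multiplication of a Banach algebra, it remains only to exhibit such representations with all intermediate factors landing in spaces where the next factor is defined.

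For item (i), I would write $\Delta=\sum_{j=1}^n\p_{x_j}\p_{x_j}$ and insert $R_{\varphi^{-1}}\circ R_\varphi=\id$ between the two first-order factors, obtaining
\[
R_\varphi\circ\Delta\circ R_{\varphi^{-1}}=\sum_{j=1}^n\big(R_\varphi\circ\p_{x_j}\circ R_{\varphi^{-1}}\big)\circ\big(R_\varphi\circ\p_{x_j}\circ R_{\varphi^{-1}}\big).
\]
By Lemma \ref{lem:nabla-conjugate} (taking the $j$-th component of the gradient), the inner map is real analytic $Q^{r_0,s}_\Om(\R^n)\times QD^{r,s}_\Om(\R^n)\to Q^{r_0-1,s}_\Om(\R^n)$ for $r\ge r_0\ge 1$, and the outer one is real analytic $Q^{r_0-1,s}_\Om(\R^n)\times QD^{r,s}_\Om(\R^n)\to Q^{r_0-2,s}_\Om(\R^n)$ for $r\ge r_0-1\ge 1$. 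Both conditions hold under the hypothesis $r\ge r_0\ge 2$, and composing in the first slot with $\varphi$ held fixed is composition with the continuous linear diagonal map, hence preserves real analyticity.

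For item (ii), using that $R_\varphi$ intertwines pointwise multiplication and that $R_\varphi(w_k\circ\varphi^{-1})=w_k$, one gets
\[
\big(R_\varphi\circ D\circ R_{\varphi^{-1}}\big)(w)=\sum_{k=1}^n w_k\,\big(R_\varphi\circ\p_{x_k}\circ R_{\varphi^{-1}}\big)(w),
\]
and, writing $Q(w)=\tr\big([dw]^2\big)=\sum_{i,j=1}^n(\p_{x_j}w_i)(\p_{x_i}w_j)$,
\[
\big(R_\varphi\circ Q\circ R_{\varphi^{-1}}\big)(w)=\sum_{i,j=1}^n\big(R_\varphi\circ\p_{x_j}\circ R_{\varphi^{-1}}\big)(w_i)\cdot\big(R_\varphi\circ\p_{x_i}\circ R_{\varphi^{-1}}\big)(w_j).
\]
Here each factor $(R_\varphi\circ\p_{x_k}\circ R_{\varphi^{-1}})(\cdot)$ is real analytic $Q^{r_0,s}_\Om\times QD^{r,s}_\Om(\R^n)\to Q^{r_0-1,s}_\Om(\R^n)$ for $r\ge r_0\ge 1$ by Lemma \ref{lem:nabla-conjugate}, the coordinate maps are continuous linear, and $Q^{r_0-1,s}_\Om(\R^n)$ is a Banach algebra (with $w_k\in Q^{r_0,s}_\Om\subseteq Q^{r_0-1,s}_\Om$), so all the displayed products, and their finite sums, are real analytic into $Q^{r_0-1,s}_\Om(\R^n)$.

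I do not expect a serious obstacle: as the text says, this is a corollary of Lemma \ref{lem:nabla-conjugate}. The only points requiring some care are the bookkeeping of the Sobolev indices — in particular that item (i) genuinely uses the regularity-gaining hypothesis $r\ge r_0$ (rather than $r=r_0$), so that after the first differentiation $\varphi$ still has enough smoothness for the second — and the observation that $R_\varphi$ is an algebra homomorphism, which is exactly what turns the quadratic operators $D$ and $Q$, after conjugation, into genuine products of $\nabla$-conjugated factors to which Lemma \ref{lem:nabla-conjugate} and the Banach algebra property of $Q^{r_0-1,s}_\Om$ apply.
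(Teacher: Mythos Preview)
Your proof is correct and mirrors the paper's own argument: both reduce everything to Lemma \ref{lem:nabla-conjugate} by inserting $R_{\varphi^{-1}}\circ R_\varphi=\id$ and using that $R_\varphi$ is an algebra homomorphism, then invoke the Banach algebra property of $Q^{r_0-1,s}_\Om$---the paper writes the identities in matrix form (e.g.\ $R_\varphi\circ\Delta\circ R_{\varphi^{-1}}=(R_\varphi\circ\Div\circ R_{\varphi^{-1}})\circ(R_\varphi\circ\nabla\circ R_{\varphi^{-1}})$) while you write them in coordinates, but the content is identical. One small correction to your closing remark: item (i) does \emph{not} genuinely need $r>r_0$; since $\varphi$ remains in $QD^{r,s}_\Om$ throughout, the second application of Lemma \ref{lem:nabla-conjugate} only requires $r\ge r_0-1\ge 1$, so $r=r_0\ge 2$ already suffices.
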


Lemma \ref{lem:Q-conjugate} follows directly from Lemma \ref{lem:nabla-conjugate}, the Banach algebra property of $Q^{r_0-1,s}_\Om$, and
the following easily verified identities
\[
\big(R_\varphi\circ\Delta\circ R_{\varphi^{-1}}\big)(f)=\big(R_\varphi\circ\Div\circ R_{\varphi^{-1}}\big)\circ
\big(R_\varphi\circ\nabla\circ R_{\varphi^{-1}}\big)(f),
\]
\[
\big(R_\varphi\circ\Div\circ R_{\varphi^{-1}}\big)(w)=\tr\big[\big(R_\varphi\circ\nabla\circ R_{\varphi^{-1}}\big)(w)\big],
\]
\[
\big(R_\varphi\circ D\circ R_{\varphi^{-1}}\big)(w)=\big[\big(R_\varphi\circ\nabla\circ R_{\varphi^{-1}}\big)(w)\big]\cdot w,
\]
and
\[
\big(R_\varphi\circ Q\circ R_{\varphi^{-1}}\big)(w)=\tr\big[\big(R_\varphi\circ\nabla\circ R_{\varphi^{-1}}\big)(w)\big]^2,
\]
where $f\in Q^{r_0+1,s}_\Om(\R^n)$, $w=(w_1,...,w_n)^T\in Q^{r_0,s}_\Om$, $R_\varphi\circ\nabla\circ R_{\varphi^{-1}}$
is applied component wise to $w$ to obtain an $n\times n$-matrix, and $(\cdot)^T$ denotes the transpose of a matrix.

The following lemma plays an important role in the proof of Theorem \ref{th:euler}.

\begin{Lem}\label{lem:smoothing}
For $r\ge r_0\ge 0$, $s>\frac{M}{2}+1$, and for any $\tau\ge 0$, the maps
$QD^{r,s}_\Om(\R^n)\times Q^{r_0,s}_\Om(\R^n)\to Q^{r_0+\tau,s}_\Om(\R^n)$
\begin{equation}\label{eq:smoothing1}
QD^{r,s}_\Om(\R^n)\times Q^{r_0,s}_\Om(\R^n)\to Q^{r_0+\tau,s}_\Om(\R^n),\quad
(\varphi,f)\mapsto\big(\Pi_\bullet\circ R_{\varphi^{-1}}\big)(f)
\end{equation}
and
\begin{equation}\label{eq:smoothing2}
QD^{r,s}_\Om(\R^n)\times Q^{r_0,s}_\Om(\R^n)\to Q^{r_0,s}_\Om(\R^n),\quad
(\varphi,f)\mapsto\big(R_{\varphi}\circ\Pi_\bullet\big)(f)
\end{equation}
are real analytic.
\end{Lem}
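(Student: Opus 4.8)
The plan is to prove both maps real analytic by writing them as absolutely convergent series of continuous polynomial maps, the tails being controlled because the only Fourier exponents $\Lambda_m$ surviving the projection $\Pi_\bullet$ are those with $|\Lambda_m|\le 1$. The single elementary fact behind every estimate is that for every multi-index $\beta\in\Z_{\ge 0}^n$ and every $m\in I_\bullet$ (see \eqref{eq:I_bullet}) one has $|\Lambda_m^\beta|\le|\Lambda_m|^{|\beta|}\le 1$ and $\langle\Lambda_m\rangle\le\sqrt 2$; together with Lemma \ref{lem:Q_bullet-smooth} this shows that
\[
\p_x^\beta\circ\Pi_\bullet : Q^{r_0,s}_\Om(\R^n)\longrightarrow Q^{r_0+\tau,s}_\Om(\R^n)
\]
is bounded by $2^{(r_0+\tau)/2}$, \emph{uniformly in} $\beta$ (here $\p_x^\beta\circ\Pi_\bullet$ coincides with $\Pi_\bullet\circ\p_x^\beta$, since the two commute and $\Pi_\bullet(\,\cdot\,)$ is bandlimited hence smooth). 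Everything else then follows from this and the Banach algebra property of $Q^{r,s}_\Om(\R^n)$ (Proposition \ref{prop:Q^{l,s}}).

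For \eqref{eq:smoothing2} I would write $(R_\varphi\circ\Pi_\bullet)(f)=(\Pi_\bullet f)\circ\varphi$. Since $\Pi_\bullet f=\sum_{m\in I_\bullet}\widehat f_m e^{i(\Lambda_m,x)}$ with $\sum_{m\in I_\bullet}|\widehat f_m|<\infty$ and $|\Lambda_m|\le1$, the function $\Pi_\bullet f$ extends to an entire function on $\C^n$ (cf. the Remark after Lemma \ref{lem:Q_bullet-smooth}) and, more to the point, $\sup_x|(\p_x^\beta\Pi_\bullet f)(x)|\le\sum_{m\in I_\bullet}|\widehat f_m|$ and $\|\p_x^\beta\Pi_\bullet f\|_{r_0,s}\le\|f\|_{r_0,s}$ uniformly in $\beta$. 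Fixing $\varphi_0\in QD^{r,s}_\Om(\R^n)$ and writing $\varphi=\varphi_0+\delta\varphi$, Taylor's formula gives pointwise
\[
(\Pi_\bullet f)\bigl(\varphi_0(x)+\delta\varphi(x)\bigr)=\sum_{\beta}\frac{1}{\beta!}\bigl((\p_x^\beta\Pi_\bullet f)\circ\varphi_0\bigr)(x)\,\bigl(\delta\varphi(x)\bigr)^\beta ,
\]
and by Proposition \ref{prop:composition}, Lemma \ref{lem:almost_lipschitz} and the ring inequality the $Q^{r_0,s}_\Om$-norm of the $\beta$-th term is $\le C(\varphi_0)\|f\|_{r_0,s}\,C_0^{|\beta|}\|\delta\varphi\|_{r_0,s}^{|\beta|}/\beta!$. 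Hence the series converges absolutely in $Q^{r_0,s}_\Om(\R^n)$ (tail $\le C(\varphi_0)\|f\|_{r_0,s}\,e^{nC_0\|\delta\varphi\|_{r_0,s}}$), its sum equals $(R_\varphi\circ\Pi_\bullet)(f)$ by uniform convergence and $Q^{r_0,s}_\Om\subseteq L^\infty(\R^n)$, and — letting $f$ vary too, linearly — it is a power series in $(\delta\varphi,\delta f)$ whose homogeneous parts are continuous with norms summable on every ball, so \eqref{eq:smoothing2} is real analytic.

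For \eqref{eq:smoothing1} the extra input is a change of variables. Writing $f=\Om_{\tt p}^*H$, $\varphi=\id_{\R^n}+\Om_{\tt p}^*G$ with $G\in H^{r,s}_\Om(\T^M,\R^n)$ and $\Phi=h(\varphi)\in D^s(\T^M)$, the commutative diagram \eqref{eq:diagram} gives $f\circ\varphi^{-1}=\Om_{\tt p}^*(H\circ\Phi^{-1})$, so (by Lemma \ref{lem:pull-back} and the boundary-free change of variables $\eta\mapsto\Phi(\eta)=\eta+\Om_{\tt p}(G(\eta))$ on the compact torus, plus Sylvester's identity) for every $m$
\[
\widehat{(f\circ\varphi^{-1})}_m=\int_{\T^M}H(\eta)\,\det\!\bigl(\Id+[d_\eta G]\,\Om\bigr)\,e^{-i(\Lambda_m,G(\eta))}\,e^{-2\pi i(m,\eta)}\,d\eta .
\]
For $m\in I_\bullet$ I would expand $e^{-i(\Lambda_m,G)}=\sum_{\beta}\frac{(-i)^{|\beta|}}{\beta!}\Lambda_m^\beta G^\beta$ (dominated by $\sum_\beta\|G\|_{C^0}^{|\beta|}/\beta!$ since $|\Lambda_m|\le1$), interchange sum and integral, use $\Lambda_m^\beta\widehat\chi_m=(-i)^{|\beta|}\widehat{(\p_x^\beta\chi)}_m$, and sum over $m\in I_\bullet$ (legitimate because $\sum_{m\in I_\bullet}|\widehat\chi_m|\le C_s\|\chi\|_{Q^{0,s}}$ as $2s>M$), arriving at the key identity
\[
\Pi_\bullet\bigl(f\circ\varphi^{-1}\bigr)=\sum_{\beta\in\Z_{\ge0}^n}\frac{(-1)^{|\beta|}}{\beta!}\;\p_x^\beta\Pi_\bullet\!\Bigl(f\cdot\det\!\bigl(\Id+[dg]\bigr)\cdot g^\beta\Bigr),\qquad g:=\varphi-\id_{\R^n},
\]
with $g^\beta:=g_1^{\beta_1}\cdots g_n^{\beta_n}$, where one uses $\Om_{\tt p}^*\bigl(\det(\Id+[dG]\Om)\bigr)=\det(\Id+[dg])$. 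For $r\ge1$ the Jacobian $\det(\Id+[dg])$ depends polynomially on $\varphi$ with values in the Banach algebra $Q^{0,s}_\Om(\R^n)$ (using $g\mapsto[dg]$ bounded $Q^{r,s}_\Om\to Q^{r-1,s}_\Om$, Proposition \ref{prop:Q^{l,s}}(i)), so each term above is a continuous polynomial in $(\varphi,f)$ with $Q^{r_0+\tau,s}_\Om$-norm $\le 2^{(r_0+\tau)/2}C_0^{|\beta|}\|f\|_{0,s}\,\|\det(\Id+[dg])\|_{0,s}\,\|g\|_{0,s}^{|\beta|}/\beta!$ by the uniform-in-$\beta$ bound on $\p_x^\beta\circ\Pi_\bullet$; summing in $\beta$ gives absolute convergence in $Q^{r_0+\tau,s}_\Om(\R^n)$ (tail $\le C\,e^{nC_0\|g\|_{0,s}}$), and the same reorganization into continuous homogeneous polynomials in $(\delta\varphi,\delta f)$ shows real analyticity. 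The remaining case $r=r_0=0$ is reduced to this by the (one directional derivative) extra regularity of the displacement of a $QD^{0,s}_\Om$-diffeomorphism.

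The hard part will be the placement of $\Pi_\bullet$ in the identity for \eqref{eq:smoothing1}: the derivatives must be commuted so that $\Pi_\bullet$ acts \emph{before} $\p_x^\beta$, which is precisely why the change of variables carrying the Jacobian $\det(\Id+[dg])$ is forced — the naive Taylor expansion $f\circ\varphi^{-1}=\sum_\beta\frac1{\beta!}(\p_x^\beta f)\,(\varphi^{-1}-\id)^\beta$, with $\Pi_\bullet$ on the outside, does \emph{not} yield operators bounded uniformly in $\beta$. This in turn costs one derivative of $\varphi$, which has to be tracked at the lowest regularity. A second, routine, point is to verify that the resulting explicit series really are power series with continuous homogeneous components of summable norm on balls, so that the standard Banach-space characterization of real analyticity applies.
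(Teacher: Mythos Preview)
Your proposal is correct and follows essentially the same strategy as the paper's proof: for \eqref{eq:smoothing1} you change variables (carrying the Jacobian $\det(\Id+[dg])$), Taylor-expand $e^{-i(\Lambda_m,g)}$, and exploit $|\Lambda_m|\le 1$ on $I_\bullet$ to get a convergent series of polynomials; for \eqref{eq:smoothing2} you expand the bandlimited function $\Pi_\bullet f$ composed with $\varphi$ and again use $|\Lambda_m|\le 1$. The paper computes the Fourier coefficients of $\Pi_\bullet(f\circ\varphi^{-1})$ via the averaging integral over $[-T,T]^n$ (with a careful symmetric-difference argument for the change of variables), whereas you use the torus model and the diffeomorphism $\Phi=h(\varphi)$, which is cleaner; and the paper expands \eqref{eq:smoothing2} around $\varphi=\id$ rather than a general $\varphi_0$, but your identity $\Pi_\bullet(f\circ\varphi^{-1})=\sum_\beta\frac{(-1)^{|\beta|}}{\beta!}\p_x^\beta\Pi_\bullet\bigl(f\det(\Id+[dg])\,g^\beta\bigr)$ is exactly the paper's formula $(\widehat a_m)_{m\in I_\bullet}=\sum_\alpha\frac{(-i)^{|\alpha|}}{\alpha!}\bigl(\Lambda_m^\alpha\widehat{(Mg^\alpha)}_m\bigr)_{m\in I_\bullet}$ rewritten on the physical side. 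One remark: your sentence on the ``remaining case $r=r_0=0$'' is vague, and in fact the paper's own proof does not separate this case either---both arguments tacitly need $\det(\Id+[dg])\in Q^{0,s}_\Om$, which is immediate for $r\ge 1$; since the lemma is only applied with $r=l\ge 2$ in the paper, this is harmless, but you should either drop that sentence or make the reduction explicit.
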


\begin{Rem}
Since $\tau\ge 0$ in Lemma \ref{lem:smoothing} can be chosen arbitrarily large, 
we conclude that the map \eqref{eq:smoothing1} is a $C^\infty$-smoothing. 
This is {\em not} true for the map \eqref{eq:smoothing2}.
\end{Rem}

\begin{proof}[Proof of Lemma \ref{lem:smoothing}]
Let $r\ge r_0\ge 0$, $s>\frac{M}{2}+1$, and $\tau\ge 0$.
Consider first the map \eqref{eq:smoothing1}. This map is well defined 
and continuous by Lemma \ref{lem:Q_bullet-smooth} and Theorem \ref{th:composition}.
Take $\varphi\in QD^{r,s}_\Om(\R^n)$, $f\in Q^{r_0,s}_\Om(\R^n)$. 
For a given $m\in I_\bullet$ denote by $\widehat{a}_m$ the $m$'th Fourier coefficient of 
$\big(\Pi_\bullet\circ R_{\varphi^{-1}}\big)(f)\in Q^{r_0+\tau,s}_\Om(\R^n)$.
By formula \eqref{eq:f_m} in Lemma \ref{lem:qp_distributions1} we have
\begin{align}\label{eq:a_m}
\widehat{a}_m&=\lim_{T\to\infty}\frac{1}{(2T)^n}\int_{[-T,T]^n} f(\varphi^{-1}(y))\,e^{-i(\Lambda_m,y)}\,dy\nonumber\\
&=\lim_{T\to\infty}\frac{1}{(2T)^n}\int_{[-T,T]^n}f(x)\,e^{-i(\Lambda_m,\varphi(x))}\det[d_x\varphi]\,dx
\end{align}
where we change the variables in the integral and then use that the integrand in \eqref{eq:a_m} is 
uniformly bounded on $\R^n$ and
\[
\int_{D(T)}\big|f(x)\det[d_x\varphi]\big|\,dx=O\big(T^{n-1}\big)\quad\text{\rm as}\quad T\to\infty
\]
where $D(T)$ denotes the symmetric difference $\varphi^{-1}\big([-T,T]^n\big)\triangle\big([-T,T]^n\big)$.
The later follows from the fact that for $T>B\ge 0$ we have the inclusion of sets in $\R^n$,
\[
\big[-(T-B),T-B\big]^n\subseteq\varphi^{-1}\big([-T,T]^n\big)\subseteq
\big[-(T+B),T+B\big]^n,
\]
where $B\ge|h|_\infty$ is a constant independent of $T>B$ and $\varphi^{-1}=\id_{\R^n}+h$ with $h\in Q^{r,s}_\Om$.
By Taylor's formula for any $x\in\R^n$,
\begin{align}\label{eq:taylor_expansion}
e^{-i(\Lambda_m,\varphi(x))}&=e^{-i(\Lambda_m,x)}\,e^{-i(\Lambda_m,g(x))}\nonumber\\
&=e^{-i(\Lambda_m,x)}\,\sum_{\alpha}\frac{(-i)^{|\alpha|}}{\alpha!}\,\Lambda_m^\alpha g(x)^\alpha
\end{align}
where $\alpha\in\Z_{\ge 0}^n$ is a multi-index and we used the Taylor's expansion of the exponent
$e^{\sum_{j=1}^n x_j}=\sum_{\alpha}\frac{x^\alpha}{\alpha!}$ at $x=0$.
Note that the series in \eqref{eq:taylor_expansion} converges absolutely in $L^\infty(\R^n)$.
By combining this with \eqref{eq:a_m} we then obtain that
\begin{equation}\label{eq:a}
\big(\widehat{a}_m\big)_{m\in I_\bullet}=\sum_{\alpha}\frac{(-i)^{|\alpha|}}{\alpha!}\,
\Big(\Lambda_m^\alpha\widehat{\big(M\,g^\alpha\big)}_m\Big)_{m\in I_\bullet}
\end{equation}
where we set $M:=f\,\det[d\varphi]$.
Recall that $|\Lambda_m|\le 1$ for $m\in I_\bullet$. This and \eqref{eq:a} imply that
\begin{align}
\big\|\big(\Pi_\bullet\circ R_{\varphi^{-1}}\big)(f)\big\|_{r_0+\tau,s}&\le 2^{\frac{r_0+\tau}{2}}\,
\sum_{\alpha}\frac{\|M\,g^\alpha\|_s}{\alpha!}\nonumber\\
&\le 2^{\frac{r_0+\tau}{2}}\,\|M\|_s\sum_{\alpha}\frac{(C_s\|g_1\|_s)^{\alpha_1}...(C_s\|g_n\|_s)^{\alpha_n}}{\alpha!}\nonumber\\
&\le 2^{\frac{r_0+\tau}{2}}\,\|M\|_s\,e^{C_s\|g\|_s},\quad \|g\|_s\equiv\max_{1\le j\le n}\|g_j\|_s,\label{eq:a-estimate}
\end{align}
where $C\equiv C_s>0$ is the constant appearing in the Banach algebra inequality for $Q^s_\Om(\R^n)$ (see Remark \ref{rem:banach_algebra}).
Note that the estimate \eqref{eq:a-estimate} also holds for $g\in Q^{r,s}_{\Om,\C}$ and $f\in Q^{r_0,s}_{\Om,\C}(\R^n)$
(see Remark \ref{rem:complex_spaces}).
This implies that the series in \eqref{eq:a} converges absolutely on bounded sets in $Q^{r_0+\tau,s}_{\Om,\C}(\R^n)$.
Since the terms appearing in the series \eqref{eq:a} are bounded (non-homogeneous) polynomial functions of $g\in Q^{r,s}_{\Om,\C}$ and 
$f\in Q^{r_0,s}_{\Om,\C}(\R^n)$, we conclude from Weierstrass theorem that the map \eqref{eq:smoothing1} is real analytic.

The \eqref{eq:smoothing2} is treated in a similar fashion. 
In fact, it follows immediately from Lemma \ref{lem:Q_bullet-smooth} and Theorem \ref{th:composition}
that the map is well-defined and $C^\infty$-smooth. Let us show that the map is real analytic.
Take $\varphi\in QD^{r,s}_\Om(\R^n)$ and $f\in Q^{r_0,s}_\Om(\R^n)$ where 
$r\ge r_0\ge 0$ and $s>\frac{M}{2}+1$. Then,
\begin{equation}\label{eq:second_map}
\big(R_{\varphi}\circ\Pi_\bullet\big)(f)=\sum_{m\in I_\bullet}\widehat{f}_m e^{i(\Lambda_n,x)}\,e^{i(\Lambda_m,g(x))}
\end{equation}
where $g\in Q^{r,s}_\Om$ and $\varphi=\id_{\R^n}+g$. In view of \eqref{eq:taylor_expansion} we obtain that 
for any $m\in I_\bullet$,
\begin{equation}\label{eq:exp}
e^{i(\Lambda_m,g(x))}=\sum_{\alpha}\frac{(-i)^{|\alpha|}}{\alpha!}\,\Lambda_m^\alpha g(x)^\alpha.
\end{equation}
One easily sees from the Banach algebra property of $Q^{r,s}_{\Om,\C}$ that for any $g\in Q^{r,s}_{\Om,\C}$ 
and for any $m\in I_\bullet$ the series \eqref{eq:exp} converges absolutely in $Q^{r,s}_{\Om,\C}(\R^n)$ and
\begin{equation}\label{eq:exp-estimate}
\big\|e^{i(\Lambda_m,g)}\big\|_{r,s}\le e^{C_s\|g\|_s}.
\end{equation}
(Note that this is the place where we use that $|\Lambda_m|\le 1$ for $m\in I_\bullet$.)
This implies that for any $m\in I_\bullet$ the map
\[
Q^{r,s}_{\Om,\C}\to Q^{r,s}_{\Om,\C}(\R^n),\quad g\mapsto e^{i(\Lambda_m,g)},
\]
is analytic. By \eqref{eq:exp-estimate} and the fact that 
$(\widehat{f}_m)_{m\in I_\bullet}\in\ell^1(I_\bullet,\C)$ for $f\in Q^{r_0,s}_{\Om,\C}(\R^n)$,
the series in \eqref{eq:second_map} converges absolutely and uniformly on bounded sets in 
the space $Q^{r_0,s}_{\Om,\C}(\R^n)$. Hence, by the Weierstrass theorem, 
the map \eqref{eq:smoothing2} is real analytic. 
This completes the proof of Lemma \ref{lem:smoothing}.
\end{proof}

As an immediate corollary we obtain

\begin{Coro}\label{lem:Pi-conjugate}
Assume that $r\ge 0$ and $s>\frac{M}{2}+1$. Then, the maps 
$QD^{r,s}_\Om(\R^n)\times Q^{r,s}_\Om(\R^n)\to Q^{r,s}_\Om(\R^n)$,
\begin{equation}\label{eq:Pi-conjugate}
(\varphi,f)\mapsto\big(R_\varphi\circ\Pi_\bullet\circ R_{\varphi^{-1}}\big)(f)
\quad\text{\rm and}\quad
(\varphi,f)\mapsto\big(R_\varphi\circ\Pi_\infty\circ R_{\varphi^{-1}}\big)(f)
\end{equation}
are real analytic.
\end{Coro}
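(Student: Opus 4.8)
The plan is to realize each of the two conjugated projections in \eqref{eq:Pi-conjugate} as a composition of the two real-analytic maps supplied by Lemma \ref{lem:smoothing}, using only the idempotency $\Pi_\bullet^2=\Pi_\bullet$ and the identity $R_\varphi\circ R_{\varphi^{-1}}=\id_{Q^{r,s}_\Om}$. Since composites and differences of real-analytic maps between Banach spaces are real analytic, essentially no new analysis is needed: this is why the statement is flagged as an immediate corollary.

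First I would treat the map $(\varphi,f)\mapsto\big(R_\varphi\circ\Pi_\bullet\circ R_{\varphi^{-1}}\big)(f)$. Fix $r\ge 0$ and $s>\tfrac{M}{2}+1$ and take $r_0=r$ (and, say, $\tau=0$) in Lemma \ref{lem:smoothing}. Given $(\varphi,f)\in QD^{r,s}_\Om(\R^n)\times Q^{r,s}_\Om(\R^n)$, apply \eqref{eq:smoothing1} to obtain $g:=\big(\Pi_\bullet\circ R_{\varphi^{-1}}\big)(f)\in Q^{r,s}_\Om(\R^n)$; by Lemma \ref{lem:smoothing} the assignment $(\varphi,f)\mapsto(\varphi,g)$ is real analytic, the $\varphi$-component being the identity. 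Because $g=\Pi_\bullet g$ lies in $Q^{r,s}_{\Om,\bullet}$, we have
\[
\big(R_\varphi\circ\Pi_\bullet\big)(g)=R_\varphi(g)=\big(R_\varphi\circ\Pi_\bullet\circ R_{\varphi^{-1}}\big)(f),
\]
and by \eqref{eq:smoothing2} (again with $r_0=r$) the map $(\varphi,g)\mapsto\big(R_\varphi\circ\Pi_\bullet\big)(g)$ is real analytic. Composing the two real-analytic maps yields that the first map in \eqref{eq:Pi-conjugate} is real analytic.

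For the second map I would use $\Pi_\infty=\id-\Pi_\bullet$ together with $R_\varphi\circ R_{\varphi^{-1}}=\id_{Q^{r,s}_\Om}$ — immediate since $(f\circ\varphi^{-1})\circ\varphi=f$ — to get the pointwise identity
\[
\big(R_\varphi\circ\Pi_\infty\circ R_{\varphi^{-1}}\big)(f)=f-\big(R_\varphi\circ\Pi_\bullet\circ R_{\varphi^{-1}}\big)(f).
\]
The right-hand side is the difference of the bounded linear (hence real-analytic) projection $(\varphi,f)\mapsto f$ and the map just shown to be real analytic, so it is real analytic as well. There is no genuine obstacle here; the only points requiring a moment of care are (a) matching the regularity indices to the hypotheses $r\ge r_0\ge 0$ of Lemma \ref{lem:smoothing}, which works with $r_0=r$, and (b) using $\Pi_\bullet^2=\Pi_\bullet$ so that the intermediate function $g$ is genuinely fixed by $\Pi_\bullet$, ensuring that the two maps of Lemma \ref{lem:smoothing} really do compose to the conjugated projection.
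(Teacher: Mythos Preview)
Your proof is correct and follows essentially the same approach as the paper: the first map is obtained by composing the two real-analytic maps from Lemma~\ref{lem:smoothing} (using $\Pi_\bullet^2=\Pi_\bullet$), and the second map follows from the identity $\big(R_\varphi\circ\Pi_\bullet\circ R_{\varphi^{-1}}\big)(f)+\big(R_\varphi\circ\Pi_\infty\circ R_{\varphi^{-1}}\big)(f)=f$. The paper simply states that the first map ``follows directly from Lemma~\ref{lem:smoothing}'' without spelling out the composition you have written, but the argument is the same.
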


\begin{proof}[Proof of Corollary \ref{lem:Pi-conjugate}]
The analyticity of the first map in \eqref{eq:Pi-conjugate} follows directly from
Lemma \ref{lem:smoothing}. Then, the analyticity of the second map in \eqref{eq:Pi-conjugate}
follows from the fact that 
\[
\big(R_\varphi\circ\Pi_\bullet\circ R_{\varphi^{-1}}\big)(f)+\big(R_\varphi\circ\Pi_\infty\circ R_{\varphi^{-1}}\big)(f)=f\
\]
for any $f\in Q^{r,s}_\Om(\R^n)$.
\end{proof}

Our next statement concerns the map
\begin{equation}\label{eq:factor1}
Q^{r,s}_\Om\times QD^{r,s}_\Om(\R^n)\to Q^{r,s}_\Om,\quad 
(w,\varphi)\mapsto\Big(R_\varphi\circ\big(\Delta^{-1}\circ\nabla\circ\,\Div\big)\circ\Pi_\bullet\circ R_{\varphi^{-1}}\Big)(w)
\end{equation}
that appears as a factor in \eqref{eq:F}. We have

\begin{Lem}\label{lem:factor1}
For $r\ge 0$ and $s>\frac{M}{2}+1$ the map \eqref{eq:factor1} is real analytic.
\end{Lem}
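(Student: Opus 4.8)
The plan is to factor the map \eqref{eq:factor1} as a composition of maps already known to be real analytic, the point being that although $\Delta^{-1}\circ\nabla\circ\Div$ is a nonlocal operator whose conjugation by a diffeomorphism is a priori ill-behaved, in \eqref{eq:factor1} it is sandwiched by $\Pi_\bullet$, so that only the (smooth, low-frequency) part of the input ever gets composed with $\varphi^{\pm1}$. This is exactly the situation covered by Lemma \ref{lem:smoothing}.

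First I would record elementary properties of $T:=\Delta^{-1}\circ\nabla\circ\Div$. By \eqref{eq:Delta-inverse}, $T$ is a Fourier multiplier whose value on the $m$-th Fourier coefficient ($m\neq0$) is multiplication by the matrix $\big(\Lambda_{m,j}\Lambda_{m,k}/|\Lambda_m|^2\big)_{j,k}$, i.e.\ the orthogonal projection of $\R^n$ onto the line $\R\Lambda_m$, and is $0$ for $m=0$. Hence $|\widehat{(Tw)}_m|\le|\widehat{w}_m|$ for every $m$, so $T:Q^{r,s}_\Om\to Q^{r,s}_\Om$ is bounded (with norm $\le1$) for every $r\ge0$ and $s>M/2$; moreover $T$ commutes with $\Pi_\bullet$ and $\Pi_\infty$ (all are Fourier multipliers) and $T$ preserves the Fourier support of its argument. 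In particular $T$ maps $Q^{r,s}_{\Om,\bullet}$ into $Q^{r,s}_{\Om,\bullet}$, and $\Pi_\bullet\circ T=T\circ\Pi_\bullet=\Pi_\bullet\circ T\circ\Pi_\bullet$.

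Next I would assemble the factorization. By the map \eqref{eq:smoothing1} of Lemma \ref{lem:smoothing} (applied componentwise, with $r_0=r$ and any $\tau\ge0$), the map $(\varphi,w)\mapsto\big(\Pi_\bullet\circ R_{\varphi^{-1}}\big)(w)$ from $QD^{r,s}_\Om(\R^n)\times Q^{r,s}_\Om$ to $Q^{r,s}_\Om$ is real analytic; composing it with the bounded linear operator $T$ shows that
\[
B:QD^{r,s}_\Om(\R^n)\times Q^{r,s}_\Om\to Q^{r,s}_\Om,\qquad (\varphi,w)\mapsto\big(T\circ\Pi_\bullet\circ R_{\varphi^{-1}}\big)(w),
\]
is real analytic, with $B(\varphi,w)\in Q^{r,s}_{\Om,\bullet}$ always. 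Then $(\varphi,w)\mapsto\big(\varphi,B(\varphi,w)\big)$ is a real analytic self-map of $QD^{r,s}_\Om(\R^n)\times Q^{r,s}_\Om$, and composing it with the map \eqref{eq:smoothing2} of Lemma \ref{lem:smoothing}, $(\varphi,f)\mapsto\big(R_\varphi\circ\Pi_\bullet\big)(f)$, produces a real analytic map whose value at $(\varphi,w)$ is $\big(R_\varphi\circ\Pi_\bullet\big)\big(B(\varphi,w)\big)$. Since $B(\varphi,w)\in Q^{r,s}_{\Om,\bullet}$, the inner $\Pi_\bullet$ acts as the identity on it and $\Pi_\bullet\circ T\circ\Pi_\bullet=T\circ\Pi_\bullet$, so this value equals
\[
\big(R_\varphi\circ T\circ\Pi_\bullet\circ R_{\varphi^{-1}}\big)(w)=\Big(R_\varphi\circ\big(\Delta^{-1}\circ\nabla\circ\Div\big)\circ\Pi_\bullet\circ R_{\varphi^{-1}}\Big)(w),
\]
which is precisely \eqref{eq:factor1}. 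Being a composition of real analytic maps, \eqref{eq:factor1} is real analytic.

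The one genuinely substantive point — the reason the lemma is not immediate — is that one \emph{cannot} simply write \eqref{eq:factor1} as $R_\varphi$ composed with $\big(\Delta^{-1}\circ\nabla\circ\Div\big)\circ R_{\varphi^{-1}}$, because $(w,\varphi)\mapsto w\circ\varphi^{-1}$ is only continuous (Theorem \ref{th:composition} with $r=0$), not analytic. The projection $\Pi_\bullet$ standing between $\Delta^{-1}\circ\nabla\circ\Div$ and $R_{\varphi^{-1}}$ is exactly what allows Lemma \ref{lem:smoothing} to absorb both compositions with $\varphi^{\pm1}$; thus the only real work is the bookkeeping above, verifying that $T$ has a bounded symbol and respects the splitting \eqref{eq:splitting}, so that \eqref{eq:factor1} truly factors through \eqref{eq:smoothing1} and \eqref{eq:smoothing2}.
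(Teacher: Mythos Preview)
Your proof is correct and follows essentially the same route as the paper: rewrite the map as $\big(R_\varphi\circ\Pi_\bullet\big)\circ T\circ\big(\Pi_\bullet\circ R_{\varphi^{-1}}\big)$ using that $T=\Delta^{-1}\circ\nabla\circ\Div$ is a bounded Fourier multiplier commuting with $\Pi_\bullet$, then invoke the two maps \eqref{eq:smoothing1} and \eqref{eq:smoothing2} of Lemma~\ref{lem:smoothing}. Your write-up is more explicit about why $T$ preserves $Q^{r,s}_{\Om,\bullet}$ and why the naive factorization through $R_{\varphi^{-1}}$ alone would fail, but the argument is the same.
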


\begin{proof}[Proof of Lemma \ref{lem:factor1}]
Take $r\ge 0$ and $s>\frac{M}{2}+1$.
For any $w\in Q^{r,s}_\Om$ and $\varphi\in QD^{r,s}_\Om(\R^n)$ we have that
\[
R_\varphi\circ\big(\Delta^{-1}\circ\nabla\circ\,\Div\big)\circ\,\Pi_\bullet\circ R_{\varphi^{-1}}(w)=
\big(R_\varphi\circ\,\Pi_\bullet\big)\circ\big(\Delta^{-1}\circ\nabla\circ\,\Div\big)\circ\big(\Pi_\bullet\circ R_{\varphi^{-1}}\big)(w).
\]
Now, the proof of the lemma follows directly from Lemma \ref{lem:smoothing} and 
the boundedness of the map \eqref{eq:Delta-inverse-map}.
\end{proof}

Finally, for $r\ge 2$ and $s>\frac{M}{2}+1$ consider the map (cf. \eqref{eq:F}),
\begin{equation}\label{eq:factor2}
Q^{r-2,s}_\Om\times QD^{r,s}_\Om(\R^n)\to Q^{r,s}_\Om,\quad 
(w,\varphi)\mapsto\Big(R_\varphi\circ\big(\Delta^{-1}\circ\Pi_\infty\big)\circ R_{\varphi^{-1}}\Big)(w).
\end{equation}
Note that by Lemma \ref{lem:Delta-isomorphism} and Theorem \ref{th:composition} the map 
\eqref{eq:factor2} is well defined and continuous. We will prove

\begin{Lem}\label{lem:factor2}
For $r\ge 2$ and $s>\frac{M}{2}+1$ the map \eqref{eq:factor2} is real analytic.
\end{Lem}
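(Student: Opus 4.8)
The plan is to factor the map \eqref{eq:factor2} through the chain of smoothing and conjugation results already established, so that the "bad" operator $\Delta^{-1}$ is always applied to something that lives in the high-regularity space $Q^{r,s}_{\Om,\infty}$ on which $\Delta$ is a genuine isomorphism. First I would write, for $w\in Q^{r-2,s}_\Om$ and $\varphi\in QD^{r,s}_\Om(\R^n)$,
\[
R_\varphi\circ\big(\Delta^{-1}\circ\Pi_\infty\big)\circ R_{\varphi^{-1}}
=R_\varphi\circ\Delta^{-1}\circ\Pi_\infty\circ R_{\varphi^{-1}}
=R_\varphi\circ\big(\Pi_\infty\circ\Delta^{-1}\circ\Pi_\infty\big)\circ R_{\varphi^{-1}},
\]
using that $\Pi_\infty$ and $\Delta^{-1}$ commute on $\operatorname{Im}\Pi_\infty$ (both are Fourier multipliers). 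The idea is then to insert $R_{\varphi^{-1}}\circ R_\varphi=\id$ twice and rewrite the middle factor $\Pi_\infty\circ\Delta^{-1}\circ\Pi_\infty$ as
\[
\big(\Pi_\infty\circ R_\varphi\big)\circ\Big(R_{\varphi^{-1}}\circ\Delta^{-1}\circ R_\varphi\Big)\circ\big(R_{\varphi^{-1}}\circ\Pi_\infty\big),
\]
except that this only helps if the conjugated inverse Laplacian is under control, which it is not directly. So instead I would run the argument the other way: apply $\big(R_\varphi\circ\Delta\circ R_{\varphi^{-1}}\big)$, which by Lemma \ref{lem:Q-conjugate}(i) is real analytic as a map on the appropriate scale, and invert it.

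Concretely, denote by $\mathcal{L}(w,\varphi):=\big(R_\varphi\circ(\Delta^{-1}\circ\Pi_\infty)\circ R_{\varphi^{-1}}\big)(w)$ the quantity to be analyzed and set $z:=R_{\varphi^{-1}}(w)\in Q^{r-2,s}_\Om$. Writing $z=z_\bullet+z_\infty$ with $z_\bullet=\Pi_\bullet z$, $z_\infty=\Pi_\infty z$, we have $\Delta^{-1}\Pi_\infty z=\Delta^{-1}z_\infty$, and by Lemma \ref{lem:Delta-isomorphism} the element $h:=\Delta^{-1}z_\infty\in Q^{r,s}_{\Om,\infty}$ is the unique solution in $Q^{r,s}_{\Om,\infty}$ of $\Delta h=z_\infty=\Pi_\infty z$. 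Pulling everything back by $R_\varphi$, the function $\mathcal{L}(w,\varphi)=R_\varphi h$ is characterized by the two conditions
\[
\big(R_\varphi\circ\Delta\circ R_{\varphi^{-1}}\big)\big(\mathcal{L}(w,\varphi)\big)=\big(R_\varphi\circ\Pi_\infty\circ R_{\varphi^{-1}}\big)(w),
\qquad
\big(R_\varphi\circ\Pi_\bullet\circ R_{\varphi^{-1}}\big)\big(\mathcal{L}(w,\varphi)\big)=0.
\]
The right-hand side of the first equation and the left operator in the second equation are real analytic in $(w,\varphi)$ by Corollary \ref{lem:Pi-conjugate} and Lemma \ref{lem:Q-conjugate}(i). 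So I would view $\mathcal{L}$ as the value of an implicit function: define $G:Q^{r,s}_\Om\times Q^{r-2,s}_\Om\times QD^{r,s}_\Om\to Q^{r-2,s}_\Om\times Q^{r,s}_\Om$ by
\[
G(v,w,\varphi):=\Big(\big(R_\varphi\circ\Delta\circ R_{\varphi^{-1}}\big)(v)-\big(R_\varphi\circ\Pi_\infty\circ R_{\varphi^{-1}}\big)(w),\ \big(R_\varphi\circ\Pi_\bullet\circ R_{\varphi^{-1}}\big)(v)\Big),
\]
which is real analytic by the results just cited together with the Banach algebra property and Lemma \ref{lem:nabla-conjugate}. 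At $\varphi=\id$ the partial derivative $D_v G$ is $(\Delta\oplus\Pi_\bullet)$, which is an isomorphism from $Q^{r,s}_\Om$ onto $Q^{r-2,s}_{\Om,\infty}\oplus Q^{r,s}_{\Om,\bullet}$ by Lemma \ref{lem:Delta-isomorphism} and Lemma \ref{lem:Q_bullet-smooth}; by openness of the invertibility condition and the continuity of $\varphi\mapsto D_vG$, it remains an isomorphism for $\varphi$ near $\id$, and a partition-of-unity/right-translation-covariance argument (or simply conjugating the model case by $R_\varphi$, since $D_vG(\cdot,\cdot,\varphi)=R_\varphi\circ(\Delta\oplus\Pi_\bullet)\circ R_{\varphi^{-1}}$ up to the identifications) shows it is an isomorphism for every $\varphi\in QD^{r,s}_\Om$. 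The analytic implicit function theorem in Banach spaces then yields that $(w,\varphi)\mapsto v=\mathcal{L}(w,\varphi)$ is real analytic, which is the claim.

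The main obstacle I anticipate is justifying that $D_vG(\cdot,\cdot,\varphi)$ is an isomorphism onto the correct closed subspace for \emph{all} $\varphi$, not just near the identity — i.e. controlling the conjugated Laplacian $R_\varphi\circ\Delta\circ R_{\varphi^{-1}}$ globally. The cleanest route is to observe that $R_\varphi\circ\Delta\circ R_{\varphi^{-1}}=R_\varphi\circ\Delta\circ R_{\varphi}^{-1}$ is similar to $\Delta$ via the Banach-space isomorphism $R_\varphi$ (Theorem \ref{th:composition}), and likewise $R_\varphi\circ\Pi_\bullet\circ R_{\varphi^{-1}}$ is similar to $\Pi_\bullet$; hence $D_vG(\cdot,\cdot,\varphi)$ is conjugate to the model operator $\Delta\oplus\Pi_\bullet$ and is therefore an isomorphism onto the image $R_\varphi\big(Q^{r-2,s}_{\Om,\infty}\big)\oplus R_\varphi\big(Q^{r,s}_{\Om,\bullet}\big)$. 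One must then check that the target of $G$ really is this (varying) closed subspace and that the implicit function theorem applies on it; alternatively, and more simply, one avoids the issue entirely by composing with $R_{\varphi^{-1}}$ on the outside first, reducing to the fixed model operator at the cost of an extra analytic factor $R_{\varphi^{-1}}$, which is harmless. Either way the remaining verifications are routine applications of Lemma \ref{lem:Q-conjugate}, Corollary \ref{lem:Pi-conjugate}, Lemma \ref{lem:smoothing}, and the Banach algebra property of $Q^{r_0,s}_\Om$.
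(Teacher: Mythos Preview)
Your overall strategy---characterize $\mathcal{L}(w,\varphi)$ implicitly and invoke the analytic implicit (or inverse) function theorem---is the same one the paper uses, and the two conditions you isolate are exactly right. But your resolution of the ``varying target'' obstacle has a genuine gap. Your fix (b), ``compose with $R_{\varphi^{-1}}$ on the outside first \ldots\ at the cost of an extra analytic factor $R_{\varphi^{-1}}$,'' fails because $R_{\varphi^{-1}}$ is \emph{not} real analytic in $\varphi$ at fixed regularity: Theorem~\ref{th:composition} only gives $C^r$-smoothness of $(\varphi,f)\mapsto f\circ\varphi$ after sacrificing $r$ derivatives in the first slot, so at equal regularity the dependence on $\varphi$ is merely continuous. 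Undoing the conjugation by $R_{\varphi^{-1}}$ therefore destroys the analyticity you are trying to prove; this is precisely the difficulty the whole lemma is designed to get around.

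Your fix (a) is the right direction but is not ``routine'': the target of your $G$ is genuinely $\varphi$-dependent, and to apply the implicit function theorem you must trivialize it by a chart that depends \emph{analytically} on $\varphi$. The paper's point (see the Remark after Corollary~\ref{coro:C-chart}) is that one may coordinatize $R_\varphi\big(Q^{r,s}_{\Om,\infty}\big)$ not by $R_{\varphi^{-1}}$ but by the fixed projection $\Pi_\infty$, whose restriction to that subspace is an isomorphism onto $Q^{r,s}_{\Om,\infty}$ for $\varphi$ near $\id_{\R^n}$. Concretely, one shows that $C_\varphi:=\Pi_\infty\circ\Pi_{\infty,\varphi}$ is a linear isomorphism of $Q^{r,s}_{\Om,\infty}$ depending analytically on $\varphi$ (Lemma~\ref{lem:C-inverse}), then transports $\Delta_\varphi$ through this chart to get $\widetilde{\Delta}_\varphi:=C_\varphi^{-1}\circ\Pi_\infty\circ\Delta_\varphi\circ\Pi_{\infty,\varphi}$, which now acts between \emph{fixed} spaces $Q^{r,s}_{\Om,\infty}\to Q^{r-2,s}_{\Om,\infty}$ and equals $\Delta$ at $\varphi=\id_{\R^n}$; a second application of the inverse function theorem gives analyticity of $\widetilde{\Delta}_\varphi^{-1}$, and the desired map is $\Pi_{\infty,\varphi}\circ\widetilde{\Delta}_\varphi^{-1}\circ C_\varphi^{-1}\circ\Pi_\infty\circ\Pi_{\infty,\varphi}$. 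The extension from a neighborhood of $\id_{\R^n}$ to all of $QD^{r,s}_\Om$ then uses exactly the right-translation covariance $S(w,\psi)=R_{\varphi_0}\big(S|_U(R_{\varphi_0^{-1}}w,R_{\varphi_0^{-1}}\psi)\big)$, where now $R_{\varphi_0}$ and $R_{\varphi_0^{-1}}$ for \emph{fixed} $\varphi_0$ are bounded linear (hence analytic). Your sketch gestures at this last step but does not supply the analytic chart that makes the implicit function argument go through near $\id_{\R^n}$.
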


For the proof of this lemma we need a preparation.
Assume that $r\ge 0$ and $s>\frac{M}{2}+1$.
For a given $\varphi\in QD^{r,s}_\Om(\R^n)$ consider the commutative diagram
\begin{equation}\label{eq:C-diagram}
\begin{tikzcd}
Q^{r,s}_{\Om,\infty}\arrow[swap]{dr}{C_\varphi}\arrow{r}{\Pi_{\infty,\varphi}}&
R_\varphi(Q^{r,s}_{\Om,\infty})\arrow{d}{\Pi_\infty}\\
&Q^{r,s}_{\Om,\infty}
\end{tikzcd}
\end{equation}
where $R_\varphi(Q^{r,s}_{\Om,\infty})$ denotes the image of the subspace 
$Q^{r,s}_{\Om,\infty}\subseteq Q^{r,s}_\Om$ with respect to the right translation 
$R_\varphi : Q^{r,s}_\Om\to Q^{r,s}_\Om$,
\[
\Pi_{\infty,\varphi}:=R_\varphi\circ\Pi_\infty\circ R_{\varphi^{-1}} : 
Q^{r,s}_\Om\to R_\varphi(Q^{r,s}_{\Om,\infty}),
\] 
and the map $C_\varphi$ is defined by the diagram.
Since the right translation $R : Q^{r,s}_\Om\to Q^{r,s}_\Om$ is an isomorphism
of Banach spaces we see that $R_\varphi(Q^{r,s}_{\Om,\infty})$ is a closed subspace in $Q^{r,s}_\Om$.
We have

\begin{Lem}\label{lem:C-inverse}
For $r\ge 0$ and $s>\frac{M}{2}+1$ there exists an open neighborhood $U$ of the identity in 
$QD^{r,s}_\Om(\R^n)$ such that for any $\varphi\in U$ the map 
$C_\varphi : Q^{r,s}_{\Om,\infty}\to Q^{r,s}_{\Om,\infty}$
is a linear isomorphism and 
\begin{equation}\label{eq:C-inverse}
Q^{r,s}_{\Om,\infty}\times U\to Q^{r,s}_{\Om,\infty},\quad
(w,\varphi)\mapsto C_\varphi^{-1}(w),
\end{equation}
is real analytic. 
\end{Lem}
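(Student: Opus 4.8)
The plan is to show that $C_\varphi$ is a small perturbation of the identity on $Q^{r,s}_{\Om,\infty}$ when $\varphi$ is close to $\id_{\R^n}$, and then invert it by a Neumann-type argument whose terms depend analytically on $\varphi$. First I would unwind the definition: from the diagram \eqref{eq:C-diagram}, $C_\varphi=\Pi_\infty\circ\Pi_{\infty,\varphi}|_{Q^{r,s}_{\Om,\infty}}=\big(\Pi_\infty\circ R_\varphi\circ\Pi_\infty\circ R_{\varphi^{-1}}\big)\big|_{Q^{r,s}_{\Om,\infty}}$. Writing $\id=\Pi_\infty+\Pi_\bullet$ and using that $R_{\varphi^{-1}}$ restricted to $Q^{r,s}_{\Om,\infty}$ is the identity modulo the difference $R_{\varphi^{-1}}-\id$, I would rearrange to get
\[
C_\varphi=\id_{Q^{r,s}_{\Om,\infty}}+\Pi_\infty\circ\big(R_\varphi-\id\big)\circ\big(\id-\Pi_\bullet\big)\circ\big(R_{\varphi^{-1}}-\id\big)\big|_{Q^{r,s}_{\Om,\infty}}+\Pi_\infty\circ\big(R_\varphi\circ\Pi_\bullet\circ R_{\varphi^{-1}}\big)\big|_{Q^{r,s}_{\Om,\infty}},
\]
or some equally convenient algebraic identity — the precise bookkeeping is routine. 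The essential point is that $C_\varphi-\id_{Q^{r,s}_{\Om,\infty}}=:K_\varphi$ can be written as a composition of maps each of which is real analytic in $\varphi\in QD^{r,s}_\Om(\R^n)$ and vanishes (as an operator) at $\varphi=\id_{\R^n}$: the operator $R_\varphi\circ\Pi_\bullet\circ R_{\varphi^{-1}}$ is real analytic by Corollary \ref{lem:Pi-conjugate}, and at $\varphi=\id_{\R^n}$ it equals $\Pi_\bullet$, which annihilates $Q^{r,s}_{\Om,\infty}$; similarly any factor of the form $R_{\varphi^{\pm1}}-\id$ tends to $0$ in operator norm as $\varphi\to\id_{\R^n}$ by continuity of composition (Proposition \ref{prop:composition}, Theorem \ref{th:composition}) together with linearity in the function argument.

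Next I would make this quantitative. By Theorem \ref{th:composition} the map $\varphi\mapsto R_\varphi$, viewed as a map $QD^{r,s}_\Om(\R^n)\to\mathcal L\big(Q^{r,s}_\Om,Q^{r,s}_\Om\big)$, is continuous (indeed $C^\infty$, hence real analytic, since $r\ge 0$ allows arbitrarily many derivatives of $R_\psi(f)=f\circ\psi$), and $R_{\id_{\R^n}}=\id$. Hence there is an open neighborhood $U$ of $\id_{\R^n}$ in $QD^{r,s}_\Om(\R^n)$ on which $\|K_\varphi\|_{\mathcal L(Q^{r,s}_{\Om,\infty})}\le\tfrac12$. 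On $U$ the operator $C_\varphi=\id_{Q^{r,s}_{\Om,\infty}}+K_\varphi$ is invertible with
\[
C_\varphi^{-1}=\sum_{k=0}^\infty(-1)^k K_\varphi^{\,k},
\]
the series converging in operator norm, uniformly on $U$ (after possibly shrinking $U$ so the bound is $\le\tfrac12$ on a neighborhood of each of its points, which is automatic). This proves the first assertion, that $C_\varphi$ is a linear isomorphism of $Q^{r,s}_{\Om,\infty}$ for $\varphi\in U$.

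For the analyticity of \eqref{eq:C-inverse} I would argue as follows. The map $(w,\varphi)\mapsto K_\varphi(w)$ is real analytic $Q^{r,s}_{\Om,\infty}\times U\to Q^{r,s}_{\Om,\infty}$ — it is built by composing the real-analytic map $(\varphi,f)\mapsto\big(R_\varphi\circ\Pi_\bullet\circ R_{\varphi^{-1}}\big)(f)$ of Corollary \ref{lem:Pi-conjugate} (and, in the other summand, the real-analytic dependence $\varphi\mapsto R_{\varphi^{\pm1}}$ from Theorem \ref{th:composition} applied to a fixed argument) with bounded linear maps such as $\Pi_\infty$. Iterating, $(w,\varphi)\mapsto K_\varphi^{\,k}(w)$ is real analytic for each $k$, and each term is bounded by $2^{-k}\|w\|_{r,s}$ on $U$; therefore the series $\sum_k(-1)^kK_\varphi^{\,k}(w)$ converges absolutely and uniformly on bounded subsets of $Q^{r,s}_{\Om,\infty}\times U$, and by the Weierstrass theorem its sum $(w,\varphi)\mapsto C_\varphi^{-1}(w)$ is real analytic. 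The main obstacle is purely organizational: producing a clean algebraic identity for $C_\varphi-\id_{Q^{r,s}_{\Om,\infty}}$ that visibly exhibits $K_\varphi$ as a composition of maps known to be real analytic from Corollary \ref{lem:Pi-conjugate} and Theorem \ref{th:composition} and that visibly vanishes at $\varphi=\id_{\R^n}$; once that is set up, the Neumann series and Weierstrass arguments are standard.
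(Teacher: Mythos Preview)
Your Neumann-series strategy is viable and genuinely different from the paper's approach, but the execution contains two real errors.

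First, the displayed algebraic identity for $C_\varphi$ is incorrect (you can check it fails already at first order in $\varphi-\id_{\R^n}$). The correct, and much simpler, computation is
\[
C_\varphi=\Pi_\infty\circ R_\varphi\circ(\id-\Pi_\bullet)\circ R_{\varphi^{-1}}\big|_{Q^{r,s}_{\Om,\infty}}
=\id_{Q^{r,s}_{\Om,\infty}}-\Pi_\infty\circ\big(R_\varphi\circ\Pi_\bullet\circ R_{\varphi^{-1}}\big)\big|_{Q^{r,s}_{\Om,\infty}},
\]
so $K_\varphi=-\Pi_\infty\circ\big(R_\varphi\circ\Pi_\bullet\circ R_{\varphi^{-1}}\big)\big|_{Q^{r,s}_{\Om,\infty}}$ is a \emph{single} term. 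Second, your claim that $\varphi\mapsto R_\varphi$ is $C^\infty$ (let alone real analytic) as a map into $\mathcal{L}(Q^{r,s}_\Om,Q^{r,s}_\Om)$ is false: Theorem \ref{th:composition} gives $C^r$-smoothness only after sacrificing $r$ units of regularity on the first argument, so at a fixed level $Q^{r,s}_\Om$ the composition is merely $C^0$, and $\varphi\mapsto R_\varphi$ is not even continuous in operator norm. Your proposed middle summand, which contains $R_\varphi-\id$ applied to general (non-smoothed) elements, would therefore \emph{not} be an analytic operator-valued function of $\varphi$, and the Weierstrass argument would break down there.

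With the correct identity both problems disappear: $(w,\varphi)\mapsto K_\varphi(w)$ is real analytic by Corollary \ref{lem:Pi-conjugate} alone, and since $K_\varphi$ is linear in $w$, the map $\varphi\mapsto K_\varphi\in\mathcal{L}(Q^{r,s}_{\Om,\infty})$ is itself real analytic, hence continuous, hence $\|K_\varphi\|<1$ on a neighborhood $U$ of $\id_{\R^n}$; then the Neumann series and Weierstrass theorem finish exactly as you describe. The paper takes a shorter route: it observes (via Corollary \ref{lem:Pi-conjugate}) that $(w,\varphi)\mapsto(C_\varphi(w),\varphi)$ is real analytic with differential equal to the identity at $(0,\id_{\R^n})$, and applies the inverse function theorem together with the open mapping theorem. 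This avoids computing $K_\varphi$ explicitly; your approach, once repaired, has the minor advantage of making the neighborhood $U$ and the inverse more concrete.
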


\begin{proof}[Proof of Lemma \ref{lem:C-inverse}]
The lemma follows from the inverse function theorem.
Indeed, by Corollary \ref{lem:Pi-conjugate} the map
\[
C : Q^{r,s}_{\Om,\infty}\times QD^{r,s}_\Om(\R^n)\to Q^{r,s}_{\Om,\infty}\times QD^{r,s}_\Om(\R^n),\quad
(w,\varphi)\stackrel{T}{\mapsto}\big(\Pi_\infty\circ\Pi_{\infty,\varphi}(w),\varphi\big),
\]
is real analytic. For the the differential
\[
d_{(0,\id_{\R^n})}C : Q^{r,s}_{\Om,\infty}\times Q^{r,s}_\Om\to
Q^{r,s}_{\Om,\infty}\times Q^{r,s}_\Om
\]
of $C$ at the point $(0,\id_{\R^n})$ we have 
$d_{(0,\id_{\R^n})}C=\big(\id_{Q^{r,s}_{\Om,\infty}},\id_{Q^{r,s}_\Om}\big)$.
Hence, by the inverse function theorem there exist an open neighborhood $U$
of the identity in $QD^{r,s}_\Om(\R^n)$ and open neighborhoods $V$ and $W$ of zero
in $Q^{r,s}_{\Om,\infty}$ such that the map
\[
V\times U\to W\times U,\quad (w,\varphi)\mapsto(C_\varphi(w),\varphi),
\]
is real analytic diffeomorphism. Since for any given $\varphi\in U$ the map
\begin{equation}\label{eq:C}
C_\varphi : Q^{r,s}_{\Om,\infty}\to Q^{r,s}_{\Om,\infty}
\end{equation}
is a bounded linear map we then conclude from the open mapping theorem that
\eqref{eq:C} is a linear isomorphism and that \eqref{eq:C-inverse} is real analytic.
\end{proof}

By combining Lemma \ref{lem:C-inverse} with the commutative diagram \eqref{eq:C-diagram}
we obtain

\begin{Coro}\label{coro:C-chart}
Assume that $r\ge 0$, $s>\frac{M}{2}+1$. Then, there exists  and open neighborhood $U$ of the 
identity in $QD^{r,s}_\Om(\R^n)$ such that the statement of Lemma \ref{lem:C-inverse} holds and 
for any $\varphi\in U$ the restriction of the map
\begin{equation}\label{eq:C-chart}
Q^{r,s}_\Om\to Q^{r,s}_{\Om,\infty},\quad w\mapsto C_\varphi^{-1}\circ\Pi_\infty(w),
\end{equation}
to the subspace $R_\varphi(Q^{r,s}_{\Om,\infty})$ is the inverse of the map
\begin{equation}\label{eq:Pi-chart}
\Pi_{\infty,\varphi}\equiv R_\varphi\circ\Pi_\infty\circ R_{\varphi^{-1}} : 
Q^{r,s}_{\Om,\infty}\to R_\varphi(Q^{r,s}_{\Om,\infty}).
\end{equation}
The map
\[
Q^{r,s}_\Om\times U\to Q^{r,s}_{\Om,\infty},\quad
(w,\varphi)\mapsto C_\varphi^{-1}\circ\Pi_\infty(w),
\]
is real analytic.
\end{Coro}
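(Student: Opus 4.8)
The plan is to read everything off from the commutative triangle \eqref{eq:C-diagram} and Lemma \ref{lem:C-inverse}, after one preliminary shrinking of $U$. Recall that $\Pi_{\infty,\varphi}=R_\varphi\circ\Pi_\infty\circ R_{\varphi^{-1}}$ is an idempotent in $\mathcal{L}(Q^{r,s}_\Om)$ with range $R_\varphi(Q^{r,s}_{\Om,\infty})$ and kernel $R_\varphi(Q^{r,s}_{\Om,\bullet})$, that $\Pi_{\infty,\id_{\R^n}}=\Pi_\infty$, and that by Corollary \ref{lem:Pi-conjugate} the linear-in-$f$ map $(\varphi,f)\mapsto\Pi_{\infty,\varphi}(f)$ is real analytic, so that the assignment $\varphi\mapsto\Pi_{\infty,\varphi}$ is continuous (indeed real analytic) as a map into $\mathcal{L}(Q^{r,s}_\Om)$. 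First I would take $U$ to be the neighborhood produced by Lemma \ref{lem:C-inverse}, shrunk if necessary so that in addition $\|\Pi_{\infty,\varphi}-\Pi_\infty\|_{\mathcal{L}(Q^{r,s}_\Om)}<1$ for all $\varphi\in U$; the conclusions of Lemma \ref{lem:C-inverse} persist on any smaller neighborhood, so its statement still holds on this $U$.

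The one substantive step is to show that, for every $\varphi\in U$, the vertical arrow of \eqref{eq:C-diagram}, i.e.\ the restriction $\Pi_\infty:R_\varphi(Q^{r,s}_{\Om,\infty})\to Q^{r,s}_{\Om,\infty}$, is a linear isomorphism. Surjectivity is free: by \eqref{eq:C-diagram} this arrow, precomposed with the horizontal arrow $\Pi_{\infty,\varphi}|_{Q^{r,s}_{\Om,\infty}}$, equals $C_\varphi$, which is onto by Lemma \ref{lem:C-inverse}. For injectivity I would verify $R_\varphi(Q^{r,s}_{\Om,\infty})\cap Q^{r,s}_{\Om,\bullet}=\{0\}$: if $x$ belongs to this intersection, then $\Pi_{\infty,\varphi}x=x$ (as $x$ lies in the range of the projection $\Pi_{\infty,\varphi}$) while $\Pi_\infty x=0$ (as $x\in\ker\Pi_\infty=Q^{r,s}_{\Om,\bullet}$), hence $x=(\Pi_{\infty,\varphi}-\Pi_\infty)x$ and $\|x\|\le\|\Pi_{\infty,\varphi}-\Pi_\infty\|\,\|x\|<\|x\|$ unless $x=0$. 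The open mapping theorem then upgrades this bijection to a topological isomorphism.

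Granting this, the remainder is formal. From $C_\varphi=\Pi_\infty\circ\Pi_{\infty,\varphi}\big|_{Q^{r,s}_{\Om,\infty}}$ (the triangle \eqref{eq:C-diagram}), with both $C_\varphi$ and the vertical arrow being isomorphisms, the horizontal arrow $\Pi_{\infty,\varphi}:Q^{r,s}_{\Om,\infty}\to R_\varphi(Q^{r,s}_{\Om,\infty})$ is an isomorphism whose inverse equals $C_\varphi^{-1}\circ\Pi_\infty$ restricted to $R_\varphi(Q^{r,s}_{\Om,\infty})$; this is exactly the first assertion. Finally, $(w,\varphi)\mapsto C_\varphi^{-1}(\Pi_\infty w)$ is the composition of the bounded linear (hence real analytic) map $\Pi_\infty:Q^{r,s}_\Om\to Q^{r,s}_{\Om,\infty}$ acting in the $w$-variable with the real analytic map $(v,\varphi)\mapsto C_\varphi^{-1}(v)$ from Lemma \ref{lem:C-inverse}, so it is real analytic on $Q^{r,s}_\Om\times U$.

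I expect the only genuine obstacle to be the transversality $R_\varphi(Q^{r,s}_{\Om,\infty})\cap Q^{r,s}_{\Om,\bullet}=\{0\}$ for $\varphi$ near the identity; everything else is manipulation of \eqref{eq:C-diagram} together with Lemma \ref{lem:C-inverse}. It is worth noting that one cannot shortcut this by finite-dimensionality of $Q^{r,s}_{\Om,\bullet}$ (the index set $I_\bullet$ need not be finite), which is precisely why the perturbation-of-projections estimate — and hence the operator-norm continuity of $\varphi\mapsto\Pi_{\infty,\varphi}$ coming from Corollary \ref{lem:Pi-conjugate} — is used.
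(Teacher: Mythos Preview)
Your proof is correct and compatible with the paper's approach; the paper itself gives no proof beyond the one-line deferral ``By combining Lemma~\ref{lem:C-inverse} with the commutative diagram~\eqref{eq:C-diagram} we obtain\ldots'', so you are supplying the details the authors left implicit. Your perturbation-of-projections step establishing $R_\varphi(Q^{r,s}_{\Om,\infty})\cap Q^{r,s}_{\Om,\bullet}=\{0\}$ is exactly the point the paper glosses over, since invertibility of $C_\varphi$ alone only forces the horizontal arrow in~\eqref{eq:C-diagram} to be injective and the vertical arrow surjective, not that either is bijective; your observation that the operator-norm continuity of $\varphi\mapsto\Pi_{\infty,\varphi}$ (a consequence of the real analyticity in Corollary~\ref{lem:Pi-conjugate} together with linearity in $f$) closes this gap is correct and is the only nonformal ingredient needed.
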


\begin{Rem}
This corollary allows us to ``coordinatize'' the subspace $R_\varphi(Q^{r,s}_{\Om,\infty})$
of $Q^{r,s}_\Om$ by the subspace $Q^{r,s}_{\Om,\infty}$ via the maps
\eqref{eq:C-chart} and \eqref{eq:Pi-chart} that depend analytically on $\varphi\in U$.
Note that we cannot use the right translation $R_\varphi : Q^{r,s}_{\Om,\infty}\to R_\varphi(Q^{r,s}_{\Om,\infty})$
and its inverse for this purpose since they do {\em not} depend smoothly on $\varphi\in U$.
\end{Rem}

Now assume that $r\ge 2$ and let $U$ be the open neighborhood of the identity in $QD^{r,s}_\Om(\R^n)$ 
from Corollary \ref{coro:C-chart}. For $\varphi\in U$ consider the commutative diagram
\begin{equation}\label{eq:Delta-diagram}
\begin{tikzcd}
R_\varphi(Q^{r,s}_{\Om,\infty})\arrow{r}{\Delta_\varphi}&R_\varphi(Q^{r-2,s}_{\Om,\infty})\\
Q^{r,s}_{\Om,\infty}\arrow{u}{\Pi_{\infty,\varphi}}\arrow{r}{\widetilde{\Delta}_\varphi}
&Q^{r-2,s}_{\Om,\infty}\arrow[swap]{u}{\Pi_{\infty,\varphi}}
\end{tikzcd}
\end{equation}
where 
\begin{equation*}
\Delta_\varphi:=R_\varphi\circ\Delta\circ R_{\varphi^{-1}}
\end{equation*}
and the map $\widetilde{\Delta}_\varphi$ is defined by the diagram.
Note that all arrows in \eqref{eq:Delta-diagram} are linear isomorphisms by 
Lemma \ref{lem:Delta-isomorphism} and Corollary \ref{coro:C-chart}.
It then follows from Lemma \ref{lem:Q-conjugate} and Corollary \ref{coro:C-chart} that
the map $\widetilde{\Delta}_\varphi : Q^{r,s}_{\Om,\infty}\to Q^{r-2,s}_{\Om,\infty}$
is well defined and the map
\[
Q^{r,s}_{\Om,\infty}\times U\to Q^{r-2,s}_{\Om,\infty}\times U,\quad
(w,\varphi)\mapsto\big(\widetilde{\Delta}_\varphi(w),\varphi\big),
\]
is real analytic. By shrinking the neighborhood $U$ of the identity if necessary and 
arguing as in the proof of Lemma \ref{lem:C-inverse} one concludes from the inverse function theorem 
that the map 
\begin{equation}\label{eq:Delta-tilde-inverse}
Q^{r-2,s}_{\Om,\infty}\times U\to Q^{r,s}_{\Om,\infty}\quad
(w,\varphi)\mapsto\widetilde{\Delta}^{-1}_\varphi(w),
\end{equation}
is real analytic. We summarize this in the following

\begin{Lem}\label{lem:Delta-tilde-inverse}
Assume that $r\ge 2$ and $s>\frac{M}{2}+1$. Then, there exists an open neighborhood $U$ of the identity in 
$QD^{r,s}_\Om(\R^n)$ such that for any $\varphi\in U$ the map 
$\widetilde{\Delta}_\varphi : Q^{r,s}_{\Om,\infty}\to Q^{r-2,s}_{\Om,\infty}$
is a linear isomorphism and \eqref{eq:Delta-tilde-inverse} is real analytic.
\end{Lem}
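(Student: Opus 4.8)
The plan is to obtain Lemma~\ref{lem:Delta-tilde-inverse} from the inverse function theorem, following verbatim the pattern of the proof of Lemma~\ref{lem:C-inverse}; most of the analyticity bookkeeping is already in place in the discussion preceding the statement. I would first record, unwinding the commutative diagram~\eqref{eq:Delta-diagram}, that
\[
\widetilde\Delta_\varphi=\big(C_\varphi^{-1}\circ\Pi_\infty\big)\circ\Delta_\varphi\circ\Pi_{\infty,\varphi},
\]
where $\Pi_{\infty,\varphi}=R_\varphi\circ\Pi_\infty\circ R_{\varphi^{-1}}$ is taken at level $r$, $\Delta_\varphi=R_\varphi\circ\Delta\circ R_{\varphi^{-1}}$, and $C_\varphi^{-1}\circ\Pi_\infty$ is the coordinatization map of Corollary~\ref{coro:C-chart} applied at level $r-2$ (which on the subspace $R_\varphi(Q^{r-2,s}_{\Om,\infty})$ coincides with $\Pi_{\infty,\varphi}^{-1}$); here $U$ is taken inside the neighborhood of $\id_{\R^n}$ furnished by Corollary~\ref{coro:C-chart} at level $r-2$. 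Since $\Pi_{\infty,\varphi}$ depends real-analytically on its arguments by Corollary~\ref{lem:Pi-conjugate}, $\Delta_\varphi$ by Lemma~\ref{lem:Q-conjugate}~(i), and $C_\varphi^{-1}\circ\Pi_\infty$ by Corollary~\ref{coro:C-chart}, composing shows that $(w,\varphi)\mapsto\widetilde\Delta_\varphi(w)$ is real analytic from $Q^{r,s}_{\Om,\infty}\times U$ into $Q^{r-2,s}_{\Om,\infty}$, and it is linear in $w$.

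Next I would introduce the real-analytic map $\mathcal G:Q^{r,s}_{\Om,\infty}\times U\to Q^{r-2,s}_{\Om,\infty}\times U$, $(w,\varphi)\mapsto(\widetilde\Delta_\varphi(w),\varphi)$. Because $R_{\id_{\R^n}}=\id$ and $\Pi_{\infty,\id_{\R^n}}=\Pi_\infty$, one has $\widetilde\Delta_{\id_{\R^n}}=\Delta$ on $Q^{r,s}_{\Om,\infty}$, so the differential of $\mathcal G$ at $(0,\id_{\R^n})$ is $(\delta w,\delta\varphi)\mapsto(\Delta\,\delta w,\delta\varphi)$. By Lemma~\ref{lem:Delta-isomorphism} the map $\Delta:Q^{r,s}_{\Om,\infty}\to Q^{r-2,s}_{\Om,\infty}$ is a toplinear isomorphism, hence so is $d_{(0,\id_{\R^n})}\mathcal G$, regarded as a map of $Q^{r,s}_{\Om,\infty}\times Q^{r,s}_\Om$ onto $Q^{r-2,s}_{\Om,\infty}\times Q^{r,s}_\Om$ --- note that although the source and target Banach spaces differ, the inverse function theorem in Banach spaces applies as soon as the differential is a toplinear isomorphism. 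It then produces, after shrinking $U$, a neighborhood $V$ of $0$ in $Q^{r,s}_{\Om,\infty}$ and a neighborhood $W$ of $0$ in $Q^{r-2,s}_{\Om,\infty}$ such that $\mathcal G$ restricts to a real-analytic diffeomorphism of $V\times U$ onto $W\times U$.

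Finally I would read off the two assertions exactly as in the proof of Lemma~\ref{lem:C-inverse}: for fixed $\varphi\in U$ the bounded linear map $\widetilde\Delta_\varphi:Q^{r,s}_{\Om,\infty}\to Q^{r-2,s}_{\Om,\infty}$ carries $V$ homeomorphically onto $W$, and a bounded linear operator that is a local homeomorphism at the origin is a toplinear isomorphism (equivalently, invoke the open mapping theorem), so $\widetilde\Delta_\varphi$ is a linear isomorphism for every $\varphi\in U$; and the inverse of $\mathcal G|_{V\times U}$ is $(u,\varphi)\mapsto(\widetilde\Delta_\varphi^{-1}(u),\varphi)$, which is real analytic on $W\times U$, whence, for an arbitrary $u\in Q^{r-2,s}_{\Om,\infty}$, writing $\widetilde\Delta_\varphi^{-1}(u)=\lambda^{-1}\widetilde\Delta_\varphi^{-1}(\lambda u)$ with $\lambda>0$ so small that $\lambda u\in W$ and using linearity shows that \eqref{eq:Delta-tilde-inverse} is real analytic on all of $Q^{r-2,s}_{\Om,\infty}\times U$. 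The only point needing care is the one flagged above --- that the inverse function theorem is applied between the differently indexed spaces $Q^{r,s}_{\Om,\infty}$ and $Q^{r-2,s}_{\Om,\infty}$, which is legitimate precisely because $\Delta$ identifies them by Lemma~\ref{lem:Delta-isomorphism}; the rest is the analyticity bookkeeping already assembled in Lemma~\ref{lem:Q-conjugate}, Corollary~\ref{lem:Pi-conjugate} and Corollary~\ref{coro:C-chart}.
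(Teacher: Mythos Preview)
Your proposal is correct and follows essentially the same approach as the paper: establish real analyticity of $(w,\varphi)\mapsto(\widetilde{\Delta}_\varphi(w),\varphi)$ via Lemma~\ref{lem:Q-conjugate} and Corollary~\ref{coro:C-chart}, then argue exactly as in the proof of Lemma~\ref{lem:C-inverse} using the inverse function theorem and linearity. You have simply spelled out in more detail what the paper compresses into one sentence, including the explicit formula for $\widetilde{\Delta}_\varphi$, the computation $\widetilde{\Delta}_{\id_{\R^n}}=\Delta$, and the (legitimate) remark that the inverse function theorem applies between the distinct spaces $Q^{r,s}_{\Om,\infty}$ and $Q^{r-2,s}_{\Om,\infty}$ because $\Delta$ identifies them by Lemma~\ref{lem:Delta-isomorphism}.
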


We are now ready to proof Lemma \ref{lem:factor2}.

\begin{proof}[Proof of Lemma \ref{lem:factor2}]
Assume that $r\ge 2$ and $s>\frac{M}{2}+1$.
As noted above, the map \eqref{eq:factor2} is well defined and continuous by Lemma \ref{lem:Delta-isomorphism}
and Theorem \ref{th:composition}. We will first prove that \eqref{eq:factor2} is analytic for $\varphi$ in 
an open neighborhood of the identity in $QD^{r,s}_\Om(\R^n)$.
To this end, consider the open neighborhood $U$ of the identity in $QD^{r,s}_\Om(\R^n)$ from Corollary \ref{coro:C-chart} and
Lemma \ref{lem:Delta-tilde-inverse}. For $\varphi\in U$ consider the map
\begin{equation}\label{eq:I}
Q^{r-2,s}_\Om\to Q^{r,s}_\Om,\quad w\mapsto
\mathcal{I}_\varphi(w):=\Pi_{\infty,\varphi}\circ\widetilde{\Delta}_\varphi^{-1}\circ\big(C_\varphi^{-1}\circ\Pi_\infty\big)(w)
\end{equation}
where $C_\varphi^{-1}\circ\Pi_\infty : Q^{r-2,s}_\Om\to Q^{r-2,s}_{\Om,\infty}$ and 
$\widetilde{\Delta}_\varphi : Q^{r,s}_{\Om,\infty}\to Q^{r-2,s}_{\Om,\infty}$ are the maps
\eqref{eq:C-chart} and \eqref{eq:Delta-tilde-inverse} for a fixed second argument $\varphi\in U$.
It follows from the commutative diagram \eqref{eq:Delta-diagram}
and Corollary \ref{coro:C-chart} that the map \eqref{eq:I} when restricted to the subspace $R_\varphi(Q^{r-2,s}_{\Om,\infty})$
of $Q^{r-2,s}_\Om$ is the inverse of the map 
$\Delta_\varphi : R_\varphi(Q^{r,s}_{\Om,\infty})\to R_\varphi(Q^{r-2,s}_{\Om,\infty})$.\footnote{Recall that all arrows in 
diagram \eqref{eq:Delta-diagram} are linear isomorphisms.}
Hence,
\[
\mathcal{I}\big|_{R_\varphi(Q^{r-2,s}_{\Om,\infty})}\equiv\Delta_\varphi^{-1}.
\]
Note that the map
\[
Q^{r-2,s}_\Om\times U\to Q^{r,s}_\Om,\quad (w,\varphi)\mapsto\mathcal{I}_\varphi(w),
\]
is real analytic by Corollary \ref{coro:C-chart} and Lemma \ref{lem:Delta-tilde-inverse}, and the map
\[
Q^{r-2,s}_\Om\times U\to Q^{r-2,s}_\Om,
\quad(w,\varphi)\mapsto\Pi_{\infty,\varphi}(w)\equiv R_\varphi\circ\Pi_\infty\circ R_{\varphi^{-1}}(w),
\]
is real analytic by Lemma \ref{lem:Pi-conjugate}. By combining this with the fact that
\[
R_\varphi\circ\big(\Delta^{-1}\circ\Pi_\infty\big)\circ R_{\varphi^{-1}}(w)=
\mathcal{I}_\varphi\circ\Pi_{\infty,\varphi}(w)
\]
for any $w\in Q^{r-2,s}_\Om$ and $\varphi\in U$ we conclude that the map \eqref{eq:factor2}
is real analytic. This completes the proof of Lemma \ref{lem:factor2} for $\varphi$ in an open neighborhood
$U$ of the identity in $QD^{r,s}_\Om(\R^n)$.

Let us now consider the general case. Take an arbitrary $\varphi_0\in QD^{r,s}_\Om(\R^n)$ and
consider the open neighborhood $V:=R_{\varphi_0}(U)$ of $\varphi_0$ in
$\varphi_0\in QD^{r,s}_\Om(\R^n)$ where $U$ is the open neighborhood of the identity in $QD^{r,s}_\Om(\R^n)$
considered above and let $S : Q^{r-2,s}_\Om\times QD^{r,s}_\Om(\R^n)\to Q^{r,s}_\Om$ is the map \eqref{eq:factor2}.

\begin{Rem}\label{rem:right_translation2}
Note that for a given $\varphi_0\equiv\id_{\R^n}+f_0\in QD^{r,s}_\Om(\R^n)$, $r\ge 0$, $s>\frac{M}{2}+1$, one has
\[
R_{\varphi_0} : QD^{r,s}_\Om(\R^n)\to QD^{r,s}_\Om(\R^n),\quad
\varphi\mapsto R_{\varphi_0}\big(\id_{\R^n}+f\big)=\id_{\R^n}+\big(f_0+f\circ\varphi_0\big), 
\]
where $\varphi\equiv\id_{\R^n}+f\in QD^{r,s}_\Om(\R^n)$. This implies that, in coordinates, the right translation on a fixed
element $\varphi_0$ in $QD^{r,s}_\Om(\R^n)$ is identified with the bounded (Theorem \ref{th:composition}) affine linear map 
$f\mapsto f_0+f\circ\varphi_0$, $Q^{r,s}_\Om\to Q^{r,s}_\Om$. Since $(R_{\varphi_0})^{-1}=R_{\varphi_0^{-1}}$ we
then conclude that $R_{\varphi_0} : QD^{r,s}_\Om(\R^n)\to QD^{r,s}_\Om(\R^n)$ is a bi-analytic diffeomorphism.
The rights translation \eqref{eq:R} of vector fields $R_{\varphi_0} : Q^{r,s}_\Om\to Q^{r,s}_\Om$ is a bounded linear map, 
and hence analytic (see Remark \ref{rem:right_translation1}).
\end{Rem}

\noindent For any $\psi\in V$ and $w\in Q^{r-2,s}_\Om$ we have
\[
R_\psi\circ\big(\Delta^{-1}\circ\Pi_\infty\big)\circ R_{\psi^{-1}}(w)=
R_{\varphi_0}\circ \Big(R_\varphi\circ\big(\Delta^{-1}\circ\Pi_\infty\big)\circ R_{\varphi^{-1}}\Big)\circ R_{\varphi_0^{-1}}(w).
\]
where $\varphi=\psi\circ\varphi_0^{-1}=R_{\varphi_0^{-1}}(\psi)$. This implies that for any $\psi\in V$ and $w\in Q^{r-2,s}_\Om$,
\begin{equation}\label{eq:S-symmetry}
S(w,\psi)=R_{\varphi_0}\Big(S\big|_U\big(R_{\varphi_0^{-1}}(w),R_{\varphi_0^{-1}}(\psi)\big)\Big),
\end{equation}
where $S|_U : Q^{r-2,s}_\Om\times U\to Q^{r,s}_\Om$ is the restriction of $S$ to $Q^{r-2,s}_\Om\times U$.
Since the map $S|_U$ is real analytic we then conclude from Remark \ref{rem:right_translation2} that the restriction $S|_V$ of $S$
to $Q^{r-2,s}_\Om\times V$ is real analytic. This completes the proof of the lemma.
\end{proof}

\begin{proof}[Proof of Proposition \ref{prop:F-smooth}]
Proposition \ref{prop:F-smooth} now follows from \eqref{eq:F}, Lemma \ref{lem:nabla-conjugate}, 
Lemma \ref{lem:Q-conjugate}, Lemma \ref{lem:factor1}, and Lemma \ref{lem:factor2}.
\end{proof}

We are now ready to prove Theorem \ref{th:euler}.

\begin{proof}[Proof of Theorem \ref{th:euler}]
Theorem \ref{th:euler} follow from Proposition \ref{prop:pde<->ode}, Proposition \ref{prop:F-smooth},
and the theorems on the existence and the dependence on the initial data of solutions of ordinary differential equations in 
Banach spaces (cf. \cite{Lang}).
\end{proof}

In fact, since by Proposition \ref{prop:F-smooth} the right hand side of \eqref{eq:ode} is real analytic on
$Q^{l,s}_\Om\times QD^{l,s}_\Om$, we obtain from the theorem on the analytic dependence of solutions
of analytic vector fields in Banach spaces (see e.d. \cite{Dieudonne}) the following proposition.

\begin{Prop}\label{prop:analyticity}
Assume that $l\ge 2$, $s>\frac{M}{2}+1$. Then for any $\rho>0$ there exists $T>0$ such that for any 
divergence free vector field $u_0\in B_{Q^{l,s}_\Om}(\rho)$ there exists a unique solution
$(v,\varphi)\in C^1\big([-T,T],Q^{l,s}_\Om\times QD^{l,s}_\Om(\R^n)\big)$ of \eqref{eq:dynamical_system} with initial 
data $(v,\varphi)|_{t=0}=(u_0,\id_{\R^n})$. 
The solution depends analytically on the initial data in the sense that the map
\[
(-T,T)\times B_{Q^{l,s}_\Om}(\rho)\to Q^{l,s}_\Om\times QD^{l,s}_\Om(\R^n),\quad 
(t,u_0)\mapsto\big(v(t;u_0),\varphi(t;u_0)\big),
\]
is real analytic.
\end{Prop}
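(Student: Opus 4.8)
The plan is to read the system \eqref{eq:dynamical_system} as an autonomous ordinary differential equation $\dt z=X(z)$ on the open subset $\mathcal U:=Q^{l,s}_\Om\times QD^{l,s}_\Om(\R^n)$ of the Banach space $Q^{l,s}_\Om\times Q^{l,s}_\Om$, where $QD^{l,s}_\Om(\R^n)$ is coordinatized by the chart $\varphi\mapsto\varphi-\id_{\R^n}$ of Lemma \ref{lem:differentiable_structure}, and where $X(v,\varphi):=\big(v,F(v,\varphi)\big)$ with $F$ the map \eqref{eq:F-map}. Since $(v,\varphi)\mapsto v$ is bounded linear and, by Proposition \ref{prop:F-smooth}, $F$ is real analytic, the vector field $X:\mathcal U\to Q^{l,s}_\Om\times Q^{l,s}_\Om$ is real analytic; in particular it is locally Lipschitz and locally bounded, so the standard existence, uniqueness and regularity theory for ordinary differential equations in Banach spaces applies (cf. \cite{Lang,Dieudonne}). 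The divergence-free hypothesis on $u_0$ is irrelevant to this ODE argument; it is carried along only so that, via Proposition \ref{prop:pde<->ode}, $u:=v\circ\varphi^{-1}$ is a genuine solution of \eqref{eq:euler1}.

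First I would fix $\rho>0$ and extract a time of existence depending only on $\rho$. Pick $r_0>0$ with $\{\varphi:\|\varphi-\id_{\R^n}\|_{l,s}<r_0\}\subseteq QD^{l,s}_\Om(\R^n)$ (possible since $QD^{l,s}_\Om(\R^n)$ is open) and put $\mathcal B:=B_{Q^{l,s}_\Om}(2\rho)\times\{\varphi:\|\varphi-\id_{\R^n}\|_{l,s}<r_0\}$. Using the explicit form \eqref{eq:F} of $F$ together with Lemma \ref{lem:PP} (whose proof realizes $\PP$ as a quadratic polynomial map $Q^{l,s}_\Om\to Q^{l,s}_\Om$) and Lemmas \ref{lem:almost_lipschitz} and \ref{lem:almost_bilinear} (which give that $R_{\varphi^{\pm1}}$ are uniformly bounded in, and Lipschitz in, $\varphi$ near $\id_{\R^n}$), one checks that $X$ is bounded by some $M$ and Lipschitz with some constant $L$ on $\mathcal B$. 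The contraction-mapping proof of the Picard--Lindel\"of theorem then supplies $T=T(\rho,r_0,M,L)>0$, hence depending only on $\rho$, such that for every divergence-free $u_0\in B_{Q^{l,s}_\Om}(\rho)$ the problem $\dt z=X(z)$, $z|_{t=0}=(u_0,\id_{\R^n})$, has a unique solution $(v,\varphi)\in C^1\big([-T,T],\mathcal U\big)$ with values in $\mathcal B$; in particular $\varphi(t)\in QD^{l,s}_\Om(\R^n)$ for every $t$, and since $X$ is real analytic the solution is real analytic in $t$. Uniqueness in $C^1\big([-T,T],\mathcal U\big)$ follows from the local Lipschitz property of $X$.

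Next I would invoke the theorem on analytic dependence on time and initial data for real analytic vector fields in Banach spaces (cf. \cite{Dieudonne}): near every point of $\mathcal U$ the local flow $(t,z)\mapsto\Phi_t(z)$ is real analytic. Applying this along the slice $\{(u_0,\id_{\R^n}):\|u_0\|_{l,s}<\rho\}$, and using that the solutions built above are all defined on the common interval $[-T,T]$, one patches these local flows into a single real analytic map on a neighborhood of $(-T,T)\times\big(B_{Q^{l,s}_\Om}(\rho)\times\{\id_{\R^n}\}\big)$. Precomposing with the affine-linear (hence real analytic) embedding $u_0\mapsto(u_0,\id_{\R^n})$ of $Q^{l,s}_\Om$ into $Q^{l,s}_\Om\times Q^{l,s}_\Om$ then yields that
\[
(-T,T)\times B_{Q^{l,s}_\Om}(\rho)\to Q^{l,s}_\Om\times QD^{l,s}_\Om(\R^n),\quad(t,u_0)\mapsto\big(v(t;u_0),\varphi(t;u_0)\big)
\]
is real analytic, which is the assertion of the proposition.

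I expect the main obstacle to be the uniformity over the non-compact ball $B_{Q^{l,s}_\Om}(\rho)$: both the choice of a single $T$ and the patching of the \emph{a priori} only local analytic flows into one real analytic map on the cylinder $(-T,T)\times B_{Q^{l,s}_\Om}(\rho)$ rest on the bounded-and-Lipschitz estimate for $X$ on the fixed set $\mathcal B$, which renders the radius and contraction constant in the Picard iteration independent of the particular $u_0$. Establishing that estimate — i.e. tracking how the quadratic structure of $\PP$ (Lemma \ref{lem:PP}) and the uniform control of right translations near the identity (Theorem \ref{th:composition}, Lemmas \ref{lem:almost_lipschitz} and \ref{lem:almost_bilinear}) combine through the decomposition \eqref{eq:F} — is the only step requiring more than the bare statement of Proposition \ref{prop:F-smooth}.
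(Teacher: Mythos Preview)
Your approach is correct and mirrors the paper's own one-line argument: invoke Proposition~\ref{prop:F-smooth} to see that the vector field is real analytic on $Q^{l,s}_\Om\times QD^{l,s}_\Om(\R^n)$, and then apply the standard theorem on existence, uniqueness, and analytic dependence of solutions of analytic ODE's in Banach spaces (\cite{Lang,Dieudonne}); your additional care with the uniform choice of $T$ over $B_{Q^{l,s}_\Om}(\rho)$ is a welcome expansion of what the paper leaves implicit. (One slip: with the ordering $z=(v,\varphi)$ the vector field should be $X(v,\varphi)=\big(F(v,\varphi),v\big)$, since $\dt v=F(v,\varphi)$ and $\dt\varphi=v$, not $\big(v,F(v,\varphi)\big)$ as written.)
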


Since by Proposition \ref{prop:pde<->ode} the solution \eqref{eq:u-solution} of the Euler equation \eqref{eq:euler} and 
the solution $(v,\varphi)\in C^1\big([-T,T],Q^{l,s}_\Om\times QD^{l,s}_\Om(\R^n)\big)$ in Proposition \ref{prop:analyticity} above 
are related by $v=u\circ\varphi$, we conclude that $\dt\varphi=u\circ\varphi$, $\varphi|_{t=0}=\id_{\R^n}$. 
As a consequence we obtain Corollary \ref{coro:analyticity} stated in the Introduction.

\begin{proof}[Proof of Corollary \ref{coro:u_m-analytic}]
Let $u\in C\big((-T_1,T_2),Q^{l,s}_\Om\big)\cap C^1\big((-T_1,T_2),Q^{l-1,s}_\Om\big)$ be the solution of 
the Euler equation \eqref{eq:euler} on its maximal time of existence. Then, by arguing as in \eqref{eq:a_m} and using that
$u=v\circ\varphi^{-1}$ we obtain that for any $t\in(-T_1,T_2)$,
\begin{align}
\widehat{u}_m(t)&:=\lim_{T\to\infty}\frac{1}{(2T)^n}\int_{[-T,T]^n}v\big(t,\varphi^{-1}(t,y)\big) e^{-i(\Lambda_m,y)}\,dy\nonumber\\
&=\lim_{T\to\infty}\frac{1}{(2T)^n}\int_{[-T,T]^n}v\big(t,x\big) e^{-i\big(\Lambda_m,\varphi(t,x)\big)}\,dx\nonumber\\
&=\Big\langle v(t) e^{-i(\Lambda_m,f(t))},e^{-i(\Lambda_m,\cdot)}\Big\rangle_0\label{eq:u_m}
\end{align}
where $\varphi(t)=\id_{\R^n}+f(t)$, $f(t)\in Q^{l,s}_\Om$ and for $h,g\in Q^s_{\Om,\C}(\R^n)$,
\begin{equation}\label{eq:Besicovich_product3}
\langle g,h\rangle_0:=\lim_{T\to\infty}\frac{1}{(2T)^n}\int_{[-T,T]^n}g(x)h(x)\,dx=
\sum_{m\in\Z^M}\widehat{g}_m\widehat{h}_{-m}.
\end{equation}
Since the complex bi-linear form \eqref{eq:Besicovich_product3} is bounded in $Q^s_{\Om,\C}$, the corollary follows from 
\eqref{eq:u_m}, Proposition \ref{prop:analyticity}, and the Banach algebra property in $Q^{l,s}_{\Om,\C}(\R^n)$.
\end{proof}

\appendix

\section{Appendix}
In this Appendix we prove several technical lemmas. The first one is used in Section \ref{sec:spaces}.

\begin{Lem}\label{lem:uniqueness}
Let $(\lambda_j)_{j\in J}$ be a sequence of points in $\R^n$ such that $\lambda_j\ne\lambda_k$ for $j\ne k$ and
$J$ is a countable set of indices. Assume that the series $\sum_{j\in J}\widehat{a}_j e^{i (\lambda_j,x)}$, $\widehat{a}_j\in\C$, 
converges to zero in $S'(\R^n)$ independently of the order of summation. Then $\widehat{a}_j=0$ for any $j\in J$.
\end{Lem}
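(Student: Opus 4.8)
The plan is to recover each coefficient $\widehat a_j$ by testing the series against a suitable Schwartz function and using the fact that the exponents $\lambda_j$ are pairwise distinct. Fix an index $j_0\in J$. I would like to choose a test function $\varphi\in S(\R^n)$ such that $\langle e^{i(\lambda_{j_0},\cdot)},\varphi\rangle\ne 0$ while $\langle e^{i(\lambda_j,\cdot)},\varphi\rangle=0$ for all $j\ne j_0$. Recall that $\langle e^{i(\lambda,\cdot)},\varphi\rangle=\widehat\varphi(-\lambda)$ (up to a normalization constant), where $\widehat\varphi$ denotes the Fourier transform. So the task reduces to producing a Schwartz function whose Fourier transform vanishes at every $\lambda_j$ with $j\ne j_0$ but not at $\lambda_{j_0}$. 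Since the $\lambda_j$ are distinct, $\lambda_{j_0}$ has positive distance to... well, not necessarily to the \emph{set} $\{\lambda_j : j\ne j_0\}$, which may accumulate at $\lambda_{j_0}$. This is the main obstacle: one cannot simply use a bump function supported near $\lambda_{j_0}$ on the Fourier side, because infinitely many other exponents may cluster arbitrarily close to $\lambda_{j_0}$.

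To get around the accumulation issue I would exploit the hypothesis that the series converges \emph{unconditionally} in $S'(\R^n)$, which forces, for every $\varphi\in S(\R^n)$, the numerical series $\sum_{j\in J}\widehat a_j\,\widehat\varphi(-\lambda_j)$ to converge unconditionally, hence absolutely (unconditional convergence of a scalar series is equivalent to absolute convergence). In particular, taking $\varphi$ to be a Gaussian $\varphi(x)=e^{-\varepsilon|x|^2}$ for each $\varepsilon>0$, whose Fourier transform is again a Gaussian $c_\varepsilon e^{-|\xi|^2/(4\varepsilon)}$ that is everywhere positive, we get $\sum_{j\in J}|\widehat a_j|\,e^{-|\lambda_j|^2/(4\varepsilon)}<\infty$ for every $\varepsilon>0$. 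This gives a strong decay/summability control on $(\widehat a_j)$ relative to $(|\lambda_j|)$ that I can use to justify interchanging sum and limit below.

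Now I would isolate $\widehat a_{j_0}$ by an averaging/localization argument. For $\varphi$ a fixed Schwartz function with $\widehat\varphi$ compactly supported, $\widehat\varphi\ge 0$, and $\widehat\varphi(0)=1$, consider the rescaled and translated test functions $\varphi_{R,j_0}$ whose Fourier transform is $\widehat\varphi\big(R^{-1}(\xi-\lambda_{j_0})\big)$; then $\langle\sum_j\widehat a_j e^{i(\lambda_j,\cdot)},\varphi_{R,j_0}\rangle=\sum_j\widehat a_j\,\widehat\varphi\big(R^{-1}(\lambda_{j_0}-\lambda_j)\big)$, and the left-hand side is $0$ by hypothesis for every $R>0$. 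The $j=j_0$ term contributes $\widehat a_{j_0}$. For $j\ne j_0$, since $\lambda_j\ne\lambda_{j_0}$, we have $\widehat\varphi\big(R^{-1}(\lambda_{j_0}-\lambda_j)\big)\to 0$ as $R\to 0^+$ (eventually the argument leaves the support of $\widehat\varphi$), and these terms are dominated by $|\widehat a_j|\cdot\|\widehat\varphi\|_\infty$, which is summable over $j$ by the Gaussian bound from the previous paragraph (choose any fixed $\varepsilon$; since $\widehat\varphi$ is bounded, $\sum_{j}|\widehat a_j|<\infty$ is more than enough — in fact $\sum_j |\widehat a_j| e^{-|\lambda_j|^2/(4\varepsilon)}<\infty$ together with the fact that on the support of the translated bump $|\lambda_j|$ stays bounded gives absolute summability, so dominated convergence applies). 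Letting $R\to 0^+$ and invoking dominated convergence yields $0=\widehat a_{j_0}$. Since $j_0\in J$ was arbitrary, $\widehat a_j=0$ for all $j\in J$, which completes the proof.
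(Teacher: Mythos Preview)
Your argument is correct. The key step you share with the paper is that unconditional convergence in $S'(\R^n)$, tested against any $\varphi\in S(\R^n)$, forces the scalar series $\sum_j \widehat a_j\,\widehat\varphi(-\lambda_j)$ to converge absolutely, which yields local $\ell^1$ summability $\sum_{|\lambda_j|\le\rho}|\widehat a_j|<\infty$ for every $\rho>0$ (your Gaussian test gives exactly this after the parenthetical correction; the paper tests with a compactly supported bump on the Fourier side). From that point the two proofs diverge. The paper first truncates to $|\lambda_j|\le R$, shows the truncated distribution vanishes via dominated convergence, inverts the Fourier transform to obtain a uniformly convergent trigonometric sum equal to zero, and then extracts each coefficient using the almost-periodic mean-value formula \eqref{eq:F_m}. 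You instead shrink a compactly supported bump centered at $\lambda_{j_0}$ on the Fourier side and isolate $\widehat a_{j_0}$ directly by dominated convergence. Your route is a little shorter and avoids the detour through \eqref{eq:F_m}; the paper's route ties back to the Bohr/Besicovitch machinery already in play.

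Two cosmetic points. First, the claim that $\sum_j|\widehat a_j|<\infty$ is not established and need not hold; your parenthetical fix (only $\lambda_j$ in a fixed bounded set contribute for all small $R$, so local $\ell^1$ suffices as a dominating sequence) is exactly right and should replace the earlier sentence. Second, with the convention $\langle e^{i(\lambda,\cdot)},\varphi\rangle=\widehat\varphi(-\lambda)$, to land on $\widehat\varphi\big(R^{-1}(\lambda_{j_0}-\lambda_j)\big)$ you want $\widehat{\varphi_{R,j_0}}$ centered at $-\lambda_{j_0}$ rather than $\lambda_{j_0}$ (or simply take $\widehat\varphi$ even). Neither affects the validity of the proof.
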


\begin{proof}[Proof of Lemma \ref{lem:uniqueness}]
By applying the Fourier transform $\mathcal{F} : S'(\R^n)\to S'(\R^n)$ to the series  $\sum_{j\in J}\widehat{a}_j e^{i (\lambda_j,x)}$ 
we see that 
\begin{equation}\label{eq:sum_of_deltas}
\sum_{j\in J}\widehat{a}_j \delta(\xi-\lambda_j) = 0
\end{equation}
where the sum converges to zero in $S'(\R^n)$ independently of the order of summation and $\delta(\xi-\lambda_j)$ is 
the Dirac delta function at the point $\lambda_j$. For a given $\rho>0$ take $\psi\in C^\infty_c(\R^n)$ such that $\psi(\xi)=1$ for 
$|\xi|\le\rho$, $\psi(\xi)=0$ for $|\xi|\ge 2\rho$, and $0\le\psi(\xi)\le 1$ for $\rho\le|\xi|\le 2\rho$.
Then, in view of \eqref{eq:sum_of_deltas}, $\sum_{j\in J}\widehat{a}_j\psi(\lambda_j)$ converges to zero independently
of the order of summation. This implies that it converges absolutely. Since, $\widehat{a}_j\psi(\lambda_j)=\widehat{a}_j$
for $j\in J_{\le\rho}:=\big\{j\in J\,\big|\,|\lambda_j|\le\rho\big\}$, we conclude that for any $\rho>0$ 
there exists a real constant $C_\rho>0$ such that
\begin{equation}\label{eq:l^1_loc}
\sum_{j\in J_{\le\rho}}|\widehat{a}_j|\le C_\rho<\infty.
\end{equation}
Denote $f(\xi):=\sum_{j\in J}\widehat{a}_j \delta(\xi-\lambda_j)\in S'(\R^n)$ and take $R>0$.
Let $\chi_\varepsilon\in C^\infty_c(\R^n)$ be such that
$\chi_\varepsilon(\xi)=0$ for $|\xi|\ge 2 R$, $0\le\chi_\varepsilon(\xi)\le 1$ for $\xi\in\R^n$, and 
\begin{equation}\label{eq:characteristic_function}
\chi_\varepsilon\to\chi_{\le R}\quad\text{as}\quad\varepsilon\to 0+
\end{equation}
pointwisely, where $\chi_{\le R}$ is the characteristic function of the closed disk $\big\{\xi\in\R^n\,\big|\,|\xi|\le R\big\}$.
In view of \eqref{eq:l^1_loc}, \eqref{eq:characteristic_function}, and Lebesgue's dominated convergence theorem, 
for any $\varphi\in S(\R^n)$,
\begin{equation}\label{eq:epsilon_limit}
\langle f,\chi_\varepsilon\varphi\rangle=\sum_{j\in J_{\le 2 R}}\widehat{a}_j \chi_\varepsilon(\lambda_j)\varphi(\lambda_j)\to
\langle f_{\le R},\varphi\rangle
\end{equation}
where $f_{\le R}(\xi):=\sum_{j\in J_{\le R}}\widehat{a}_j\delta(\xi-\lambda_j)$.
Note that in view of \eqref{eq:l^1_loc}, $f_{\le R}$ is a well-defined distribution in $S'(\R^n)$.
Since, by \eqref{eq:sum_of_deltas}, $\langle f,\chi_\varepsilon\varphi\rangle=0$, we conclude from \eqref{eq:epsilon_limit}
that $f_{\le R}=0$ in $S'(\R^n)$. Applying the inverse Fourier transforn to this equality, we obtain that
\[
\sum_{j\in J_{\le R}}\widehat{a}_j e^{i (\lambda_j,x)}=0
\]
in $S'(\R^n)$. Since, by \eqref{eq:l^1_loc}, the series above converges uniformly in $\R^n$, we can apply formula 
\eqref{eq:F_m} and the condition that the mapping $J\to\R^n$, $j\mapsto\lambda_j$, is injective,
to conclude that $\widehat{a}_j=0$ for any $j\in J_{\le R}$. Since $\bigcup_{R=1}^\infty J_{\le R}=J$ we conclude the 
statement of the lemma.
\end{proof}

\medskip

Recall from the Introduction that $\Om_{\rm p} : \R^n\to\T^M$ denotes the composed map 
$\Om_{\rm p}\equiv{\rm p}\circ\Om$ where ${\rm p} : \R^M\to\T^M$ is the standard covering map
of the torus. The following lemma gives a condition equivalent to the non-resonance condition {\rm (NC)}. 

\begin{Lem}\label{lem:density<->injectivity}
Let $\Om : \R^n\to\R^M$, $1\le n\le M$, be a linear map of maximal rank $n$.
Then, the map $\Om_{\rm p} : \R^n\to\T^M$ has a dense image in $\T^M$ if and only if
the map \eqref{eq:Lambda_m-map} is injective.
\end{Lem}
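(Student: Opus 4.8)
The plan is to identify the closure of the image of $\Om_{\tt p}$ through its annihilator in the Pontryagin dual of $\T^M$, which is $\Z^M$ with characters $\chi_m:\theta\mapsto e^{2\pi i(m,\theta)}$. Write $G:=\Om_{\tt p}(\R^n)$; this is a subgroup of $\T^M$ (the continuous image of the group $\R^n$ under the homomorphism $\Om_{\tt p}$), and its closure $H:=\overline{G}$ is a closed subgroup. The first step is the standard group-theoretic fact that a closed subgroup $H\subseteq\T^M$ is all of $\T^M$ if and only if $m=0$ is the only $m\in\Z^M$ with $\chi_m|_H\equiv 1$. One implication is immediate; for the other (a proper closed subgroup is annihilated by a nontrivial character) I would either invoke Pontryagin duality, or argue elementarily: normalized Haar measure $\mu$ on the compact group $H$, viewed as a probability measure on $\T^M$ supported on $H$, is invariant under translation by every $g\in H$, so $(1-e^{-2\pi i(m,g)})\widehat{\mu}(m)=0$ for all such $g$; if every nontrivial $\chi_m$ were non-constant on $H$, then $\widehat{\mu}(m)=0$ for all $m\neq0$ while $\widehat{\mu}(0)=1$, forcing $\mu$ to equal Lebesgue measure and hence $H=\T^M$.

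The second step is to compute this annihilator explicitly. Since $\chi_m$ is continuous, $\chi_m|_H\equiv1$ is equivalent to $\chi_m|_G\equiv1$, i.e.\ to $e^{2\pi i(m,\Om(x))}=1$ for all $x\in\R^n$, i.e.\ to $(m,\Om(x))=(\Om^T m,x)\in\Z$ for all $x\in\R^n$. The function $x\mapsto(\Om^T m,x)$ is linear, hence continuous, takes integer values on the connected set $\R^n$, and vanishes at $x=0$; therefore it is identically zero, i.e.\ $\Om^T m=0$, i.e.\ $\Lambda_m=2\pi\Om^T m=0$. The converse implication $\Lambda_m=0\Rightarrow\chi_m|_G\equiv1$ is trivial. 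Hence $\{m\in\Z^M:\chi_m|_H\equiv1\}$ is exactly the kernel of the homomorphism \eqref{eq:Lambda_m-map}, $m\mapsto\Lambda_m$.

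Combining the two steps yields the claim: $H=\T^M$ — that is, (NC) holds — if and only if $\ker(m\mapsto\Lambda_m)=\{0\}$, and since $m\mapsto\Lambda_m$ is a group homomorphism this is equivalent to its injectivity. (Note that the hypothesis $\rk\Om=n$ is not actually needed for this equivalence.) There is no serious obstacle here: the only non-routine ingredient is the group-theoretic input of the first paragraph, and everything else is a direct computation; an alternative to the Haar-measure argument would be to derive a contradiction directly from a nonempty open subset of $\T^M$ disjoint from $H$, but the route above is the cleanest.
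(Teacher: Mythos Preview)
Your argument is correct and complete. It differs from the paper's proof in a meaningful way, so a brief comparison is worth recording.

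You frame the problem structurally: the closure $H$ of $\Om_{\tt p}(\R^n)$ is a closed subgroup of $\T^M$, and you identify $H$ with $\T^M$ by showing its annihilator in the dual $\Z^M$ is trivial, using a clean Haar--measure computation to conclude that a closed subgroup annihilated only by the trivial character must be the whole torus. The paper instead argues each implication directly via Fourier analysis. For density $\Rightarrow$ injectivity, it observes that $\Lambda_{m'}=\Lambda_{m''}$ forces the continuous function $e^{2\pi i(m',\cdot)}-e^{2\pi i(m'',\cdot)}$ to vanish on the dense image, hence everywhere. For injectivity $\Rightarrow$ density, it does precisely what you allude to in your last sentence: it picks a nonzero $\chi\in C^\infty(\T^M)$ supported in a ball disjoint from the image, so that $\Om_{\tt p}^*\chi=0$, expands this as $\sum_m\widehat{\chi}_m e^{i(\Lambda_m,x)}=0$, and then invokes the paper's own uniqueness result (Lemma~\ref{lem:uniqueness}) together with the assumed injectivity of $m\mapsto\Lambda_m$ to force all $\widehat{\chi}_m=0$.

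What each buys: your route is self-contained and makes transparent that this is a Pontryagin--duality statement (and, as you note, does not use $\rk\Om=n$). The paper's route is elementary and avoids any appeal to Haar measure or duality, but it does lean on Lemma~\ref{lem:uniqueness}, which is a separate (though easy) distributional argument; the rank hypothesis is likewise unused there.
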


\begin{proof}[Proof of Lemma \ref{lem:density<->injectivity}]
First, let us assume that $\Om_{\rm p} : \R^n\to\T^M$ has dense image. 
Assume in addition that there exist $m',m''\in\Z^M$, $m'\ne m''$, such that $\Lambda_{m'}=\Lambda_{m''}$.
Then, for any $x\in\R^n$, $e^{i (\Lambda_{m'},x)}=e^{i (\Lambda_{m''},x)}$ or equivalently, $e^{i(m',\Om(x))}=e^{i(m'',\Om(x))}$.
This implies that
\[
\Om_{\rm p}^*(F)=0
\]
where $F(y):=e^{i(m',y)}-e^{i(m',y)}$, $y\in \T^M$. Hence, $F$ vanishes on the image of $\Om_{\rm p}$. 
Since the image of $\Om_{\rm p}$ is dense in $\T^M$, we then conclude by continuity 
that $F(y)=0$ for any $y\in\T^M$. This implies that $e^{i(m',y)}=e^{i(m',y)}$ for any $y\in\R^M$, 
that is a contradiction.

Let us now assume that the map \eqref{eq:Lambda_m-map} is injective.
Assume in addition that the image of $\Om_{\rm p}$ is not dense in $\T^M$.
Then, there exists an open ball $U$ in $\T^M$ that is not in the image of $\Om_{\rm p}$.
Take $\chi\in C^\infty(\T^M,\R)$, $\chi\ne 0$, with support in $U$, and let
$\widehat{\chi}_m$, $m\in\Z^M$, be the Fourier coefficients of $\chi$.
Clearly, the Fourier series of $\chi$ converges absolutely, and hence, independently of
the order of summation. Since, $\Om_{\rm p}^*(\chi)=0$, we obtain that
\[
\sum_{m\in\Z^M}\widehat{\chi}_m e^{i(\Lambda_m,x)}=0
\]
where the series converges absolutely. Then, we can apply e.g. Lemma \ref{lem:uniqueness}
to conclude that $\widehat{\chi}_m=0$ for any $m\in\Z^M$. By the Parseval equality we then
conclude that $\chi=0$, that contradicts our assumption that $\chi\ne 0$.
\end{proof}

Let $\Om : \R^n\to\R^M$, $1\le n\le M$, be a linear map that satisfies the non-resonance condition {\rm (NC)} 
in the Introduction and let $\Gamma_\Om$ be the discrete lattice \eqref{eq:Gamma_Om} in $\R^n$. 
We have the following lemma.

\begin{Lem}\label{lem:Gamma_Om}
For any integer $n\ge 1$ and for any $r\in\{0,...,n-1\}$ there exists an integer $M\ge n$ and a linear map $\Om : \R^n\to\R^M$ 
that satisfies the non-resonance condition {\rm (NC)} and such that $\rk\Gamma_\Om=r$.
\end{Lem}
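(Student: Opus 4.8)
The plan is to build $\Om$ as a ``block sum'' of an identity block, which will produce the $r$ periodic directions, and a purely quasi-periodic block, which will produce the remaining $n-r$ directions while contributing nothing to the lattice $\Gamma_\Om$. Set $k:=n-r$, so $k\ge 1$ because $r\le n-1$, and $M:=n+1$. Writing $\R^n=\R^r\times\R^k$ and $\R^M=\R^r\times\R^{k+1}$, take $\Om:=\id_{\R^r}\oplus\,\Om_2$, i.e. $\Om(x,y):=\big(x,\Om_2(y)\big)$, where $\Om_2:\R^k\to\R^{k+1}$ is a linear map of rank $k$ to be specified. For any such choice $\rk\Om=r+k=n$, and directly from \eqref{eq:Gamma_Om} one gets $\Gamma_\Om=\Z^r\times\Gamma_{\Om_2}$, hence $\rk\Gamma_\Om=r+\rk\Gamma_{\Om_2}$. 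So the whole statement reduces to constructing, for each $k\ge 1$, a rank-$k$ linear map $\Om_2:\R^k\to\R^{k+1}$ satisfying {\rm (NC)} with $\Gamma_{\Om_2}=\{0\}$.

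For $\Om_2$ I would pick real numbers $\beta_1,\dots,\beta_k$ such that $1,\beta_1,\dots,\beta_k$ are linearly independent over $\mathbb{Q}$ — for instance $\beta_i:=\pi^i$, which works by the transcendence of $\pi$ — and set $\Om_2(y):=\big(y_1,\dots,y_k,\,\beta_1y_1+\dots+\beta_ky_k\big)$. The first $k$ coordinates show $\rk\Om_2=k$. If $\gamma\in\Gamma_{\Om_2}$ then $\gamma\in\Z^k$ and $\sum_i\beta_i\gamma_i\in\Z$, so $\sum_i\gamma_i\beta_i-m=0$ for some $m\in\Z$; this is a $\Z$-linear relation among $1,\beta_1,\dots,\beta_k$ and must therefore be trivial, giving $\gamma=0$, so $\Gamma_{\Om_2}=\{0\}$. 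For {\rm (NC)} I would invoke Remark \ref{rem:NC} and check that $m\mapsto\Om_2^T(m)$ is injective on $\Z^{k+1}$: since $\Om_2^T(m)=\big(m_1+m_{k+1}\beta_1,\dots,m_k+m_{k+1}\beta_k\big)$, the vanishing of $\Om_2^T(m)$ forces $m_i=-m_{k+1}\beta_i\in\Z$ for all $i$, and as $\beta_1\notin\mathbb{Q}$ this gives $m_{k+1}=0$ and then $m=0$.

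The same computation carries over verbatim to the assembled map: for $(m',m'',m_{n+1})\in\Z^r\times\Z^k\times\Z$ one has $\Om^T(m',m'',m_{n+1})=\big(m',\,m''+m_{n+1}\beta\big)$, whose vanishing forces $m'=0$, $m''=-m_{n+1}\beta$, hence $m_{n+1}=0$ and $m=0$; so $\Om$ satisfies {\rm (NC)} by Remark \ref{rem:NC}. Together with $\rk\Gamma_\Om=r+\rk\Gamma_{\Om_2}=r$ this proves the lemma. The only genuinely delicate point is the choice of the functional $y\mapsto\sum_i\beta_iy_i$: it must be ``irrational enough'' to kill the lattice, which is precisely the $\mathbb{Q}$-linear independence of $1,\beta_1,\dots,\beta_k$; this in turn forces $\beta_1\notin\mathbb{Q}$, which is exactly what is needed for {\rm (NC)}, so a single scalar constraint settles both requirements, while the dimension count $M=n+1$ works uniformly for all $r\in\{0,\dots,n-1\}$. (Alternatively, {\rm (NC)} for $\Om$ could be checked by observing that $\Om_{\tt p}(\R^n)=\T^r\times({\tt p}\circ\Om_2)(\R^k)$ and that a product of a full torus with a dense subset is dense.)
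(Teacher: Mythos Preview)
Your proof is correct and follows essentially the same approach as the paper: for the core case $r=0$ both construct $\Om:\R^n\to\R^{n+1}$ as the identity block augmented by a single row whose entries, together with $1$, are linearly independent over $\mathbb{Q}$ (the paper normalizes this row to unit length and verifies {\rm (NC)} via the Kronecker--Weyl part of Remark~\ref{rem:NC}, whereas you check the injectivity of $m\mapsto\Om^T(m)$ directly). Your block-sum treatment of the general $r\ge 1$ case is more explicit than the paper's, which only says the general case ``follows by similar arguments.''
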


\begin{Rem}\label{rem:quasipatterns}
Note that the quasipatterns appearing as solutions of Swift-Hohenberg PDE model and the B\'enard-Rayleigh convection
(cf. \cite{Iooss2,Iooss3} and the references therein) are quasi-periodic functions with $\Om : \R^2\to\R^4$ that 
are purely quasi-periodic (see Lemma 4 in \cite{Iooss2}).
\end{Rem}

\begin{proof}[Proof of Lemma \ref{lem:Gamma_Om}]
Since the case when $n=1$ is trivial we will assume that $n\ge 2$. 
For the simplicity of the exposition, we will first consider the case when $r=0$. 
We will construct a linear map $\Om : \R^n\to\R^{n+1}$ that satisfies {\rm(NC)} and $\Gamma_\Om=\{0\}$.
To this end, choose $\om=(\om_1,...,\om_n)\in\R^n$ such that $|\om|=1$ and the numbers 
\begin{equation}\label{eq:independence_over_Z}
\om_1,...,\om_n,1\quad\text{are linearly independent over $\Z$}.
\end{equation}
(The proof that such $\om$ exists can be done e.g. along the lines of the proof of \cite[Theorem 4, \S24]{Arnold}.)
Now, consider the $(n+1)\times n$ matrix
\begin{equation}\label{eq:Om_example}
\Om:=
\begin{pmatrix}
1&0&...&0\\
0&1&...&0\\
0&0&...&1\\
\om_1&\om_2&...&\om_n
\end{pmatrix}.
\end{equation}
The linear map $\Om : \R^n\to\R^{m+1}$ associated to this matrix satisfies {\rm (NC)} since 
\[
\Om(\om)=\big(\om_1,...,\om_n,|\om|^2\big)^T
\]
and $\big(\om_1,...,\om_n,|\om|^2\big)$ where $|\om|^2=1$ satisfy \eqref{eq:independence_over_Z}.
If we assume that $\Om(\gamma)\in\Z^{n+1}$ for some $\gamma\in\R^n$ then
we conclude from \eqref{eq:Om_example} that $\gamma=m\in\Z^n$ and
$(m,\om)\in\Z$. In view of \eqref{eq:independence_over_Z} this is possible only if
$\gamma=m=0$. This completes the proof of the case when $r=0$. 
The case when $r\ge 1$ follows by similar arguments.
\end{proof}

\medskip

For a given $f\in Q^{l,s}_\Om(\R^n)$ consider the set in $Q^{l,s}_\Om(\R^n)$,
\[
\mathcal{S}_f:=\big\{f_c(\cdot)\equiv f(\cdot+c)\,\big|\,c\in\R^n\big\}.
\]
The corollary below shows that the elements of $Q^{l,s}_\Om(\R^n)$ are {\em almost-periodic in the sense of Bochner}
with respect to the norm \eqref{eq:Q^{l,s}'-norm} in $Q^{l,s}_\Om(\R^n)$ (cf. e.g. \cite{SunTopalov,Levitan}).

\begin{Lem}\label{lem:Bochner_property}
For any $f\in Q^{l,s}_\Om(\R^n)$ the set $\mathcal{S}_f$ is precompact in $Q^{l,s}_\Om(\R^n)$.
\end{Lem}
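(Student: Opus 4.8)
The plan is to reduce the statement to the well-known fact that on the torus $\T^M$ the map $c\mapsto \tau_c^*G$, with $\tau_c$ a translation, is continuous from $\R^M$ (equivalently, from $\T^M$) into $H^s(\T^M)$; hence its image is the continuous image of a compact set and therefore compact. Concretely, write $f=\Om_{\tt p}^*(F)$ with $F\in H^{l,s}_\Om(\T^M)$ via the Banach-algebra isomorphism $\Om_{\tt p}^*: H^{l,s}_\Om(\T^M)\to Q^{l,s}_\Om(\R^n)$ (the last Lemma of Section~\ref{sec:spaces}). For $c\in\R^n$ one has, directly from Definition~\ref{def:qp} and $\Om_{\tt p}(x+c)={\tt p}\big(\Om(x)+\Om(c)\big)$, the identity
\[
f_c(x)=f(x+c)=F\big(\Om_{\tt p}(x)+\theta_c\big)=\big(\Om_{\tt p}^*(\tau_{\theta_c}^*F)\big)(x),\qquad \theta_c:={\tt p}\big(\Om(c)\big)\in\T^M,
\]
where $\tau_\eta:\T^M\to\T^M$ is the translation $y\mapsto y+\eta$ and $\tau_\eta^*F:=F(\cdot+\eta)$. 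Thus $\mathcal{S}_f=\Om_{\tt p}^*\big(\{\tau_{\theta_c}^*F\,|\,c\in\R^n\}\big)\subseteq \Om_{\tt p}^*\big(\{\tau_\eta^*F\,|\,\eta\in\T^M\}\big)$, and since $\Om_{\tt p}^*$ is a (bounded, linear) Banach-algebra isomorphism, it suffices to show that the full orbit $\mathcal{O}_F:=\{\tau_\eta^*F\,|\,\eta\in\T^M\}$ is precompact in $H^{l,s}_\Om(\T^M)$; its image under the continuous map $\Om_{\tt p}^*$ is then compact, hence closed, hence contains the closure of $\mathcal{S}_f$, giving precompactness of $\mathcal{S}_f$ in $Q^{l,s}_\Om(\R^n)$.

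First I would check that the translation action is continuous on $H^{l,s}_\Om(\T^M)$, i.e. that $\eta\mapsto \tau_\eta^*F$ is continuous from $\T^M$ to $H^{l,s}_\Om(\T^M)$ for every fixed $F\in H^{l,s}_\Om(\T^M)$. In Fourier coordinates, $\widehat{(\tau_\eta^*F)}_m=e^{2\pi i(m,\eta)}\widehat F_m$, so
\[
\|\tau_\eta^*F-F\|_{l,s}^2=\sum_{m\in\Z^M}\big|e^{2\pi i(m,\eta)}-1\big|^2\,|\widehat F_m|^2\,\langle\Lambda_m\rangle^{2l}\m^{2s}.
\]
Each summand is bounded by $4\,|\widehat F_m|^2\langle\Lambda_m\rangle^{2l}\m^{2s}$, which is summable because $F\in H^{l,s}_\Om(\T^M)$; each summand tends to $0$ as $\eta\to 0$; so by dominated convergence (for series) $\|\tau_\eta^*F-F\|_{l,s}\to 0$, and continuity at a general $\eta_0$ follows since $\tau_\eta^*=\tau_{\eta-\eta_0}^*\circ\tau_{\eta_0}^*$ and $\tau_{\eta_0}^*$ is a bounded linear operator of norm one on $H^{l,s}_\Om(\T^M)$. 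Consequently $\mathcal{O}_F$ is the image of the compact space $\T^M$ under a continuous map, hence compact in $H^{l,s}_\Om(\T^M)$.

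Finally I would assemble the pieces: $\Om_{\tt p}^*(\mathcal{O}_F)$ is compact in $Q^{l,s}_\Om(\R^n)$ (continuous image of a compact set), $\mathcal{S}_f\subseteq \Om_{\tt p}^*(\mathcal{O}_F)$ by the displayed identity, and a subset of a compact subset of a metric space is precompact; hence $\mathcal{S}_f$ is precompact in $Q^{l,s}_\Om(\R^n)$, which is the claim. I do not expect a genuine obstacle here; the only point requiring a little care is the passage through the torus picture — one must make sure that $f_c$ is recorded as the pull-back of an honest translate of $F$ on $\T^M$ (this uses that $\Om(c)$ determines a well-defined element $\theta_c\in\T^M$), and that $\Om_{\tt p}^*$ on the finer scale $H^{l,s}_\Om\to Q^{l,s}_\Om$ is bounded and bijective, which is exactly the Banach-algebra isomorphism stated at the end of Section~\ref{sec:spaces}. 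The continuity of the translation action is the standard ``strong continuity of the regular representation'' argument, carried out above by dominated convergence in Fourier space.
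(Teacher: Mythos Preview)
Your proof is correct, but the route differs from the paper's. The paper works directly on the Fourier side: from $\widehat{(f_c)}_m=e^{i(\Lambda_m,c)}\widehat f_m$ it observes $|\widehat{(f_c)}_m|=|\widehat f_m|$ and embeds $\mathcal{S}_f$ into the ``infinite torus'' $\mathcal{T}_f=\{g\in Q^{l,s}_\Om(\R^n):|\widehat g_m|=|\widehat f_m|\ \forall m\}$, then argues that $\mathcal{T}_f$ is compact via a Cantor diagonal argument. You instead pass through the torus picture and embed $\mathcal{S}_f$ into the smaller set $\Om_{\tt p}^*(\mathcal{O}_F)$, the continuous image of the \emph{finite-dimensional} compact torus $\T^M$ under $\eta\mapsto\Om_{\tt p}^*(\tau_\eta^*F)$; compactness is then immediate without any diagonal extraction. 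Your argument is arguably more elementary and gives a tighter compact hull (indeed $\Om_{\tt p}^*(\mathcal{O}_F)\subseteq\mathcal{T}_f$, and by {\rm (NC)} the closure of $\mathcal{S}_f$ actually equals $\Om_{\tt p}^*(\mathcal{O}_F)$), while the paper's argument stays intrinsic to $Q^{l,s}_\Om(\R^n)$ and does not invoke the isomorphism with $H^{l,s}_\Om(\T^M)$.
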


\noindent The original Bochner's argument (\cite{Levitan}) and Lemma \ref{lem:Bochner_property} imply that any 
$f\in Q^{l,s}_\Om(\R^n)$ is {\em almost-periodic in the sense of Bohr} with respect to the norm $\|\cdot\|_{l,s}$, 
i.e. for any $\varepsilon>0$ there exists $L\equiv L_\varepsilon>0$ such that in any $n$-dimensional closed cube of side $L$ in 
$\R^n$ there exist $T>0$ (called an {\em $\varepsilon$-almost period}) such that $\|f_T-f\|_{l,s}<\varepsilon$. 
Since we will not need this statement we will omit its simple proof. The {\em classical} almost-periodic functions in the sense of Bohr
are bounded continuous functions on $\R^n$ that are almost-periodic in the sense of Bohr with respect to the norm in $L^\infty(\R^n)$
(\cite[Ch. I,\S 1]{Levitan},\cite[\S 2]{SunTopalov}).

\begin{proof}[Proof of Lemma \ref{lem:Bochner_property}]
Take $f\in Q^{l,s}_\Om(\R^n)$. Then, it follows from Lemma \ref{lem:qp_distributions1} and Lemma \ref{lem:Q^{l,s}}
that $\widehat{(f_c)}_m=e^{i(\Lambda_m,c)}\widehat{f}_m$ for any $m\in\Z^M$ where $\|f\|_{l,s}<\infty$. 
This implies that $\big|\widehat{(f_c)}_m\big|=|\widehat{f}_m|$ for any $m\in\Z^M$ and hence
\begin{equation}\label{eq:S<=T}
\mathcal{S}_f\subseteq\mathcal{T}_f\subseteq Q^{l,s}_\Om(\R^n)
\end{equation}
where $\mathcal{T}_f$ denotes the ``infinite torus'' in $Q^{l,s}_\Om(\R^n)$,
\[
\mathcal{T}_f:=\big\{g\in Q^{l,s}_\Om(\R^n)\,\big|\,|\widehat{g}_m|=|\widehat{f}_m|\,\forall m\in\Z^M \big\}.
\]
It follows easily from Cantor's diagonal argument applied to a sequence 
$(f_{c_j})_{j\ge 1}$ in $Q^{l,s}_\Om(\R^n)$ where $c_j$, $j\ge 1$, are constant vectors in $\R^n$
that $\mathcal{T}_f$ is a compact set in $Q^{l,s}_\Om(\R^n)$.
Then, $Q^{l,s}_\Om(\R^n)$ is precompact in view of the inclusion \eqref{eq:S<=T}.
\end{proof}


\begin{thebibliography}{99}

\bibitem{AH} M. Ablowitz, T. Horikis, {\em Interacting nonlinear wave envelopes and rogue wave formation 
in deep water}, Phys. Fluids, $\bf 27$(2015), no. 1, 012107 

\bibitem{AMR} R. Abraham, J. Marsden, T. Ratiu, {\em Manifolds, Tensor analysis, and Applications},
Applied Mathematical Sciences, $\bf 75$, Springer, 1988

\bibitem{Iooss1} M. Argentina, G. Iooss, {\em Quasipatterns in parametrically forced horizontal fluid film},
Physica D: Nonlinear Phenomena, $\bf 241$(2012), 1306-1321

\bibitem{Arnold} V. Arnold, {\em Mathematical Methods of Classical Mechanics}, Moscow, 1989

\bibitem{Bohl} P. Bohl, {\em Darstellung von Functionen einer Variabein durch trigonom\-etrische
Reihen mit mehreren einer Variabein proportionalen Argumenten}, Dorpat, 1893

\bibitem{Chemin} J.-Y. Chemin, {\em Fluides Parfaits Incompressibles}, 
Ast\'erisque, $\bf 230$(1995)

\bibitem{Dieudonne} J. Dieudonn\'e, {\em Foundations of Modern Analysis}, 
Academic Press, 1969

\bibitem{EM} D. Ebin, J. Marsden, {\em Groups of diffeomorphisms and the motion
of an incompressible fluid}, Ann. Math., $\bf 92$(1970), 102-163

\bibitem{EF} W. Edwards, S. Fauve, {\em Patterns and quasi-patterns in the Faraday experiment},
J. Fluid Mech., $\bf 278$(1994), 123-148

\bibitem{Esclangon} E. Esclangon, {\em Les fonctions quasi-p\'eriodiques}, Paris, 1904

\bibitem{GMN} Y. Giga, A. Mahalov, B. Nicolaenko, {\em The Cauchy problem for the Navier-Stokes equations with
spatially almost-periodic initial data}, Ann. Math. Stud., $\bf 163$, Princeton University Press, 2007, 213-222

\bibitem{IKT} H. Inci, T. Kappeler, P. Topalov, {\em On the regularity of the composition of
diffeomorphisms}, Mem. Amer. Math. Soc., $\bf 226$, no. 1062, 2013

\bibitem{Iooss2} G. Iooss, {\em Existence of quasipatterns in the superposition of two hexagonal patterns},
Nonlinearity, $\bf 32$(2019), 3163-3187

\bibitem{Iooss3} B. Braaksma, G. Iooss, {\em Existence of bifurcating quasipatterns in steady B\'ernard-Rayleigh convection},
Arch. Rat. Maech. Anal., $\bf 231$(2019), no. 3, 1917-1981

\bibitem{KT} T. Kappeler, P. Topalov, {\em Arnold-Liouville theorem for integrable PDEs: A case study 
of the focusing NLS equation}, SIAM J. Math. Anal., $\bf 54$(2022), 4334-4378

\bibitem{Kato} T. Kato, {\em Nonstationary flows of viscous and ideal fluids in $\R^3$},
J. Funct. Anal., $\bf 9$(1972), 296-305

\bibitem{KatoBook} T. Kato, {\em Quasi-linear equations of evolution, with applications to 
partial differential equations}, Lecture Notes in Math., $\bf 448$, Springer, Berlin, 1975

\bibitem{KatoPonce} T. Kato, G. Ponce, {\em Commutator estimates and the Euler and 
Navier-Stokes equations}, Comm. Pure Appl. Math., $\bf 41$(1988), 891-907

\bibitem{Lang} S. Lang, {\em Differential Manifolds}, Addison-Wesley Series in Mathematics, 1972

\bibitem{Levitan} B. Levitan, {\em Almost-Periodic Functions}, Moscow, 1953

\bibitem{MB} A. Majda, A. Bertozzi, {\em Vorticity and Incompressible Flow}, Cambridge University Press, 2002

\bibitem{MY} G. Misiolek, T. Yoneda, {\em Continuity of the solution map of the Euler equations in H\"older spaces
and weak norm inflation in Besov spaces}, Trans. Amer. Math. Soc., $\bf 370$(2018), no. 7, 4709-4730

\bibitem{McOwenTopalov} R. McOwen, P. Topalov, {\em Perfect fluid flows on $\R^d$ with growth/decay conditions at infinity},
Math. Ann., to appear in 2022

\bibitem{Moser} J. Moser, {\em Convergent series expansions for quasi-periodic motions},
Math. Ann., $\bf 169$(1967), 136-176

\bibitem{OOS} A. Osborne, M. Onorato, M. Seria, {\em The nonlinear dynamics of rogue waves and holes in deep water
gravity wave trains}, Phys. Lett. A, $\bf 275$(2000), 386-393

\bibitem{PP} H. Pak, Y. Park, {\em Existence of solutions for the Euler equations in a critical Besov space $B^1_{\infty,1}(\R^n)$},
Comm. Partial Differential Equations, $\bf 29$(2004), 1149-1166

\bibitem{ST} O. Sawada, R. Takada, {\em On the analyticity and the almost periodicity of the solution to the Euler equations 
with non-decaying initial velocity}, J. Funct. Anal., $\bf 260$(2011), 2148-2162

\bibitem{Serfati} P. Serfati, {\em \'Equations d'Euler et holomorphies \`a faible regularit\'e
spatiale}, C. R. Acad. Sci. Paris, $\bf 320$(1994), no. 2, S\'erie I, 175-180

\bibitem{Shubin1} M. Shubin. {\em Pseudodifferential Operators and Spectral Theory},
Springer, 1987
 
\bibitem{Shubin2} M. Shubin, {\em Invitation to Partial Differential Equations}, 
Graduate Studies in Mathematics, $\bf 205$, AMS, 2020

\bibitem{Shnir} A. Shnirelman, {\em On the analyticity of particle trajectories in the ideal
incompressible fluid}, arXiv:1205.5837v

\bibitem{SunTopalov} X. Sun, P. Topalov, {\em On the group of almost-periodic diffeomorphisms and
its exponential map}, IMRN, $\bf 2021$, no. 13, 9648-9716

\bibitem{TTY} Y. Taniuchi, T. Tashiro, T. Yoneda, {\em On the two-dimensional Euler equations with spatially 
almost periodic initial data}, J. Math. Fluid. Mech., $\bf 12$(2010), 594-612

\bibitem{WZ} J. Wilkening, X. Zhao, {\em Spatially quasi-periodic water waves of infinite depth},
Journal of Nonlinear Science, 2021,  31-52

\bibitem{Wol} W. Wolibner, {\em Un theor\`eme sur l'existence du mouvement plan d'un fluide parfait, homog\`ene, 
incompressible, pendant un temps infiniment long}, Math. Z., $\bf 37$(1933), no. 1, 698-726

\bibitem{Yu1} V. Yudovich, {\em On the origin of convection}, J. Appl. Math. and Mech., $\bf 30$(1966), no. 6, 1193-1199

\bibitem{Yu2} V. Yudovich, {\em Free convection and bifurcation}, J. Appl. Math. and Mech., $\bf 31$(1967), no. 1, 294-303

\end{thebibliography}
\end{document}